\numberwithin{equation}{section}
\newcommand{\cat}[1]{\mathsf{#1}}
\newcommand{\mr}[1]{{\rm #1}}
\newcommand{\bunit}{\mathbbm{1}}
\newcommand{\fgr}[1]{\| {#1} \|}
\newcommand{\grr}{\mathrm{gr}}
\newcommand{\CircNum}[1]{\ooalign{\hfil\raise .00ex\hbox{\scriptsize #1}\hfil\crcr\mathhexbox20D}}
\newcommand{\bA}{\mathbb{A}}
\newcommand{\bC}{\mathbb{C}}
\newcommand{\bF}{\mathbb{F}}
\newcommand{\bH}{\mathbb{H}}
\newcommand{\bL}{\mathbb{L}}
\newcommand{\bN}{\mathbb{N}}
\newcommand{\bP}{\mathbb{P}}
\newcommand{\bQ}{\mathbb{Q}}
\newcommand{\bR}{\mathbb{R}}
\newcommand{\bZ}{\mathbb{Z}}
\newcommand{\gA}{\bold{A}}
\newcommand{\gB}{\bold{B}}
\newcommand{\gC}{\bold{C}}
\newcommand{\gD}{\bold{D}}
\newcommand{\gE}{\bold{E}}
\newcommand{\gH}{\bold{H}}
\newcommand{\gK}{\bold{K}}
\newcommand{\gM}{\bold{M}}
\newcommand{\gR}{\bold{R}}
\newcommand{\gT}{\bold{T}}
\newcommand{\gU}{\bold{U}}
\newcommand{\gX}{\bold{X}}
\newcommand{\gZ}{\bold{Z}}
\newcommand{\cA}{\mathcal{A}}
\newcommand{\cB}{\mathcal{B}}
\newcommand{\cC}{\mathcal{C}}
\newcommand{\cO}{\mathcal{O}}
\newcommand{\cS}{\mathcal{S}}
\newcommand{\cX}{\mathcal{X}}
\newcommand{\cY}{\mathcal{Y}}
\newcommand{\bk}{\mathbbm{k}}
\newcommand\lra{\longrightarrow}
\newcommand\Diff{\mathrm{Diff}}
\newcommand\colim{\operatorname*{colim}}
\newcommand{\fS}{\mathfrak{S}}
\newcommand{\R}{\bR}
\newcommand{\Z}{\mathbbm{Z}}
\renewcommand{\epsilon}{\varepsilon}
\newcommand{\suspsplit}{T^{E_1}}
\renewcommand{\split}{S^{E_1}}
\newcommand{\Steinb}{\mathrm{St}}
\newcommand{\Alg}{\cat{Alg}}
\theoremstyle{plain}
\newtheorem{MainThm}{Theorem}
\newtheorem{theorem}{Theorem}[section]
\newtheorem{proposition}[theorem]{Proposition}
\newtheorem{lemma}[theorem]{Lemma}
\newtheorem{corollary}[theorem]{Corollary}
\theoremstyle{definition}
\newtheorem{definition}[theorem]{Definition}
\newtheorem{example}[theorem]{Example}
\theoremstyle{remark}
\newtheorem{remark}[theorem]{Remark}
\newtheorem*{remark*}{Remark}
\title{$E_2$-cells and mapping class groups}
\author{S{\o}ren Galatius}
\email{galatius@math.ku.dk}
\address{Department of Mathematics\\
	University of Copenhagen\\
	Universitetsparken 5\\
	2100 Copenhagen \O \\
	Denmark}
\author{Alexander Kupers}
\email{kupers@math.harvard.edu}
\address{Department of Mathematics \\
	Harvard University\\
	One Oxford Street \\
	Cambridge MA, 02138 \\USA}
\author{Oscar Randal-Williams}
\email{o.randal-williams@dpmms.cam.ac.uk}
\address{Department of Pure Mathematics and Mathematical Statistics\\
	University of Cambridge\\
	Wilberforce Road\\
	Cambridge CB3 0WB\\
	UK}
\date{\today}
\subjclass[2010]{57R90, 57R15, 57R56, 55P47}
\keywords{Mapping class groups, homological stability, $E_2$-algebras}
\begin{document}
	
\begin{abstract}
We prove a new kind of stabilisation result, ``secondary homological stability,'' for the homology of mapping class groups of orientable surfaces with one boundary component. These results are obtained by constructing CW approximations to the classifying spaces of these groups, in the category of $E_2$-algebras, which have no $E_2$-cells below a certain vanishing line.
\end{abstract}

\maketitle

\section{Introduction} 

The mapping class group $\Gamma_{g,1}$ of a surface $\Sigma_{g,1}$ of genus $g$ with one boundary component is the group of diffeomorphisms of $\Sigma_{g,1}$ which fix the boundary, taken up to isotopy. As well as its evident position in geometric topology, this group arises as the fundamental group of the associated moduli space $\mathcal{M}(\Sigma_{g,1})$ of Riemann surfaces having the topological type of $\Sigma_{g,1}$. In fact, this moduli space is an Eilenberg--Mac~Lane space for $\Gamma_{g,1}$, so the group cohomology of $\Gamma_{g,1}$ coincides with the cohomology of the space $\mathcal{M}(\Sigma_{g,1})$. It is a motivating problem in both topology and algebraic geometry to understand the (co)homology of such spaces.

By considering $\Sigma_{g-1,1}$ as lying inside $\Sigma_{g,1}$, we may extend diffeomorphisms of $\Sigma_{g-1,1}$ to $\Sigma_{g,1}$, defining group homomorphisms
\[s \colon \Gamma_{g-1,1} \lra \Gamma_{g,1}\]
called \emph{stabilisation maps}. One may ask about the effect of these maps on homology: they were shown by Harer \cite{HarerStab} to induce an isomorphism on homology in a range of degrees increasing with $g$. This stability range was later improved, and the best known stability range has been established by Boldsen \cite{Boldsen} (see also \cite{R-WResolution,WahlSurvey}): the induced map $s_*$ on homology is an epimorphism in homological degrees $* \leq \frac{2g-2}{3}$ and an isomorphism in degrees $* \leq \frac{2g-4}{3}$. In particular, it follows that the relative homology groups satisfy $H_d(\Gamma_{g,1}, \Gamma_{g-1,1};\bZ)=0$ for $d \leq \tfrac{2g-2}{3}$.

Our main result is a new type of stability theorem for these relative homology groups, in a larger range than that in which ordinary stability makes them vanish.

\begin{MainThm}\label{thm:A}
There are maps
\[\varphi_* \colon H_{d-2}(\Gamma_{g-3,1}, \Gamma_{g-4,1};\bZ) \lra H_d(\Gamma_{g,1}, \Gamma_{g-1,1};\bZ),\]
constructed below, which are
epimorphisms for $d \leq \tfrac{3g-1}{4}$ and isomorphisms for $d
\leq \tfrac{3g-5}{4}$. With rational coefficients they are epimorphisms for $d \leq \tfrac{4g-1}{5}$ and isomorphisms for $d \leq \tfrac{4g-6}{5}$.
\end{MainThm}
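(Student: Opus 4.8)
The plan is to realise the classifying spaces $B\Gamma_{g,1}$ as (part of) an $E_2$-algebra $\mathbf{R}$, built by letting genus be the rank filtration, so that $\pi_0(\mathbf{R}) \cong \bN$ with the monoid generated by a single class $\sigma$ in rank $1$, and $\mathbf{R}(g) \simeq B\Gamma_{g,1}$. The stabilisation maps $s$ are then induced by multiplication by $\sigma$, so $H_d(\Gamma_{g,1},\Gamma_{g-1,1};\bZ)$ computes the homology of the cofibre of $\sigma \cdot (-)$, i.e.\ (a shift of) the $E_1$-homology or ``$\sigma$-derived indecomposables'' $H^{E_1}_{g,d}(\mathbf{R}/\sigma)$. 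The first real input is Harer's theorem in the improved form of Boldsen, which as noted gives $H_d(\Gamma_{g,1},\Gamma_{g-1,1};\bZ)=0$ for $d\leq \tfrac{2g-2}{3}$; phrased homologically, this says $\mathbf{R}$ has no $E_1$-cells (equivalently, $E_k$-cells for $k\geq 1$) below the line of slope $\tfrac{2}{3}$, after possibly adjusting by a standard algebraic-independence argument. I would then invoke the general machinery of cellular $E_k$-algebras to produce a CW approximation $\mathbf{C}\xrightarrow{\sim}\mathbf{R}$ in $E_2$-algebras with no cells below this line; the secondary stability map $\varphi_*$ will be the map on relative homology induced by ``multiplication by the fundamental class'' coming from the lowest-degree $E_2$-cell attaching, which accounts for the shift $(g,d)\mapsto (g-3,d-2)$ (one generator $\sigma$ squares off via the $E_2$-structure, or more precisely the $E_2$-bracket/Browder operation lands three units of genus and two homological degrees higher).

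The core of the argument is to analyse the $E_2$-cell structure near the vanishing line. Concretely, I would compute the $E_2$-homology $H^{E_2}_{g,d}(\mathbf{R})$ — or work relative to a subalgebra containing the known low-rank cells — and show that in the relevant bidegree range it is concentrated where the secondary map is forced to be iso/epi. The key steps, in order: (i) set up $\mathbf{R}$ and identify $H_{*,*}(\mathbf{R})$ with the homology of mapping class groups, tracking the $\sigma$-module structure; (ii) translate Boldsen's range into a vanishing line for $E_1$-homology and, via the bar-construction comparison ($E_1$- vs.\ $E_2$-homology, using that $E_2 = E_1 \otimes E_1$ up to the relevant filtration, or the iterated bar spectral sequence), into a vanishing line for $E_2$-homology — this is where the slope improves from $\tfrac23$ to $\tfrac34$ (resp.\ $\tfrac45$ rationally) because an $E_2$-cell in bidegree $(g,d)$ contributes $E_1$-cells only in bidegrees $(ng, *)$ with $*$ at least roughly $d + (n-1)$, so a clean $E_2$-vanishing line of slope $\tfrac34$ is consistent with the coarser $E_1$-line of slope $\tfrac23$; (iii) build the CW-$E_2$-approximation $\mathbf{C}$ with cells only on or above this line, and read off the ``first obstruction'': below twice the vanishing line there are no products of two positive-genus cells, so the relative homology in that range is governed by a single cell-attaching map, which is exactly $\varphi_*$; (iv) a separate, more hands-on computation of the $\bF_p$- and $\bQ$-homology of the relevant cells/skeleta (using known computations of $H_*(\Gamma_{g,1})$ in low degrees, the Browder bracket, and Dyer–Lashof-type operations for the $E_2$-structure) to pin down the exact numerical ranges $\tfrac{3g-1}{4}$, $\tfrac{3g-5}{4}$, $\tfrac{4g-1}{5}$, $\tfrac{4g-6}{5}$; (v) assemble (iii) and (iv) into the stated epimorphism/isomorphism range for $\varphi_*$.

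The main obstacle I anticipate is step (ii)–(iii): converting a vanishing line for homology (or for $E_1$-homology) into a vanishing line for $E_2$-homology with a genuine improvement of slope is not formal — it requires carefully controlling how $E_2$-cells generate $E_1$-cells (the ``$E_2$-to-$E_1$'' Künneth/bar spectral sequence) and, crucially, ruling out that low-slope $E_1$-homology is produced by \emph{products} of two $E_2$-cells rather than by an $E_2$-cell directly. This is where a ``skeletal induction'' on the $E_2$-cell filtration, combined with a vanishing-line bootstrap argument (assume the line up to rank $g-1$, deduce it at rank $g$ using that the only new contributions are a single cell plus decomposables that already satisfy the bound), will be needed; getting the exact constants $3/4$ and $4/5$ — rather than merely \emph{some} improved slope — will require inputting the precise low-degree homology of $\Gamma_{g,1}$ (including torsion) and the action of homology operations, and checking there is no ``early'' $E_2$-cell that would spoil the line. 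A secondary technical point is making the geometric identification of $\varphi_*$ with the cell-attaching map precise enough that it is visibly the map asserted in the statement (with the $(g-3,d-2)$ shift), which amounts to identifying the lowest $E_2$-cell of $\mathbf{R}$ with a specific class in $H_2(\Gamma_{3,1},\Gamma_{2,1};\bZ)$ and checking it is the Browder bracket $[\sigma,\sigma]$ (or the relevant secondary operation on $\sigma$).
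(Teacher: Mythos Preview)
Your outline contains the right general shape (build $\gR$ as an $E_2$-algebra, control its $E_2$-cells, deduce stability results), but several of the key mechanisms are misidentified, and the one genuinely new geometric input is missing.

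\textbf{The vanishing line does not come from Boldsen.} You propose to feed Harer--Boldsen stability in as the input and read off a vanishing line for $E_1$- or $E_2$-homology. This is backwards. The homology of $\overline{\gR}/\sigma$ (relative homology of the stabilisation map) is \emph{not} the $E_1$-homology of $\gR$; the latter is the homology of $Q^{E_1}_\bL(\gR)$, computed by a bar construction, and there is no formal way to convert a slope-$\tfrac23$ stability result into any $E_k$-cell vanishing line. In the paper the logic runs the other way: the new geometric input is the high connectivity of the \emph{$E_1$-splitting complex} (a complex of separating arcs on $\Sigma_{g,1}$), which gives $H^{E_1}_{g,d}(\gR)=0$ for $d<g-1$, i.e.\ a vanishing line of slope essentially $1$. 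This both re-proves and slightly improves ordinary stability as a \emph{consequence}, not an assumption.

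\textbf{The slope-improvement mechanism you describe is fictional.} Transferring a vanishing line from $E_1$- to $E_2$-homology (via the bar spectral sequence) does not improve the slope; one gets the same line $d<g-1$. The slope $\tfrac34$ arises for a different and much simpler reason: the only bidegrees $(g,d)$ with $d\geq g-1$ and $\tfrac{d}{g}<\tfrac34$ are $(1,0),(2,1),(3,2),(4,3)$. One then builds a small explicit $E_2$-algebra $\gA$ out of cells in these bidegrees (using known low-genus homology: $H_1(\Gamma_{2,1})\cong\bZ/10$, the value of $Q^1_\bZ(\sigma)$, the structure of $H_2(\Gamma_{3,1},\Gamma_{2,1})$, etc.) and shows $H^{E_2}_{g,d}(\gR,\gA)=0$ below slope $\tfrac34$. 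The rational improvement to $\tfrac45$ requires the additional, separately-proved fact $H_3(\Gamma_{4,1};\bQ)=0$, which allows one to kill the $(4,3)$-cell as well.

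\textbf{The secondary map is not $[\sigma,\sigma]$.} The Browder bracket $[\sigma,\sigma]$ lives in bidegree $(2,1)$, not $(3,2)$; it plays a role only as the attaching map of one of the auxiliary cells. The actual map $\varphi$ is, rationally, multiplication by the class $\lambda\in H_2(\Gamma_{3,1};\bQ)$ dual to $\lambda_1$. Integrally $\lambda$ does not give the right map (it hits $10\mu$ rather than $\mu$ in $H_2(\Gamma_{3,1},\Gamma_{2,1};\bZ)\cong\bZ$), and $\varphi$ must instead be constructed by an obstruction-theory argument producing a left $\overline{\gR}$-module endomorphism of $\overline{\gR}/\sigma$ that does not come from multiplication by any element of $H_{3,2}(\overline{\gR})$. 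This subtlety, and the delicate extension computation identifying $H_2(\Gamma_{3,1},\Gamma_{2,1};\bZ)$, are essential to the integral statement and are absent from your plan.
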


We call this result \emph{secondary homological stability}; qualitatively it can be expressed as ``the failure of homological stability is itself stable.'' We will show that such secondary homological stability extends to surfaces with several boundaries and with punctures, and to homology with twisted coefficients satisfying a strong polynomiality condition. 

In light of Harer's stability theorem it became reasonable to ask for a computation of the limiting homology groups $\colim_{g \to \infty}H_d(\Gamma_{g,1};\bZ)$, which was accomplished by Madsen and Weiss \cite{MadsenWeiss}; in light of Theorem \ref{thm:A} it becomes reasonable and extremely interesting to ask for a computation of the limiting homology groups
\[\colim_{k \to \infty}H_{d+2k}(\Gamma_{g+3k,1}, \Gamma_{g-1+3k,1};\bZ)\]
formed with respect to these new stabilisation maps.

We can say a little bit about these relative groups, although not far beyond their vanishing range.
\begin{MainThm}\label{thm:B}\mbox{} 
\begin{enumerate}[(i)]
\item\label{it:B1} $H_d(\Gamma_{g,1}, \Gamma_{g-1,1};\bZ)=0$ for $d \leq \tfrac{2g-1}{3}$.

\item\label{it:B2} $H_{2k}(\Gamma_{3k,1}, \Gamma_{3k-1,1};\bZ) \cong \bZ$ for $k \geq 0$.

\item\label{it:B3} $H_{2k+1}(\Gamma_{3k+1,1}, \Gamma_{3k,1};\bQ)=0$ for $k \geq 1$ (but $H_1(\Gamma_{1,1},\Gamma_{0,1};\bQ) = \bQ$).

\item\label{it:B4} $H_{2k+2}(\Gamma_{3k+2,1}, \Gamma_{3k+1,1};\bQ) \neq 0$ for $k \geq 0$.

\item\label{it:B5} $H_{2k+2}(\Gamma_{3k+1,1}, \Gamma_{3k,1};\bQ) \neq 0$ for $k \geq 3$ (but we have $H_{2}(\Gamma_{1,1}, \Gamma_{0,1};\bQ) = 0$ and $H_4(\Gamma_{4,1},\Gamma_{3,1};\bQ) \neq 0$).
\end{enumerate}
\end{MainThm}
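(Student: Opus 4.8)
The plan is to deduce Theorem~\ref{thm:B} from the same $E_2$-cellular input as Theorem~\ref{thm:A}. Write $\mathbf{R}$ for the $E_2$-algebra $\coprod_{g \geq 0} B\Gamma_{g,1}$ graded by genus, let $\sigma \in H_{1,0}(\mathbf{R})$ be the class representing the stabilisation map, and let $\mathbf{R}\hcoker\sigma$ denote the derived quotient, so that there is an identification $H_{g,d}(\mathbf{R}\hcoker\sigma) \cong H_d(\Gamma_{g,1},\Gamma_{g-1,1};\bZ)$ under which $\varphi_*$ becomes multiplication by the relevant higher homology class. The structural theorems provide a CW approximation of $\mathbf{R}$ with no $E_2$-cells below a line of slope $\tfrac23$, and with only a controlled list of cells in a band just above it. Passing to $\mathbf{R}\hcoker\sigma$ and running the resulting skeletal spectral sequence, the relative homology in total degree within one or two diagonals of the vanishing line is governed entirely by $H^{E_2}_{*,*}(\mathbf{R})$ in a small range of small bidegrees, together with the $E_2$-operations (the Browder bracket and the rest of the operad action) applied to those few classes.

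For part~\ref{it:B1} the assertion is that $\mathbf{R}\hcoker\sigma$ is acyclic in bidegrees $(g,d)$ with $d \leq \tfrac{2g-1}{3}$. Since $\tfrac{2g-1}{3}$ exceeds the Boldsen range $\tfrac{2g-2}{3}$ by only $\tfrac13$, the content is a single additional degree, and only when $g \equiv 2 \pmod 3$ (namely $H_{2k+1}(\Gamma_{3k+2,1},\Gamma_{3k+1,1};\bZ)=0$). I would obtain this by a careful accounting of which CW-$E_2$-cells of $\mathbf{R}$ can contribute to $H_{g,d}(\mathbf{R}\hcoker\sigma)$ in these bidegrees: the cell structure forces the only potential contributions to lie strictly below the offending bidegree, which sharpens the standard connectivity estimate — equivalently the epimorphism range of the stabilisation map — by exactly the required amount.

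Parts~\ref{it:B2}--\ref{it:B5} ask for the value of $H_{g,d}(\mathbf{R}\hcoker\sigma)$ in the first one or two diagonals above the vanishing line. Here the cell structure reduces the problem to computing $H^{E_2}_{g,d}(\mathbf{R})$ for a handful of small $(g,d)$, for which the input is the classically known low-degree homology of $\Gamma_{g,1}$ (used integrally for small genus) and, after base change to $\bQ$, the Madsen--Weiss theorem \cite{MadsenWeiss} giving $H^*(\Gamma_{\infty,1};\bQ)=\bQ[\kappa_1,\kappa_2,\ldots]$ with $|\kappa_i|=2i$. For part~\ref{it:B2} one shows that on the line, i.e.\ $(g,d)=(3k,2k)$, exactly one $E_2$-cell contributes, so $H_{2k}(\Gamma_{3k,1},\Gamma_{3k-1,1};\bZ)$ is cyclic, and then that this group is infinite — the delicate no-torsion point — by exhibiting a class (built from $\kappa$-classes via the scanning map to $\CPlll$, or a direct genus-by-genus detection propagated by the isomorphism range of Theorem~\ref{thm:A}) pairing nontrivially with a generator; hence it is $\bZ$. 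The remaining three parts are rational, so there the relative homology just above the line is a small explicit $\bQ$-vector space read off from the rational $E_2$-homology: for the non-vanishing statements~\ref{it:B4} and~\ref{it:B5} I would exhibit a specific nonzero class (a low-degree polynomial in the $\kappa_i$ together with the secondary generator) that is not in the image of $\sigma$, and for the vanishing statement~\ref{it:B3} a parity and dimension count in the relevant bidegree shows there is nothing available. The stated small-$k$ exceptions are precisely the cases in which the diagonal being analysed dips outside the range where the structural estimates apply; these I would settle by hand from the known homology of $\Gamma_{g,1}$ for small $g$.

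The main obstacle is the quantitative step: passing from the qualitative vanishing line to the precise value of $H^{E_2}_{*,*}(\mathbf{R})$, and of the operations on it, in the one or two diagonals immediately above the line. Integrally this means resolving potential torsion and the extension problems in the skeletal spectral sequence for $\mathbf{R}\hcoker\sigma$ — which is exactly why part~\ref{it:B2} can be stated as an isomorphism with $\bZ$ rather than merely a nonvanishing — and this is where I expect the real work to lie. By contrast the rational statements~\ref{it:B3}--\ref{it:B5} should be comparatively soft once $H^*(\Gamma_{\infty,1};\bQ)$ and the rational $E_2$-homology are in hand.
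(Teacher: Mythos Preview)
Your proposal has the right high-level shape for parts~(\ref{it:B1}) and~(\ref{it:B2}), but there is a genuine gap in part~(\ref{it:B3}), and your expectation that the rational parts are ``comparatively soft'' is exactly backwards.

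For~(\ref{it:B3}) you write that ``a parity and dimension count in the relevant bidegree shows there is nothing available''. This is false. Secondary stability reduces the statement to the base case $H_3(\Gamma_{4,1},\Gamma_{3,1};\bQ)=0$, and the long exact sequence reduces that to $H_3(\Gamma_{4,1};\bQ)=0$. The bidegree $(4,3)$ lies on the line $d=g-1$, so the $E_2$-cell estimate says nothing; in the paper this vanishing is Theorem~\ref{thm:H43} and is one of the most substantial individual arguments, requiring Tommasi's computation of $H^*(\Gamma_4;\bQ)$, Faber's tautological relations in genus~4, Morita's stable twisted homology, and a bootstrap through secondary stability with coefficients in $H_1(\Sigma_{g,1};\bQ)$ (Corollary~\ref{cor:H1Vanish42}). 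No dimension count will produce this.

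For~(\ref{it:B2}), cyclicity does not come from ``exactly one $E_2$-cell contributing'' in a spectral sequence for $\gR/\sigma$. Instead one computes the base case $H_2(\Gamma_{3,1},\Gamma_{2,1};\bZ)\cong\bZ$ directly---this is Lemma~\ref{lem:MCGFacts}\,(\ref{it:MCGFacts32rel}) and is itself delicate, amounting to identifying the extension~\eqref{eq:Ext} by evaluating $\lambda_1$ on explicit finite-order elements of $\Gamma_2$---and then the surjectivity range of the integral Theorem~\ref{thm:A} propagates cyclicity. Rank~1 is detected, as you guess, by $\kappa$-classes, via Morita's decomposability of $\kappa_{\lfloor g/3\rfloor+1}$ on $\Gamma_g$.

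For~(\ref{it:B4}) and~(\ref{it:B5}) your direction is right but the actual argument is more specific: one finds $x\in H_4(\Gamma_{5,1};\bQ)$ with $\langle x,\kappa_2\rangle\neq 0$, then uses an explicit Morita relation (with its actual numerical coefficients) to show that $\lambda^3 x^2\in H_{14}(\Gamma_{19,1};\bQ)$ does not destabilise; propagating this to all $k$ requires the \emph{improved} rational range of Corollary~\ref{cor:MCGSecStabImprov}, which in turn rests on Theorem~\ref{thm:H43}. So the input you dismissed in~(\ref{it:B3}) is load-bearing for~(\ref{it:B5}) as well.
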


The vanishing range for $H_d(\Gamma_{g,1}, \Gamma_{g-1,1};\bZ)$ in Theorem~\ref{thm:B} (\ref{it:B1}) is slightly better than that of \cite{Boldsen}. Theorem~\ref{thm:B} (\ref{it:B2}), (\ref{it:B4}) and (\ref{it:B5}) determine the first non-zero rational relative homology group in every genus except $g=7$.  We remark that a rational homology version of (\ref{it:B2}) was obtained by Harer in the unpublished preprint \cite{harer1993improved}.
In Section \ref{sec:table} we will describe the concrete impact of our results on the rational homology of $\Gamma_{g,1}$ for $g \leq 9$, which is summarised in Figure \ref{fig:rat}. 

The maps $\varphi_*$ which feature in Theorem \ref{thm:A} will be defined in Section~\ref{sec:integr-second-homol}. They will be constructed by obstruction theory, and as usual such constructions do not typically produce a single map but rather a set of choices of maps: in this case the choices form an $H_3(\Gamma_{4,1}, \Gamma_{3,1};\bZ)$-torsor. We will not attempt to pin down a preferred choice, though in principle this could be done by picking a null-homotopy of a certain map which we know to be null-homotopic. 
In this introduction we shall be content with describing $\varphi_*$ after inverting 10, in which case there is a preferred choice. Pick an element $\lambda \in H_2(\Gamma_{3,1};\bZ)$ on which the Miller--Morita--Mumford class $\kappa_1 \colon H_2(\Gamma_{3,1};\bZ) \to \Z$ evaluates to $12$. This is known to be the smallest possible positive value of $\kappa_1$ 
and to characterise the class $\lambda$ modulo the (2-torsion) image of $H_2(\Gamma_{2,1};\bZ) \to H_2(\Gamma_{3,1};\bZ)$.  Boundary connected sum induces an operation
\begin{equation*}
  H_*(\Gamma_{3,1};\bZ) \otimes H_*(\Gamma_{g-3,1},\Gamma_{g-4,1};\bZ) \lra H_*(\Gamma_{g,1},\Gamma_{g-1,1};\bZ),
\end{equation*}
and substituting $\lambda \in H_*(\Gamma_{3,1};\bZ)$ in the first entry we obtain a homomorphism which we shall also denote $\lambda \colon H_{d-2}(\Gamma_{g-3,1},\Gamma_{g-4,1};\bZ) \to H_d(\Gamma_{g,1},\Gamma_{g-1,1};\bZ)$.  After inverting 10, we may then take
\begin{equation*}
(\varphi_*)\otimes \Z[\tfrac1{10}] = \lambda/10 \colon H_{d-2}(\Gamma_{g-3,1}, \Gamma_{g-2,1};\bZ[\tfrac{1}{10}]) \lra H_d(\Gamma_{g,1}, \Gamma_{g-1,1};\bZ[\tfrac{1}{10}]).
\end{equation*}

\setcounter{tocdepth}{1}
\tableofcontents

\section{Overview of $E_2$-cells and $E_2$-homology}
\label{sec:homology-theory-e_2}
The purpose of this section is to explain the methods used to prove Theorems \ref{thm:A} and \ref{thm:B}. We aim for an informal discussion of the ideas involved, and refer to \cite{e2cellsI} for a more formal discussion as well as for proofs; we shall refer to things labelled $X$ in \cite{e2cellsI} as $E_k$.$X$ throughout this paper. We hope this informal treatment may allow for an independent first reading of the present paper, at least for the reader with some prior exposure to the notion of an \emph{$E_2$-algebra}. We shall work in the category of simplicial $\bk$-modules. Here, $\mathbbm{k}$ is a commutative ring, but not much is lost by assuming $\mathbbm{k}$ is a field; if it is a field of characteristic zero then not much is lost by considering instead the notion of Gerstenhaber algebras in chain complexes over $\bk$.  At the end of this introduction we shall offer some remarks for the reader more familiar with that notion.

In order to keep track of genus, we shall be working in the category
of $\bN$-graded simplicial $\mathbbm{k}$-modules, i.e.\ each object $M$ consists of a simplicial $\mathbbm{k}$-module $M_\bullet(g)$ for each
$g \in \bN$ (for us $\bN$ always includes $0$). The tensor product of two objects $M$ and $N$ is given by the formula
\begin{equation*}
  (M \otimes N)_p(g) = \bigoplus_{a+b = g} M_p(a) \otimes_\mathbbm{k}
  N_p(b).
\end{equation*}

The \emph{little $2$-cubes} operad has $n$-ary operations given by rectilinear embeddings $\{1, \dots, n\} \times I^2 \to I^2$ such that the interiors of the images of the cubes are disjoint. Here we shall work with the \emph{non-unitary} version of this operad, meaning that the space of 0-ary operations is empty. Take the singular simplicial set to get an operad in simplicial sets, and take the free $\mathbbm{k}$-module to get an operad in simplicial $\mathbbm{k}$-modules. The resulting operad, made $\bN$-graded by
concentrating it in degree 0, shall be denoted $\mathcal{C}_2$.  In this section, ``$E_2$-algebra'' shall mean an algebra over this operad (i.e.\ a ``non-unital $E_2$-algebra in $\bN$-graded simplicial $\mathbbm{k}$-modules''). Note that the homotopy groups of a simplicial $\bk$-module are isomorphic to the homology groups of the chain complex associated it to by Dold--Kan.

The module of \emph{indecomposables} of an $E_2$-algebra $\gR$ is defined by the exact sequence of $\bN$-graded simplicial $\mathbbm{k}$-modules
\begin{equation*}
  \bigoplus_{n \geq 2} \mathcal{C}_2(n) \otimes \gR^{\otimes n} \lra \gR
  \lra Q^{E_2}(\gR) \lra 0,
\end{equation*}
with leftmost map induced by the $E_2$-algebra structure on $\gR$.

The functor $\gR \mapsto Q^{E_2}(\gR)$ is not homotopy invariant, but has a derived functor $Q^{E_2}_{\mathbbm{L}}$, see Section $E_k$.13 for more details on its definition and calculation.  In particular, we shall use an insight going back to at least Getzler--Jones \cite{GetzlerJones} that there is a formula for the derived indecomposables in terms of a $2$-fold iterated bar construction.

Let us now assume $\mathbbm{k}$ is a field, and define numerical
invariants of $\gR$ by
\begin{equation*}
  b_{g,d}^{E_2}(\gR) \coloneqq \dim_\mathbbm{k} \pi_d((Q^{E_2}_\mathbbm{L}(\gR))(g)) \in \bN
  \cup\{\infty\},
\end{equation*}
which we call \emph{$E_2$-Betti numbers}. 

A key result, proved in Theorem $E_k$.11.21, is that under mild conditions on $\gR$ there exists a multiplicative filtration on (an $E_2$-algebra equivalent to) $\gR$, whose associated graded is a \emph{free} $E_2$-algebra on an $\bN$-graded simplicial $\bk$-module with precisely $b_{g,d}^{E_2}(\gR)$ many non-degenerate $d$-simplices in grading $g$.  Now, the homotopy groups of a free $E_2$-algebra are completely known: if $\mathbbm{k}$ has characteristic 0, then the homotopy groups of a free $E_2$-algebra are a free Gerstenhaber algebra on the set of generators; if $\mathbbm{k}$ has characteristic $p > 0$, then the homotopy groups are explicitly described in F.\ Cohen's thesis (published in \cite{CLM}) in terms of iterated Dyer--Lashof operations applied to iterated Browder brackets of generators. Therefore, if we can calculate the numbers $b_{g,d}^{E_2}(\gR)$ for a given $E_2$-algebra $\gR$, then we
obtain a multiplicative spectral sequence converging to $\pi_*(\gR)$
with a known $E^1$-page.  For many purposes an estimate of the
numbers $b_{g,d}^{E_2}(\gR)$, or even just their vanishing in a certain
region, will suffice.

A slightly more refined statement uses the notion of a \emph{CW
$E_2$-algebra}, built out of free $E_2$-algebras by iteratively attaching cells in the category of $E_2$-algebras in order of dimension. The data for a cell attachment to an $E_2$-algebra $\gR$ is an ``attaching map'' $e\colon S^{g,d-1}_\bk \to \gR$, which by an adjunction is the same as a map $\bk[\partial \Delta^{d}] \to \gR(g)$ of simplicial $\bk$-modules. The inclusion $\partial \Delta^{d} \hookrightarrow \Delta^d$ induces a map $S^{g,d-1}_\bk \to D^{g,d}_\bk$, and to attach a cell we form the pushout in $\Alg_{E_2}(\cat{sMod}_\bk^\bN)$
\[\begin{tikzcd} \gE_2(S^{g,d-1}_\bk) \rar \dar & \gR \dar \\
\gE_2(D^{g,d}_\bk) \rar & \gR \cup^{E_2}_e \gD^{g,d},\end{tikzcd}\]
where $\gE_2(-)$ denotes the free $E_2$-algebra construction. 

An equivalence from a CW $E_2$-algebra to $\gR$ is called a CW approximation to $\gR$, and a key result is that if $\gR(0) \simeq 0$, then $\gR$ admits a CW approximation built using precisely $b_{g,d}^{E_2}$ many $d$-cells in grading $g$.  The above-mentioned filtration and the resulting spectral sequence is then generated multiplicatively by
``giving the $d$-cells filtration $d$'' (see Section $E_k$.11
for a more precise discussion of what this means).

\vspace{1em}

Theorems \ref{thm:A} and \ref{thm:B} are obtained by applying these ideas to a model of the space $\bigsqcup_{g \geq 1} B\Gamma_{g,1}$ which is an algebra over the (non-unital) little 2-cubes operad, so that its singular chains (regarded as a simplicial $\mathbbm{k}$-module) will be an $E_2$-algebra $\gR$ in the above sense when graded by genus. An application of the same ideas to general linear groups is given in \cite{e2cellsIII}.

The starting point is an important estimate: $\smash{b_{g,d}^{E_2}}(\gR) = 0$ for
$d < g-1$, established in Proposition~\ref{prop:MCGE2vanish}
below. We first prove a vanishing line for the homotopy groups of $\smash{Q^{E_1}_\bL(\gR)}$ by describing it in terms of a bar construction (see Theorem $E_k$.13.7), interpreting this as a certain complex of separating arcs, and finally showing that this is highly-connected using geometric methods. Since $\smash{Q^{E_2}_\bL(\gR)}$ may be similarly described as a double bar construction, we can transfer the vanishing line from $Q^{E_1}_\bL(\gR)$ to $Q^{E_2}_\bL(\gR)$ using a bar spectral sequence (see Theorem $E_k$.14.2), and the dimensions of homotopy groups of $Q^{E_2}_\bL(\gR)$ are exactly the $E_2$-Betti numbers.

The only $(g,d) \in \bN^2$ satisfying both $d \geq g-1$ and
$\frac{d}{g} \leq \frac{3}{4}$ are $(1,0)$, $(2,1)$, $(3,2)$, and $(4,3)$; therefore we conclude that there must exist a CW approximation with no cells in bidegrees $(g,d)$ satisfying $\frac{d}{g} \leq \frac{3}{4}$, except possibly those bidegrees.  We then analyse known low-dimensional calculations of the homology of mapping class groups, and draw finer conclusions about how these and a small number of additional cells are attached to each other, in order to deduce our main results. The main observation is that while a computation of $\pi_{g,d}(\gR)$ requires knowledge of all cells of dimension $\leq d+1$ and grading $\leq g$, proving results related to stability only requires knowledge about those cells in bidegree $(g',d')$ satisfying $\frac{d'}{g'} \leq \frac{d}{g}$. We call the number $\smash{\frac{d}{g}}$ the \emph{slope} of a bidegree $(g,d)$. This connection between $E_2$-cells and homological stability was first indicated in \cite{KupersMiller}.

Finally, let us mention that in order to prove something about
$Q^{E_2}_\mathbbm{L}(\gR)$ for the relevant $E_2$-algebra $\gR$, for example
its vanishing in the $d < g-1$ range, it is helpful to calculate derived indecomposables in more refined categories, such as the category of $E_2$-algebras in $\bN$-graded simplicial sets.  In
fact it is even more convenient to work in a category in which our
$E_2$-algebra is represented by the terminal object, viz.\ the functor
category defined in Section~\ref{sec:mcg-as-e2} below. Vanishing
estimates in that category may be transferred to the more useful
category of $\bN$-graded simplicial $\mathbbm{k}$-modules for formal
reasons, as in Corollaries~\ref{cor:MCGVanishingLine} and \ref{cor:MCG21vanish}.

\begin{remark} As promised, let us briefly explain how some of these notions may be
  reinterpreted in the case $\mathbbm{k}$ is a field of characteristic
  0.  The reader will not find a more formal version of this
  discussion in \cite{e2cellsI}, so it should perhaps be regarded as
  more of a heuristic picture for now.  In the characteristic zero
  case, the category of $\bN$-graded simplicial $\mathbbm{k}$-modules
  may be replaced with $\bN\times \bN$-graded chain complex whose
  boundary map has degree $(0,-1)$, and ``$E_2$-algebra'' may be
  replaced by (non-unital) ``Gerstenhaber algebra''.

  The notion of ``cellular $E_2$-algebra'' may then be replaced by
  ``quasi-free Gerstenhaber algebra'': one that is free as a
  Gerstenhaber algebra in $\bN\times \bN$-graded vector spaces (i.e.\
  after forgetting the boundary map).  The boundary map applied to a
  generator is then a sum of products of iterated brackets of other
  generators, and the quasi-free Gerstenhaber algebra is called 
  \emph{minimal} if the boundary of each generator contains no terms
  which are a non-zero scalar times another generator (i.e.\ only
  non-trivial products or brackets are allowed).  The Betti number
  $b_{g,d}^{E_2}(\gR)$ may then be interpreted as the number of
  generators of bidegree $(g,d)$ in a minimal quasi-free Gerstenhaber
  algebra equivalent to $\gR$.
\end{remark}

\section{Mapping class groups as an $E_2$-algebra}
\label{sec:mcg-as-e2}

Let us choose once and for all a compact oriented smooth surface $\Sigma_{1,1} \subset [0,1]^3$ of genus 1 with one boundary component, equal as a subset to $[0,1]^2 \times \{0\}$ near $\partial [0,1]^3$. We write $\Sigma_{g,1} \subset [0,g] \times [0,1]^2$ for the surface of genus $g$ obtained as the union of $g$ translates of $\Sigma_{1,1}$, as shown in Figure \ref{fig:standardsurface}. 

\begin{figure}[h]
	\begin{tikzpicture}
	\draw (2.35,1.5) -- (4,1.5) -- (2.5,0) -- (0,0) -- (1.5,1.5) -- (1.65,1.5);
	\draw [dotted] (1.65,1.5) -- (2.35,1.5);
	\begin{scope}[xshift=2cm,yshift=1.25cm]
	\draw (-.5,-.5) to[out=0,in=-90] (-.4,0.8) to[out=90,in=180] (0,1.2) to[out=0,in=90] (.4,0.8) to[out=-90,in=180] (.5,-.5);	
	\draw (-.2,0.8) to[out=-90,in=-90] (.2,0.8);
	\draw (-.15,0.75) to[out=90,in=90] (.15,0.75);
	\end{scope}
	\node at (2,0) [below] {$\Sigma_{1,1}$};
	
	\begin{scope}[xshift=4cm]
	\draw (2.35,1.5) -- (4,1.5);
	\draw [dashed] (4,1.5) -- (2.5,0);
	\draw (2.5,0) -- (0,0) -- (1.5,1.5) -- (1.65,1.5);
	\draw [dotted] (1.65,1.5) -- (2.35,1.5);
	\begin{scope}[xshift=2cm,yshift=1.25cm]
	\draw (-.5,-.5) to[out=0,in=-90] (-.4,0.8) to[out=90,in=180] (0,1.2) to[out=0,in=90] (.4,0.8) to[out=-90,in=180] (.5,-.5);	
	\draw (-.2,0.8) to[out=-90,in=-90] (.2,0.8);
	\draw (-.15,0.75) to[out=90,in=90] (.15,0.75);
	\end{scope}
	\end{scope}
	\begin{scope}[xshift=6.5cm]
	\draw (2.35,1.5) -- (4,1.5) -- (2.5,0) -- (0,0);
	\draw (1.5,1.5) -- (1.65,1.5);
	\draw [dotted] (1.65,1.5) -- (2.35,1.5);
	\begin{scope}[xshift=2cm,yshift=1.25cm]
	\draw (-.5,-.5) to[out=0,in=-90] (-.4,0.8) to[out=90,in=180] (0,1.2) to[out=0,in=90] (.4,0.8) to[out=-90,in=180] (.5,-.5);	
	\draw (-.2,0.8) to[out=-90,in=-90] (.2,0.8);
	\draw (-.15,0.75) to[out=90,in=90] (.15,0.75);
	\end{scope}
	\end{scope}
	\node at (6.5,0) [below] {$\Sigma_{2,1}$};
	\end{tikzpicture}
	\caption{The standard surface $\Sigma_{1,1}$, and the surface $\Sigma_{g,1}$ for $g=2$ obtained from two copies of $\Sigma_{1,1}$.}
	\label{fig:standardsurface}
\end{figure}
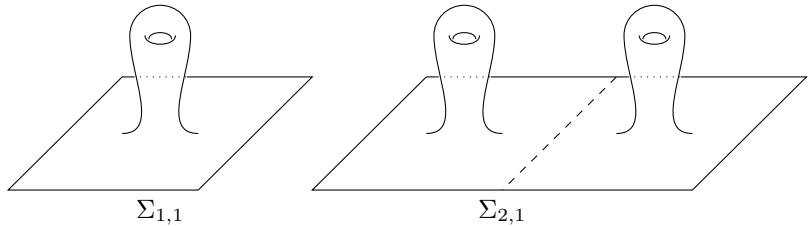

In certain formulae we shall write $e_1 = (1,0,0) \in \R^3$ for the first basis vector and $(\Sigma_{1,1} + n\cdot e_1) \subset [n,n+1] \times [0,1]^2$ for the translate of $\Sigma_{1,1}$; in this notation we may identify $\Sigma_{g,1}$ as
\begin{equation*}
  \Sigma_{g,1} = \bigcup_{n=0}^{g-1} (\Sigma_{1,1} + n\cdot e_1).
\end{equation*}

Let $\Diff_\partial(\Sigma_{g,1})$ denote the group of smooth diffeomorphisms of $\Sigma_{g,1}$ which are equal to the identity on a neighbourhood of $\partial \Sigma_{g,1}$. We shall give this group the $C^\infty$-topology, which makes into a topological group. We then define the \emph{mapping class group} of $\Sigma_{g,1}$ to be the discrete group
\[\Gamma_{g,1} \coloneqq \pi_0(\Diff_\partial(\Sigma_{g,1})),\]
so that in particular $\Gamma_{0,1} = \{1\}$ as a consequence of Smale's theorem \cite[Theorem B]{SmaleSphere}. Note that passing to $\pi_0$ does not lose any homotopical information, as the path components of $\mr{Diff}_\partial(\Sigma_{g,1})$ are contractible by \cite[Th\'eor\`eme 1]{Gramain}.

Let us define a groupoid $\cat{MCG}$ having objects the natural numbers and morphisms
\begin{equation*}
  \cat{MCG}(g,h) \coloneqq
  \begin{cases}
    \Gamma_{g,1} & \text{if $g = h$,}\\
    \varnothing & \text{otherwise}.
  \end{cases}
\end{equation*}
We define a monoidal structure $\oplus$ on $\cat{MCG}$ by $g \oplus h \coloneqq g+h$ on objects, and, for morphisms $\varphi \in \Gamma_{g,1}$ and $\psi \in \Gamma_{h,1}$, by $\varphi \oplus \psi \coloneqq \varphi \cup (\psi + g \cdot e_1)$ as a diffeomorphism of 
\[\Sigma_{g+h, 1} = \Sigma_{g,1} \cup (\Sigma_{h,1} + g \cdot e_1) \subset [0,g+h] \times [0,1]^2.\]
This defines a strict monoidal category, which admits a braiding using the half right-handed Dehn-twist along the boundary. In other words, the braiding is induced by the element in $\pi_1(\cC_2(2)/\Sigma_2)$ switching the two little cubes by moving the second over the first, as in Figure \ref{fig:braiding} (and see Figure \ref{fig:dehn-twist} for a picture of a right-handed Dehn twist). 

\begin{lemma}\label{lem:MonoidalInj}
The homomorphism $-\oplus - \colon \Gamma_{g,1} \times \Gamma_{h,1} \to \Gamma_{g+h,1}$ is injective.
\end{lemma}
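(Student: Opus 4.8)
The plan is to realise $-\oplus-$ as the map on path components induced by an inclusion of diffeomorphism groups, and then kill the relevant connecting map in the long exact sequence of a fibration by invoking the contractibility of a component of a space of arcs.

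First I would dispose of the degenerate cases: if $g = 0$ or $h = 0$ then one of the two factors is $\Gamma_{0,1} = \{1\}$ and the statement is trivial, so assume $g,h \geq 1$. Let $\alpha \coloneqq \{g\} \times [0,1] \times \{0\} \subset \Sigma_{g+h,1}$ be the arc along which $\Sigma_{g,1}$ and $\Sigma_{h,1} + g\cdot e_1$ are glued; it is a properly embedded arc with endpoints on $\partial \Sigma_{g+h,1}$, and it separates $\Sigma_{g+h,1}$ into these two subsurfaces, neither of which is a disc since $g,h \geq 1$. A diffeomorphism of $\Sigma_{g+h,1}$ which is the identity on a neighbourhood of $\partial \Sigma_{g+h,1} \cup \alpha$ cannot interchange the two sides of $\alpha$, so it restricts to a pair of diffeomorphisms of $\Sigma_{g,1}$ and of $\Sigma_{h,1}$ each fixing a neighbourhood of the respective boundary; conversely $\varphi \oplus \psi$ is visibly such a diffeomorphism. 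In this way one identifies $\Diff_\partial(\Sigma_{g,1}) \times \Diff_\partial(\Sigma_{h,1})$ with the subgroup $\Diff_{\partial \cup \alpha}(\Sigma_{g+h,1}) \leq \Diff_\partial(\Sigma_{g+h,1})$ of diffeomorphisms fixing a neighbourhood of $\partial \Sigma_{g+h,1} \cup \alpha$, in such a way that $-\oplus-$ becomes the map $\pi_0 \Diff_{\partial \cup \alpha}(\Sigma_{g+h,1}) \to \pi_0 \Diff_\partial(\Sigma_{g+h,1})$ induced by the inclusion.

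Next, by the isotopy extension theorem the orbit map gives a fibration sequence
\[\Diff_{\partial \cup \alpha}(\Sigma_{g+h,1}) \lra \Diff_\partial(\Sigma_{g+h,1}) \lra \mathcal{A},\]
where $\mathcal{A}$ is the orbit of $\alpha$, i.e.\ the space of properly embedded arcs in $\Sigma_{g+h,1}$ (each equipped with the germ of a tubular neighbourhood, so that the fibre is exactly $\Diff_{\partial\cup\alpha}$) which are ambient isotopic rel $\partial \Sigma_{g+h,1}$ to $\alpha$. The associated exact sequence contains
\[\pi_1(\mathcal{A},\alpha) \lra \pi_0 \Diff_{\partial \cup \alpha}(\Sigma_{g+h,1}) \lra \pi_0 \Diff_\partial(\Sigma_{g+h,1}),\]
and since the second map is a homomorphism of groups, exactness shows it is injective provided $\pi_1(\mathcal{A},\alpha) = 0$, i.e.\ provided the path component of $\alpha$ in $\mathcal{A}$ is simply connected.

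This last point carries the geometric content, and is the step I expect to require the most care. In fact the component of $\alpha$ in $\mathcal{A}$ — which coincides with its component in the full space of properly embedded arcs, and consists of the arcs isotopic to $\alpha$ rel endpoints — is contractible: this is the standard fact that a component of the space of embedded essential arcs in a compact surface is contractible, and $\alpha$ is essential precisely because both complementary pieces $\Sigma_{g,1}$ and $\Sigma_{h,1}$ have positive genus. One can prove it, for instance, by equipping $\Sigma_{g+h,1}$ with a hyperbolic metric with geodesic boundary and deformation retracting the component onto the unique geodesic arc in that isotopy class (an Earle--Eells type straightening argument), in the spirit of Gramain's results \cite{Gramain} on mapping spaces of surfaces. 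Granting this, $\pi_1(\mathcal{A},\alpha) = 0$ and the displayed sequence gives the desired injectivity. (Alternatively one could cite the injectivity of inclusions of ``geometric subgroups'' of mapping class groups, due to Paris and Rolfsen.)
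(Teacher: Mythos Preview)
Your proposal is correct and follows essentially the same approach as the paper: both realise the product as the fibre of the restriction/orbit fibration from $\Diff_\partial(\Sigma_{g+h,1})$ to the space of embedded arcs, and then use the long exact sequence together with the contractibility of path components of that arc space. The paper's version is slightly more streamlined in that it simply cites Gramain's Th\'eor\`eme~5 for the contractibility of components of $\mathrm{Emb}_\partial([0,1],\Sigma_{g+h,1})$, which works uniformly and makes the separate treatment of the cases $g=0$ or $h=0$ (and the remark about essentiality of $\alpha$) unnecessary.
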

\begin{proof}
Let $e \colon [0,1] \to \Sigma_{g+h,1}$ be the embedding $e(t) = (g, t, 0)$, which decomposes $\Sigma_{g+h,1}$ into $\Sigma_{g,1}$ and $\Sigma_{h,1}+g \cdot e_1$. The map $\Diff_\partial(\Sigma_{g+h,1}) \to \mr{Emb}_\partial([0,1], \Sigma_{g+h,1})$ given by $\varphi \mapsto \varphi \circ e$ is a Serre fibration \cite{Lima}, and the fibre over $e$ is $\Diff_\partial(\Sigma_{g,1}) \times \Diff_\partial(\Sigma_{h,1})$. The claim then follows from the long exact sequence on homotopy groups using the fact that $\mr{Emb}_\partial([0,1], \Sigma_{g+h,1})$ has contractible path components \cite[Th\'eorem\`e 5]{Gramain}.
\end{proof}

\begin{figure}[h]
	\begin{tikzpicture}
	\begin{scope}
	\node at (-3,.75) [left] {$g \oplus h$};
	\draw [->] (-2.75,.75) -- (0,.75);
	\node at (-1.5,.75) [above,align=center] {shrink $\Sigma_{g,1}$ and\\ $\Sigma_{h,1}+g \cdot e_1$};
	\node at (-1.5,.75) [below] {$\cong$};
	\draw (6,1.5) -- (4.5,0) -- (0,0) -- (1.5,1.5) -- (6,1.5);
	\draw [xscale=.4,yscale=.4, xshift=3cm,yshift=1cm,fill=Mahogany!10!white] (4,1.5) -- (2.5,0) -- (0,0) -- (1.5,1.5) -- (4,1.5);
	\node at (2,.7) {$g$};
	\draw [densely dotted,thick] (1.75,.4) -- (1.35,0);
	\draw [xscale=.4,yscale=.4, xshift=7.5cm,yshift=1cm,fill=Mahogany!10!white] (4,1.5) -- (2.5,0) -- (0,0) -- (1.5,1.5) -- (4,1.5);
	\node at (3.75,.7) {$h$};
	\draw [densely dotted,thick] (3.5,.4) -- (3.1,0);
	\end{scope}
	
	\draw [->] (3,-.5) -- (3,-2);
	\node at (3,-1.25) [left] {$\cong$};
	\node at (3,-1.25) [right] {half Dehn twist};
	
	\begin{scope}[yshift=-4cm]
	\node at (-3,.75) [left] {$h \oplus g$};
	\draw [->] (-2.75,.75) -- (0,.75);
	\node at (-1.5,.75) [above,align=center] {shrink $\Sigma_{h,1}$ and\\ $\Sigma_{g,1}+h \cdot e_1$};
	\node at (-1.5,.75) [below] {$\cong$};
	\draw (6,1.5) -- (4.5,0) -- (0,0) -- (1.5,1.5) -- (6,1.5);
	\draw [xscale=.4,yscale=.4, xshift=3cm,yshift=1cm,fill=Mahogany!10!white] (4,1.5) -- (2.5,0) -- (0,0) -- (1.5,1.5) -- (4,1.5);
	\node at (2,.7) {$h$};
	\draw [densely dotted,thick] (3.5,.4) to[in=35,out=-125,looseness=0.6] (1.35,0);
	\draw [xscale=.4,yscale=.4, xshift=7.5cm,yshift=1cm,fill=Mahogany!10!white] (4,1.5) -- (2.5,0) -- (0,0) -- (1.5,1.5) -- (4,1.5);
	\node at (3.75,.7) {$g$};
	\draw [densely dotted,thick] (1.75,.4) to[out=-125,in=-125,looseness=.8] (3,.65) to[out=55,in=180] (3.9,1.25) to[out=0,in=55,looseness=1.5] (4.8,.5) to[out=-125,in=55,looseness=.8] (3.1,0);
	\end{scope}
	
	\draw [->] (-3.5,.25) -- (-3.5,-2.75);
	\node at (-3.5,-1.25) [left] {$\beta_{g,h}$};
	\end{tikzpicture}
	\caption{The braiding of $\cat{MCG}$.}
	\label{fig:braiding}
\end{figure}

The braided monoidal structure on $\cat{MCG}$ endows the disjoint union $\bigsqcup_{g \geq 1} B\Gamma_{g,1}$ of the classifying spaces of the automorphism groups in $\cat{MCG}$ with the structure of a non-unital $E_2$-algebra, up to weak equivalence. To see this, we consider the braided monoidal category $\cat{MCG}$ in the framework set up in Part 4
of \cite{e2cellsI}, and shall use some notation from there too.

That framework yields a braided monoidal and simplicially enriched category $\cat{sSet}^\cat{MCG}$ whose objects are functors $\cat{MCG} \to \cat{sSet}$.  An object $X$ in this category has bigraded homology groups given by $H_{g,d}(X;\mathbbm{k}) \coloneqq H_d(X(g);\mathbbm{k})$ for any commutative ring $\mathbbm{k}$. 

The category $\cat{sSet}^\cat{MCG}$ contains a non-unital $E_2$-algebra given by $\varnothing$ if $g=0$ and a point $\ast$ otherwise. As in Section $E_k$.17.1, we can construct a weakly equivalent cofibrant non-unital $E_2$-algebra $\gT \in \Alg_{E_2}(\cat{sSet}^\cat{MCG})$ by cofibrant replacement, which satisfies
\[\gT(g) \simeq \begin{cases}
\varnothing & \text{ if $g=0$,}\\
* & \text{ if $g > 0$}.
\end{cases}\]

Writing $r \colon \cat{MCG} \to \bN$  for the unique functor which is the identity map on objects, the left Kan extension of $\gT$ along $r$ inherits an $E_2$-algebra structure by Lemma~ $E_k$.2.12, and gives rise to an object $\gR \coloneqq r_*(\gT) \in \Alg_{E_2}(\cat{sSet}^\bN)$, having 
\[\gR(g) \simeq \begin{cases} \varnothing & \text{if $g=0$,} \\
B\Gamma_{g,1} & \text{if $g > 0$}.\end{cases}\]
This is the promised $E_2$-algebra structure on an $\bN$-graded simplicial set weakly equivalent to $\bigsqcup_{g \geq 0} B\Gamma_{g,1}$. Such an $E_2$-algebra was first vaguely described by Miller \cite{Miller}; a more precise description, making use of the same braided monoidal groupoid $\cat{MCG}$, was given by Fiedorowicz--Song \cite{FiedorowiczSong}.

For any commutative ring $\mathbbm{k}$ we have $H_{g,d}(\gR;\mathbbm{k}) \cong H_d(\Gamma_{g,1};\mathbbm{k})$ for $g>0$, so the homology of all mapping class groups is encoded in the object $\gR$. As we are interested in the homology of $\gR$ with coefficients in $\mathbbm{k}$, we apply the symmetric monoidal functor $\mathbbm{k}[-] \colon \cat{sSet} \to \cat{sMod}_\mathbbm{k}$ sending a (simplicial) set to the free (simplicial) $\mathbbm{k}$-module on it, to obtain an $E_2$-algebra
\[\gR_\mathbbm{k} \coloneqq \mathbbm{k}[\gR] \in \Alg_{E_2}(\cat{sMod}_\mathbbm{k}^\bN)\]
in the category of $\bN$-graded simplicial $\bk$-modules.

Using a construction reminiscent of Moore loops, in Section $E_k$.12.2 we construct a unital strictly associative algebra $\overline{\gR}_\bk$ which as an $E_1$-algebra is equivalent to the free unitalisation $\gR^+_\bk$ by Proposition $E_k$.12.9. We will work with these objects, making the identifications $H_{g,d}(\gR_\bk) = H_{g,d}(\gR^+_\bk)$ for $g>0$ and $ H_{g,d}(\gR^+_\bk) = H_{g,d}(\overline{\gR}_\bk)$ for all $g$ without further comment. We shall write 
\begin{equation}\label{eqn:sigma}
\sigma \in H_{1,0}(\gR_\bk) = H_0(\Gamma_{1,1};\bk)
\end{equation} for the canonical generator. 

There are objects $S^{p,q}_\bk \in \cat{sMod}_\bk^\bN$ defined by
\begin{equation}\label{eqn:bigraded-spheres} S^{p,q}_\bk(g) \coloneqq \begin{cases}
    \bk[\Delta^q]/\bk[\partial \Delta^q]
    &\text{if $g=p$,}\\
0 & \text{otherwise},
\end{cases}\end{equation}
and we may choose a map $\sigma \colon S^{1,0}_\mathbbm{k} \to \overline{\mathbf{R}}_\mathbbm{k}$ representing the class $\sigma \in H_{1,0}(\overline{\mathbf{R}}_\mathbbm{k})$. Using the unital and strictly associative multiplication $- \cdot - \colon \overline{\gR}_\bk \otimes \overline{\gR}_\bk \to \overline{\gR}_\bk$ we can form the map $\sigma \cdot - \colon S^{1,0}_\bk \otimes \overline{\gR}_\bk \to \overline{\gR}_\bk$ in $\cat{sMod}_\bk^\bN$, and we write $\overline{\gR}_\bk/\sigma$ for its homotopy cofibre. The operation $S^{1,0} \otimes -$ corresponds to shifting the $\bN$-grading. That is, $(S^{1,0}_\bk \otimes X)(g) \cong X(g-1)$. The map $(\sigma \cdot -)(g)$ is therefore identified up to homotopy with the map on simplicial $\bk$-chains induced by the inclusion
\[\Gamma_{g-1,1} \xrightarrow{\{\mr{id}_1\} \times \mathrm{Id}} \Gamma_{1,1} \times \Gamma_{g-1,1} \overset{\oplus}\lra \Gamma_{g,1},\]
so there is an isomorphism
\[H_{g,d}(\overline{\gR}_\bk/\sigma) \cong H_d(\Gamma_{g,1}, \Gamma_{g-1,1};\bk).\]

In Section $E_k$.12.2.3 we have explained how the map $\sigma \cdot -$ may be represented up to homotopy by a map $\phi(\sigma)$ of left $\overline{\gR}_\bk$-modules, which endows $\overline{\gR}_\bk/\sigma$ up to homotopy with the structure of a left $\overline{\gR}_\bk$-module and yields a cofibre sequence
\begin{equation}\label{eq:RModSigmaCofSeq}
S^{1,0} \otimes \overline{\gR}_\bk \overset{\phi(\sigma)}\lra \overline{\gR}_\bk \overset{q}\lra \overline{\gR}_\bk/\sigma \overset{\partial}\lra S^{1,1} \otimes \overline{\gR}_\bk
\end{equation}
in the homotopy category of left $\overline{\gR}_\bk$-modules.

Our first step is to establish the \emph{standard connectivity estimate} of Definition $E_k$.17.6 for $\gR$, and hence for $\gR_\mathbbm{k}$ since $\mathbbm{k}[-]$ preserves connectivity. This estimate says that a certain semi-simplicial pointed set $\suspsplit_\bullet(g)$ associated to each $g \in \cat{MCG}$ is $(g-1)$-connected, but it is equivalent to show that a certain semi-simplicial set $\split_\bullet(g)$, described in Definition $E_k$.17.9, is $(g-3)$-connected. We now give the definition of this semi-simplicial set, in terms adapted to the situation at hand. 

By Remark $E_k$.17.11, it may be defined using certain ``Young-type'' subgroups of $\Gamma_{g,1}$ associated to an ordered $(p+2)$-tuple $(g_0,\ldots,g_{p+1})$ of positive integers summing to $g$. Given such a $(p+2)$-tuple, we can decompose $\Sigma_{g,1}$ into $(p+2)$ subsurfaces as
\[\Sigma_{g,1} = \bigsqcup_{i=0}^{p+1} (\Sigma_{g_i,1}+(g_0+\cdots+g_{i-1}) \cdot e_1).\]
Such a decomposition induces a homomorphism $\iota_{(g_0,\ldots,g_{p+1})} \colon \Gamma_{g_0,1} \times \cdots \times \Gamma_{g_{p+1},1} \to \Gamma_{g,1}$. This is injective by Lemma \ref{lem:MonoidalInj}.  In Section \ref{sec:HighConn} below we shall identify its image as the stabiliser of a $p$-tuple of arcs.

\begin{definition}
Given a $(p+2)$-tuple $(g_0,\ldots,g_{p+1})$ of positive integers summing to $g$, the Young-type subgroup $\Gamma_{(g_0,\ldots,g_{p+1}),1} \subset \Gamma_{g,1}$ is defined to be the image of $\iota_{(g_0,\ldots,g_{p+1})}$.
\end{definition}

The subgroup $\Gamma_{(g_0,\ldots,g_{p+1}),1}$ is isomorphic to $\Gamma_{g_0,1} \times \cdots \times \Gamma_{g_{p+1},1}$, and is functorial in $(g_0,\ldots,g_{p+1})$ in the following sense. We say that $(g'_0,\ldots,g'_{p'+1})$ is a \emph{refinement} of $(g_0,\ldots,g_{p+1})$ if there is a surjective order-preserving map $\phi \colon [p'+1] \to [p+1]$ such that $g_i = \sum_{j \in \phi^{-1}(i)} g'_j$. Whenever $(g'_0,\ldots,g'_{p'+1})$ is a refinement of $(g_0,\ldots,g_{p+1})$ the subgroup $\Gamma_{(g'_0,\ldots,g'_{p'+1}),1}$ is contained in $\Gamma_{(g_0,\ldots,g_{p'+1}),1}$. Upon using the given identifications
\[\Gamma_{(g'_0,\ldots,g'_{p'+1}),1} \cong \Gamma_{g'_0,1} \times \cdots \times \Gamma_{g'_{p'+1},1} \quad \text{and} \quad \Gamma_{(g_0,\ldots,g_{p'+1}),1} \cong \Gamma_{g_0,1} \times \cdots \times \Gamma_{g_{p+1},1},\]
this inclusion is induced by the monoidal structure of $\cat{MCG}$: when adjacent integers are added, we apply the homomorphism $- \oplus - \colon \Gamma_{g'_i,1} \times \Gamma_{g'_{i+1},1} \to \Gamma_{g'_i+g'_{i+1},1}$.

\begin{definition}\label{defn:SplittingCx1} The semi-simplicial set $\split_\bullet(g)$ has $p$-simplices
\[\split_p(g) = \bigsqcup_{\substack{g_0, \ldots, g_{p+1} > 0\\\sum g_i=g}} \frac{\Gamma_{g,1}}{\Gamma_{(g_0,\ldots,g_{p+1}),1}}\]
and face maps induced by inclusions of Young-type subgroups. It is called the \emph{$E_1$-splitting complex} and its geometric realisation $\fgr{\split_\bullet(g)}$ is denoted $\split(g)$. 
\end{definition}
Section \ref{sec:HighConn} will be dedicated to the proof of the following theorem.

\begin{theorem}\label{thm:MCGSplittingConn}
The space $\split(g)$ is $(g-3)$-connected.
\end{theorem}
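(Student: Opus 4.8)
The plan is to identify $\split_\bullet(g)$ with a semi-simplicial set of ``systems of separating arcs'' on $\Sigma_{g,1}$, and then to prove the connectivity estimate by the standard geometric arguments of Harer--Hatcher--Wahl type, adapted to this separating-arc setting. First I would make the identification: a $p$-simplex of $\split_\bullet(g)$ is a coset $\Gamma_{g,1}/\Gamma_{(g_0,\ldots,g_{p+1}),1}$, and since $\Gamma_{(g_0,\ldots,g_{p+1}),1}$ is (by the discussion above, to be made precise in Section~\ref{sec:HighConn}) the stabiliser of the ordered $(p+1)$-tuple of disjoint arcs $e_j(t)=(g_0+\cdots+g_{j-1},t,0)$ for $j=1,\ldots,p+1$ that cut $\Sigma_{g,1}$ into the pieces $\Sigma_{g_i,1}$, I would show that $\split_p(g)$ is in $\Gamma_{g,1}$-equivariant bijection with the set of ordered $(p+1)$-tuples of disjoint, non-isotopic, boundary-parallel-free arcs $(a_1,\ldots,a_{p+1})$ with endpoints on $\partial\Sigma_{g,1}$ (at fixed points), each of which separates, such that cutting along all of them yields $p+2$ pieces each of positive genus and in the correct linear order; transitivity of this action (hence surjectivity onto one orbit) follows from the change-of-coordinates principle together with Lemma~\ref{lem:MonoidalInj}. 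The face maps forget arcs, matching the inclusions of Young-type subgroups under refinement.

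Having this combinatorial-geometric model, the second step is the connectivity estimate itself. Here I would argue by induction on $g$, using the by-now-standard package: form the augmented semi-simplicial space (or use the associated simplicial complex / the ``arc complex'' technology of Hatcher--Wahl and Randal-Williams--Wahl), and run the ``bad arc'' / ``link'' argument. Concretely, let $A(g)$ be the full complex of separating arcs of the above type (forgetting the genus-decomposition data and the ordering); one shows $A(g)$ is highly connected by a surgery argument on arcs — given a sphere, make it simplicial, put the arcs in minimal position with a fixed ``reference'' arc, and push intersections off — and then deduces the connectivity of $\split(g)$ by comparing with $A(g)$, using that the links in $\split_\bullet(g)$ of a $p$-simplex decompose as joins $\split(g_0)*\cdots*\split(g_{p+1})$ of the same complexes for smaller genus, so the inductive hypothesis applies. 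The precise bookkeeping of connectivity (the $g-3$ rather than $g-2$, accounting for the positivity constraint $g_i>0$ on every piece, which is what kills two ``units'' of genus) is the part that needs care; one expects the join estimate $\mathrm{conn}(X*Y)\geq \mathrm{conn}(X)+\mathrm{conn}(Y)+2$ to propagate the bound correctly, with the base cases $g\leq 2$ (where $\split(g)$ is empty, hence $(-2)$-connected, consistent with $g-3$) checked by hand.

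The main obstacle I expect is the high-connectivity of the underlying separating-arc complex $A(g)$ — this is genuinely more delicate than the connectivity of the complex of \emph{all} arcs (which is contractible), because the condition that each arc separates, and that all complementary pieces have positive genus, is not preserved under arbitrary surgery: surgering a separating arc along an intersection with a non-separating arc can produce non-separating pieces or genus-zero pieces. The resolution is the one used by Galatius--Randal-Williams and in the cancellation-theorem literature: rather than working with $A(g)$ directly, work with a semi-simplicial \emph{set} built from the $E_1$-splitting data and exploit the ``$E_1$-indecomposables as a bar construction'' description recalled in Section~\ref{sec:homology-theory-e_2}, so that the geometric input is repackaged as connectivity of a moduli space of disconnecting curves — or, alternatively, deduce it from Harer-type connectivity of the full arc complex together with a careful analysis of which simplices are ``separating''. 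I would structure Section~\ref{sec:HighConn} so that the genuinely geometric lemma is isolated (``the complex of separating arcs cutting off a once-punctured piece is $(g-3)$-connected''), proved by an explicit surgery-and-minimal-position argument, and everything else is formal join/induction bookkeeping.
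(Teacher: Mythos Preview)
Your first step, identifying $\split_\bullet(g)$ with an ordered complex of separating arcs, is correct and is exactly what the paper does (Proposition~\ref{prop:SplittingModel}). You are also right that the main obstacle is the connectivity of the separating-arc complex itself, and your diagnosis of \emph{why} this is hard --- that surgery on separating arcs can destroy the separating property or the positive-genus condition --- is spot on.

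The genuine gap is in your resolution of this obstacle. The join decomposition of links $\split(g_0)*\cdots*\split(g_{p+1})$ is correct, but by itself it only tells you the complex is locally highly connected (Cohen--Macaulay-type information); it does not give global connectivity without some further input at the top level. Your two proposed inputs do not work: the ``$E_1$-indecomposables as a bar construction'' is circular (that equivalence is what reduces the problem to this connectivity statement in the first place), and ``deduce from the full arc complex by analysing which simplices are separating'' is not a method --- the separating condition is not open in the full arc complex in any useful sense, and passing from the contractible full complex to the separating subcomplex is exactly where the difficulty lies.

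The paper's key idea, which your proposal is missing, is to bring in the complex $B_0(\Sigma,b_2,b_3)$ of \emph{non-separating} arcs as an \emph{indexing poset for a covering} of the separating-arc poset: to each system of non-separating arcs one assigns the subposet of separating arcs having those non-separating arcs to their right. The non-separating arc complex is highly connected by Harer's theorem (this is the geometric input, and it is already in the literature), and the covering subposets are separating-arc posets for surfaces of strictly smaller genus or with additional boundaries, so induction applies. The whole thing is assembled via a Nerve Theorem for posets (Corollary~\ref{cor.nervelemma}). In short: rather than surgering separating arcs directly, the paper uses non-separating arcs to cut the surface down and invoke the inductive hypothesis, with Harer's connectivity result supplying the global input that your link-join argument lacks.
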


Because $\split(g)$ is $(g-2)$-dimensional and $(g-3)$-connected, by the equivalence $\Sigma^2 \split(g) \simeq \suspsplit(g)$ of Lemma $E_k$.17.10 it follows that the pointed space $\suspsplit(g)$ has the homotopy type of a wedge of $g$-dimensional spheres; we write
\[\Steinb^{E_1}(g) \coloneqq \tilde{H}_{g}(\suspsplit(g);\bZ)\]
for the $\bZ[\Gamma_{g,1}]$-module given by its top homology, which is a free $\bZ$-module called the \emph{$E_1$-Steinberg module}. This moniker refers to the fact that the $E_1$-Steinberg module arising in the related context of general linear groups is the top homology of Charney's split building \cite{Charney}, which is closely related to the Tits building whose top homology is the classical Steinberg module. 

The equivalence $S^1 \wedge Q^{E_1}_\bL(\gR)(g) \simeq \suspsplit(g) \wedge_{\Gamma_{g,1}} (E\Gamma_{g,1})_+$ of Corollary $E_k$.17.5 implies that
\begin{equation}\label{eq:MCGE1Homology}
H_{g,d}^{E_1}(\gR_\mathbbm{k}) = H_{d-(g-1)}(\Gamma_{g,1} ; \Steinb^{E_1}(g) \otimes_\bZ \mathbbm{k})
\end{equation}
and in particular we deduce the following.

\begin{corollary}\label{cor:MCGVanishingLine}
$H_{g,d}^{E_1}(\gR_\mathbbm{k})=0$ for $d < g-1$.
\end{corollary}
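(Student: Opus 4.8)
This statement is essentially immediate from the identification~\eqref{eq:MCGE1Homology} established just above, so the proposal is really just to spell out that deduction and to recall what~\eqref{eq:MCGE1Homology} rests on. Recall that~\eqref{eq:MCGE1Homology} is obtained as follows. Theorem~\ref{thm:MCGSplittingConn} gives that $\split(g)$ is $(g-3)$-connected, and combined with the equivalence $\Sigma^2\split(g)\simeq\suspsplit(g)$ of Lemma~$E_k$.17.10 this shows that the pointed space $\suspsplit(g)$ is $(g-1)$-connected; being also $g$-dimensional, it has the homotopy type of a wedge of $g$-spheres, so its reduced chains form a complex of $\bZ[\Gamma_{g,1}]$-modules quasi-isomorphic to the $E_1$-Steinberg module $\Steinb^{E_1}(g)$ concentrated in degree $g$. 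Plugging this into the equivalence $S^1\wedge Q^{E_1}_\bL(\gR)(g)\simeq\suspsplit(g)\wedge_{\Gamma_{g,1}}(E\Gamma_{g,1})_+$ of Corollary~$E_k$.17.5, taking reduced $\bk$-homology, and using that the symmetric monoidal left adjoint $\bk[-]$ commutes with $Q^{E_1}_\bL$, one arrives at $H^{E_1}_{g,d}(\gR_\bk)\cong H_{d-(g-1)}(\Gamma_{g,1};\Steinb^{E_1}(g)\otimes_\bZ\bk)$.

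Granting~\eqref{eq:MCGE1Homology}, I would finish by observing that for $d<g-1$ the degree $d-(g-1)$ is strictly negative, so the group homology on the right-hand side vanishes for any coefficient module, and hence $H^{E_1}_{g,d}(\gR_\bk)=0$. There is no substantive obstacle in this argument: all of the geometry has been isolated in Theorem~\ref{thm:MCGSplittingConn}, whose proof occupies Section~\ref{sec:HighConn} and is the genuinely hard part. The only points requiring a little care — and they are formal — are that $\bk[-]$ commutes with the derived $E_1$-indecomposables functor, so that~\eqref{eq:MCGE1Homology} holds with arbitrary $\bk$-coefficients rather than merely integrally, and that a bounded-below chain complex with homology concentrated in a single degree is quasi-isomorphic to that homology placed in that degree, so that the equivariant homology computation produces group homology with coefficients in $\Steinb^{E_1}(g)$ with no additional spectral-sequence contributions.
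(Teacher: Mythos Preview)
Your proposal is correct and matches the paper's approach exactly: the paper simply states the corollary after equation~\eqref{eq:MCGE1Homology} with the phrase ``and in particular we deduce the following,'' giving no further argument. Your write-up is more detailed than what the paper provides, but the deduction is identical.
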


\subsection{Low genus computations}

In this section we shall recall some calculations of the low-dimensional homology of $\Gamma_{g,1}$'s. Many of these calculations are not new---we deduce most of them from Korkmaz's survey \cite{Korkmaz} and Korkmaz--Stipsicz \cite{KorkmazStipsicz}, see Remark \ref{rem:mcg-comp} for additional references---but we take care to describe the answer in terms of operations on the homology of an $E_2$-algebra when possible.

As explained in Section $E_k$.16, for an $E_2$-algebra $\gX$ in $\cat{sSet}^\bN$ there is a \emph{product}
\[-\cdot - \colon H_{g,q}(\gX) \otimes H_{g', q'}(\gX) \lra H_{g+g', q+q'}(\gX)\]
and a \emph{Browder bracket}
\[[-,-] \colon H_{g,q}(\gX) \otimes H_{g',q'}(\gX) \lra H_{g+g', q+q'+1}(\gX)\]
in homology with arbitrary coefficients, as well as \emph{Araki--Kudo--Dyer--Lashof operations}
\[Q^s_2 \colon H_{g,q}(\gX;\bF_2) \lra H_{2g, q+s}(\gX;\bF_2)\]
for $s \leq q$, and a \emph{top operation}
\[\xi \colon H_{g,q}(\gX;\bF_2) \lra H_{2g, 2q+1}(\gX;\bF_2).\]
The basic properties of Dyer--Lashof operations mean that $Q^s_2(x)=0$ for $s < |x|$, and $\smash{Q_2^{|x|}}(x)= x \cdot x$, and so (in this case of $E_2$-algebras) they carry no more information than the product. But it is convenient to have them available to simplify the naming of certain homology classes, and it is furthermore convenient to write $Q^{q+1}_2(x) \coloneqq \xi(x)$, and treat this as an honorary Dyer--Lashof operation. As in the proof of Theorem $E_k$.18.1 we shall also make use of an integral refinement of one of these operations, 
\[Q^1_\bZ \colon H_{g, 0}(\gX;\bZ) \lra H_{2g, 1}(\gX;\bZ),\]
which interacts with $\rho_2$, reduction modulo 2, as $\rho_2(Q^1_\bZ(x)) = Q^1_2(\rho_2(x))$ and with the Browder bracket as $2 Q^1_\bZ(x) = -[x,x]$ (our sign is chosen to counteract Cohen's sign convention in \cite{CLM}), and is characterised by these properties. Indeed, the free $E_2$-algebra on a generator $x$ of bidegree $(g,0)$ is $\gE_2(S^{g,0})$, and $E_2(S^{g,0})(2g) \simeq \mathrm{Conf}_2(\R^2)$ has first homology $\bZ$; the class $Q^1_\bZ(x)$ is the generator represented by a half-rotation in the anticlockwise direction.

Recall that to any submanifold of an oriented surface diffeomorphic to a circle there is an associated \emph{right-handed Dehn twist} diffeomorphism, whose isotopy class depends only on the submanifold and the orientation of the surface, see Figure \ref{fig:dehn-twist}.

\begin{figure}[h]
	\begin{tikzpicture}
	\begin{scope}[scale=0.95]
	\draw (0,0) circle (.7cm);
	\draw (0,0) circle (2cm);
	\draw [thick,Mahogany] (0,-2) -- (0,-.7);
	\node at (0,-2) [Mahogany] {$\bullet$};
	\node at (0,-.7) [Mahogany] {$\bullet$};
	
	\draw [->] (2.5,0) -- (6.5,0);
	\node at (4.5,-.5) {right-handed Dehn twist};
	
	\begin{scope}[xshift=9cm]
	\draw (0,0) circle (.7cm);
	\draw (0,0) circle (2cm);
	\draw [Mahogany, thick,  domain=-2:-.7, samples=80, smooth] 
	plot (xy polar cs:angle={360/1.3 * (\x+.7)+90}, radius={\x});
	\node at (0,-2) [Mahogany] {$\bullet$};
	\node at (0,-.7) [Mahogany] {$\bullet$};
	\end{scope}
	\end{scope}
	\end{tikzpicture}
	\caption{A right-handed Dehn twist.}
	\label{fig:dehn-twist}
\end{figure}
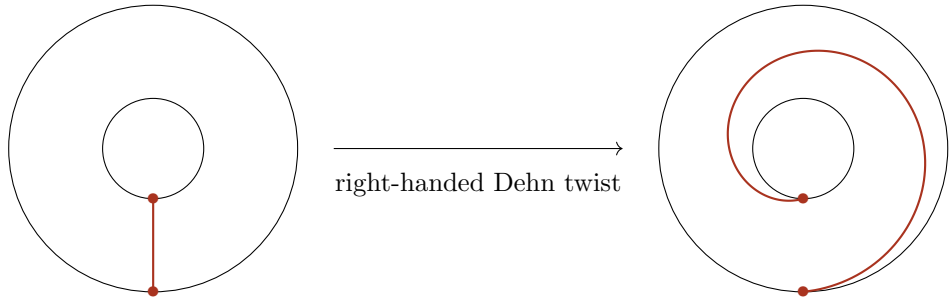

Finally, we wish to recall something about certain low-dimensional cohomology classes of $\Gamma_{g,1}$'s. A diffeomorphism of $\Sigma_{g,1}$ induces an isomorphism of $H_1(\Sigma_{g,1};\bZ)$, which respects the (skew-symmetric and non-singular) intersection form: this determines a group homomorphism $\Gamma_{g,1} \to \mathrm{Sp}_{2g}(\bZ)$, which naturally factors through the mapping class group $\Gamma_g$ of a closed genus $g$ surface. We may form the composition
\[B\Gamma_{g,1} \lra B\Gamma_{g} \lra B\mathrm{Sp}_{2g}(\bZ) \lra B\mathrm{Sp}_{2g}(\bR) \overset{\sim}\longleftarrow BU(g),\]
where $\mathrm{Sp}_{2g}(\bR)$ denotes the topological, not discrete, group, which is homotopy equivalent to its maximal compact subgroup $U(g)$. The pullback of the first Chern class under this composition is denoted $\lambda_1 \in H^2(\Gamma_{g,1};\bZ)$ (or in the cohomology of $\Gamma_g$), and is often called the Hodge class. It is natural with respect to the inclusions $\Gamma_{g-1,1} \to \Gamma_{g,1}$. It follows from \cite[Theorems 3.9 and 3.10]{KorkmazStipsicz} that $H^2(\Gamma_{g,1};\bZ) \cong \bZ$ for all $g \geq 3$; combining this with the work of Meyer \cite{Meyer} shows that $\lambda_1$ generates $H^2(\Gamma_{g,1};\bZ)$ in these cases. Perhaps more familiar is the first Miller--Morita--Mumford class $\kappa_1 \in H^2(\Gamma_{g,1};\bZ)$ (or in the cohomology of $\Gamma_g$), which we will describe in more detail in Section \ref{sec:proofthmh43}: this satisfies $\kappa_1 = 12 \lambda_1$ (because both $4\lambda_1$ and $\tfrac{1}{3}\kappa_1$ compute the signature of total spaces of surface bundles over surfaces, by Meyer's theorem for the first and Hirzebruch's signature theorem for the second).

\begin{lemma}\label{lem:MCGFacts}
	\mbox{}
	\begin{enumerate}[(i)]
		\item\label{it:MCGFacts11} $H_{1,1}(\gR_\bZ) = \bZ$ generated by a right-handed Dehn twist $\tau$ along any non-separating circle in $\Sigma_{1,1}$.
		
		\item\label{it:MCGFacts21} $H_{2,1}(\gR_\bZ) = \bZ/10$ generated by $\sigma \tau$, where as usual $\sigma \in H_0(\Gamma_{1,1};\Z)$ is the class of a point.
		
		\item\label{it:MCGFactsg1} $H_{g,1}(\gR_\bZ) = 0$ for $g \geq 3$.
		
		\item\label{it:MCGFactsQ1} $Q^1_{\bZ}(\sigma) = 3 \sigma \tau \in H_{2,1}(\gR_\bZ) = \bZ/10\{\sigma \tau\}$.
		
		\item\label{it:MCGFacts42} $H_{4,2}(\gR_{\bZ}) = \bZ$ generated by $\sigma  \lambda$, where $\lambda \in H_{3,2}(\gR_\bZ)$ satisfies $\langle \lambda,\lambda_1 \rangle=1$ and is well-defined up to the image of $\sigma \cdot - \colon H_{2,2}(\gR_{\bZ}) \to H_{3,2}(\gR_{\bZ})$.

		\item\label{it:MCGFacts32rel} $H_{3,2}(\overline{\gR}_\bZ/\sigma) = \bZ$, generated by a class $\mu$ such that $\partial(\mu) = \sigma \tau \in H_{2,1}(\overline{\gR}_\bZ)$ and $q(\lambda)=10\mu$, where $\partial$ and $q$ are homomorphisms in the long exact sequence associated to the cofibre sequence \eqref{eq:RModSigmaCofSeq}.

	\end{enumerate}
	Furthermore, we have
	\begin{enumerate}[(i)]
		\setcounter{enumi}{6}
		\item\label{it:MCGFactsg2} $H_{g,2}(\gR_{\bZ[\frac{1}{2}]})  = 0$ for $g<3$.
		
		\item\label{it:MCGFacts32abs} $H_{3,2}(\gR_{\bZ[\frac{1}{10}]}) = \bZ[\tfrac{1}{10}]$, generated by a class $\lambda$ which maps to $10\mu$ under the isomorphism $H_{3,2}(\gR_{\bZ[\frac{1}{10}]}) \to H_{3,2}(\overline{\gR}_{\bZ[\frac{1}{10}]}/\sigma)= \bZ[\tfrac{1}{10}]\{\mu\}$.	
	\end{enumerate}
\end{lemma}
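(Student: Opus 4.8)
The plan is to separate the purely group-homological input, which I would simply quote from the literature collected above, from the identifications of classes in terms of the $E_2$-operations and of the cofibre sequence \eqref{eq:RModSigmaCofSeq}, which carry the actual content; throughout I use the identifications $H_{g,1}(\gR_\bZ) = H_1(\Gamma_{g,1};\bZ)$, $H_{g,2}(\gR_\bZ) = H_2(\Gamma_{g,1};\bZ)$ and $H_{g,d}(\overline{\gR}_\bZ/\sigma) = H_d(\Gamma_{g,1},\Gamma_{g-1,1};\bZ)$, together with the fact that $\sigma\cdot-$ realises the genus-stabilisation maps. For parts (i)--(iii) one quotes $H_1(\Gamma_{1,1};\bZ) = \bZ$, $H_1(\Gamma_{2,1};\bZ) = \bZ/10$ and $H_1(\Gamma_{g,1};\bZ) = 0$ for $g \geq 3$ from \cite{Korkmaz, KorkmazStipsicz}; the standard presentation of $\Gamma_{1,1}$ identifies the generator of $H_{1,1}(\gR_\bZ)$ with a non-separating Dehn twist $\tau$, and since every non-separating Dehn twist generates $H_1(\Gamma_{2,1};\bZ)$ and $\sigma\tau$ is the image of $\tau$ under $\sigma\cdot-$, the class $\sigma\tau$ generates $H_{2,1}(\gR_\bZ)$.

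For (iv) I would use the two properties characterising $Q^1_\bZ$. The Browder bracket $[\sigma,\sigma] \in H_{2,1}(\gR_\bZ) = \bZ/10\{\sigma\tau\}$ is the image in $H_1(\Gamma_{2,1};\bZ)$ of the generator of $\pi_1(\cC_2(2)) \cong \bZ$ under the $E_2$-structure of $\gR$, i.e.\ the homology class of the mapping class of $\Sigma_{2,1}$ that rotates its two genus-one subsurfaces fully around one another; the pair-of-pants description of this diffeomorphism writes it as a product of Dehn twists along $\partial\Sigma_{2,1}$ and along the two separating curves bounding the handles, and the chain relations---expressing $t_{\partial\Sigma_{2,1}}$ as a $40$-fold, and each of these separating twists as a $12$-fold, multiple of a non-separating twist class---then compute $[\sigma,\sigma]$, hence $2Q^1_\bZ(\sigma)$. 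This pins down $Q^1_\bZ(\sigma)$ only up to adding the $2$-torsion element $5\sigma\tau$, so $Q^1_\bZ(\sigma) \in \{3\sigma\tau, 8\sigma\tau\}$. To remove the ambiguity I would use $\rho_2 Q^1_\bZ(\sigma) = Q^1_2(\rho_2\sigma)$: the class $Q^1_2(\rho_2\sigma)$ is the image in $H_1(\Gamma_{2,1};\bF_2) = \bF_2$ of the nonzero class $Q^1_2(x) \in H_{2,1}(\gE_2(S^{1,0}_{\bF_2}))$, and showing it is nonzero---for instance by tracking the half-twist through the symplectic representation $\Gamma_{2,1} \to \mathrm{Sp}_4(\bZ)$ and its $\bZ/2$-abelianisation---forces $Q^1_\bZ(\sigma) = 3\sigma\tau$.

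For (v) and (vii) I would quote $H_2(\Gamma_{1,1};\bZ) = 0$, that $H_2(\Gamma_{2,1};\bZ)$ is finite of $2$-power order, and that $H_2(\Gamma_{4,1};\bZ) = \bZ$, while $H_2(\Gamma_{3,1};\bZ)$ has free rank one (from $H^2(\Gamma_{3,1};\bZ) = \bZ$, \cite{KorkmazStipsicz}, and universal coefficients using $H_1(\Gamma_{3,1};\bZ) = 0$); this gives $H_{4,2}(\gR_\bZ) = \bZ$ and, after inverting $2$, part (vii). Since $\lambda_1$ generates $H^2(\Gamma_{3,1};\bZ) = \bZ$, pairing against it determines $\lambda$ modulo the torsion of $H_{3,2}(\gR_\bZ)$, which accounts for the image of $\sigma\cdot-\colon H_{2,2}(\gR_\bZ) \to H_{3,2}(\gR_\bZ)$; as $\sigma\cdot-\colon H_{3,2}(\gR_\bZ)\to H_{4,2}(\gR_\bZ) = \bZ$ annihilates that torsion and is surjective in this degree by the stability range recalled in the introduction, $\sigma\lambda$ is a well-defined generator of $H_{4,2}(\gR_\bZ)$, with $\langle\sigma\lambda,\lambda_1\rangle = 1$ fixing the sign. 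For (vi), feeding $H_1(\Gamma_{3,1};\bZ) = 0$, $H_1(\Gamma_{2,1};\bZ) = \bZ/10$ and the $H_2$-values into the long exact sequence of \eqref{eq:RModSigmaCofSeq} in bidegree $(3,2)$ exhibits $H_{3,2}(\overline{\gR}_\bZ/\sigma)$ as an extension $0 \to \bZ \to H_{3,2}(\overline{\gR}_\bZ/\sigma) \to \bZ/10 \to 0$, necessarily $\cong \bZ$ with $q(\lambda) = 10\mu$ for a generator $\mu$; one then checks $\partial\mu = \sigma\tau$ (rather than another generator of $\bZ/10$) by exhibiting an explicit relative $2$-cycle, for instance arising from a chain relation in $\Sigma_{3,1}$, whose boundary is a non-separating Dehn twist. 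Finally (viii) follows by inverting $10$: then $H_{2,1}$ and $H_{2,2}$ of $\overline{\gR}_{\bZ[\tfrac{1}{10}]}$ vanish, so in \eqref{eq:RModSigmaCofSeq} the map $q$ becomes an isomorphism $H_{3,2}(\gR_{\bZ[\tfrac{1}{10}]}) \xrightarrow{\sim} H_{3,2}(\overline{\gR}_{\bZ[\tfrac{1}{10}]}/\sigma) = \bZ[\tfrac{1}{10}]\{\mu\}$ carrying $\lambda$ to $10\mu$ by (vi).

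I expect the main obstacle to be part (iv): correctly identifying the mapping class that realises the Browder bracket (equivalently the operation $Q^1_\bZ$) and carrying out the chain-relation bookkeeping modulo $10$, and---above all---resolving the residual two-torsion ambiguity, which genuinely requires the mod-$2$ computation of $Q^1_2(\rho_2\sigma)$ and cannot be extracted from a cyclic-group count. The normalisation $\partial\mu = \sigma\tau$ in (vi) is a smaller instance of the same kind of difficulty.
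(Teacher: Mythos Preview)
Your outline for parts (i)--(v), (vii), (viii), and the strategy for (iv), matches the paper's approach closely; in particular the paper also resolves the $2$-torsion ambiguity in (iv) by pushing the half-twist through $\Gamma_{2,1} \to \mathrm{Sp}_4(\bZ/2) \cong \fS_6$ and checking it maps to an odd permutation.

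There is, however, a real gap in part (vi). The extension
\[
0 \lra \bZ\{\lambda\} \overset{q}\lra H_{3,2}(\overline{\gR}_\bZ/\sigma) \overset{\partial}\lra \bZ/10\{\sigma\tau\} \lra 0
\]
is \emph{not} ``necessarily $\cong \bZ$'': extensions of $\bZ/10$ by $\bZ$ are classified by $\mathrm{Ext}^1_\bZ(\bZ/10,\bZ) \cong \bZ/10$, and the middle term is $\bZ$ only when the extension class is a unit. Determining this class is precisely the content of (vi). The paper identifies the extension class with the restriction of $\lambda_1 \in H^2(\Gamma_{3,1};\bZ)$ to $H^2(\Gamma_{2,1};\bZ) \cong \bZ/10\{\epsilon_{\sigma\tau}\}$, and then computes this restriction by evaluating $\lambda_1$ on two explicit finite cyclic group actions on genus~$2$ Riemann surfaces (a $\mu_2$-action from a double cover of an elliptic curve, and a $\mu_5$-action from a double cover of $\bC\bP^1$ branched over $0$ and the $5$th roots of unity), using the Kawazumi--Uemura parametrised Riemann--Hurwitz formula for the latter. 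These two calculations are jointly injective on $\bZ/10$ and pin down $\lambda_1 = \epsilon_{\sigma\tau}$, hence that the extension is $\bZ$ with $q(\lambda) = 10\mu$ and $\partial\mu = \sigma\tau$.

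Your proposed alternative---exhibiting an explicit relative $2$-cycle $\mu'$ with $\partial\mu' = \sigma\tau$---does not sidestep this: such a $\mu'$ exists in any of the possible extensions (since $\partial$ is surjective), and the question of whether $\mu'$ generates is exactly the question of whether $10\mu' = \pm q(\lambda)$, i.e.\ whether the extension class is a unit. You have correctly identified that (vi) hides a difficulty, but mislocated it as a normalisation issue rather than the extension problem itself.
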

\begin{proof}
	For the first four parts we rely on Wajnryb's presentation \cite{WajnrybPres} of the mapping class groups $\Gamma_{g,1}$. Our reference for this is Korkmaz's survey \cite[Section 4]{Korkmaz}. 
	
	The generators in this presentation are all Dehn twists around non-separating curves; by the classification of surfaces such curves are permuted transitively by the mapping class group, so $\Gamma_{g,1}$ is normally generated by a single element, so has cyclic abelianisation generated by $\sigma^{g-1} \tau$. It is proved on page 108 of \cite{Korkmaz} that $\Gamma_{1,1}$ is isomorphic to the braid group on three strands, and so has abelianisation $\bZ$ which must be generated by $\tau$. In $\Gamma_{2,1}$ one finds the first instance of the ``two-holed torus relation,'' which implies that $10\sigma \tau=0 \in H_1(\Gamma_{2,1};\bZ)$, and it is shown on page 107 of \cite{Korkmaz} that this group is $\bZ/10$ generated by $\sigma \tau$. In $\Gamma_{3,1}$ one finds the first instance of the ``lantern relation,'' which implies that $\sigma^2 \tau=0$. This completes the proof of (\ref{it:MCGFacts11}), (\ref{it:MCGFacts21}) and (\ref{it:MCGFactsg1}).
	
	For item (\ref{it:MCGFactsQ1}), we first show that $2 Q_\bZ^1(\sigma) = -[\sigma, \sigma]=6\sigma \tau \in H_1(\Gamma_{2,1};\bZ)$, so that $Q_\bZ^1(\sigma)$ is either $3\sigma \tau$ or $8\sigma \tau$, and then exclude the latter case by showing that $\rho_2(Q^1_\bZ(\sigma)) = Q^1_2(\sigma)  \neq 0 \in H_1(\Gamma_{2,1};\bF_2)$.
	
	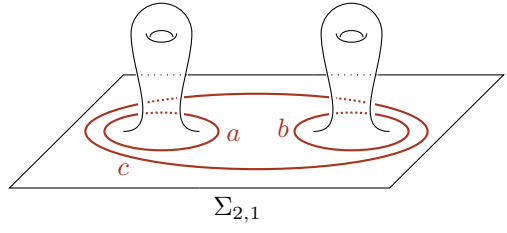
\begin{figure}[h]
		\begin{tikzpicture}
		\node at (3,0) [below] {$\Sigma_{2,1}$};
		
		\draw [thick,Mahogany] (3.25,0.75) ellipse (2.25cm and 0.5cm);
		\draw [thick,Mahogany] (2,0.75) ellipse (0.75cm and 0.25cm);
		\draw [thick,Mahogany] (4.5,0.75) ellipse (0.75cm and 0.25cm);
		\fill [white] (1.7,0.75) rectangle (2.3,2);
		\fill [white] (4.2,0.75) rectangle (4.8,2);
		
		\begin{scope}
		\clip (1.8,0.75) rectangle (2.2,2);
		\draw [thick,Mahogany,densely dotted] (3.25,0.75) ellipse (2.25cm and 0.5cm);
		\draw [thick,Mahogany,densely dotted] (2,0.75) ellipse (0.75cm and 0.25cm);
		\end{scope}
		
		\begin{scope}
		\clip (4.3,0.75) rectangle (4.7,2);
		\draw [thick,Mahogany,densely dotted] (3.25,0.75) ellipse (2.25cm and 0.5cm);
		\draw [thick,Mahogany,densely dotted] (4.5,0.75) ellipse (0.75cm and 0.25cm);
		\end{scope}

		\begin{scope}[xshift=0cm]
		\draw (2.35,1.5) -- (4,1.5);
		\draw (2.5,0) -- (0,0) -- (1.5,1.5) -- (1.65,1.5);
		\draw [dotted] (1.65,1.5) -- (2.35,1.5);
		\begin{scope}[xshift=2cm,yshift=1.25cm]
		\draw (-.5,-.5) to[out=0,in=-90] (-.4,0.8) to[out=90,in=180] (0,1.2) to[out=0,in=90] (.4,0.8) to[out=-90,in=180] (.5,-.5);	
		\draw (-.2,0.8) to[out=-90,in=-90] (.2,0.8);
		\draw (-.15,0.75) to[out=90,in=90] (.15,0.75);
		\end{scope}
		\end{scope}
		
		\begin{scope}[xshift=2.5cm]
		\draw (2.35,1.5) -- (4,1.5) -- (2.5,0) -- (0,0);
		\draw (1.5,1.5) -- (1.65,1.5);
		\draw [dotted] (1.65,1.5) -- (2.35,1.5);
		\begin{scope}[xshift=2cm,yshift=1.25cm]
		\draw (-.5,-.5) to[out=0,in=-90] (-.4,0.8) to[out=90,in=180] (0,1.2) to[out=0,in=90] (.4,0.8) to[out=-90,in=180] (.5,-.5);	
		\draw (-.2,0.8) to[out=-90,in=-90] (.2,0.8);
		\draw (-.15,0.75) to[out=90,in=90] (.15,0.75);
		\end{scope}
		\end{scope}
		
		\node [Mahogany] at (2.95,0.7) {$a$};
		\node [Mahogany] at (3.6,0.8) {$b$};
		\node [Mahogany] at (1.5,0.25) {$c$};
		
		\end{tikzpicture}
		\caption{Curves such that $-[\sigma,\sigma] = t_c t_a^{-1} t_b^{-1}$.}
		\label{fig:browder}
	\end{figure}

	For the first of these, recall from Section $E_k$.16.1.1 that $-[\sigma, \sigma] \in H_{2,1}(\gR_\bZ)$ is represented by the diffeomorphism of $\Sigma_{2,1}$ given by the square of the braiding $\beta_{1,1}$: this is the diffeomorphism $t_c t_a^{-1} t_b^{-1}$ for the curves $a$, $b$, and $c$ in Figure \ref{fig:browder}. The two-holed torus relation applied to the complement of the genus $1$ surface with one boundary component bounded by $a$ implies that
	\[[t_c] + [t_a] = 12 \sigma \tau = 2 \sigma \tau \in H_1(\Gamma_{2,1};\bZ) = \bZ/10\{\sigma \tau\},\]
	and there is a diffeomorphism interchanging the curves $a$ and $b$ so $t_a$ and $t_b$ are conjugate. Thus $[t_c t_a^{-1} t_b^{-1}] = 2\sigma \tau -3[t_a]$. Finally, the central extension
	\begin{equation}\label{eq:Genus1Ext}
	0 \lra \bZ \lra \Gamma_{1,1} \lra \Gamma_1^1 \cong SL_2(\bZ) \lra 0
	\end{equation}
	gives an exact sequence
	\[0 \lra \bZ \lra H_1(\Gamma_{1,1};\bZ) = \bZ\{\tau\} \lra \bZ/12 \lra 0\]
	which shows that the Dehn twist around the boundary is $\pm 12 \tau \in H_1(\Gamma_{1,1};\bZ) = \bZ\{\tau\}$; in fact it is $12 \tau$ \cite[page 108]{Korkmaz}. (Alternatively, the expression of a Dehn twist along the boundary as a sixth power of a product of two positive Dehn twists \cite[proof of Lemma 1.3]{minsky}.) Hence $[t_a] = 12 \sigma \tau$ and so $-[\sigma, \sigma] = [t_c t_a^{-1} t_b^{-1}] = 2 \sigma \tau - 3(12\sigma \tau) = 6 \sigma \tau \in \bZ/10\{\sigma \tau\}$ as required.
	
	Let us now show that $Q^1_2(\sigma) \neq 0 \in H_1(\Gamma_{2,1};\bF_2)$. Under the homomorphism $\Gamma_{2,1} \to \mathrm{Sp}_4(\bZ) \to \mathrm{Sp}_4(\bZ/2)$, the homology class $Q^1_2(\sigma)$ is represented by the matrix
	\begin{equation}\label{eq:Transposition}
\left(\begin{array}{cc}
	0 & I_2 \\
	I_2 & 0 \\
	\end{array}\right) \in \mathrm{Sp}_4(\bZ/2).
\end{equation}
	Recall that $\mathrm{Sp}_4(\bZ/2) \cong \fS_6$. Following \cite[\S 3.1.5]{OMearaSymp} this isomorphism may be given by the action of $\mathrm{Sp}_4(\bZ/2)$ on the set $\Omega$ of 5 element subsets $\{v_1, \ldots, v_5\} \subset \bF_2^4$ which are totally non-orthogonal, i.e.\ $\langle v_i, v_j\rangle=1$ for $i \neq j$; there are 6 such subsets, and the induced homomorphism $\phi \colon \mathrm{Sp}_4(\bZ/2) \to \mathrm{Sym}(\Omega)$ is an isomorphism. In terms of the standard symplectic basis $e_1, f_1, e_2, f_2$ for $(\bZ/2)^4$ these 6 subsets may be enumerated as
	\begin{align*}
	1 &= \{e_1, f_1, e_1+f_1+e_2, e_1+f_1+f_2, e_1+f_1+e_2+f_2\}\\
	2 &= \{e_2, f_2, e_2+f_2+e_1, e_2+f_2+f_1, e_2+f_2+e_1+f_1\}\\
	3 &= \{e_1, e_1+f_1, f_1+e_2, f_1+f_2, f_1+e_2+f_2\}\\
	4 &= \{e_2, e_2+f_2, f_2+e_1, f_2+f_1, f_2+e_1+f_1\}\\
	5 &= \{f_2, e_1+e_2, e_1+f_1+e_2, e_2+f_2, f_1+e_2\}\\
	6 &= \{f_1, e_1+e_2, e_2+f_2+e_1, e_1+f_1, f_2+e_1\}
	\end{align*}
	and one sees that $\phi\left(\begin{smallmatrix}
	0 & I_2 \\
	I_2 & 0 \\
	\end{smallmatrix}\right)$ is given by the permutation $(12)(34)(56) \in \fS_6$, so is odd. Thus it remains nontrivial in $H_1(\mathrm{Sp}_4(\bZ/2);\bF_2) = \bF_2$, so $Q^1_2(\sigma) \neq 0$ as required.
	
	Item (\ref{it:MCGFacts42}) follows from the discussion just before the statement of this lemma: by \cite[Theorem 3.9]{KorkmazStipsicz} we have $H_2(\Gamma_{4,1};\bZ) \cong \bZ$ and by the discussion the cohomology class $\lambda_1$ takes the value 1 on this group; by the proof of \cite[Theorem 3.10]{KorkmazStipsicz} (or the stability range of \cite[Theorem 1]{Boldsen}) the map $H_{2}(\Gamma_{3,1};\bZ) \to H_{2}(\Gamma_{4,1};\bZ)$ is surjective. Combining these, there exists a $\lambda \in H_2(\Gamma_{3,1};\bZ)$ such that $\langle \lambda, \lambda_1 \rangle = 1$. We shall address the well-definedness of $\lambda$ below.
	
	For item (\ref{it:MCGFacts32rel}), as explained in the proof of \cite[Theorem 3.10]{KorkmazStipsicz}, calculating with Hopf's formula for second homology and Wajnryb's presentation of $\Gamma_{3,1}$ is not completely conclusive, and shows that $H_2(\Gamma_{3,1};\bZ)$ is isomorphic to either $\bZ$ or $\bZ \oplus \bZ/2$ (the latter is in fact the case by Remark \ref{rem:mcg-comp}). The (potential) homology class of order 2 is represented by a product of commutators $[a_i, a_j]$ arising in part (A) of \cite[Theorem 2.1]{KorkmazStipsicz}. Each such commutator is supported in a genus 2 subsurface, and hence the (potential) homology class of order 2 lies in the image of the stabilisation map $\sigma \cdot - \colon H_2(\Gamma_{2,1};\bZ) \to H_2(\Gamma_{3,1};\bZ)$. Thus the cokernel of this stabilisation map is isomorphic to $\bZ$, generated by any of the classes $\lambda$ described above (this also shows that $\lambda$ is well-defined modulo the image of $\sigma \cdot -$, finishing (\ref{it:MCGFacts42})), and so there is a short exact sequence
\begin{equation}\label{eq:Ext}
0 \lra \bZ\{\lambda\} \overset{q}\lra H_2(\Gamma_{3,1}, \Gamma_{2,1};\bZ) \overset{\partial}\lra \bZ/10\{\sigma \tau\} \lra 0.
\end{equation}
We wish to determine this extension. Recall that for a finite cyclic group $G$ and a generator $g \in G$ there is an extension
\begin{equation*}
\begin{tikzcd}
0 \rar & \bZ \rar{1 \mapsto |G|}& \bZ \rar{1 \mapsto g} & G  \rar & 0
\end{tikzcd}
\end{equation*}
defining a class $\epsilon_g \in \mr{Ext}_\bZ^1(G, \bZ)$, such that $\bZ/|G|\{\epsilon_g\} \overset{\sim}\to \mr{Ext}_\bZ^1(G, \bZ)$. If $\mu_n \subset \bC^\times$ is the group of $n$th roots of unity, generated by $\xi_n = \exp(\tfrac{2\pi i}{n})$, then the class $\epsilon_{\xi_n} \in \mr{Ext}_\bZ^1(\mu_n, \bZ) \cong H^2(\mu_n;\bZ)$ is the same as the first Chern class of the representation $L_n$ of $\mu_n$ on $\bC$ by multiplication.

Describing the extension \eqref{eq:Ext} is the same as describing the image of $\lambda_1$ under the map
\[ H^2(\Gamma_{3,1};\bZ) = \bZ\{\lambda_1\} \lra H^2(\Gamma_{2,1};\bZ) \cong \mr{Ext}_\bZ^1(\bZ/10\{\sigma\tau\}, \bZ) = \bZ/10\{\epsilon_{\sigma\tau}\}.\]
We claim that $\lambda_1 = \epsilon_{\sigma\tau}$ in $H^2(\Gamma_{2,1};\bZ)$: this means that there is a $\mu \in H_2(\Gamma_{3,1}, \Gamma_{2,1};\bZ)$ such that $\partial(\mu) = \sigma\tau$ and $10\mu = q(\lambda)$, which is precisely what (\ref{it:MCGFacts32rel}) claims. The first homology of the mapping class group $\Gamma_2$ of a closed genus 2 surface is also $\bZ/10$, as described on page 107 of \cite{Korkmaz}, and its second homology is torsion by \cite[Theorem 3.10]{KorkmazStipsicz}, so the map $\delta \colon \Gamma_{2,1} \to \Gamma_2$ is an isomorphism on second cohomology: thus we are required to describe the class $\lambda_1 \in H^2(\Gamma_{2};\bZ) \cong \mr{Ext}_\bZ^1(\bZ/10\{\delta_*\sigma\tau\}, \bZ) = \bZ/10\{\epsilon_{\delta_*\sigma\tau}\}$. We shall use that $H_1(\Gamma_2;\bZ) \cong \bZ/10\{\delta_*\sigma\tau\}$ can be generated by elements of $\Gamma_2$ of order 2 and 5.

Firstly consider the genus 2 Riemann surface $\Sigma$ arising as the double cover of an elliptic curve branched over two points. Interchanging the sheets of the cover gives an action of $\mu_2$ of $\Sigma$, for which the action of $\xi_2$ on $H_1(\Sigma;\bZ)$ is given in a symplectic basis (of $a$- and $b$-curves) by \eqref{eq:Transposition}, considered as a matrix with integer entries. By the proof of (\ref{it:MCGFactsQ1}) the associated map $i \colon \mu_2 \to \Gamma_2$ is non-trivial on first homology, so must be given by $i_*[\xi_2] = 5\delta_*\sigma\tau$. Thus $i^*\epsilon_{\delta_*\sigma\tau} = \epsilon_{\xi_2}$. When we view the matrix \eqref{eq:Transposition} as having real coefficients it actually lies in the subgroup $U(2) \subset \mr{Sp}_4(\bR)$. Thus the composition
\[B\mu_2 \overset{Bi}\lra B\Gamma_{2} \lra B\mathrm{Sp}_{4}(\bZ) \lra B\mathrm{Sp}_{4}(\bR) \overset{\sim}\longleftarrow BU(2)\]
is seen to classify the 2-dimensional complex representation of $\mu_2$ given by the sum of the trivial representation and $L_2$, whose first Chern class is therefore $\epsilon_{\xi_2} \in H^2(\mu_2;\bZ)$. Thus $i^*\lambda_1 = \epsilon_{\xi_2} = i^*\epsilon_{\delta_*\sigma\tau}$.

Secondly, consider the genus 2 Riemann surface $\Sigma'$ arising as the double cover of $\mathbb{CP}^1$ branched over 0 and the $5$th roots of unity. The involution given by interchanging the sheets is the hyperelliptic involution. Consider the action of $\mu_5 \subset \bC^\times$ on $\mathbb{CP}^1 = \bC \cup \{\infty\}$. As $\xi_5$ fixes the branch points it lifts, in two different ways which differ by the hyperelliptic involution, to a diffeomorphism of $\Sigma'$. One of these lifts will have order 5 and one will have order 10 so there is a preferred choice of lift of $\xi_5$ which defines an action of $\mu_5$ on $\Sigma'$. (The order 10 diffeomorphism can not be used to study the 2- and 5-local situations simultaneously because the hyperelliptic involution is given by an even number of Dehn twists.) We need to understand the effect of the associated map $j \colon \mu_5 \to \Gamma_2$ on first homology. The relevant elements of $\Gamma_2$ arise from lifts of diffeomorphisms of the complex plane punctured at 0 and at the 5th roots of unity.  We shall identify the mapping class group of this punctured plane with the six-strand braid group in the usual way. Given an arc in $\bC\bP^1$ between two of the branch points and not passing through any other branch point, a positive braid of the endpoints along it lifts to a right-handed Dehn twist of $\Sigma'$ along a non-separating curve \cite[p.\ 54]{AGLV}. A moment's sketching shows that the braid corresponding to the diffeomorphism of $\bC\setminus \{0, 1, \xi_5, \xi_5^2, \xi_5^3, \xi_5^4\}$ given by multiplication by $\xi_5$ is given by 6 such positive braids, so $j_*[\xi_5] = 6 \delta_*\sigma \tau \in H_1(\Gamma_2;\bZ)$ and hence $j^*\epsilon_{\delta_*\sigma\tau} = 3\epsilon_{\xi_5}$. The action of $\mu_5$ on $\Sigma'$ has three fixed points: one lying over 0, and two lying over $\infty$. At the two fixed points over $\infty$ the action of $\mu_5$ on the tangent space of $\Sigma'$ is $\overline{L}_5$. At the fixed point over 0 the action on the tangent space is such that its tensor square is $L_5$: thus it is $L^3_5$. It therefore follows from the parametrised Riemann--Hurwitz formula of Kawazumi--Uemura \cite[Theorem B]{KU} for the Miller--Morita--Mumford classes that
\[12 j^*\lambda_1 = j^*\kappa_1 = (3-1-1)c_1(L_5) = \epsilon_{\xi_5} \in H^2(\mu_5 ; \bZ) \cong \mr{Ext}_\bZ^1(\mu_5, \bZ),\]
and so, dividing by 12, we have $j^*\lambda_1 = 3 \epsilon_{\xi_5} = j^*\epsilon_{\delta_*\sigma\tau}$. As $i^*$ and $j^*$ are jointly injective, it follows that $\lambda_1 = \epsilon_{\delta_*\sigma\tau}$, as claimed.
	
	For item (\ref{it:MCGFactsg2}), if $g=0$, there is nothing to show. If $g=1$, the claim follows from the Lyndon--Hochschild--Serre spectral sequence for the central extension \eqref{eq:Genus1Ext}, which actually shows that $H_2(\Gamma_{1,1};\bZ)=0$. If $g=2$, the claim follows from \cite[Theorem 1.3]{KorkmazStipsicz}.
	
	For item (\ref{it:MCGFacts32abs}) we consider the exact sequence
	\[\cdots H_{2,2}(\gR_{\bZ[\frac{1}{10}]}) \overset{\sigma \cdot -}\lra H_{3,2}(\gR_{\bZ[\frac{1}{10}]}) \lra H_{3,2}(\overline{\gR}_{\bZ[\frac{1}{10}]}/\sigma) \overset{\partial}\lra H_{2,1}(\gR_{\bZ[\frac{1}{10}]}) \cdots\]
	associated to the homotopy cofibre sequence \eqref{eq:RModSigmaCofSeq}, and the fact that the outer two terms are zero by items (\ref{it:MCGFactsg2}) and (\ref{it:MCGFacts21}).
\end{proof}

\begin{figure}[h]
	\begin{tikzpicture}
	\begin{scope}
	\draw (-1,0)--(5.5,0);
	\draw (0,-1) -- (0,2.5);
	
	\foreach \s in {0,...,2}
	{
		\draw [dotted] (-.5,\s)--(5.5,\s);
		\node [fill=white] at (-.25,\s) [left] {\tiny $\s$};
	}
	
	\foreach \s in {1,...,4}
	{
		\draw [dotted] ({1.25*\s},-0.5)--({1.25*\s},2.5);
		\node [fill=white] at ({1.25*\s},-.5) {\tiny $\s$};
		\node [fill=white] at ({1.25*\s},0) {$\bZ \sigma^\s$};
	}
	\node [fill=white] at (0,0) {$\bZ 1$};
	\node [fill=white] at (0,-.5) {\tiny 0};
	\node [fill=white] at ({1.25*1},1) {$\bZ \tau$};
	\node [fill=white] at ({1.25*2},1) {$\bZ/10 \sigma \tau$};
	\node [fill=white] at ({1.25*2},2) {$A$};
	\node [fill=white] at ({1.25*3},2) {$\bZ \lambda \oplus B$};
	\node [fill=white] at ({1.25*4},2) {$\bZ \sigma \lambda$};

	\node [fill=white] at (-.5,-.5) {$\nicefrac{d}{g}$};
	\end{scope}
	\end{tikzpicture}
	\caption{Summary of $H_{g,d}(\gR_\bZ)$ as described in Lemma \ref{lem:MCGFacts}, where $A$ and $B$ are abelian groups with $A[\tfrac{1}{2}] = 0 = B[\tfrac{1}{10}]$, and $\lambda$ is only well-defined modulo torsion. Empty entries are $0$. Compare to Remark \ref{rem:mcg-comp} and Figure \ref{fig:rat}.}
	\label{fig:table-mcg-rat}
\end{figure}

\begin{remark}\label{rem:mcg-comp} Though we will not need it, the table in Figure \ref{fig:table-mcg-rat} may be extended. Godin \cite[Proposition 18]{GodinMCG} and Abhau--B\"odigheimer--Ehrenfried \cite[Section 2.1]{ABE} have independently computed $H_*(\Gamma_{2,1};\bZ)$ and showed $A \cong \bZ/2\bZ$. It is a consequence of Godin's computations that $\tau^2$ is not $0$ but generates $A$ \cite[Example 4]{GodinMCG}. Wang \cite[Conjecture 4.2.1]{WangThesis} and Boes--Hermann \cite[Theorem]{BoesHermannThesis} have computed $H_*(\Gamma_{3,1};\bZ)$ up to torsion at primes $> 23$, concluding that $B \cong \bZ/2\bZ$, which has been confirmed independently by Sakasai \cite[Theorem 4.9]{Sakasai}. As a consequence of (\ref{it:MCGFacts32rel}) the map $H_2(\Gamma_{2,1};\bZ) \to H_2(\Gamma_{3,1};\bZ)$ is injective. Additional sources for computations are \cite{HarerH2,HarerH3,BensonCohen,Pitsch}. The non-vanishing of the Browder bracket $[\sigma,\sigma]$ is due to Fiedorowicz--Song \cite[Theorem 2.5]{FiedorowiczSong}, and up to a convention about signs (\ref{it:MCGFactsQ1}) may be deduced from their Lemma 2.1.\end{remark}

\section{The $E_1$-splitting complex is highly-connected}\label{sec:HighConn}

In addition to the machinery developed in \cite{e2cellsI} and a study of low-dimensional low-genus homology groups, we shall use that the $E_1$-splitting complex $\split_\bullet(g)$ is highly-connected. Of course, most proofs of stability involve proving the high-connectivity of certain simplicial complexes or semi-simplicial sets, but those which arise through the machinery of \cite{e2cellsI} are of a different nature to those which arise in the ``usual'' homological stability argument as for example in \cite{RWW}. Examples of such complexes have appeared before, such as in \cite{Charney,hepworth}, but without the connection to $E_1$-cells. The main result of this section is a proof of Theorem \ref{thm:MCGSplittingConn}; the reader willing to take this at face value on a first reading may proceed to Section~\ref{sec:stability}.

\subsection{Poset techniques}\label{sec:posets}

The semi-simplicial sets in the proof of Theorem \ref{thm:MCGSplittingConn} can be described as nerves of posets. To analyse the connectivity of posets and of maps between posets, we use a slight generalisation of \cite[Theorem 9.1]{quillenposet} which we learnt from Looijenga--van der Kallen \cite[Corollary 2.2]{LvdK}. Recall that if $\cY$ is a poset with partial order $\leq$ and $y \in \cY$ then $\cY_{< y}$ denotes the subset $\{y' \in \cY \setminus \{y\} \mid y' \leq y\}$ with the induced ordering, and similarly for $\cY_{> y}$.  If $f \colon \cX \to \cY$ is a map of posets then $f_{\leq y} \coloneqq \{x \in \cX \mid f(x) \leq y\}$ and similarly for $f_{\geq y}$.

\begin{theorem}\label{thm.posetmapconnectedness} 
	Let $f \colon \cX \to \cY$ be a map of posets, $n \in \bZ$, and $t_\cY \colon \cY \to \bZ$ be a function. Suppose that one of the following holds
	\begin{enumerate}[(i)]
		\item for every $y \in \cY$, $f_{\leq y}$ is $(t_\cY(y)-2)$-connected and $\cY_{>y}$ is $(n-t_\cY(y)-1)$-connected, or
		\item for every $y \in \cY$, $f_{\geq y}$ is $(n-t_\cY(y)-1)$-connected and $\cY_{<y}$ is $(t_\cY(y)-2)$-connected.
	\end{enumerate}
	Then the map $f$ is $n$-connected.
\end{theorem}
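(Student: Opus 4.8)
The plan is to reduce to a quantitative version of Quillen's Theorem~A. First, hypothesis~(ii) becomes hypothesis~(i) upon reversing the partial orders on both $\cX$ and $\cY$: this replaces $\cY_{>y}$ by $\cY_{<y}$ and $f_{\ge y}$ by $f_{\le y}$, leaves $t_\cY$ and $n$ untouched, and changes neither the connectivity conditions nor the conclusion (the nerve of a poset is canonically homeomorphic to the nerve of its opposite). So I would treat only case~(i). Now observe that $f_{\le y}$ is by definition just $f^{-1}(\cY_{\le y})$, so hypothesis~(i) is precisely the hypothesis of the connectivity refinement of Quillen's Theorem~A: a poset map all of whose ``fibres'' $f^{-1}(\cY_{\le y}) = f_{\le y}$ are highly connected --- the degree of connectivity being allowed to vary with $y$ according to $t_\cY$, and with a compensating connectivity hypothesis on the up-sets $\cY_{>y}$ --- is itself highly connected. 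This is \cite[Corollary~2.2]{LvdK}, building on \cite[Theorem~9.1]{quillenposet}; for completeness I would sketch the argument, which is all one really needs.

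To prove it, I would first replace $f$ by a cofibration using the non-Hausdorff mapping cylinder $\cM$: the poset with underlying set $\cX \sqcup \cY$ in which $\cX$ and $\cY$ retain their orders and (for $x \in \cX$, $y \in \cY$) one declares $x < y$ exactly when $f(x) \le y$. Since $x \le f(x)$ holds in $\cM$, the retraction $r \colon \cM \to \cY$ which is the identity on $\cY$ and $f$ on $\cX$ satisfies $r \ge \id_\cM$ as a poset map, hence is homotopic to $\id_\cM$; thus $\cM \simeq \cY$ and $f$ factors as $\cX \hookrightarrow \cM \xrightarrow{\ \sim\ } \cY$. It therefore suffices to show the subcomplex inclusion $N\cX \hookrightarrow N\cM$ is $n$-connected, i.e.\ that $\pi_k(N\cM, N\cX) = 0$ for all $k \le n$ and all basepoints (with the evident interpretation for $k = 0$). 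One represents such a relative class, after subdivision, by a simplicial map $\phi \colon (D,\partial D) \to (N\cM, N\cX)$ from a triangulated $k$-disc, and tries to homotope $\phi$ rel $\partial D$ into $N\cX$. Call a simplex $\tau$ of $D$ \emph{bad} if $\phi(\tau)$ is a chain of $\cM$ meeting $\cY$; as every chain of $\cM$ is a chain of $\cX$ followed by a chain of $\cY$, a bad $\tau$ has a well-defined top vertex $y_\tau = \max \phi(\tau) \in \cY$. One then eliminates bad simplices by elementary homotopies, processing them in order of \emph{decreasing} $t_\cY(y_\tau)$: for a bad simplex whose value $t_\cY(y_\tau)$ is currently maximal, the part of the link of the chain $\phi(\tau)$ lying above $y_\tau$ is exactly $\cY_{>y_\tau}$, which is $(n - t_\cY(y_\tau) - 1)$-connected, while the subcomplex over which one must fill to push $\tau$ off $\cY$ is controlled by $f^{-1}(\cY_{\le y_\tau}) = f_{\le y_\tau}$, which is $(t_\cY(y_\tau)-2)$-connected; these two bounds combine additively (as for a join, $(t-2) + (n-t-1) + 2 = n-1$), so that any bad simplex of dimension $\le n$ can be removed without creating bad simplices of larger complexity.

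The main obstacle is the bookkeeping in that last induction: one must choose the correct well-founded complexity on bad simplices --- a lexicographic combination of the value $t_\cY(y_\tau)$ and the dimension of $\tau$ --- and verify that each elementary move is both \emph{possible}, matching the available connectivity of $\cY_{>y_\tau}$ and of $f_{\le y_\tau}$ to the dimension of the sphere being filled, with no off-by-one, and \emph{strictly complexity-decreasing}. This is genuinely delicate precisely because $t_\cY$ is not assumed monotone, so a priori a move across $y_\tau$ could introduce chains topped by some $y' > y_\tau$ with $t_\cY(y')$ large; arranging the induction so that this does not happen is the heart of \cite[\S2]{LvdK}, which I would ultimately invoke. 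Everything else --- the order-reversal duality, the identification $f_{\le y} = f^{-1}(\cY_{\le y})$, and the mapping-cylinder reduction --- is routine.
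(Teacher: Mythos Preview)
The paper does not actually prove this theorem: it is stated with attribution to \cite[Theorem~9.1]{quillenposet} and \cite[Corollary~2.2]{LvdK} and then used as a black box in the proof of the Nerve Theorem (Corollary~\ref{cor.nervelemma}). Your proposal is therefore strictly more than what the paper does---you cite the same sources and additionally sketch the mapping-cylinder/bad-simplex argument behind them, which is the right picture.

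One small correction to your duality reduction: reversing the orders does \emph{not} leave $t_\cY$ untouched. Hypothesis~(ii) for $(f,t_\cY)$ becomes hypothesis~(i) for the reversed map only after replacing $t_\cY$ by $t'_\cY \coloneqq n+1-t_\cY$, so that $t'_\cY(y)-2 = n-t_\cY(y)-1$ and $n-t'_\cY(y)-1 = t_\cY(y)-2$. This is harmless since $t_\cY$ is an auxiliary function, but as written your sentence ``leaves $t_\cY$ and $n$ untouched'' is not quite right. Everything else in your outline is sound and matches the literature you cite.
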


We shall also use the following consequence of Theorem \ref{thm.posetmapconnectedness}, which is a version of the Nerve Theorem of Borsuk adapted to posets and is a slight generalisation of \cite[Theorem 2.3]{LvdK}. Recall that if $\cX$ is a poset, then a subposet $\cY \subset \cX$ is said to be \emph{closed} if $x \leq y$ for $x \in \cX$ and $y \in \cY$ implies that $x \in \cY$. The closed subposets of $\cX$ form a poset ordered by inclusion.

\begin{corollary}[Nerve Theorem]\label{cor.nervelemma} 
	Let $\cX$ be a poset, $\cA$ another poset, and 
	\[F \colon \cA^\mr{op} \lra \{\text{closed subposets of $\cX$}\}\]
	be a functor. Let $n \in \bZ$, and $t_\cX \colon \cX \to \bZ$, $t_\cA \colon \cA \to \bZ$ be functions such that
	\begin{enumerate}[(i)]
		\item $\cA$ is $(n-1)$-connected, and
		
		\item for every $a \in \cA$, $\cA_{<a}$ is $(t_\cA(a)-2)$-connected and $F(a)$ is $(n-t_\cA(a)-1)$-connected, and
		
		\item for every $x \in \cX$, $\cX_{<x}$ is $(t_\cX(x)-2)$-connected and the subposet
		\[\cA_x \coloneqq \{a \in \cA \mid x \in F(a)\} \subset \cA\]
		is $((n-1)-t_\cX(x)-1)$-connected.
	\end{enumerate}
	Then $\cX$ is $(n-1)$-connected.
\end{corollary}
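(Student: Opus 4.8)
The plan is to deduce the Nerve Theorem (Corollary~\ref{cor.nervelemma}) from the poset map connectedness result, Theorem~\ref{thm.posetmapconnectedness}, by building an auxiliary poset that interpolates between $\cX$ and $\cA$. Specifically, I would form the \emph{homotopy colimit poset} (or \emph{Grothendieck construction}) $\cX \wr F$ whose elements are pairs $(x,a)$ with $x \in F(a)$, ordered by $(x,a) \leq (x',a')$ iff $x \leq x'$ in $\cX$ and $a \leq a'$ in $\cA$. There are two forgetful maps of posets, $\pi_\cX \colon \cX \wr F \to \cX$ sending $(x,a) \mapsto x$, and $\pi_\cA \colon \cX \wr F \to \cA$ sending $(x,a) \mapsto a$. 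The strategy is to show that \emph{both} projections are $(n-1)$-connected; combined with the hypothesis that $\cA$ is $(n-1)$-connected, this forces $\cX$ to be $(n-1)$-connected.

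First I would analyse $\pi_\cA$. For $a \in \cA$, the fibre-type subposet $(\pi_\cA)_{\leq a} = \{(x,a') : x \in F(a'),\ a' \leq a\}$; since $F$ is a contravariant functor into closed subposets, $F(a) \subseteq F(a')$ whenever $a' \leq a$, so $x \in F(a)$ automatically, and this subposet deformation retracts onto $F(a) \times \cA_{\leq a}$, which is a join-like product that is highly connected because $F(a)$ is $(n - t_\cA(a) - 1)$-connected. Meanwhile $\cA_{>a}$ controls the ``link'' direction. With the function $t_\cA$ as given, hypothesis~(ii) of Corollary~\ref{cor.nervelemma} is exactly what is needed to invoke Theorem~\ref{thm.posetmapconnectedness}(i) and conclude $\pi_\cA$ is $(n-1)$-connected. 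Next I would analyse $\pi_\cX$ symmetrically: for $x \in \cX$, the subposet $(\pi_\cX)_{\geq x} = \{(x',a) : x' \geq x,\ x' \in F(a)\}$, which maps to $\cA_x = \{a : x \in F(a)\}$ with highly connected fibres, so it is connected to the order of $\cA_x$, namely $((n-1) - t_\cX(x) - 1)$-connected; and $\cX_{<x}$ is $(t_\cX(x)-2)$-connected by hypothesis~(iii). Applying Theorem~\ref{thm.posetmapconnectedness}(ii) with the function $t_\cX$ gives that $\pi_\cX$ is $((n-1)$-connected, wait---here one must be slightly careful with the numerology: the target bound is $(n-1)$-connectedness of $\pi_\cX$, so one applies the theorem with $n$ replaced by $n-1$ throughout, which is why hypothesis~(iii) is stated with $n-1$ in place of $n$.

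Putting it together: since $\cA$ is $(n-1)$-connected and $\pi_\cA \colon \cX \wr F \to \cA$ is $(n-1)$-connected, the source $\cX \wr F$ is $(n-1)$-connected (an $n$-connected or even just $(n-1)$-connected map between spaces, one of which is $(n-1)$-connected, forces the other to be $(n-1)$-connected, via the long exact sequence of a fibration, or more precisely: a map that is $k$-connected induces isomorphisms on $\pi_i$ for $i<k$ and a surjection on $\pi_k$, so $\pi_i(\cX\wr F) \cong \pi_i(\cA)$ in the relevant range). Then, since $\pi_\cX \colon \cX \wr F \to \cX$ is $(n-1)$-connected and its source is $(n-1)$-connected, the target $\cX$ is $(n-1)$-connected as well. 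The main obstacle I anticipate is bookkeeping the off-by-one shifts in the connectivity indices---keeping straight whether to feed $n$ or $n-1$ into Theorem~\ref{thm.posetmapconnectedness}, and verifying that the deformation retractions of the slice categories $(\pi_\cA)_{\leq a}$ and $(\pi_\cX)_{\geq x}$ onto the claimed products genuinely hold, which is where the \emph{closedness} of the subposets $F(a)$ is essential (it guarantees that $x \in F(a)$ and $x' \leq x$ imply $x' \in F(a)$, so the product structure on slices is literal rather than merely up to homotopy). A secondary subtlety is that one should check $\cX \wr F$ is a poset and not merely a preorder, but the componentwise order is visibly antisymmetric since both $\cX$ and $\cA$ are posets.
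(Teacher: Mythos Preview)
Your overall architecture is exactly the paper's: form the Grothendieck-type poset of pairs $(x,a)$ with $x\in F(a)$, project to each factor, and apply Theorem~\ref{thm.posetmapconnectedness} to each projection. The analysis of $\pi_\cX$ is correct: closedness of $F(a)$ ensures $(\pi_\cX)_{\geq x}$ retracts (via a right adjoint $(x',a)\mapsto(x,a)$) onto $\{x\}\times\cA_x$, and then case~(ii) with $t_\cX$ applies with $n$ replaced by $n-1$, just as you say.

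However, your treatment of $\pi_\cA$ contains a genuine error. You correctly observe that $F(a)\subseteq F(a')$ when $a'\leq a$, but then write ``so $x\in F(a)$ automatically''---this is backwards: for $(x,a')\in(\pi_\cA)_{\leq a}$ you only know $x\in F(a')\supseteq F(a)$, which does \emph{not} place $x$ in $F(a)$. Consequently $(\pi_\cA)_{\leq a}$ does not retract onto $F(a)$ (let alone onto $F(a)\times\cA_{\leq a}$, which is not the right target in any case). Moreover, case~(i) of Theorem~\ref{thm.posetmapconnectedness} would require control of $\cA_{>a}$, but the hypotheses give you $\cA_{<a}$.

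There are two clean fixes. Either order the auxiliary poset using $\cA^{\mathrm{op}}$ in the second coordinate and project to $\cA^{\mathrm{op}}$---this is what the paper does, and then case~(i) applies with $(\pi_1)_{\leq a}$ retracting onto $\{a\}\times F(a)$ via the right adjoint $(b,x)\mapsto(a,x)$, and $(\cA^{\mathrm{op}})_{>a}\cong(\cA_{<a})^{\mathrm{op}}$ matches the hypothesis. Or keep your ordering but apply case~(ii) to $\pi_\cA$: then $(\pi_\cA)_{\geq a}=\{(x,a'):a'\geq a,\ x\in F(a')\}$, and now $F(a')\subseteq F(a)$ does give $x\in F(a)$, so $(x,a')\mapsto(x,a)$ is a genuine retraction onto $\{a\}\times F(a)\simeq F(a)$; combined with the hypothesis on $\cA_{<a}$, this yields that $\pi_\cA$ is $n$-connected. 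Either route then finishes exactly as you outlined.
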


\begin{proof}Form a new poset $\cA \wr F$ whose objects are pairs $(a,x) \in \cA^\mr{op} \times \cX$ such that $x \in F(a)$, with partial order given by $(a,x) \preceq (a',x')$ if $a \leq a'$ and $x \leq x'$. There are two maps of posets
	\[\cA^\mr{op} \overset{\pi_1}\longleftarrow \cA \wr F \overset{\pi_2}\lra \cX\]
	given on objects by $\pi_1(a,x) = a$ and $\pi_2(a,x) = x$.
	
	Apply Theorem \ref{thm.posetmapconnectedness} (i) to the map $\pi_1 \colon \cA \wr F \to \cA^\mr{op}$ using the function $t_\cA$. For each $a \in \cA^\mr{op}$, the poset
	\[(\pi_1)_{\leq a} \coloneqq \{(b, x) \in \cA^\mr{op} \times \cX \mid x \in F(b), b \leq a\}\]
	contains $\{a\} \times F(a)$ as a subposet, and $(b,x) \mapsto (a,x) \colon (\pi_1)_{\leq a} \to \{a\} \times F(a)$ is right adjoint to the inclusion so these posets are homotopy equivalent. Thus $(\pi_1)_{\leq a}$ is $(n-t_\cA(a)-1)$-connected, and $(\cA^\mr{op})_{> a} \cong (\cA_{< a})^\mr{op}$ is $(t_\cA(a)-2)$-connected, so by Theorem \ref{thm.posetmapconnectedness} (i) the map $\pi_1$ is $n$-connected. As $\cA$ is $(n-1)$-connected, it follows that $\cA \wr F$ is $(n-1)$-connected.
	
	Now apply Theorem \ref{thm.posetmapconnectedness} (ii) to the map $\pi_2 \colon \cA \wr F \to \cX$ using the function $t_\cX$. For each $x \in \cX$, the poset
	\[(\pi_2)_{\geq x} \coloneqq \{(a, y) \in \cA^\mr{op} \times \cX \mid y \in F(a), x \leq y\}\]
	contains $(\cA_x)^\mr{op} \times \{x\}$ as a subposet, and $(a, y) \mapsto (a,x) \colon (\pi_2)_{\geq x} \to (\cA_x)^\mr{op} \times \{x\}$ is right adjoint to the inclusion so these posets are homotopy equivalent. Thus $(\pi_2)_{\geq x}$ is $((n-1)-t_\cX(x)-1)$-connected, and $\cX_{<x}$ is $(t_\cX(x)-2)$-connected, so by Theorem \ref{thm.posetmapconnectedness} (ii) the map $\pi_2$ is $(n-1)$-connected, and hence $\cX$ is $(n-1)$-connected.
\end{proof}

\subsection{Proof of Theorem \ref{thm:MCGSplittingConn}}

\label{sec:PfMCGCxSpherical} In this subsection we will study the connectivity of the $E_1$-splitting complex $\split(g)$ of Definition~\ref{defn:SplittingCx1}.  We remark that Looijenga \cite{looijenga2013connectivity} has proved a similar result, but for a complex of closed curves instead of arcs with boundary conditions.

The following semi-simplicial set is similar to the one denoted $\mathcal{O}(S,b_0,b_1)$ in \cite[Definition 2.1]{WahlSurvey}, except ``non-separating'' is replaced by the extreme opposite.

\begin{definition}\label{defn:SplittingCx2}
	Let $\Sigma$ be a connected oriented surface with one boundary component, and let $b_0, b_1 \in \partial \Sigma$ be distinct marked points. Let $S(\Sigma, b_0, b_1)_0$ denote the set of isotopy classes of smoothly embedded arcs in $\Sigma$ from $b_0$ to $b_1$ which separate $\Sigma$ into two path components, each having positive genus.
	
	Let $S(\Sigma, b_0, b_1)_p$ denote the set of $(p+1)$-tuples $(a_0, \ldots, a_p)$ of distinct elements of $S(\Sigma, b_0, b_1)_0$ where the collection $(a_0,\ldots,a_p)$ may be represented by a collection of embedded arcs such that
	\begin{enumerate}[(i)]
		\item they are disjoint except at their endpoints,
		\item the order $a_0, \ldots, a_p$ coincides with that obtained by ordering the arcs in clockwise fashion as they approach $b_0$ (and that obtained by ordering the arcs in counterclockwise fashion as they approach $b_1$),
		\item the region of $\Sigma$ between $a_i$ and $a_{i+1}$ has positive genus, for each $i$.
	\end{enumerate}
	These form a semi-simplicial set $[p] \mapsto S(\Sigma, b_0, b_1)_p$, where the $i$th face map is given by forgetting $a_i$.
\end{definition}

By a theorem of Gramain \cite[Th\'eor\`eme 5]{Gramain}, the space of disjoint arcs representing a $(p+1)$-tuple $(a_0, \ldots, a_p)$ of isotopy classes is contractible. In particular, the last two conditions above do not depend on the choice of such representatives. Furthermore, the parenthesised statement in (ii) follows from the unparenthesised one. We next relate it to the $E_1$-splitting complex $\split_\bullet(g)$. The model surface $\Sigma_{g,1} \subset [0,g] \times [0,1]^2$ has boundary $\partial ([0,g] \times [0,1]) \times \{0\}$ and so the points $(0,0,0)$ and $(0,1,0)$ lie on the boundary of $\Sigma_{g,1}$.

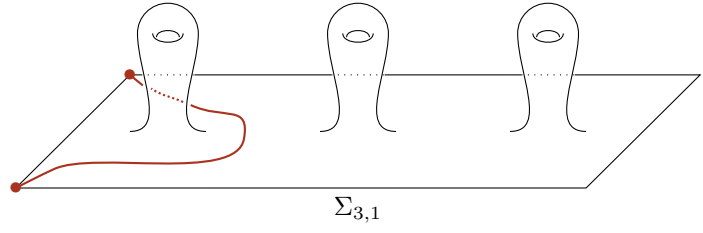
\begin{figure}[h]
	\begin{tikzpicture}
	\begin{scope}[xshift=0cm]
	\draw (2.35,1.5) -- (4,1.5);
	\draw (2.5,0) -- (0,0) -- (1.5,1.5) -- (1.65,1.5);
	\draw [dotted] (1.65,1.5) -- (2.35,1.5);
	\begin{scope}[xshift=2cm,yshift=1.25cm]
	\draw (-.5,-.5) to[out=0,in=-90] (-.4,0.8) to[out=90,in=180] (0,1.2) to[out=0,in=90] (.4,0.8) to[out=-90,in=180] (.5,-.5);	
	\draw (-.2,0.8) to[out=-90,in=-90] (.2,0.8);
	\draw (-.15,0.75) to[out=90,in=90] (.15,0.75);
	\end{scope}
	\end{scope}
	
	\begin{scope}[xshift=2.5cm]
	\draw (2.35,1.5) -- (4,1.5);
	\draw (2.5,0) -- (0,0);
	\draw (1.5,1.5) -- (1.65,1.5);
	\draw [dotted] (1.65,1.5) -- (2.35,1.5);
	\begin{scope}[xshift=2cm,yshift=1.25cm]
	\draw (-.5,-.5) to[out=0,in=-90] (-.4,0.8) to[out=90,in=180] (0,1.2) to[out=0,in=90] (.4,0.8) to[out=-90,in=180] (.5,-.5);	
	\draw (-.2,0.8) to[out=-90,in=-90] (.2,0.8);
	\draw (-.15,0.75) to[out=90,in=90] (.15,0.75);
	\end{scope}
	\end{scope}
	
	\begin{scope}[xshift=5cm]
	\draw (2.35,1.5) -- (4,1.5) -- (2.5,0) -- (0,0);
	\draw (1.5,1.5) -- (1.65,1.5);
	\draw [dotted] (1.65,1.5) -- (2.35,1.5);
	\begin{scope}[xshift=2cm,yshift=1.25cm]
	\draw (-.5,-.5) to[out=0,in=-90] (-.4,0.8) to[out=90,in=180] (0,1.2) to[out=0,in=90] (.4,0.8) to[out=-90,in=180] (.5,-.5);	
	\draw (-.2,0.8) to[out=-90,in=-90] (.2,0.8);
	\draw (-.15,0.75) to[out=90,in=90] (.15,0.75);
	\end{scope}
	\end{scope}
	
	\node at (4.5,0) [below] {$\Sigma_{3,1}$};
	\draw [thick,Mahogany] (0,0) to[out=27.5,in=-155] (.5,.25) to[out=25,in=-100,looseness=.6] (3,0.65) to[out=80,in=-25,looseness=1.5] (2.3,1.1);
	\draw [thick,Mahogany,densely dotted] (2.2,1.14) to[out=155,in=-35] (1.78,1.32);
	\draw [thick,Mahogany] (1.67,1.36) -- (1.5,1.5);
	
	\node [Mahogany] at (0,0) {$\bullet$};
	\node [Mahogany] at (1.5,1.5) {$\bullet$};
	\end{tikzpicture}
	\caption{A $0$-simplex of $S(\Sigma_{3,1}, (0,0,0), (0,1,0))_\bullet$.}
	\label{fig:s0simplex}
\end{figure}

\begin{proposition}\label{prop:SplittingModel}
There is a $\Gamma_{g,1}$-equivariant isomorphism between $\split_\bullet(g)$ and $S(\Sigma_{g,1}, (0,0,0), (0,1,0))_\bullet$.
\end{proposition}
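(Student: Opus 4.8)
The plan is to produce, for each $p$, a $\Gamma_{g,1}$-equivariant bijection $\split_p(g) \xrightarrow{\ \sim\ } S(\Sigma_{g,1},(0,0,0),(0,1,0))_p$ which commutes with the face maps. Both sides split $\Gamma_{g,1}$-equivariantly as disjoint unions indexed by $(p+2)$-tuples $(g_0,\ldots,g_{p+1})$ of positive integers summing to $g$: on the left by definition, and on the right by recording the genera of the $p+2$ complementary regions of an arc tuple $(a_0,\ldots,a_p)$, which are linearly ordered (the region ``before $a_0$'', then the regions between consecutive arcs, then the region ``after $a_p$''). So it is enough to match the two sides one type $(g_0,\ldots,g_{p+1})$ at a time. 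The key geometric input is the \emph{change-of-coordinates principle}: $\Gamma_{g,1}$ acts transitively on the set of $p$-simplices of a fixed type. This is the classification of surfaces: any two tuples of pairwise-disjoint arcs from $b_0=(0,0,0)$ to $b_1=(0,1,0)$ cutting $\Sigma_{g,1}$ into pieces of the same genera in the same order are carried onto one another by a diffeomorphism fixing $\partial\Sigma_{g,1}$, and such a diffeomorphism is orientation-preserving, hence respects the clockwise order of the arcs at $b_0$ and so respects the ordering of the tuple. (All conditions on a tuple depend only on its isotopy classes, by the contractibility of the space of disjoint representatives recorded after Definition \ref{defn:SplittingCx2}.)

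Next I would fix, for each type, a \emph{standard} $p$-simplex $\mathbf{a}(g_0,\ldots,g_{p+1}) = (a_0,\ldots,a_p)$, where $a_j$ runs close to $\partial\Sigma_{g,1}$ and makes a ``U-turn'' just to the left of the slice $\{x = g_0+\cdots+g_j\}$, so that cutting $\Sigma_{g,1}$ along $a_0\cup\cdots\cup a_p$ recovers the decomposition $\Sigma_{g,1}=\bigsqcup_{i=0}^{p+1}(\Sigma_{g_i,1}+(g_0+\cdots+g_{i-1})e_1)$ used to define $\iota_{(g_0,\ldots,g_{p+1})}$. These are chosen compatibly: deleting $a_i$ from $\mathbf{a}(g_0,\ldots,g_{p+1})$ literally equals $\mathbf{a}(g_0,\ldots,g_i+g_{i+1},\ldots,g_{p+1})$, since the surviving U-turns occur exactly at the partial sums of the coarsened tuple. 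I then claim the stabiliser of $\mathbf{a}(g_0,\ldots,g_{p+1})$ in $\Gamma_{g,1}$ is precisely the Young-type subgroup $\Gamma_{(g_0,\ldots,g_{p+1}),1}$. The inclusion ``$\supseteq$'' holds by the choice of the standard arcs: an element of $\IM(\iota_{(g_0,\ldots,g_{p+1})})$ is represented by a diffeomorphism supported in the complementary pieces and fixing a neighbourhood of the cutting arcs, hence fixing, up to isotopy rel endpoints, the boundary-parallel arcs $a_j$. For ``$\subseteq$'', given $[\phi]$ fixing every $[a_j]$, the path-connectedness of the space of disjoint representatives of $(a_0,\ldots,a_p)$ lets me isotope $\phi$ rel $\partial\Sigma_{g,1}$ to a diffeomorphism fixing the arc system $a_0\cup\cdots\cup a_p$ (no arc is reversed, since $b_0\neq b_1$ are fixed); cutting along the arcs then exhibits $[\phi]$ as $\iota_{(g_0,\ldots,g_{p+1})}$ applied to a tuple of mapping classes of the pieces. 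Granting this, the orbit-stabiliser theorem identifies the type-$(g_0,\ldots,g_{p+1})$ part of $S(\ldots)_p$ with $\Gamma_{g,1}/\Gamma_{(g_0,\ldots,g_{p+1}),1}$, $\Gamma_{g,1}$-equivariantly, and assembling over all types gives the bijection $\Phi_p\colon\phi\cdot\Gamma_{(g_0,\ldots,g_{p+1}),1}\mapsto \phi\cdot\mathbf{a}(g_0,\ldots,g_{p+1})$.

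Finally I would check compatibility with faces. On the arc side the $i$-th face forgets $a_i$, which merges the complementary regions of genera $g_i$ and $g_{i+1}$; on the $\split$ side the $i$-th face is the map $\Gamma_{g,1}/\Gamma_{(g_0,\ldots,g_{p+1}),1}\to\Gamma_{g,1}/\Gamma_{(g_0,\ldots,g_i+g_{i+1},\ldots,g_{p+1}),1}$ induced by the refinement merging $g_i$ and $g_{i+1}$. Since $\mathbf{a}(g_0,\ldots,g_{p+1})$ with $a_i$ deleted equals $\mathbf{a}(g_0,\ldots,g_i+g_{i+1},\ldots,g_{p+1})$, both composites send $\phi\cdot\Gamma_{(g_0,\ldots,g_{p+1}),1}$ to $\phi\cdot\mathbf{a}(g_0,\ldots,g_i+g_{i+1},\ldots,g_{p+1})$, so $\Phi_\bullet$ is semi-simplicial, completing the isomorphism. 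The step I expect to be the real work is the stabiliser computation: pinning down ``$\supseteq$'', i.e.\ choosing the standard arcs so that they are genuinely compatible with the vertical decomposition defining $\iota_{(g_0,\ldots,g_{p+1})}$, and ``$\subseteq$'', where one must promote a mapping class fixing each arc up to isotopy to an honest diffeomorphism fixing the arc system so that the cut-and-reglue description applies; both rely on Gramain's contractibility theorems. Everything else is bookkeeping with the change-of-coordinates principle.
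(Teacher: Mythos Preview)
Your argument is correct and follows essentially the same strategy as the paper: fix standard simplices, identify their stabilisers as the Young-type subgroups, and invoke change-of-coordinates for transitivity. The one difference worth noting is that the paper avoids your boundary-parallel ``U-turn'' arcs by first passing to an auxiliary semi-simplicial set $S'(\Sigma_{g,1})_\bullet$ in which the arc endpoints are allowed to slide along the intervals $[0,g]\times\{(0,0)\}$ and $[0,g]\times\{(1,0)\}$; this is isomorphic to $S(\Sigma_{g,1},b_0,b_1)_\bullet$ by dragging endpoints. In $S'$ the standard arc can be taken to be the literal straight segment $\gamma_n(t)=(n,t,0)$, so the complementary regions are \emph{exactly} the translates $\Sigma_{g_i,1}+(\cdots)e_1$ defining $\iota_{(g_0,\ldots,g_{p+1})}$, and both the stabiliser identification and the face-compatibility $d_i\sigma_{g_0,\ldots,g_{p+1}}=\sigma_{g_0,\ldots,g_i+g_{i+1},\ldots,g_{p+1}}$ become tautological. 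Your approach trades this extra gadget for a slightly more delicate stabiliser check (the complementary regions of your nested arcs are only \emph{isotopic} to the horizontal slices, so one must absorb the thin collar strips), which you correctly flag as the point requiring care.
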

\begin{proof}
Consider a slightly modified semi-simplicial set $S'(\Sigma_{g,1})_\bullet$ whose $p$-simplices are $(p+1)$-tuples $(a_0, \ldots, a_p)$ of isotopy classes of arcs on $\Sigma_{g,1}$ starting on the interval $[0,g] \times \{(0,0)\}$ and ending on the interval $[0,g] \times \{(1,0)\}$, such that the collection $(a_0,\ldots,a_p)$ may be represented by a collection of embedded separating arcs such that
\begin{enumerate}[(i)]
	\item the $a_i$ are disjoint,		
	\item $a_0(t) < \ldots < a_p(t) \in [0,g]$ for $t=0$ and $t=1$ (in this definition, the endpoints of the arcs are allowed to move),
	\item the region of $\Sigma_{g,1}$ between $a_i$ and $a_{i+1}$ has positive genus, for each $i$.
\end{enumerate}
Face maps are again given by forgetting arcs.
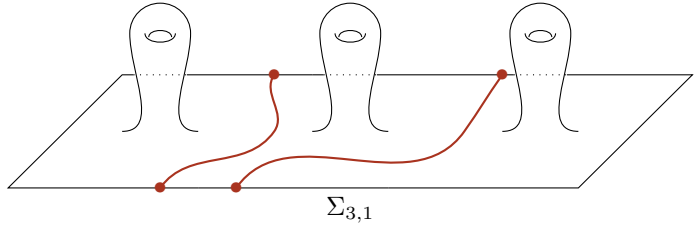
\begin{figure}[h]
	\begin{tikzpicture}
	\begin{scope}[xshift=0cm]
	\draw (2.35,1.5) -- (4,1.5);
	\draw (2.5,0) -- (0,0) -- (1.5,1.5) -- (1.65,1.5);
	\draw [dotted] (1.65,1.5) -- (2.35,1.5);
	\begin{scope}[xshift=2cm,yshift=1.25cm]
	\draw (-.5,-.5) to[out=0,in=-90] (-.4,0.8) to[out=90,in=180] (0,1.2) to[out=0,in=90] (.4,0.8) to[out=-90,in=180] (.5,-.5);	
	\draw (-.2,0.8) to[out=-90,in=-90] (.2,0.8);
	\draw (-.15,0.75) to[out=90,in=90] (.15,0.75);
	\end{scope}
	\end{scope}
	
	\begin{scope}[xshift=2.5cm]
	\draw (2.35,1.5) -- (4,1.5);
	\draw (2.5,0) -- (0,0);
	\draw (1.5,1.5) -- (1.65,1.5);
	\draw [dotted] (1.65,1.5) -- (2.35,1.5);
	\begin{scope}[xshift=2cm,yshift=1.25cm]
	\draw (-.5,-.5) to[out=0,in=-90] (-.4,0.8) to[out=90,in=180] (0,1.2) to[out=0,in=90] (.4,0.8) to[out=-90,in=180] (.5,-.5);	
	\draw (-.2,0.8) to[out=-90,in=-90] (.2,0.8);
	\draw (-.15,0.75) to[out=90,in=90] (.15,0.75);
	\end{scope}
	\end{scope}
	
	\begin{scope}[xshift=5cm]
	\draw (2.35,1.5) -- (4,1.5) -- (2.5,0) -- (0,0);
	\draw (1.5,1.5) -- (1.65,1.5);
	\draw [dotted] (1.65,1.5) -- (2.35,1.5);
	\begin{scope}[xshift=2cm,yshift=1.25cm]
	\draw (-.5,-.5) to[out=0,in=-90] (-.4,0.8) to[out=90,in=180] (0,1.2) to[out=0,in=90] (.4,0.8) to[out=-90,in=180] (.5,-.5);	
	\draw (-.2,0.8) to[out=-90,in=-90] (.2,0.8);
	\draw (-.15,0.75) to[out=90,in=90] (.15,0.75);
	\end{scope}
	\end{scope}
	
	\node at (4.5,0) [below] {$\Sigma_{3,1}$};
	
	\draw [thick,Mahogany] (2,0) to[out=55,in=-125] (3.5,0.75) to[out=55,in=-125] (3.5,1.5);
	\node [Mahogany] at (2,0) {$\bullet$};
	\node [Mahogany] at (3.5,1.5) {$\bullet$};
	
	\draw [thick,Mahogany] (3,0) to[out=55,in=-125] (6,0.75) to[out=55,in=-125] (6.5,1.5);
	\node [Mahogany] at (3,0) {$\bullet$};
	\node [Mahogany] at (6.5,1.5) {$\bullet$};
	
	\end{tikzpicture}
	\caption{A $1$-simplex of $S'(\Sigma_{3,1})_\bullet$.}
	\label{fig:sprimesimplex}
\end{figure}

There is a semi-simplicial map 
\[S'(\Sigma_{g,1})_\bullet \lra S(\Sigma_{g,1}, (0,0,0), (0,1,0))_\bullet\]
given by dragging the endpoints of the arcs to $(0,i,0) \subset [0,g]\times \{(i, 0)\}$ for $i \in \{0,1\}$, and this is an isomorphism of semi-simplicial sets. It is $S'(\Sigma_{g,1})_\bullet$ that we compare to $\split_\bullet(g)$.

For $0< n < g$, let $\gamma_{n} \colon [0,1] \to \Sigma_{g,1}$ be the arc given by $t \mapsto (n, t, 0) \in \Sigma_{g,1}$, a particular 0-simplex of $S'(\Sigma_{g,1})_\bullet$ separating a genus $n$ piece from a genus $g-n$ piece. For $n < m$ the arcs $\gamma_n$ and $\gamma_m$ are disjoint, and satisfy $\gamma_n(t) < \gamma_m(t)$ for $t=0$ and $t=1$. For each tuple $(g_0, \ldots, g_{p+1})$ with $g_i > 0$ such that $g=\sum g_i$, there is $p$-simplex
\[\sigma_{g_0, g_1, \ldots, g_{p+1}} = (\gamma_{g_0}, \gamma_{g_0+g_1}, \ldots, \gamma_{g_0+ \cdots + g_p}) \in S'(\Sigma_{g,1})_p.\]
These arcs cut the surface into  $p+1$ subsurfaces of genus $g_0$, \dots, $g_{p+1}$. The stabiliser of $\sigma_{g_0, \ldots, g_{p+1}}$ under the action of $\Gamma_{g,1}$ on $S'(\Sigma_{g,1})_p$ is the subgroup of $\Gamma_{g,1}$ which fixes all of these arcs up to isotopy: by the isotopy extension theorem this consists of those diffeomorphisms which fix these arcs pointwise, i.e.\ the Young-type subgroup $\Gamma_{(g_0,\ldots,g_{p+1}),1}$ of $\Gamma_{g,1}$ isomorphic to $\Gamma_{g_0,1} \times \cdots \times \Gamma_{g_{p+1},1}$. Acting on the $\sigma_{g_0, \ldots, g_{p+1}}$ therefore gives a map
\begin{equation}\label{eqn:sprimemap} \bigsqcup_{\substack{g_0, \ldots, g_{p+1} > 0\\\sum g_i=g}} \frac{\Gamma_{g,1}}{\Gamma_{(g_0,\ldots,g_{p+1}),1}} \lra S'(\Sigma_{g,1})_p\end{equation}
from the $p$-simplices of $\split_\bullet(g)$, and this defines a semi-simplicial map.

We claim that this semi-simplicial map is an isomorphism. It is surjective on simplices by the classification of surfaces: we may act upon any $(a_0, \ldots, a_p) \in S'(\Sigma_{g,1})_p$ to transform it into $\sigma_{g_0, \ldots, g_{p+1}}$, where $g_i>0$ is the genus of the piece of surface between $a_{i-1}$ and $a_i$ ($a_{-1}$ and $a_{p+1}$ are to interpreted in the obvious way). To see that it is injective, we look at the genus of the pieces of the decomposition to see that all of the terms of left hand of (\ref{eqn:sprimemap}) have disjoint image. Injectivity then follows from the fact $\Gamma_{(g_0,\ldots,g_{p+1}),1}$ is the stabiliser of $\sigma_{g_0, \ldots, g_{p+1}}$.\end{proof}

In view of this proposition, Theorem \ref{thm:MCGSplittingConn} follows when we show that $\fgr{S(\Sigma, b_0, b_1)_\bullet}$ is $(g-3)$-connected if $\Sigma$ has genus $g$.

\subsubsection{The poset model}

In order to employ the techniques described in Section \ref{sec:posets} we will describe $S(\Sigma,b_0,b_1)_\bullet$ as the nerve of a poset, as follows.

\begin{definition}
	Let $\mathcal{S}(\Sigma, b_0, b_1)$ be the poset with underlying set $S(\Sigma, b_0, b_1)_0$, and where $a < a'$ if these isotopy classes may be represented by disjoint arcs so that the ordering $(a, a')$ coincides with the clockwise ordering of the arcs at $b_0$, and the region of $\Sigma$ between $a$ and $a'$ has positive genus.
\end{definition}

The semisimplicial set $S(\Sigma, b_0, b_1)_\bullet$ may be identified with the nerve of the poset $\mathcal{S}(\Sigma, b_0, b_1)$. For an inductive argument, we need a version of the above for (connected) surfaces with more than one boundary. (This will also be used when we discuss homology of mapping class groups of surfaces with several boundaries.)

\begin{definition}\label{defn:PreparedSurface}
A \emph{prepared surface} $(\Sigma, \partial_0 \Sigma, b_0, b_1)$ consists of a connected oriented surface $\Sigma$, a distinguished boundary component $\partial_0 \Sigma \subset \partial \Sigma$, and a pair of distinct points $b_0, b_1 \in \partial_0 \Sigma$. The boundary components which are not distinguished will be called \emph{additional}.

For an arc $a$ from $b_0$ to $b_1$ which splits $\Sigma$ into two path components, the component containing $\{0\} \times (0,1) \times \{0\} \subset \Sigma$ shall be called the \emph{left side} of $a$ and the other one the \emph{right side}, in consistency with the illustrations in this paper.

Let the poset $\mathcal{S}(\Sigma, \partial_0 \Sigma, b_0, b_1)$ consist of isotopy classes of arcs in $\Sigma$ from $b_0$ to $b_1$ such that
\begin{enumerate}[(i)]
	\item $a$ splits $\Sigma$ into two path components,
		
	\item the left side of $a$ has positive genus,
		
	\item the right side of $a$ contains all the additional boundaries, and
		
	\item if there are no additional boundaries, the right side of $a$ also has positive genus.
\end{enumerate}
	
We say $a < a'$ if these $a$ and $a'$ may be represented by disjoint arcs so that the ordering $a, a'$ coincides with the clockwise ordering of the arcs at $b_0$, and the region of $\Sigma$ between $a$ and $a'$ has positive genus.
\end{definition}

\begin{figure}[h]
	\begin{tikzpicture}
	\begin{scope}[xshift=0cm]
	\draw (2.35,1.5) -- (4,1.5);
	\draw (2.5,0) -- (0,0) -- (1.5,1.5) -- (1.65,1.5);
	\draw [dotted] (1.65,1.5) -- (2.35,1.5);
	\begin{scope}[xshift=2cm,yshift=1.25cm]
	\draw (-.5,-.5) to[out=0,in=-90] (-.4,0.8) to[out=90,in=180] (0,1.2) to[out=0,in=90] (.4,0.8) to[out=-90,in=180] (.5,-.5);	
	\draw (-.2,0.8) to[out=-90,in=-90] (.2,0.8);
	\draw (-.15,0.75) to[out=90,in=90] (.15,0.75);
	\end{scope}
	\end{scope}
	
	\begin{scope}[xshift=2.5cm]
	\draw (2.35,1.5) -- (4,1.5);
	\draw (2.5,0) -- (0,0);
	\draw (1.5,1.5) -- (1.65,1.5);
	\draw [dotted] (1.65,1.5) -- (2.35,1.5);
	\begin{scope}[xshift=2cm,yshift=1.25cm]
	\draw (-.5,-.5) to[out=0,in=-90] (-.4,0.8) to[out=90,in=180] (0,1.2) to[out=0,in=90] (.4,0.8) to[out=-90,in=180] (.5,-.5);	
	\draw (-.2,0.8) to[out=-90,in=-90] (.2,0.8);
	\draw (-.15,0.75) to[out=90,in=90] (.15,0.75);
	\end{scope}
	\end{scope}
	
	\begin{scope}[xshift=5cm]
	\draw (2.35,1.5) -- (4,1.5) -- (2.5,0) -- (0,0);
	\draw (1.5,1.5) -- (1.65,1.5);
	\draw (1.65,1.5) -- (2.35,1.5);
	\draw (2,1) ellipse (.4cm and .15cm);
	\draw (1.5,.5) ellipse (.4cm and .15cm);
	\end{scope}

	\node at (4.5,0) [below] {$\Sigma_{2,3}$};
	\draw [thick,Mahogany] (0,0) to[out=27.5,in=-155] (.5,.25) to[out=25,in=-100,looseness=.5] (5.8,0.65) to[out=80,in=-5,looseness=.8] (4.8,1.1);
	\draw [thick,Mahogany,densely dotted] (2.2,1.14) to[out=175,in=-35] (1.78,1.3);
	\draw [thick,Mahogany] (1.67,1.36) -- (1.5,1.5);
	\draw [thick,Mahogany] (2.3,1.12) -- (4.2,1.11);
	\draw [thick,Mahogany,densely dotted] (4.3,1.11) -- (4.7,1.1);
	
	\node [Mahogany] at (0,0) {$\bullet$};
	\node at (0,0) [left,Mahogany] {$b_0$};
	\node [Mahogany] at (1.5,1.5) {$\bullet$};
	\node at (1.5,1.5) [left,Mahogany] {$b_1$};
	\end{tikzpicture}
	\caption{An object of the poset $\mathcal{S}(\Sigma_{2,3}, \partial_0 \Sigma_{2,3}, b_0, b_1)$.}
	\label{fig:sobject}
\end{figure}
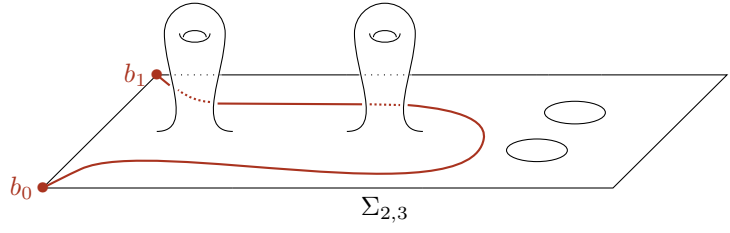

Note that if $\Sigma$ has a single boundary component, which must necessarily be the distinguished one, then this coincides with $\mathcal{S}(\Sigma, b_0, b_1)$. We shall make use of the following auxiliary simplicial complexes (or posets) of systems of non-separating arcs. These first arose in the work of Harer \cite{HarerStab} on homological stability for mapping class groups, but play a rather different role for us: rather than acting on them with the mapping class group, we will use these posets to index a covering of $\mathcal{S}(\Sigma, \partial_0 \Sigma, b_0, b_1)$ by subposets, to which we will apply the Nerve Theorem.

\begin{definition}
Let $\Sigma$ be a connected oriented surface, and $b_2, b_3 \in \partial \Sigma$ be distinct points. Let $B_0(\Sigma, b_2, b_3)$ be the simplicial complex with vertices the isotopy classes of arcs from $b_2$ to $b_3$ which are non-separating. A $(p+1)$-tuple of such isotopy classes is a $p$-simplex if they can be represented by arcs which are disjoint apart from their endpoints and are jointly non-separating.
	
Let $\mathcal{B}_0(\Sigma, b_2, b_3)$ denote the poset of simplices of $B_0(\Sigma, b_2, b_3)$.
\end{definition}

\begin{figure}[h]
	\begin{tikzpicture}
	\begin{scope}[xshift=0cm]
	\draw (2.35,1.5) -- (4,1.5);
	\draw (2.5,0) -- (0,0) -- (1.5,1.5) -- (1.65,1.5);
	\draw [dotted] (1.65,1.5) -- (2.35,1.5);
	\begin{scope}[xshift=2cm,yshift=1.25cm]
	\draw (-.5,-.5) to[out=0,in=-90] (-.4,0.8) to[out=90,in=180] (0,1.2) to[out=0,in=90] (.4,0.8) to[out=-90,in=180] (.5,-.5);	
	\draw (-.2,0.8) to[out=-90,in=-90] (.2,0.8);
	\draw (-.15,0.75) to[out=90,in=90] (.15,0.75);
	\end{scope}
	\end{scope}
	
	\begin{scope}[xshift=2.5cm]
	\draw (2.35,1.5) -- (4,1.5);
	\draw (2.5,0) -- (0,0);
	\draw (1.5,1.5) -- (1.65,1.5);
	\draw [dotted] (1.65,1.5) -- (2.35,1.5);
	\begin{scope}[xshift=2cm,yshift=1.25cm]
	\draw (-.5,-.5) to[out=0,in=-90] (-.4,0.8) to[out=90,in=180] (0,1.2) to[out=0,in=90] (.4,0.8) to[out=-90,in=180] (.5,-.5);	
	\draw (-.2,0.8) to[out=-90,in=-90] (.2,0.8);
	\draw (-.15,0.75) to[out=90,in=90] (.15,0.75);
	\end{scope}
	\end{scope}
	
	\begin{scope}[xshift=5cm]
	\draw (2.35,1.5) -- (4,1.5) -- (2.5,0) -- (0,0);
	\draw (1.5,1.5) -- (1.65,1.5);
	\draw (1.65,1.5) -- (2.35,1.5);
	\draw (1.6,.75) ellipse (.65cm and .24cm);
	\end{scope}

	\node at (4.5,0) [below] {$\Sigma_{2,2}$};
		
	\node [Mahogany] at ({7.5+.75},.75) {$\bullet$};
	\node at ({7.5+.75+.05},.7) [right,Mahogany] {$b_2$};
	\node [Mahogany] at ({5+1.75+.5},.75) {$\bullet$};
	\node at ({5+1.75+.5},.75) [left,Mahogany] {$b_3$};
	\draw [thick,Mahogany] ({7.5+.75},.75) -- ({5+1.75+.5},.75);
	\end{tikzpicture}
	\caption{A $0$-simplex of $B_0(\Sigma_{2,2}, \partial_0 \Sigma_{2,2}, b_2, b_3)$, with $b_2$ and $b_3$ chosen as in case (ii) in the proof of Theorem \ref{thm:ArcCx}.}
	\label{figb:bsimplex}
\end{figure}
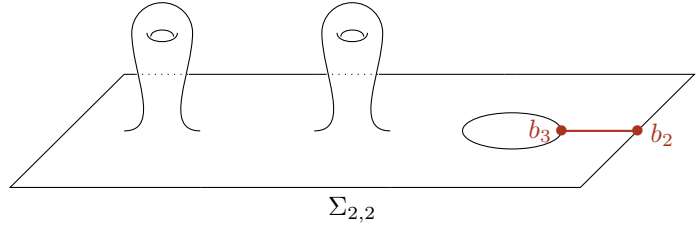

The property of these simplicial complexes we shall use is the following, originally due to Harer \cite[Theorem 1.4]{HarerStab} but see also \cite[Theorem 4.8]{WahlSurvey}.

\begin{theorem}\label{thm:ArcCx}
	Let $\Sigma$ have genus $g$.
	\begin{enumerate}[(i)]
		\item If $b_2$ and $b_3$ lie on the same boundary component, then $B_0(\Sigma, b_2, b_3)$ is $(2g-2)$-connected.
		
		\item If $b_2$ and $b_3$ lie on different boundary components, then $B_0(\Sigma, b_2, b_3)$ is $(2g-1)$-connected.
	\end{enumerate}
\end{theorem}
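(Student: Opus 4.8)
The plan is to derive both statements from the classification of surfaces by induction on the genus $g$, allowing the number of other boundary components and the positions of $b_2, b_3$ to vary; this is Harer's theorem, and the argument below follows \cite{HarerStab} in the form streamlined in \cite{WahlSurvey}. The inductive bookkeeping is dictated by the effect of cutting $\Sigma$ along a non-separating arc $a$ from $b_2$ to $b_3$: the resulting surface $\Sigma|a$ is again connected with $\chi(\Sigma|a) = \chi(\Sigma) + 1$, so if $b_2$ and $b_3$ lie on the same boundary component then $\Sigma|a$ has genus $g-1$ and one additional boundary component, while if $b_2$ and $b_3$ lie on distinct boundary components then $\Sigma|a$ has genus $g$ but one fewer boundary component. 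Hence in case (i) the genus strictly drops and in case (ii) the number of boundary components strictly drops, so an induction over the pair (genus, number of boundary components), ordered lexicographically with the genus first, closes up. The base cases are those in which the asserted connectivity is either vacuous (a bound $\le -2$) or reduces to non-emptiness or connectedness of $B_0$, and these are immediate: for instance in genus $0$ any two distinct boundary components are joined by a non-separating arc.

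For the inductive step I would first identify the links of the simplices of $B_0(\Sigma, b_2, b_3)$. Cutting along the arcs of a simplex $(a_0, \dots, a_p)$ turns the collections of further arcs disjoint from them and jointly non-separating with them into collections of arcs in the cut-open surface, based at the copies of $b_2$ and $b_3$ on its boundary and jointly non-separating there; by Gramain's theorem on the contractibility of spaces of disjoint representatives (quoted above) this correspondence is well defined on isotopy classes. One thereby exhibits each such link as a complex of the same kind on a surface of strictly smaller complexity, provided one works with the slightly more flexible variant in which the two endpoints are permitted to lie anywhere along prescribed boundary arcs rather than at fixed points. The inductive hypothesis then bounds the connectivity of all these links, so that $B_0(\Sigma, b_2, b_3)$ has the ``weak Cohen--Macaulay'' property appropriate to the claimed range.

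The remaining and hardest ingredient is to nullhomotope an arbitrary map $f \colon S^k \to |B_0(\Sigma, b_2, b_3)|$ with $k$ at most the asserted bound; highly-connected links alone do not give this. Here I would run Hatcher's surgery argument: fix a vertex $a_0$, whose star is a cone and therefore contractible, and homotope $f$ into that star. After making $f$ simplicial for a triangulation of $S^k$ and putting the arcs occurring in its image into minimal position with respect to $a_0$, if some such arc $b$ still meets $a_0$ one surgers $b$ along $a_0$ at an outermost intersection point, replacing the vertex $b$ by the one or two arcs produced by the cut-and-paste and altering $f$ on the corresponding subdivided piece. Since the total geometric intersection number with $a_0$ does not increase, after finitely many steps the image of $f$ lies in the star of $a_0$. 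The real work --- and where I expect the main obstacle --- lies in verifying that the surgered arcs remain non-separating and that all the auxiliary arc complexes encountered en route are themselves sufficiently highly connected (an input of the same inductive character), together with the termination of the process; it is exactly this step that forces the theorem to be formulated for all surfaces with marked boundary points rather than only for $\Sigma_{g,1}$.
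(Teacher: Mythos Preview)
The paper does not prove this theorem; it cites it from the literature (Harer \cite[Theorem 1.4]{HarerStab} and Wahl's survey \cite[Theorem 4.8]{WahlSurvey}). So there is no ``paper's own proof'' to compare against, only the cited arguments.

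Your outline is broadly the right shape, but the gap you flag is real and not merely a matter of bookkeeping. Hatcher's surgery argument does \emph{not} apply directly to $B_0$: when you surger a non-separating arc $b$ along a fixed arc $a_0$ at an outermost intersection, one of the two resulting arcs may well be separating (for example, if $b$ crosses $a_0$ exactly once and the two halves of $b$ on one side of $a_0$ bound a disc together with a sub-arc of $a_0$). So the replacement vertex need not lie in $B_0$ at all, and the homotopy you describe leaves the complex. Saying ``the real work lies in verifying the surgered arcs remain non-separating'' is therefore not quite right: in general they do not, and one needs a different strategy rather than a more careful version of the same one.

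The standard fix, which is what is actually done in the references the paper cites, is a two-step argument. First one proves that the \emph{full} arc complex (all isotopy classes of arcs from $b_2$ to $b_3$, separating or not, with simplices given by disjointness) is contractible; Hatcher's surgery works cleanly here because any surgered arc is again an arc. Second, one passes from the full arc complex to the subcomplex $B_0$ of jointly non-separating systems via a ``bad simplex'' argument: a simplex of the full complex is declared bad if its arcs disconnect the surface, and one shows that the link of each bad simplex (inside the good part) is a complex of the same type on a surface of strictly smaller complexity, hence highly connected by induction. This is where your inductive structure and link identification are used, but as input to the bad-simplex machinery rather than to a direct surgery on $B_0$.
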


\subsubsection{The technical theorem}

The following implies that if $\Sigma$ is a genus $g$ surface then $\fgr{S(\Sigma, b_0, b_1)_\bullet}$ is $(g-3)$-connected. By Proposition \ref{prop:SplittingModel}, this statement implies Theorem \ref{thm:MCGSplittingConn}, but is better suited to proof by induction.

\begin{theorem}\label{thm:HighConnSCx}
Let $(\Sigma, \partial_0 \Sigma, b_0, b_1)$ be a prepared surface, and $\Sigma$ have genus $g$.
\begin{enumerate}[(i)]
	\item If there are no additional boundaries then $\mathcal{S}(\Sigma, \partial_0 \Sigma, b_0, b_1)$ is $(g-3)$-connected.
		
	\item If there are additional boundaries then $\mathcal{S}(\Sigma, \partial_0 \Sigma, b_0, b_1)$ is $(g-2)$-connected.
\end{enumerate}
\end{theorem}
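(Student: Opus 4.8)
The plan is to prove both statements simultaneously by induction on the genus $g$, establishing statement (i) before statement (ii) at each fixed genus, and, within statement (ii) at fixed $g$, by a secondary induction on the number of additional boundary components. When $g$ is small enough that the asserted connectivity imposes no condition beyond non-emptiness one checks by hand that the posets are non-empty using the classification of surfaces; by Proposition~\ref{prop:SplittingModel} the one-boundary case of statement (i) recovers Theorem~\ref{thm:MCGSplittingConn}.

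For the inductive step I would apply the Nerve Theorem, Corollary~\ref{cor.nervelemma}, to cover $\mathcal{X} \coloneqq \mathcal{S}(\Sigma, \partial_0\Sigma, b_0, b_1)$ by the closed subposets
\[F(\beta) \coloneqq \{a \in \mathcal{X} : a \text{ may be isotoped off } \beta\} \subseteq \mathcal{X},\qquad \beta \in \mathcal{A} \coloneqq \mathcal{B}_0(\Sigma, b_2, b_3),\]
for suitably chosen auxiliary marked points $b_2, b_3 \in \partial\Sigma$. Each $F(\beta)$ is closed because $a < a'$ in $\mathcal{X}$ forces $a$ to lie in the left side of $a'$, which contains none of $\beta$, so disjointness from $\beta$ is inherited; and $F$ is contravariantly functorial since enlarging $\beta$ only adds disjointness constraints. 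In statement (i), and in the base case of the secondary induction, I would place $b_2, b_3$ on the part of $\partial_0\Sigma$ to the right of $b_0$ and $b_1$, so that they lie in the right side $R_a$ of every $a \in \mathcal{X}$; when there are at least two additional boundary components I would instead place $b_2$ and $b_3$ on two \emph{distinct} additional boundary components, which both improves the connectivity of $\mathcal{A}$ (via Theorem~\ref{thm:ArcCx}(ii) rather than (i)) and arranges that cutting along a single arc of $\beta$ preserves the genus while dropping the number of boundary components, which is what the secondary induction needs.

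The three hypotheses of Corollary~\ref{cor.nervelemma} then unwind as follows. First, $\mathcal{A} = \mathcal{B}_0(\Sigma, b_2, b_3)$ is highly connected by Theorem~\ref{thm:ArcCx}. Second, for a $p$-simplex $\beta$ the poset $\mathcal{A}_{<\beta}$ is the poset of proper faces of $\beta$, whose realisation is the barycentric subdivision of $\partial\Delta^p \simeq S^{p-1}$, while $F(\beta)$ should be identified with $\mathcal{S}(\Sigma_\beta, \partial_0\Sigma_\beta, b_0, b_1)$ for the prepared surface $\Sigma_\beta$ obtained by cutting $\Sigma$ along $\beta$, which is highly connected by the inductive hypothesis. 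Third, for a vertex $a$ the poset $\mathcal{X}_{<a}$ should be identified with $\mathcal{S}(L_a, \partial_0 L_a, b_0, b_1)$ for the left side $L_a$, a prepared surface of some genus $\gamma(a)$ with no additional boundaries, so $(\gamma(a)-3)$-connected by the inductive hypothesis for statement (i), whereas $\mathcal{A}_a = \{\beta : a \text{ disjoint from }\beta\}$ should be identified with $\mathcal{B}_0(R_a, b_2, b_3)$, highly connected by Theorem~\ref{thm:ArcCx} again. Choosing $t_\mathcal{A}$ to record a fixed shift of the simplex dimension and $t_\mathcal{X}(a) \coloneqq \gamma(a)-1$, the remaining requirements reduce to elementary arithmetic in $g$, $\gamma(a)$, $p$ and the numbers of boundary components of the surfaces occurring.

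The real work — and the main obstacle — is the geometry behind the two identifications. For $\mathcal{X}_{<a}$ one must check that an arc $a' < a$ is automatically admissible for the left side and conversely, which is exactly where the asymmetric clauses (ii)--(iv) of Definition~\ref{defn:PreparedSurface} are used. For $F(\beta)$ one must verify that cutting along a jointly non-separating collection of arcs yields a \emph{connected} prepared surface with a well-defined induced distinguished boundary carrying $b_0$ and $b_1$, keep accurate track of the genus (each arc lowers the genus by one unless it joins two distinct boundary components, in which case the genus is unchanged and a boundary component is lost) and of the boundary count, and — most delicately — check that the conditions ``the left side of $a$ has positive genus'' and ``the right side of $a$ contains every additional boundary'' are equivalent before and after cutting and regluing $\beta$. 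Finally, I expect statement (ii) to need one extra input: in the base case of the secondary induction (a single additional boundary, where cutting merges it into $\partial_0\Sigma$ and lands in statement (i) at the same genus, one short of what the Nerve Theorem demands) the missing degree of connectivity must be supplied by a separate surgery/``coning'' argument using a chosen point on the additional boundary component — the usual mechanism by which an extra boundary component buys one more degree of connectivity in homological stability arguments.
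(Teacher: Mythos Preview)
Your overall strategy --- simultaneous induction on genus with a secondary induction on the number of additional boundaries, applying the Nerve Theorem (Corollary~\ref{cor.nervelemma}) with $\mathcal{A} = \mathcal{B}_0(\Sigma, b_2, b_3)$ indexing a cover of $\mathcal{X} = \mathcal{S}(\Sigma, \partial_0\Sigma, b_0, b_1)$ by closed subposets $F(\beta)$ --- is exactly the paper's. The identifications $\mathcal{X}_{<a} \cong \mathcal{S}(L_a,\partial_0 L_a,b_0,b_1)$ and $\mathcal{A}_a \cong \mathcal{B}_0(R_a, b_2, b_3)$, and the choices $t_\mathcal{X}(a) = \gamma(a) - 1$, $t_\mathcal{A}(\beta) = \dim\beta$, also match.

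Where you diverge is in the placement of $b_2, b_3$ in case (ii), and this is precisely where your acknowledged gap lies. The paper makes a single uniform choice: $b_2 \in \partial_0\Sigma$ on the right of $b_0, b_1$, and $b_3$ on \emph{one} additional boundary (there is no bifurcation according to whether there is one or several additional boundaries). With this choice, cutting along $\beta$ may leave $\Sigma' \coloneqq \Sigma \setminus \beta$ with no additional boundaries; you correctly note that then $\mathcal{S}(\Sigma', \partial_0\Sigma', b_0, b_1)$ is only $(g(\Sigma')-3)$-connected, one degree short. The point you are missing is that your identification $F(\beta) \cong \mathcal{S}(\Sigma')$ \emph{fails} exactly here: membership in $F(\beta)$ is governed by the $\mathcal{S}$-conditions for the \emph{original} $\Sigma$, which has additional boundaries, so clause (iv) of Definition~\ref{defn:PreparedSurface} was never imposed and $F(\beta)$ also contains the unique isotopy class of arc in $\Sigma'$ with a genus-zero disc on its right. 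That arc is a maximal element of $F(\beta)$, so $F(\beta)$ is contractible --- far better than needed. No separate ``surgery/coning'' argument is required, and the bifurcated placement you propose (both points on $\partial_0\Sigma$ when $k=1$, on two distinct additional boundaries when $k\geq 2$) is unnecessary.
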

\begin{proof}
We will prove the two statements simultaneously, by induction on $g$ and, in case (ii), on the number of additional boundaries. If $g \leq 1$ then both statements are clear: (i) has no content, and (ii) says that $\mathcal{S}(\Sigma, \partial_0 \Sigma, b_0, b_1)$ is non-empty, but as there are additional boundaries we can choose an arc with a genus one surface to its left and the additional boundaries to its right. Thus suppose that $g \geq 2$.
	
	\vspace{2ex}
	
	\noindent \textbf{Case (i)}. Let us suppose that (i) and (ii) hold for all genera $g'<g$, then we will prove (i) for genus $g$. Choose points $b_2, b_3 \in \partial_0 \Sigma$ between $b_0$ and $b_1$ on the right-hand side of $b_0$. There is then defined the poset $\mathcal{B}_0(\Sigma, b_2, b_3)$, and we consider the functor
	\[F \colon \mathcal{B}_0(\Sigma, b_2, b_3)^\mr{op} \lra \{\text{closed subposets of $\mathcal{S}(\Sigma, \partial_0 \Sigma, b_0, b_1)$}\}\]
	which sends a tuple of arcs $(a_0, \ldots, a_p)$ to the subposet $F(a_0, \ldots, a_p)$ of the poset $\mathcal{S}(\Sigma, \partial_0 \Sigma, b_0, b_1)$ consisting of those arcs $a$ from $b_0$ to $b_1$ for which the $a_i$ lie on the right side of $a$. This is clearly a closed subposet, and defines a functor.
	
	We wish to apply the Nerve Theorem (Corollary \ref{cor.nervelemma}). Let $n \coloneqq g-2$, define $t_{\mathcal{S}}(a)$ to be one less than the genus of the surface to the left of $a$, and define $t_{\mathcal{B}_0}(a_0, \ldots, a_p) \coloneqq p$. It remains to verify the hypotheses of the Nerve Theorem. 
	
\begin{enumerate}[(i)]
	\item  $\mathcal{B}_0(\Sigma, b_2, b_3)$ is $(2g-2)$-connected by Theorem \ref{thm:ArcCx}, so in particular it is $(g-3)=(n-1)$-connected because clearly $2g-2 \geq g-3$ if $g \geq 0$. 

	\item Let $(a_0, \ldots, a_p) \in \mathcal{B}_0(\Sigma, b_2, b_3)$. Then $\mathcal{B}_0(\Sigma, b_2, b_3)_{< (a_0, \ldots, a_p)}$ is nothing but the boundary of a $p$-simplex, so is $(p-2) = (t_{\cB_0}(a_0, \ldots, a_p)-2)$-connected, as required. On the other hand, the subposet $F(a_0, \ldots, a_p) \leq \mathcal{S}(\Sigma, \partial_0 \Sigma, b_0, b_1)$ may be considered as a poset $\mathcal{P}$ of arcs in the cut surface $\Sigma' \coloneqq \Sigma \setminus (a_0, \ldots, a_p)$. We can consider the boundary containing $b_0$ and $b_1$ as distinguished, called $\partial _0\Sigma'$, and the others as additional: this endows $\Sigma'$ with the structure of a prepared surface. In $\mathcal{P}$ arcs are still required to go from $b_0$ to $b_1$, and to have positive genus to their left, but any additional boundaries must be to their right and no genus need be to their right. This is precisely the poset $\mathcal{S}(\Sigma', \partial_0 \Sigma', b_0, b_1)$. Removing a single arc from a surface either reduces the genus by 1 and creates a new boundary, or reduces the number of boundary components and preserves the genus. As $\Sigma$ has a single boundary, $\Sigma \setminus a_0$ has two boundary components and genus $g-1$. Inductively, we see that $\Sigma'$ either has
\begin{enumerate}[(a)]
\item $g-p-1 \leq g(\Sigma') < g$ and additional boundaries, or
\item $g-p \leq g(\Sigma') < g$ and no additional boundaries.
\end{enumerate}
In either case, by inductive assumption, using (ii) or (i), $\mathcal{S}(\Sigma', \partial_0 \Sigma', b_0, b_1)$ is $((g-2)-p-1) =  (n-t_{\cB_0}(a_0,\ldots,a_p)-1)$-connected, as required.

	\item Let $a \in \mathcal{S}(\Sigma, \partial_0 \Sigma, b_0, b_1)$, and suppose it has genus $g_0$ to its left and $g_1 = g-g_0$ to its right. Then $\mathcal{S}(\Sigma, \partial_0 \Sigma, b_0, b_1)_{<a}$ is the same type of poset of splittings, but for the surface (of genus $g_0$ and having a single boundary component) to the left of $a$. By case (i) for $g_0 < g$ (as we must have $g_1 > 0$), it is $(g_0-3)$-connected, so $((g_0-1)-2) = (t_{\cS}(a)-2)$-connected, as required. On the other hand, the subposet $\mathcal{B}_0(\Sigma, b_2, b_3)_a$ is the same type of poset of non-separating arcs, but for the surface (of genus $g_1$) to the right of $a$. Thus it is $(2g_1-2)$-connected by Theorem \ref{thm:ArcCx}, so in particular $(g_1-2)=((g-2)-(g_0-1)-1) = (n-t_\cS(a)-1)$-connected, as required.
\end{enumerate}
	
Thus the Nerve Theorem applies and we conclude that $\mathcal{S}(\Sigma, \partial_0 \Sigma, b_0, b_1)$ is $(g-3)$-connected.
	
\vspace{2ex}
	
\noindent \textbf{Case (ii)}. Let us suppose that (i) holds for all genera $g' \leq g$, and that (ii) holds for all genera $g' < g$. Let $k>0$ be the number of additional boundaries, and suppose that (ii) also holds for genus $g$ and $k' < k$ additional boundaries. Choose a point $b_2 \in \partial_0 \Sigma$ between $b_0$ and $b_1$ on the right-hand side of $b_0$, and a point $b_3 \in \partial \Sigma$ on an additional boundary. There is then defined the poset $\mathcal{B}_0(\Sigma, b_2, b_3)$, and we consider the functor
	\[F \colon \mathcal{B}_0(\Sigma, b_2, b_3)^\mr{op} \lra \{\text{closed subposets of $\mathcal{S}(\Sigma, \partial_0 \Sigma, b_0, b_1)$}\}\]
	which sends a tuple of arcs $(a_0, \ldots, a_p)$ to the subposet $F(a_0, \ldots, a_p)$ of the poset $\mathcal{S}(\Sigma, \partial_0 \Sigma, b_0, b_1)$ consisting of those arcs $a$ from $b_0$ to $b_1$ for which the $a_i$ lie on the right side of $a$.
	
	We wish to apply the Nerve Theorem (Corollary \ref{cor.nervelemma}), similarly to case (i) above. Let $n \coloneqq g-1$, define $t_{\mathcal{S}}(a)$ to be one less than the genus of the surface to the left of $a$, and define $t_{\mathcal{B}_0}(a_0, \ldots, a_p) \coloneqq p$. It remains to verify the hypotheses of the Nerve Theorem. 
	
\begin{enumerate}[(i)]
	\item  $\mathcal{B}_0(\Sigma, b_2, b_3)$ is $(2g-1)$-connected by Theorem \ref{thm:ArcCx}, so in particular $(g-2)=(n-1)$-connected. 

	\item Let $(a_0, \ldots, a_p) \in \mathcal{B}_0(\Sigma, b_2, b_3)$. Then $\mathcal{B}_0(\Sigma, b_2, b_3)_{< (a_0, \ldots, a_p)}$ is nothing but the boundary of a $p$-simplex, so is $(p-2) = (t_{\cB_0}(a_0,\ldots,a_p)-2)$-connected, as required. On the other hand, the subposet $F(a_0, \ldots, a_p) \leq \mathcal{S}(\Sigma, \partial_0 \Sigma, b_0, b_1)$ may be considered as a poset $\mathcal{P}$ of arcs in the cut surface $\Sigma' \coloneqq \Sigma \setminus (a_0, \ldots, a_p)$. We can consider the boundary containing $b_0$ and $b_1$ as distinguished, called $\partial \Sigma'$, and the remaining boundaries as additional. In this cut surface, arcs in $\mathcal{P}$ are still required to go from $b_0$ to $b_1$, and to have positive genus to their left, but all the additional boundaries must be to their right, and no genus need be to their right. There are now two cases. 

If $\Sigma'$ has no additional boundaries, then in $\mathcal{P}$ we are allowed arcs which have genus zero to their right. There is one such isotopy class of arc, and it lies above every other element in $\mathcal{P}$: thus $\mathcal{P}$ is contractible, and in particular is $((g-1)-p-1) = (n-t_{\cB_0}(a_0,\ldots,a_p)-1)$-connected. 

On the other hand, if $\Sigma'$ has additional boundaries then the poset $\mathcal{P}$ is $\mathcal{S}(\Sigma', \partial_0 \Sigma', b_0, b_1)$. The surface $\Sigma'$ either has smaller genus than $\Sigma$ or fewer additional boundaries (or both), so by inductive assumption (ii) applies.  Removing the arc $a_0$ decreases the number of boundaries but preserves the genus, and hence $g(\Sigma') \geq g-p$ and so the poset is $((g-p)-2)=((g-1)-p-1)$-connected, as required.

	\item Let $a \in \mathcal{S}(\Sigma, \partial_0 \Sigma, b_0, b_1)$, and suppose it has genus $g_0$ to its left and $g_1 = g-g_0$ to its right. Then $\mathcal{S}(\Sigma, \partial_0 \Sigma, b_0, b_1)_{<a}$ is the same type of poset of splittings, but for the surface (of genus $g_0$ and having a single boundary) to the left of $a$. By case (i) and $g_0 \leq g$, it is $(g_0-3)$-connected, so $((g_0-1)-2) = (t_\cS(a)-2)$-connected, as required. On the other hand, the subposet $\mathcal{B}_0(\Sigma, b_2, b_3)_a$ is the same type of poset of non-separating arcs, but for the surface (of genus $g_1$) to the right of $a$. Thus it is $(2g_1-1)$-connected by Theorem \ref{thm:ArcCx}, so in particular $(g_1-1)=((g-1)-(g_0-1)-1) = (n-t_\cS(a)-1)$-connected, as required.
\end{enumerate} 
	
	Thus the Nerve Theorem applies and we conclude that $\mathcal{S}(\Sigma, \partial_0 \Sigma, b_0, b_1)$ is $(g-2)$-connected.
\end{proof}

\section{Homological stability and secondary homological stability} \label{sec:stability} We now apply the theory of \cite{e2cellsI} in the manner explained in Section \ref{sec:homology-theory-e_2} to the $E_2$-algebra $\gR$, using the results of the previous two sections.

\subsection{Homology stability}
We now explain the first consequences for the $E_2$-cell structure on $\gR_\bZ$ following from the standard connectivity estimate and the low-degree low-genus computations. We will later do a more refined analysis of the cell structure, and deduce from it Theorem \ref{thm:A} and several parts of Theorem \ref{thm:B}.

\begin{proposition}\label{prop:MCGE2vanish} We have $H^{E_2}_{g,d}(\gR_\bZ)=0$ for $d<g-1$.\end{proposition}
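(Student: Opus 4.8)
The plan is to obtain this as a formal consequence of the $E_1$-vanishing line of Corollary~\ref{cor:MCGVanishingLine}, following the strategy outlined in Section~\ref{sec:homology-theory-e_2}: transfer the vanishing line from $Q^{E_1}_\bL(\gR_\bZ)$ to $Q^{E_2}_\bL(\gR_\bZ)$ by means of a bar spectral sequence. Concretely, I would invoke the description of $Q^{E_2}_\bL$ as (a homological shift of) a reduced bar construction applied to the non-unital $E_1$-algebra $Q^{E_1}_\bL(\gR_\bZ)$, provided by Theorem $E_k$.14.2. Writing $M \coloneqq Q^{E_1}_\bL(\gR_\bZ)$, the skeletal filtration of this bar construction yields a spectral sequence of the shape
\[E^1_{p,q}(g) \;=\; \pi_q\!\left( M^{\otimes^{\bL} p}(g)\right) \;\Longrightarrow\; H^{E_2}_{g,\, p+q-1}(\gR_\bZ) \qquad (p \geq 1),\]
where $M^{\otimes^{\bL} p}$ is the $p$-fold derived tensor power over $\bZ$ (the relevant objects may be taken cofibrant, as arranged in \cite{e2cellsI}), and where the precise indexing is as recorded in Theorem $E_k$.14.2.

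The only thing to check is a connectivity estimate for the $E^1$-page. Since $\gR_\bZ(0) = \bZ[\varnothing] = 0$, also $M(0) \simeq 0$, so $M^{\otimes^{\bL} p}(g)$ is the \emph{finite} direct sum $\bigoplus M(g_1) \otimes^{\bL} \cdots \otimes^{\bL} M(g_p)$ over tuples $(g_1, \dots, g_p)$ of \emph{positive} integers with $\sum_i g_i = g$. By Corollary~\ref{cor:MCGVanishingLine} we have $\pi_{h,d}(M) = H^{E_1}_{h,d}(\gR_\bZ) = 0$ for $d < h-1$, so each factor $M(g_i)$ has homotopy concentrated in degrees $\geq g_i - 1$. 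Connectivity is additive under the derived tensor product over $\bZ$: the Künneth short exact sequence shows that $C \otimes^{\bL}_\bZ D$ has homology vanishing below degree $a+b$ whenever $C$ and $D$ have homology vanishing below degrees $a$ and $b$, because the $\Tor^{\bZ}$-term only contributes one degree higher. Hence $\pi_q(M^{\otimes^{\bL} p}(g)) = 0$ for $q < \sum_i (g_i - 1) = g - p$, so every nonzero entry of the $E^1$-page sits in total degree $p + q \geq g$, i.e.\ contributes to $H^{E_2}_{g,d}(\gR_\bZ)$ only in degrees $d = p+q-1 \geq g-1$. As every later page is a subquotient of $E^1$, this forces $H^{E_2}_{g,d}(\gR_\bZ) = 0$ for $d < g-1$.

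I expect essentially no obstacle at this stage: the genuinely hard input --- the high connectivity of the $E_1$-splitting complex, Theorem~\ref{thm:MCGSplittingConn}, and hence Corollary~\ref{cor:MCGVanishingLine} --- has already been established, and the present argument is the routine ``transfer of a vanishing line across a bar construction.'' The only point needing a little care is that we work over $\bZ$ rather than a field, so the bar-filtration quotients are \emph{derived} tensor powers and the Künneth formula carries $\Tor^{\bZ}$-terms; but since these terms only raise homological degree they cannot spoil the vanishing line, and the argument goes through verbatim.
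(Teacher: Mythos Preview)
Your proposal is correct and follows exactly the same route as the paper: invoke Corollary~\ref{cor:MCGVanishingLine} for the $E_1$-vanishing line and then transfer it to $E_2$-homology via the bar spectral sequence of Theorem~$E_k$.14.2. The paper's own proof is a two-line appeal to that theorem as a black box, whereas you have unpacked the spectral sequence argument and the connectivity estimate for the derived tensor powers; the content is identical.
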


\begin{proof}
  Corollary \ref{cor:MCGVanishingLine} shows that $H^{E_1}_{g,d}(\gR_\bZ)=0$ for $d < g-1$, and by transferring vanishing lines up using Theorem $E_k$.14.2, we have $\smash{H^{E_2}_{g,d}}(\gR_\bZ)=0$ for $d < g-1$ too. 
\end{proof}

Furthermore, by Lemma \ref{lem:MCGFacts} (\ref{it:MCGFacts21}) we see that the map $\sigma \cdot - \colon H_{1,1}(\gR_\bZ) \to H_{2,1}(\gR_\bZ)$ is surjective. In particular, the general homological stability result Theorem $E_k$.18.2 applies to $\gR_\bZ$, including the second part. The first part of this theorem shows that $H_{g,d}(\overline{\gR}_\bZ/\sigma)=0$ for $3d \leq 2g-1$, giving the following vanishing range.

\begin{corollary}\label{cor:MCGStabRange}
	We have $H_d(\Gamma_{g,1}, \Gamma_{g-1,1};\bZ)=0$ for $3d \leq 2g-1$.
\end{corollary}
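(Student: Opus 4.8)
The plan is to deduce the corollary from the generic homological stability theorem for $E_2$-algebras, Theorem~$E_k$.18.2 of \cite{e2cellsI}, applied to the $E_2$-algebra $\gR_\bZ$. First I would recall, as in Section~\ref{sec:mcg-as-e2}, that the cofibre sequence \eqref{eq:RModSigmaCofSeq} provides an identification $H_{g,d}(\overline{\gR}_\bZ/\sigma) \cong H_d(\Gamma_{g,1},\Gamma_{g-1,1};\bZ)$, so that it suffices to prove $H_{g,d}(\overline{\gR}_\bZ/\sigma) = 0$ whenever $3d \le 2g-1$.

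The hypotheses of Theorem~$E_k$.18.2 have essentially been assembled already. The vanishing line it requires for $E_2$-homology, namely $H^{E_2}_{g,d}(\gR_\bZ) = 0$ for $d < g-1$, is Proposition~\ref{prop:MCGE2vanish} (which in turn rests on Theorem~\ref{thm:MCGSplittingConn} together with the transfer of vanishing lines from $E_1$- to $E_2$-homology). The remaining low-degree input concerns the stabilising class $\sigma \in H_{1,0}(\gR_\bZ)$: here one uses the low-genus computations of Lemma~\ref{lem:MCGFacts}, and in particular that the map $\sigma\cdot - \colon H_{1,1}(\gR_\bZ) \to H_{2,1}(\gR_\bZ)$ is surjective, which is immediate from Lemma~\ref{lem:MCGFacts}~(\ref{it:MCGFacts21}) since $H_{2,1}(\gR_\bZ) = \bZ/10$ is generated by $\sigma\tau$ with $\tau \in H_{1,1}(\gR_\bZ)$ a generator. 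With these inputs the theorem applies, and its (first) conclusion is exactly that $H_{g,d}(\overline{\gR}_\bZ/\sigma) = 0$ in the range $3d \le 2g-1$ --- the slope-$\tfrac{2}{3}$ stability line being the standard output of a slope-$1$ vanishing line for $E_2$-homology. Combining this with the identification above proves the corollary; note that it also improves the vanishing range of \cite[Theorem~1]{Boldsen} by one.

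I do not expect a genuine obstacle here: all of the geometry and homotopy theory has been carried out in proving Proposition~\ref{prop:MCGE2vanish} and Lemma~\ref{lem:MCGFacts}, and what remains is simply to invoke Theorem~$E_k$.18.2 correctly. The one point meriting care is the bookkeeping --- matching the precise numerical statement of Theorem~$E_k$.18.2 to confirm that, given the surjectivity of the stabilisation map in bidegree $(1,1)\to(2,1)$, the output constant is $-1$ (so $3d \le 2g-1$) rather than a weaker $-2$.
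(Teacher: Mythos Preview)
Your proposal is correct and follows essentially the same approach as the paper: both invoke Theorem~$E_k$.18.2 with the $E_2$-homology vanishing line from Proposition~\ref{prop:MCGE2vanish} and the surjectivity of $\sigma\cdot - \colon H_{1,1}(\gR_\bZ) \to H_{2,1}(\gR_\bZ)$ from Lemma~\ref{lem:MCGFacts}~(\ref{it:MCGFacts21}), then read off the identification $H_{g,d}(\overline{\gR}_\bZ/\sigma) \cong H_d(\Gamma_{g,1},\Gamma_{g-1,1};\bZ)$.
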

This is slightly better than the vanishing range which can be deduced from \cite{Boldsen, R-WResolution} (which is $3d \leq 2g-2$). In particular, this corollary recovers and slightly improves all previously known homological stability results for mapping class groups.

The second part of Theorem $E_k$.18.2 tells us the vanishing of another $E_2$-homology group, not covered by Proposition \ref{prop:MCGE2vanish}.

\begin{corollary}\label{cor:MCG21vanish} We have $H^{E_2}_{2,1}(\gR_\bZ)=0$.\end{corollary}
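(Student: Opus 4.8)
The plan is to deduce this from the second part of Theorem $E_k$.18.2. That theorem shows that an $E_2$-algebra satisfying the standard connectivity estimate of Definition $E_k$.17.6, for which moreover $\sigma\cdot -\colon H_{1,1}\to H_{2,1}$ is surjective, has $H^{E_2}_{2,1}=0$; in other words it rules out the single bidegree on the line $d=g-1$ that is left open by Proposition~\ref{prop:MCGE2vanish}. So the proof amounts to recording that $\gR_\bZ$ meets these hypotheses, and both are already available: the standard connectivity estimate is Theorem~\ref{thm:MCGSplittingConn}, recorded in the form we need as Corollary~\ref{cor:MCGVanishingLine}, and it holds for $\gR_\bZ$ as well since $\bk[-]$ preserves connectivity; and the surjectivity of $\sigma\cdot -\colon H_{1,1}(\gR_\bZ)\to H_{2,1}(\gR_\bZ)$ was noted just above, using that by Lemma~\ref{lem:MCGFacts}(\ref{it:MCGFacts21}) the target is $\bZ/10$ generated by $\sigma\tau=\sigma\cdot\tau$.

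For orientation, here is the mechanism behind Theorem $E_k$.18.2(2) in this situation. One first checks that $H^{E_1}_{2,1}(\gR_\bZ)=0$: grading $2$ of the reduced $E_1$-bar construction is a two-term object, with $\overline{\gR}_\bZ(2)$ in simplicial degree one and $\overline{\gR}_\bZ(1)^{\otimes 2}$ in simplicial degree two, and in the associated skeletal spectral sequence the only classes of total degree $2$ are $H_0(\Gamma_{1,1})^{\otimes 2}$ and $H_1(\Gamma_{2,1};\bZ)=\bZ/10$. The relevant $d^1$'s are induced by the multiplication: one sends $\sigma\otimes\sigma$ isomorphically to $\sigma^2$, and the other sends $H_1(\Gamma_{1,1})\otimes H_0(\Gamma_{1,1})\oplus H_0(\Gamma_{1,1})\otimes H_1(\Gamma_{1,1})=\bZ^2$ onto $\bZ/10$ precisely because $\sigma\tau$ generates $H_1(\Gamma_{2,1};\bZ)$. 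Hence both classes die, and $H^{E_1}_{2,1}(\gR_\bZ)=0$. One then transfers this vanishing to $H^{E_2}_{2,1}(\gR_\bZ)$ through the bar spectral sequence of Theorem $E_k$.14.2.

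The delicate point — where the content of Theorem $E_k$.18.2(2) really lies — concerns the half-rotation class $Q^1_\bZ(\sigma)\in H_{2,1}(\gR_\bZ)$. In the free $E_2$-algebra $\gE_2(S^{1,0})$ on the $0$-cell $\sigma$ this class is already present in homology but is not itself a cell, because $Q^{E_2}_\bL$ of a free $E_2$-algebra is concentrated on its generators. The worry is that in $\gR_\bZ$, where $Q^1_\bZ(\sigma)=3\sigma\tau$ by Lemma~\ref{lem:MCGFacts}(\ref{it:MCGFactsQ1}) — so that this class interacts both with the other low-dimensional generator $\tau$ and with the $\bZ/10$ of torsion in $H_{2,1}$ — a $(2,1)$-cell might nonetheless be forced. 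Tracking this interaction through the transfer spectral sequence, over $\bZ$ rather than over a field, is the step I expect to be the genuine obstacle in a self-contained argument; for us it is subsumed in the cited theorem, whose surjectivity hypothesis on $\sigma\cdot -$ is exactly what makes the required cancellation go through.
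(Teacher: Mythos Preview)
Your proposal is correct and takes the same approach as the paper: the paper's entire proof is the sentence immediately preceding the corollary, namely that the surjectivity of $\sigma\cdot -\colon H_{1,1}(\gR_\bZ)\to H_{2,1}(\gR_\bZ)$ (from Lemma~\ref{lem:MCGFacts}(\ref{it:MCGFacts21})) together with the standard connectivity estimate allows one to invoke the second part of Theorem~$E_k$.18.2. Your additional ``orientation'' paragraphs are commentary beyond what the paper provides, and you rightly flag that the actual argument is subsumed in the cited theorem.
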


\begin{remark}The discussion of Koszul duality in Section $E_k$.20.3 says that the standard connectivity estimate implies the following. Consider the functor $\underline{\bk}_{>0} \colon \cat{MCG} \to \cat{Mod}_\bk$ given by $\bk$ if $g>0$ and $0$ if $g=0$. Not only does $\underline{\bk}_{>0}$ admit a presentation as a quadratic algebra generated by the trivial $\Gamma_{1,1}$-representation $\bk$ in genus 1 and relations $\ker(\bk[\Gamma_{2,1}/(\Gamma_{1,1} \times \Gamma_{1,1})] \to \bk)$ in genus 2 only, but it has $E_1$-homology groups $H^{E_1}_{g,d}(\underline{\bk}_{>0})$ concentrated along the line $d = g-1$ and given by the associated shifted Koszul dual quadratic coalgebra.\end{remark}

\subsection{Rational secondary homological stability}\label{sec:RatStab} Let us once-and-for-all choose a homotopy equivalence $S^1 \to \cC_2(2)$, so that the image of the fundamental cycle gives a $1$-simplex in $(\cC_2(2))_\bQ \in \cat{sMod}_\bQ$ representing the Browder bracket $[-,-]$. Let $\gA$ be the $E_2$-algebra in $\cat{sMod}_\bQ^\bN$ given by the presentation
\begin{equation*}
\gA \coloneqq \gE_2(S^{1,0}_\bQ\sigma  \oplus S^{3,2}_\bQ\lambda) \cup^{E_2}_{[\sigma,\sigma]} \gD^{2,2}_\bQ \rho,
\end{equation*}
where the letters $\sigma$, $\lambda$ and $\rho$ index the cells, their names serving as reminder for their eventual image in $\gR_\bQ$. The bi-graded spheres $S^{g,d}_\bQ$ are as defined in (\ref{eqn:bigraded-spheres}) and the bi-graded disks $D^{g,d}_\bQ$ are defined analogously.

We have homotopy classes $\sigma$ and $\lambda$ in $\pi_{*,*}(\gR_\bQ)$, cf.~(\ref{eqn:sigma}) and Lemma \ref{lem:MCGFacts} (\ref{it:MCGFacts32abs}), and $\sigma$ satisfies $[\sigma,\sigma] = 0$ in $H_{2,1}(\gR_\bQ)$, as this group vanishes by Lemma \ref{lem:MCGFacts} (\ref{it:MCGFactsQ1}). Let us choose maps $\sigma \colon S^{1,0}_\bQ \to \gR_\bQ$ and $\lambda \colon S^{3,2}_\bQ \to \gR_\bQ$ representing $\sigma \in H_{1,0}(\gR_\bQ)$ and $\lambda \in H_{3,2}(\gR_\bQ)$. Also choose a null-homotopy $\rho \colon D^{2,2}_\bQ \to \gR_\bQ$ of $[\sigma, \sigma] \colon \smash{S^{2,1}_\bQ} \to \gR_\bQ$. Using this data we may construct a map
\[c \colon \gA \lra \gR_\bQ\]
in $\Alg_{E_2}(\cat{sMod}_\bQ^\bN)$. That is, the formal generator $\sigma$ of $\gA$ is sent to the representative $\sigma \colon \smash{S^{1,0}_\bQ} \to \gR_\bQ$, and similarly for $\lambda$ and $\rho$.

The main virtue of the $E_2$-algebra $\gA$ is that it ``contains all the cells of small slope'' in $\gR_\bQ$: it is possible to build a CW approximation to $\gR_\bQ$ by starting from $\gA$ by attaching only cells of slope $\geq \frac34$.  We shall not work directly with such a CW approximation, instead using the following vanishing result for relative $E_2$-homology (defined as the relative homology of $Q^{E_2}_\bL(\gA) \to Q^{E_2}_\bL(\gR_\bQ)$ as in Definition $E_k$.10.7).
\begin{lemma}\label{lem:VanishingRelE2HomologyMCG}
We have $H^{E_2}_{g,d}(\gR_\bQ, \gA)=0$ for $4d \leq 3g-1$.
\end{lemma}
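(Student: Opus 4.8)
The plan is to run the long exact sequence of the pair $(\gR_\bQ, \gA)$ against a direct computation of $H^{E_2}_{*,*}(\gA)$. By Definition~$E_k$.10.7 there is a long exact sequence
\[\cdots \to H^{E_2}_{g,d}(\gA) \xrightarrow{c_*} H^{E_2}_{g,d}(\gR_\bQ) \to H^{E_2}_{g,d}(\gR_\bQ,\gA) \xrightarrow{\partial} H^{E_2}_{g,d-1}(\gA) \xrightarrow{c_*} H^{E_2}_{g,d-1}(\gR_\bQ) \to \cdots ,\]
so I would first compute $H^{E_2}_{*,*}(\gA)$. Since $\gA$ is a CW $E_2$-algebra with one cell in each of the bidegrees $(1,0)$, $(2,2)$ and $(3,2)$, and since its only positive-dimensional attaching map is the Browder bracket $[\sigma,\sigma]$, which is $E_2$-decomposable and therefore becomes null-homotopic after applying the homotopy-colimit-preserving functor $Q^{E_2}_\bL$, applying $Q^{E_2}_\bL$ to the defining pushout shows that $Q^{E_2}_\bL(\gA) \simeq S^{1,0}_\bQ \oplus S^{2,2}_\bQ \oplus S^{3,2}_\bQ$. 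Hence $H^{E_2}_{g,d}(\gA) = \bQ$ for $(g,d) \in \{(1,0),(2,2),(3,2)\}$ and is zero otherwise. A quick check shows that whenever $4d \leq 3g-1$ one has $(g,d-1) \notin \{(1,0),(2,2),(3,2)\}$, so $H^{E_2}_{g,d-1}(\gA) = 0$ and the exact sequence collapses to $H^{E_2}_{g,d}(\gR_\bQ,\gA) \cong \mathrm{coker}\bigl(c_*\colon H^{E_2}_{g,d}(\gA) \to H^{E_2}_{g,d}(\gR_\bQ)\bigr)$. It thus remains to prove that $c_*$ is onto whenever $4d \leq 3g-1$.

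Next I would reduce to three bidegrees. Proposition~\ref{prop:MCGE2vanish} holds with $\bQ$-coefficients (its proof uses only Corollary~\ref{cor:MCGVanishingLine}, which is stated for arbitrary coefficients, together with the bar spectral sequence), so $H^{E_2}_{g,d}(\gR_\bQ) = 0$ for $d < g-1$ and surjectivity is automatic there. If $d \geq g-1$, then $4d \leq 3g-1$ forces $d = g-1$ and $g \leq 3$, leaving only $(1,0)$, $(2,1)$, $(3,2)$. In bidegree $(1,0)$, genus $1$ admits no decomposition into smaller positive genera, so $Q^{E_2}_\bL(\gR_\bQ)(1) \simeq \gR_\bQ(1)$ and $H^{E_2}_{1,0}(\gR_\bQ) = H_0(\Gamma_{1,1};\bQ) = \bQ\cdot\sigma$, which is the image under $c_*$ of the generator $\sigma$ of $\gA$. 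In bidegree $(2,1)$, $H^{E_2}_{2,1}(\gR_\bQ) = 0$ by Corollary~\ref{cor:MCG21vanish}. So these two are immediate.

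The remaining case — surjectivity of $c_*\colon \bQ\cdot\lambda = H^{E_2}_{3,2}(\gA) \to H^{E_2}_{3,2}(\gR_\bQ)$ — is the heart of the matter, and the plan is to identify $H^{E_2}_{3,2}(\gR_\bQ)$ outright. Because $H^{E_2}_{g,d}(\gR_\bQ) = 0$ for all $d < g-1$, the Hurewicz theorem for $E_2$-homology applies on the line $d = g-1$ and identifies $H^{E_2}_{g,g-1}(\gR_\bQ)$ with the quotient of $H_{g,g-1}(\gR_\bQ)$ by the subgroup spanned by all products $x\cdot y$ and Browder brackets $[x,y]$ of homology classes of lower positive grading. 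For $g = 3$ I would check, using Lemma~\ref{lem:MCGFacts}, that this subgroup of $H_{3,2}(\gR_\bQ)$ is zero: every candidate product lands in a tensor product $H_{1,\ast}(\gR_\bQ) \otimes H_{2,\ast}(\gR_\bQ)$ of total degree $2$, all of which vanish since $H_{1,2}(\gR_\bQ) = H_{2,1}(\gR_\bQ) = H_{2,2}(\gR_\bQ) = 0$ by Lemma~\ref{lem:MCGFacts}(\ref{it:MCGFactsg2}) and (\ref{it:MCGFacts21}); and among the Browder brackets only $[\sigma, H_{2,1}(\gR_\bQ)] = 0$ and $[\tau,\sigma^2] = 2\,\sigma\cdot[\tau,\sigma]$ need checking, the latter vanishing because $[\tau,\sigma] \in H_{2,2}(\gR_\bQ) = 0$. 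Therefore $H^{E_2}_{3,2}(\gR_\bQ) = H_{3,2}(\gR_\bQ) = \bQ\cdot\lambda$ by Lemma~\ref{lem:MCGFacts}(\ref{it:MCGFacts32abs}), and since $c$ carries the cell $\lambda$ of $\gA$ to a representative of the class $\lambda$, the map $c_*$ is surjective, which finishes the proof.

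The step I expect to be the main obstacle is the last one, bidegree $(3,2)$: everything else is bookkeeping with long exact sequences, but identifying $H^{E_2}_{3,2}(\gR_\bQ)$ requires leaving the formal machinery and feeding in the low-genus homology of mapping class groups. In particular one must be sure that the potential ``secondary'' contribution, the Browder bracket $[\tau,\sigma^2]$, really vanishes — this is precisely what the computation $H_2(\Gamma_{2,1};\bQ)=0$ (Lemma~\ref{lem:MCGFacts}(\ref{it:MCGFactsg2})) is needed for — and it is worth noting that the application of the $E_2$-Hurewicz theorem here relies on the \emph{full} vanishing range $H^{E_2}_{g,d}(\gR_\bQ)=0$ for $d<g-1$, hence ultimately on the high-connectivity of the $E_1$-splitting complex established in Theorem~\ref{thm:MCGSplittingConn}.
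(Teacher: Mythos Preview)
Your reduction to bidegrees $(1,0)$, $(2,1)$, $(3,2)$ via the long exact sequence, your computation of $H^{E_2}_{*,*}(\gA)$, and your handling of $(1,0)$ and $(2,1)$ are all correct and match the paper. The gap is in bidegree $(3,2)$.

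The ``Hurewicz theorem for $E_2$-homology'' you invoke---that $H^{E_2}_{g,g-1}(\gR_\bQ)$ equals $H_{g,g-1}(\gR_\bQ)$ modulo products and brackets, assuming only that $H^{E_2}_{g,d}(\gR_\bQ)=0$ for $d<g-1$---is not a theorem available in \cite{e2cellsI}. Hurewicz theorems (such as Proposition~$E_k$.11.9) take connectivity of \emph{ordinary} homology as input and produce connectivity of the Hurewicz map as output; they do not run in the direction you need. Here $\gR_\bQ(3)$ is only $0$-connective (since $H_{3,0}(\gR_\bQ)=\bQ\{\sigma^3\}\neq 0$), so there is no a priori reason for the Hurewicz map $H_{3,2}(\gR_\bQ)\to H^{E_2}_{3,2}(\gR_\bQ)$ to be surjective. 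Your computation that products and brackets vanish in $H_{3,2}(\gR_\bQ)$ constrains at most the kernel of this map, not its cokernel, so you cannot conclude that $H^{E_2}_{3,2}(\gR_\bQ)=\bQ\{\lambda\}$.

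The paper sidesteps this by working \emph{relatively}. The same low-genus computations you cite (Lemma~\ref{lem:MCGFacts}(\ref{it:MCGFacts11})--(\ref{it:MCGFactsg1}),(\ref{it:MCGFactsg2}),(\ref{it:MCGFacts32abs})) show that $H_{g,1}(\gR_\bQ,\gA)=0$ for all $g$ and $H_{g,2}(\gR_\bQ,\gA)=0$ for $g\leq 3$, i.e.\ that $c\colon\gA\to\gR_\bQ$ is $(3,3,3,3,2,2,\ldots)$-connective on ordinary homology. Proposition~$E_k$.11.9 then makes the map of cofibres $\gR_\bQ/\gA \to Q^{E_2}_\bL(\gR_\bQ)/Q^{E_2}_\bL(\gA)$ be $(4,4,4,4,3,3,\ldots)$-connective, so the relative Hurewicz map $H_{3,2}(\gR_\bQ,\gA)\to H^{E_2}_{3,2}(\gR_\bQ,\gA)$ is an isomorphism; since the source vanishes, so does the target. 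Your low-genus calculations are exactly the right input---they just need to be fed into the relative Hurewicz theorem rather than an unjustified absolute one.
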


\begin{proof}
We already know that $H^{E_2}_{g,d}(\gR_\bZ)=0$ for $d < g-1$, and $\gA$ enjoys the same vanishing because it only has cells in bidegrees $(1,0)$, $(3,2)$ and $(2,2)$. From the long exact sequence ($E_k$.10.1) in $E_2$-homology of the map $c$ we conclude that $\smash{H^{E_2}_{g,d}}(\gR_\bQ, \gA)=0$ for $d < g-1$, so it remains to show that $\smash{H^{E_2}_{g,d}}(\gR_\bQ, \gA)$ vanishes in bidegrees $(g,d)$ given by $(1,0)$, $(2,1)$, and $(3,2)$. 

The long exact sequence for $E_2$-homology for the pair $(\gR_\bQ, \gA)$ also gives
\[H^{E_2}_{1,0}(\gA) = \bQ\{\sigma\} \overset{\cong}\lra H^{E_2}_{1,0}(\gR_\bQ) = \bQ\{\sigma\} \lra H^{E_2}_{1,0}(\gR_\bQ, \gA) \lra 0\]
so $H^{E_2}_{1,0}(\gR_\bQ, \gA)=0$, and
\[H^{E_2}_{2,1}(\gR_\bQ) \lra H^{E_2}_{2,1}(\gR_\bQ, \gA) \lra H^{E_2}_{2,0}(\gA) = 0,\]
with $H^{E_2}_{2,1}(\gR_\bQ)=0$ by Corollary \ref{cor:MCG21vanish}, so $H^{E_2}_{2,1}(\gR_\bQ, \gA)=0$. 

Finally, combining Lemma \ref{lem:MCGFacts} (\ref{it:MCGFacts11}), (\ref{it:MCGFacts21}) and (\ref{it:MCGFactsg1}) shows that $H_{g, 1}(\gR_\bQ, \gA)=0$ for all $g$, and combining Lemma \ref{lem:MCGFacts} (\ref{it:MCGFactsg2}) and (\ref{it:MCGFacts32abs}) shows that $H_{g, 2}(\gR_\bQ, \gA)=0$ for all $g \leq 3$: thus the map $c$ is $(3,3,3,3,2,2,\ldots)$-connective in the sense of Definition $E_k$.11.2, that is, the relative homology groups $H_d(\gR_\bQ(g),\gA(g))$ vanish for $d < 3$ if $g \leq 3$, and for $d<2$ if $g \geq 4$. 

The objects $\gA$ and $\gR_\bQ$ are 0-connective, so it follows from Proposition $E_k$.11.9 (applied to $c \colon \gA \to \gR_\bQ$ with connectivities $c = (0,0,0,\ldots)$, $c_f = (3,3,3,3,2,2,\ldots)$ as above, and the operad $\cO = \cC_2$ satisfying $\cO(1) \simeq \ast$) that the map between homotopy cofibres
\begin{equation}\label{eq:RelHurewicz}
\gR_\bQ/\gA \lra Q_\bL^{E_2}(\gR_\bQ)/Q_\bL^{E_2}(\gA)
\end{equation}
is $(4,4,4,4,3,3,\ldots)$-connective. In particular the Hurewicz map $H_{3,2}(\gR_\bQ, \gA) \to H_{3,2}^{E_2}(\gR_\bQ, \gA)$ is an isomorphism, and so the latter group vanishes.
\end{proof}

By iterating Definition $E_k$.12.14, there is a left $\overline{\gR}_\bQ$-module $\overline{\gR}_\bQ/(\sigma, \lambda)$ which fits into a cofibre sequence
\begin{equation}\label{eq:MultKappa}
S^{3,2} \otimes \overline{\gR}_\bQ/\sigma \overset{\phi(\lambda)}\lra \overline{\gR}_\bQ/\sigma \lra \overline{\gR}_\bQ/(\sigma, \lambda) \overset{\partial}\lra S^{3,3} \otimes \overline{\gR}_\bQ/\sigma
\end{equation}
in the category of $\overline{\gR}_\bQ$-modules, where after forgetting the $\overline{\gR}_\bQ$-module structure, $\phi(\lambda)$ is homotopic to left multiplication by $\lambda \in H_{3,2}(\overline{\gR}_\bQ)$. One can make the analogous definition for $\gA$ and since the representatives $\sigma \colon S^{1,0}_\bQ \to \gR_\bQ$ and $\lambda \colon S^{3,2}_\bQ \to \gR_\bQ$ by construction lift to $\gA$ along $c$, equation ($E_k$.12.11) in Section $E_k$.12.2.4 gives us a weak equivalence
\[\overline{\mathbf{R}}_{\mathbb{Q}}/(\sigma, \lambda) \simeq \overline{\mathbf{R}}_{\mathbb{Q}} \otimes^{\mathbb{L}}_{\overline{\mathbf{A}}} \overline{\mathbf{A}}/(\sigma, \lambda) \]
of left $\overline{\gR}_\bQ$-modules. As $\overline{\gA}/(\sigma, \lambda)$, $\overline{\gA}$, and $\overline{\gR}_\bQ$ are cofibrant in $\cat{sMod}_\bQ^\bN$, by Lemma $E_k$.9.16 we may compute the derived tensor product using the bar construction $B(\overline{\gR}_\bQ;\overline{\gA};\overline{\gA}/(\sigma, \lambda))$. This means it shall suffice to establish a vanishing line for $\overline{\gA}/(\sigma, \lambda)$.

To do so, we use the cell attachment filtration $f\gA$ on $\gA$. This is an $E_2$-algebra in the category $\bN$-graded simplicial $\bQ$-modules with an additional filtration, i.e.~$f\gA \in \Alg_{E_2}((\cat{sMod}_\bQ^\bN)^{\bZ_\leq})$ (and \emph{not} a filtration of $\gA$ by $E_2$-algebras). It is obtained as follows. Whenever we have a filtered object $X$, we may evaluate it at $d \in \bZ_\leq$ to get its $d$th filtration step. This evaluation functor $d^*$ has a left adjoint $d_*$. We may put $\gE_2(S^{1,0}_\bQ \sigma \oplus S^{3,2}_\bQ \lambda)$ in filtration degree $0$ by putting its generators in filtration degree $0$, $\gE_2(0_* S^{1,0}_\bQ \sigma \oplus 0_* S^{3,2}_\bQ \lambda)$. Since its first filtration step (like its zeroth) is equal to $\gE_2(S^{1,0}_\bQ \sigma \oplus S^{3,2}_\bQ \lambda)$, the map $[\sigma,\sigma] \colon S^{2,1}_\bQ \to \gE_2(S^{1,0}_\bQ \sigma \oplus S^{3,2}_\bQ \lambda)$ gives us a filtered map $1_* \partial D^{2,2}_\bQ \to \gE_2(0_*S^{1,0}_\bQ \sigma \oplus 0_*S^{3,2}_\bQ \lambda)$. Using this as attaching map for an $E_2$-cell in $\Alg_{E_2}((\cat{sMod}_\bQ^\bN)^{\bZ_\leq})$ we may form the filtered algebra
\[f\gA \coloneqq \left(\gE_2(0_* S^{1,0}_\bQ\sigma  \oplus 0_* S^{3,2}_\bQ\lambda)\right) \cup^{E_2}_{[\sigma, \sigma]} 1_* \mathbf{D}^{2,2}_{\mathbb{Q}} \rho) \in \Alg_{E_2}((\cat{sMod}_\bQ^\bN)^{\bZ_{\leq}}).\]
Its underlying object, defined as the colimit over $\bZ_{\leq}$, is $\gA$.

\begin{lemma}\label{lem:VanishA}
	$H_{g,d}(\overline{\gA}/(\sigma, \lambda))=0$ for $d < \tfrac{3}{4} g$.
\end{lemma}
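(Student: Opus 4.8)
The plan is to exploit the cell-attachment filtration $f\gA$ just constructed. Its associated graded $\mr{gr}(f\gA)$ is the free $E_2$-algebra $\gE_2(S^{1,0}_\bQ\sigma \oplus S^{3,2}_\bQ\lambda \oplus S^{2,2}_\bQ\rho)$, with $\sigma$ and $\lambda$ in filtration degree $0$ and $\rho$ in filtration degree $1$ (the attaching map of the $\rho$-cell becomes null after passing to associated gradeds). Because $\sigma$ and $\lambda$ sit in filtration degree $0$, the iterated module-quotient construction of Definition $E_k$.12.14 and the unital replacement $\overline{(-)}$ both commute strictly with forming associated gradeds, so $f\gA$ induces a filtration on $\overline{\gA}/(\sigma,\lambda)$ whose associated graded is $\overline{\mr{gr}(f\gA)}/(\sigma,\lambda)$. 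In each bidegree $(g,d)$ this filtration is finite -- a monomial in genus $g$ involves at most $\lfloor g/2 \rfloor$ copies of $\rho$ -- so the resulting spectral sequence converges strongly. It is therefore enough to prove that the $E^2$-page, i.e.\ the $d^1$-homology of $E^1 = H_{*,*}\big(\overline{\mr{gr}(f\gA)}/(\sigma,\lambda)\big)$, vanishes in bidegrees $(g,d)$ with $d < \tfrac34 g$, since $E^\infty$ is a subquotient of $E^2$.

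First I would identify the $E^1$-page. Over $\bQ$ the homotopy of a free $E_2$-algebra is the free Gerstenhaber algebra on the homotopy of its generating object, so $H_{*,*}\big(\overline{\mr{gr}(f\gA)}\big)$ is the free Gerstenhaber algebra $G$ on classes $\sigma$, $\lambda$, $\rho$ in bidegrees $(1,0)$, $(3,2)$, $(2,2)$; as a (graded-)commutative algebra $G = \mr{Sym}(L)$, where $L$ is the free Lie algebra on $\sigma,\lambda,\rho$ for the Browder bracket. As $\sigma$ and $\lambda$ have even homological degree they are polynomial generators of $\mr{Sym}(L)$, so multiplication by $\sigma$ and then by $\lambda$ is injective, and the long exact sequences of the cofibre sequences defining the two quotients give
\[ E^1 \;=\; H_{*,*}\big(\overline{\mr{gr}(f\gA)}/(\sigma,\lambda)\big) \;=\; \mr{Sym}(U), \qquad U := L/(\bQ\sigma\oplus\bQ\lambda), \]
the quotient of $L$ by the two-dimensional \emph{subspace} spanned by $\sigma$ and $\lambda$ (so $U$ still contains $[\sigma,\sigma]$, $[\sigma,\lambda]$, $[\sigma,\rho]$, and so on). The differential $d^1$ is induced by the unique Gerstenhaber derivation of $\mr{Sym}(L)$ with $d^1\sigma = d^1\lambda = 0$ and $d^1\rho = [\sigma,\sigma]$, this being the attaching map of the $\rho$-cell (which, being a single bracket of filtration-$0$ classes, is homogeneous with no lower-order correction).

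The crux is a short slope count. A Lie word in $\sigma,\lambda,\rho$ containing $a$ copies of $\sigma$, $b$ of $\lambda$ and $c$ of $\rho$ lies in bidegree $(a+3b+2c,\; a+3b+3c-1)$, so its ``defect'' $d - \tfrac34 g$ equals $\tfrac14 a + \tfrac34 b + \tfrac32 c - 1$. Inspecting this: the defect is negative only for $(a,b,c) = (2,0,0)$, i.e.\ only for $[\sigma,\sigma]$ (defect $-\tfrac12$); it is $\geq \tfrac12$ whenever $c \geq 1$; and it is $\geq 0$ for every other $\rho$-free Lie word. Moreover $[\sigma,\sigma]$ has odd homological degree, so it occurs with multiplicity at most one in any monomial of $\mr{Sym}(U)$, and the defect of a monomial is the sum of the defects of its Lie-word factors. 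Hence a monomial of $\mr{Sym}(U)$ lies in a bidegree $(g,d)$ with $d < \tfrac34 g$ only if it has the form $[\sigma,\sigma]\cdot x$ with $x$ a $\rho$-free monomial.

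For such an $x$ we have $d^1 x = 0$, since $x$ is a polynomial in the $d^1$-cycles $\sigma$ and $\lambda$; and as $\rho$ has even degree, $d^1(\rho\, x) = [\sigma,\sigma]\cdot x$. Now let $z$ be any bihomogeneous $d^1$-cycle in a bidegree $(g,d)$ with $d < \tfrac34 g$. Every monomial appearing in $z$ has the form above, so $z = [\sigma,\sigma]\cdot x$ for a single $\rho$-free $x$ with $d^1 x = 0$; therefore $z = d^1(\rho\, x)$ is a boundary. Thus $E^2_{g,d} = 0$ for $d < \tfrac34 g$, and by convergence $H_{g,d}(\overline{\gA}/(\sigma,\lambda)) = 0$ for $d < \tfrac34 g$, as claimed. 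The points needing care are bookkeeping ones: verifying that $E^1$ and $d^1$ are exactly the explicit Gerstenhaber data above -- which combines the compatibility of ``$\mr{gr}$'' with $\overline{(-)}$ and with the module-quotient (using that $\sigma,\lambda$ lie in filtration degree $0$) with the Getzler--Jones/F.~Cohen description of the homotopy of a free $E_2$-algebra in characteristic $0$ -- together with the finite defect computation; with those in hand the argument is as above.
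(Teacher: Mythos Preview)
Your proof is correct and uses the same cell-attachment filtration spectral sequence as the paper, arriving at the same identification of $E^1 = \mr{Sym}(U)$ with $d^1$ determined by $d^1\rho = [\sigma,\sigma]$. The only real difference is how you pass from $E^1$ to $E^2$: the paper introduces an auxiliary ``computational'' filtration (placing $[\sigma,\sigma]$ and $\rho$ in filtration $0$, everything else by homological degree) whose associated graded has the acyclic factor $(\Lambda_\bQ\{[\sigma,\sigma],\rho\},\delta)$, so that what survives is the free commutative algebra on $L/\langle \sigma,\lambda,[\sigma,\sigma],\rho\rangle$, all of whose generators have slope $\geq \tfrac34$. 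You instead do a direct slope count on Lie words, identify the unique negative-defect element $[\sigma,\sigma]$, and then exhibit explicit $d^1$-preimages $\rho\cdot x$.

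Two minor points of phrasing to clean up. First, your sentence ``$x$ is a polynomial in the $d^1$-cycles $\sigma$ and $\lambda$'' is literally false, since $\sigma$ and $\lambda$ have been killed in $U$; what you mean (and use) is that each factor of $x$ is an iterated bracket of $\sigma$'s and $\lambda$'s, hence lies in filtration degree $0$, so $d^1 x = 0$ for filtration reasons. Second, you assert that $d^1$ is a Gerstenhaber derivation; this is true but not needed --- the filtration-degree argument for $d^1 x = 0$ together with multiplicativity ($d^1(\rho x) = (d^1\rho)\,x$) suffices. The paper's filtration method is slightly more systematic (and is reused for the analogous Lemma~\ref{lem:VanishB} and in the integral Theorem~\ref{thm:IntStabMCG}), while your direct argument is more elementary and self-contained for this one lemma.
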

\begin{proof}
Let us give $\gA$ the cell-attachment filtration as explained above, that is consider $\gA$ as the colimit of the filtered $E_2$-algebra $f\gA$. An important property of the cell-attachment filtration is that by passing to the associated graded the attaching maps become trivial. More precisely, Theorem $E_k$.6.4 implies that
\[\grr(f\gA) = \gE_2(S^{1,0,0}_\bQ\sigma  \oplus S^{3,2,0}_\bQ\lambda \oplus S^{2,2,1}_\bQ \rho) \in \Alg_{E_2}((\cat{sMod}_\bQ^\bN)^{\bZ_{=}}),\]
where the tri-graded spheres $S^{g,d,r}_\bQ$ are defined analogously to the bigraded spheres of (\ref{eqn:bigraded-spheres}).

We obtain a corresponding unital and strictly associative algebra $\overline{f\gA}$. Because
\[\overline{f\gA}(0) = \overline{\gE_2(S^{1,0}_\bQ\sigma  \oplus S^{3,2}_\bQ\lambda)},\]
we obtain by adjunction filtered maps
\[\sigma \colon 0_*(S^{1,0}_\bQ) \lra \overline{f\gA} \quad \text{ and }\quad \lambda 
\colon 0_*(S^{3,2}_\bQ) \lra \overline{f\gA},\]
lifting the maps of the same name into $\overline{\gA}$, and using these we may form the $\overline{f\gA}$-module $\overline{f\gA}/(\sigma, \lambda) \in (\cat{sMod}_\bQ^\bN)^{\bZ_{\leq}}$, which on taking colimits recovers the $\overline{\gA}$-module $\overline{\gA}/(\sigma, \lambda)$.

Thus the spectral sequence of the filtered object $\overline{f\gA}$ as described in Corollary $E_k$.10.17 takes the form
\[F^1_{g,p,q} = H_{g,p+q,q}\left(\overline{\gE_2(S^{1,0,0}_\bQ\sigma  \oplus S^{3,2,0}_\bQ\lambda \oplus S^{2,2,1}_\bQ \rho)};\bQ\right) \Rightarrow H_{g,p+q}(\overline{\gA}),\]
and by Section $E_k$.16.6 is a multiplicative spectral sequence whose $d^1$-differential satisfies $d^1(\rho)=[\sigma,\sigma]$. The $F^1$-page of this spectral sequence is the rational homology of a free $E_2$-algebra, and hence given by the free (1-)Gerstenhaber algebra on the generators $\{\sigma[1,0,0], \lambda[3,2,0], \rho[2,2,1]\}$. In particular, as a graded-commutative algebra it is free on the free (1-)Lie algebra $L$ on these three generators, see Figure \ref{fig:lgens}.

\begin{figure}[h]
	\begin{tikzpicture}
	\begin{scope}
	\draw (-1,0)--(5.5,0);
	\draw (0,-1) -- (0,3.5);
	
	\foreach \s in {0,...,3}
	{
		\draw [dotted] (-.5,\s)--(5.5,\s);
		\node [fill=white] at (-.25,\s) [left] {\tiny $\s$};
	}
	
	\foreach \s in {1,...,4}
	{
		\draw [dotted] ({1.25*\s},-0.5)--({1.25*\s},3.5);
		\node [fill=white] at ({1.25*\s},-.5) {\tiny $\s$};
	}
	\node [fill=white] at (0,-.5) {\tiny 0};
	\node [fill=white] at ({1.25*1},0) {$\sigma$};
	\node [fill=white] at ({1.25*2},1) {$[\sigma,\sigma]$};
	\node [fill=white] at ({1.25*2},2) {$\rho$};
	\node [fill=white] at ({1.25*3},2) {$\lambda$};
	\node [fill=white] at ({1.25*3},3) {$[\rho,\sigma]$};
	\node [fill=white] at ({1.25*4},3) {$[\lambda,\sigma]$};

	\node [fill=white] at (-.5,-.5) {$\nicefrac{d}{g}$};
	\end{scope}
	\end{tikzpicture}
	\caption{The additive generators of $L$ in the range $g \leq 4$, $d \leq 3$, with filtration degree suppressed. The Jacobi relation implies that $[\sigma,[\sigma,\sigma]]=0$ in bidegree $(3,2)$.}
	\label{fig:lgens}
\end{figure}
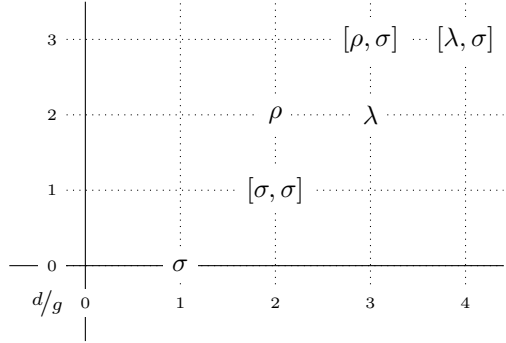

Similarly, the spectral sequence of the filtered object $\overline{f\gA}/(\sigma, \lambda)$ takes the form	
\[E^1_{g,p,q} = H_{g,p+q,q}\left(\overline{\gE_2(S^{1,0,0}_\bQ\sigma  \oplus S^{3,2,0}_\bQ\lambda \oplus S^{2,2,1}_\bQ \rho)}/(\sigma, \lambda);\bQ\right) \Rightarrow H_{g,p+q}(\overline{\gA}/(\sigma, \lambda)).\]
The indices in the trigrading $(g,p,q)$ of both spectral sequences have $g$ denoting the internal (``genus'') grading arising from starting with an $E_2$ algebra in \emph{graded} simplicial sets, $p$ denoting homological degree, and $q$ denoting the extra grading arising from passing to associated graded. The spectral sequence $E^r_{*,*,*}$ has the structure of a module over the multiplicative spectral sequence $F^r_{*,*,*}$, and the $E^1$-page is the free graded-commutative algebra $\Lambda_\bQ(L/\langle \sigma, \lambda \rangle)$ on the quotient vector space $L/\langle \sigma, \lambda \rangle$. The $d^1$-differential on $E^1_{*,*,*}$ is determined by the $d^1$-differential on $F^1_{*,*,*}$, and hence gives $E^1_{*,*,*}$ the structure of a CDGA.

\vspace{1ex}

\noindent\textbf{Claim}: $E^2_{g,p,q}$ vanishes for $\frac{p+q}{g} < \frac{3}{4}$.

\begin{proof}[Proof of claim]

The groups $E^2_{*,*,*}$ are given by the homology groups of the chain complex $(E^1_{*,*,*}, d^1) = (\Lambda_\bQ(L/\langle \sigma, \lambda \rangle),d^1)$, so to estimate these we will introduce an additional ``computational'' filtration on this chain complex which has the virtue of filtering away most of the $d^1$-differential. Let $F^\bullet E^1_{*,*,*}$ be the filtration in which $[\sigma,\sigma]$ and $\rho$ are given filtration 0, the remaining elements of a homogeneous basis of $L/\langle \sigma, \lambda \rangle$ extending these are given filtration equal to their homological degree, and this filtration is extended to $\Lambda_\bQ(L/\langle \sigma, \lambda \rangle)$ multiplicatively. The differential $d^1$ preserves this filtration.

The associated graded $\grr(F^\bullet E^1_{*,*,*})$ of this filtered chain complex can be identified with the tensor product of chain complexes 
\[(\Lambda_\bQ(\bQ\{[\sigma,\sigma],\rho\}), \delta) \otimes (\Lambda_\bQ(L/\langle \sigma,\lambda,[\sigma,\sigma],\rho \rangle), 0)\]
where $\delta([\sigma,\sigma])=0$ and $\delta(\rho) = [\sigma,\sigma]$. The first factor has homology $\bQ$, in degree $(0,0,0,0)$, so the spectral sequence for this filtration starting from its second page has the form
\[\Lambda_\bQ(L/\langle \sigma,\lambda,[\sigma,\sigma],\rho \rangle) \Rightarrow E^2_{*,*,*}.\]
All elements of $L/\langle \sigma,\lambda,[\sigma,\sigma],\rho \rangle$ lie in degrees $(g,p,q,r)$ with $\frac{p+q}{g}\geq \frac{3}{4}$, so we conclude that $E^2_{g, p, q}$ vanishes for $\frac{p+q}{g} < \frac{3}{4}$.
\end{proof}

It follows from the spectral sequence that $H_{g,d}(\overline{\gA}/(\sigma, \lambda))$ vanishes for $\frac{d}{g} < \frac{3}{4}$.\end{proof}

We have $H_{g,d}^{E_2}(\gA)=0$ for $d < g-1$, and by Lemma \ref{lem:VanishingRelE2HomologyMCG} we have $H^{E_2}_{g,d}(\gR_\bQ, \gA)=0$ for $d < \tfrac{3}{4}g$. We then apply Theorem~$E_k$.15.4 to $B(\overline{\gR}_\bQ;\overline{\gA};\overline{\gA}/(\sigma, \lambda)) \simeq \overline{\gR}_\bQ/(\sigma, \lambda)$ (with $k=2$, $\cat{S} = \cat{sMod}_\bQ$, $\cat{G} = \bN$, $f$ given by $c \colon \gA \to \gR_\bQ$, $\gM = \overline{\gA}/(\sigma, \lambda)$, $\rho(g) = g$ and $\mu(g) = \frac{3}{4}g$). The conclusion is that $H_{g,d}(\overline{\gR}_\bQ/(\sigma, \lambda))=0$ for $d < \tfrac{3}{4}g$, and the long exact sequence on homology for the cofibre sequence \eqref{eq:MultKappa} gives the following, which is our secondary homological stability theorem in the case of rational coefficients.

\begin{corollary}\label{cor:MCGSecStab}
The map
\[\lambda \cdot - \colon H_{d-2}(\Gamma_{g-3,1}, \Gamma_{g-4,1};\bQ) \lra H_{d}(\Gamma_{g,1}, \Gamma_{g-1,1};\bQ)\]
is an epimorphism for $4d \leq 3g-1$ and an isomorphism for $4d \leq 3g-5$.
\end{corollary}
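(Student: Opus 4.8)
The plan is to read the statement off the long exact sequence in homology of the cofibre sequence \eqref{eq:MultKappa}, using the vanishing line for $\overline{\gR}_\bQ/(\sigma,\lambda)$ that has just been established.

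First I would assemble the identifications. We already know $H_{g,d}(\overline{\gR}_\bQ/\sigma) \cong H_d(\Gamma_{g,1}, \Gamma_{g-1,1};\bQ)$; and since $S^{3,2}\otimes-$ shifts the genus grading up by $3$ and, being built from the $2$-simplex, the homological degree up by $2$ (and likewise $S^{3,3}\otimes-$ shifts by $(3,3)$), we get
\[H_{g,d}(S^{3,2}\otimes\overline{\gR}_\bQ/\sigma) \cong H_{g-3,d-2}(\overline{\gR}_\bQ/\sigma) \cong H_{d-2}(\Gamma_{g-3,1}, \Gamma_{g-4,1};\bQ).\]
Moreover, forgetting the module structure, the map $\phi(\lambda)$ appearing in \eqref{eq:MultKappa} is homotopic to left multiplication by $\lambda \in H_{3,2}(\overline{\gR}_\bQ)$, which is exactly the operation denoted $\lambda\cdot-$ in the statement (boundary connected sum with the class $\lambda$).

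Next I would extract the relevant four-term stretch of the long exact sequence of \eqref{eq:MultKappa} in internal degree $g$ and homological degree $d$, namely
\[H_{g,d+1}(\overline{\gR}_\bQ/(\sigma,\lambda)) \lra H_{d-2}(\Gamma_{g-3,1},\Gamma_{g-4,1};\bQ) \overset{\lambda\cdot-}\lra H_d(\Gamma_{g,1},\Gamma_{g-1,1};\bQ) \lra H_{g,d}(\overline{\gR}_\bQ/(\sigma,\lambda)).\]
Feeding in the vanishing line $H_{g,d}(\overline{\gR}_\bQ/(\sigma,\lambda))=0$ for $d<\tfrac34 g$: the right-hand term vanishes precisely when $4d \leq 3g-1$, so $\lambda\cdot-$ is an epimorphism in that range; and the left-hand term vanishes when $d+1<\tfrac34 g$, i.e.\ when $4d \leq 3g-5$, so $\lambda\cdot-$ is also injective, hence an isomorphism, in that smaller range.

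The genuine content --- the vanishing line for $\overline{\gR}_\bQ/(\sigma,\lambda)$, obtained above from Theorem~$E_k$.15.4 applied to $B(\overline{\gR}_\bQ;\overline{\gA};\overline{\gA}/(\sigma,\lambda))$ --- is already in place, so I do not expect any real obstacle here; the corollary is a formal consequence of the long exact sequence. The only things to watch are the bookkeeping with the $(3,2)$- and $(3,3)$-shifts, so that the two groups flanking $\lambda\cdot-$, together with the connecting homomorphism landing in $H_{g,d}(S^{3,3}\otimes\overline{\gR}_\bQ/\sigma)$, are identified correctly, and the translation of the strict slope bound $d<\tfrac34 g$ into the integer inequalities $4d \leq 3g-1$ and $4d \leq 3g-5$.
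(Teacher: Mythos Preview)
Your proposal is correct and takes essentially the same approach as the paper: the paper simply says that the long exact sequence on homology for the cofibre sequence \eqref{eq:MultKappa} gives the result, and you have correctly expanded the bookkeeping behind that sentence. The identifications of the shifted groups and the translation of $d<\tfrac{3}{4}g$ into the integer bounds $4d\leq 3g-1$ and $4d\leq 3g-5$ are both right.
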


Results of a similar spirit have been established by Hepworth \cite{hepworth} for certain sequences of groups (on the line $2d = g$ only), and by Miller--Wilson \cite{MillerWilson} for the homology of ordered configuration spaces in the context of representation stability. The following, along with Corollary \ref{cor:MCGStabRange}, gives complete information about the relative groups $H_d(\Gamma_{g,1}, \Gamma_{g-1,1};\bQ)$ in the range $3d \leq 2g$ and hence proves a rational version of Theorem \ref{thm:B} (\ref{it:B2}).

\begin{corollary}\label{cor:MCGPowersKappa}
	For all $k \geq 1$ we have $H_{2k}(\Gamma_{3k,1}, \Gamma_{3k-1,1};\bQ) = \bQ\{\lambda^k\}$.
\end{corollary}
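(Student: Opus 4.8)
\emph{Setup and the upper bound.} Write $V_k := H_{2k}(\Gamma_{3k,1},\Gamma_{3k-1,1};\bQ) = H_{3k,2k}(\overline{\gR}_\bQ/\sigma)$; evaluating the cofibre sequence \eqref{eq:RModSigmaCofSeq} in grading $0$, where $S^{1,0}\otimes\overline{\gR}_\bQ$ vanishes, gives $V_0 = H_{0,0}(\overline{\gR}_\bQ/\sigma) = \bQ$ with generator $1 = q(1)$. As $q$ is a map of left $\overline{\gR}_\bQ$-modules, the class $\lambda^k\in V_k$ obtained by applying $\lambda\cdot-$ to $1$ repeatedly equals $q(\lambda^k)$ for the $k$-fold product $\lambda^k\in H_{3k,2k}(\overline{\gR}_\bQ) = H_{2k}(\Gamma_{3k,1};\bQ)$, which under the identification of the $E_2$-product with boundary connected sum is $\iota_*(\lambda^{\otimes k})$, $\iota\colon\Gamma_{3,1}^{\times k}\to\Gamma_{3k,1}$. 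The assertion to be proved is $V_k = \bQ\{\lambda^k\}$. For $(g,d) = (3k,2k)$ the inequality $4d\le3g-1$ reads $8k\le9k-1$ and holds for $k\ge1$, so Corollary \ref{cor:MCGSecStab} makes $\lambda\cdot-\colon V_{k-1}\to V_k$ surjective; inducting from $V_0 = \bQ$ yields $\dim_\bQ V_k\le1$ and that $V_k$ is spanned by $\lambda^k$. So it remains to prove $\lambda^k\neq0$ for all $k\ge1$.

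\emph{Reduction and detection of the absolute class.} The sharper inequality $4d\le3g-5$, i.e.\ $8k\le9k-5$, holds for $k\ge5$, so Corollary \ref{cor:MCGSecStab} makes $\lambda\cdot-\colon V_{k-1}\to V_k$ an isomorphism for $k\ge5$; together with the surjections $V_0\twoheadrightarrow V_1\twoheadrightarrow\cdots$ and $V_1 = \bQ$ (Lemma \ref{lem:MCGFacts}\,(\ref{it:MCGFacts32rel}), where $\lambda^1 = q(\lambda) = 10\mu\neq0$), the statement for all $k$ is equivalent to $V_4\neq0$, i.e.\ $\lambda^4\neq0$. The underlying absolute class is easily seen to be non-zero: since $H^2(\Gamma_{3,1};\bQ) = \bQ\{\lambda_1\}$ with $\langle\lambda,\lambda_1\rangle = 1$ (Lemma \ref{lem:MCGFacts}\,(\ref{it:MCGFacts42})), $\kappa_1 = 12\lambda_1$, and the first Miller--Morita--Mumford class is additive under boundary connected sum, $\iota^*\kappa_1 = \sum_{i=1}^k\pi_i^*\kappa_1$, one gets $\langle\lambda^k,\kappa_1^k\rangle = \langle\lambda^{\otimes k},\iota^*(\kappa_1^k)\rangle = k!\,12^k\neq0$ --- only the multinomial term with exactly one $\kappa_1$ on each factor survives the pairing, for degree reasons --- so $\lambda^k\neq0$ in $H_{2k}(\Gamma_{3k,1};\bQ)$ for every $k$.

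\emph{The remaining obstacle.} What is left is to upgrade this to $q(\lambda^k)\neq0$, equivalently to show that $\lambda^k$ is not in the image of the stabilisation map $\sigma\cdot-\colon H_{2k}(\Gamma_{3k-1,1};\bQ)\to H_{2k}(\Gamma_{3k,1};\bQ)$. In the long exact sequence of \eqref{eq:MultKappa} at $(g,d) = (3k,2k)$,
\[
H_{3k,2k+1}(\overline{\gR}_\bQ/(\sigma,\lambda))\xrightarrow{\ \partial\ }V_{k-1}\xrightarrow{\ \lambda\cdot\ }V_k\longrightarrow H_{3k,2k}(\overline{\gR}_\bQ/(\sigma,\lambda)),
\]
the last group vanishes because $\tfrac{2k}{3k}<\tfrac34$, so $V_k = V_{k-1}/\operatorname{im}(\partial)$ and it suffices to show $\partial = 0$; the source also vanishes for $k\ge5$ (its slope $\tfrac{2k+1}{3k}$ is $<\tfrac34$) and the case $k=1$ is covered by $V_1 = \bQ$, leaving only $k = 2,3,4$. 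I expect this --- excluding that a power of $\lambda$ becomes stabilised in genus $6$, $9$ or $12$ --- to be the crux. It seems to require either extending the cell-filtration/bar-construction analysis of $\overline{\gR}_\bQ/(\sigma,\lambda)\simeq B(\overline{\gR}_\bQ;\overline{\gA};\overline{\gA}/(\sigma,\lambda))$ from the proof of Lemma \ref{lem:VanishA} one degree past the vanishing line, so as to compute $H_{3k,2k+1}(\overline{\gR}_\bQ/(\sigma,\lambda))$ and the boundary map for $k = 2,3,4$, or else detecting $\lambda^k$ by a map from $\overline{\gR}_\bQ/\sigma$ to a Madsen--Weiss-type quotient carrying the $\kappa$-classes, on which a relative version of the characteristic-number computation above applies. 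Once $\partial = 0$ in the three remaining cases, $V_k\cong V_{k-1}\cong\cdots\cong V_0 = \bQ$ with generator $\lambda^k$, completing the proof.
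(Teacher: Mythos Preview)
Your reduction is correct and matches the paper: surjectivity of $\lambda\cdot-$ from Corollary \ref{cor:MCGSecStab} gives $\dim_\bQ V_k\le1$ spanned by $\lambda^k$, so the whole content is $V_k\neq0$. You also correctly observe that your detection of $\lambda^k$ by $\kappa_1^k$ only sees the absolute class, and you leave the relative non-vanishing as an open ``crux'' for $k=2,3,4$. This is a genuine gap, and neither of your two suggested routes is the one the paper takes.

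The missing idea is Morita's decomposability relations \cite{MoritaRels}. The class $\kappa_1^k$ is useless here precisely because it survives restriction to $\Gamma_{3k-1,1}$; what you need is a polynomial in the $\kappa_i$ that vanishes on $\Gamma_{3k-1,1}$ but not on $\Gamma_{3k,1}$. Morita shows that $\kappa_{\lfloor g/3\rfloor+1}$ is decomposable in $\kappa_1,\ldots,\kappa_{\lfloor g/3\rfloor}$ on $\Gamma_g$, so in particular $\kappa_k$ is a polynomial in $\kappa_1,\ldots,\kappa_{k-1}$ on $\Gamma_{3k-1,1}$. Combined with algebraic independence of $\kappa_1,\ldots,\kappa_k$ on $\Gamma_{3k,1}$ (Miller/Madsen--Weiss plus homological stability, since $2k$ is in the stable range for genus $3k$), this produces a non-zero element of $\ker\big(H^{2k}(\Gamma_{3k,1};\bQ)\to H^{2k}(\Gamma_{3k-1,1};\bQ)\big)$, hence $H^{2k}(\Gamma_{3k,1},\Gamma_{3k-1,1};\bQ)\neq0$, hence $V_k\neq0$. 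This argument works uniformly for all $k\ge1$, so there is no need for your reduction to $k\le4$ or for any analysis of $H_{3k,2k+1}(\overline{\gR}_\bQ/(\sigma,\lambda))$.
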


\begin{proof}
By the corollary above, each of the maps
\[\bQ\{\lambda\} = H_2(\Gamma_{3,1}, \Gamma_{2,1};\bQ) \overset{\lambda \cdot -}\lra H_4(\Gamma_{6,1}, \Gamma_{5,1};\bQ) \overset{\lambda \cdot -}\lra \cdots \overset{\lambda \cdot -}\lra H_{2k}(\Gamma_{3k,1}, \Gamma_{3k-1,1};\bQ)\]
is surjective, as $4 \cdot 4 \leq 3 \cdot 6-1$. In order to prove the claim it is therefore enough to show that $H_{2k}(\Gamma_{3k,1}, \Gamma_{3k-1,1};\bQ) \neq 0$ for $k \geq 1$. 
	
To show this we use the Miller--Morita--Mumford classes, which will be discussed in more detail in Section \ref{sec:proofthmh43}. Morita \cite[Theorem 1.1]{MoritaRels} has shown that the Miller--Morita--Mumford class $\kappa_{\lfloor g/3\rfloor+1}$ is decomposable as a polynomial in the classes $\kappa_1, \kappa_2, \ldots, \kappa_{\lfloor g/3\rfloor}$ in $H^*(\Gamma_{g};\bQ)$, and since the Miller--Morita--Mumford classes in $H^*(\Gamma_{g,1};\bQ)$ are pulled back from $H^*(\Gamma_g;\bQ)$ the same is true for $\Gamma_{g,1}$. In the infinite genus limit the $\kappa_i$ are algebraically independent by a theorem of Miller \cite[Theorem 1.1]{Miller} (in fact, they are free generators as a consequence of the Madsen--Weiss theorem \cite[Theorem 1.1.1]{MadsenWeiss}). By the homological stability result of Corollary \ref{cor:MCGStabRange} the map
	\[\lim_{g \to \infty} H^*(\Gamma_{g,1};\bQ) \lra H^*(\Gamma_{g,1};\bQ)\]
is injective in degrees $* \leq \tfrac{2g+1}{3}$, so they are also algebraically independent in $H^*(\Gamma_{g,1};\bQ)$ for $* \leq \tfrac{2g+1}{3}$. 
	
	In particular, $\kappa_{k} \in H^{2k}(\Gamma_{3k-1,1};\bQ)$ is decomposable as a polynomial in the classes $\kappa_1, \ldots, \kappa_{k-1}$ while $\kappa_{k} \in H^{2k}(\Gamma_{3k,1};\bQ)$ is not. Thus there is a non-zero polynomial in $\kappa_1, \ldots, \kappa_{k}$ which gives rise to a nontrivial element in the kernel of
	\[H^{2k}(\Gamma_{3k,1};\bQ) \lra H^{2k}(\Gamma_{3k-1,1};\bQ),\]
	so $H^{2k}(\Gamma_{3k,1},\Gamma_{3k-1,1};\bQ) \neq 0$, hence $H_{2k}(\Gamma_{3k,1}, \Gamma_{3k-1,1};\bQ) \neq 0$ as required.
\end{proof}

\begin{remark}
One can extract from Corollary \ref{cor:MCGSecStab} a statement about the kernels and cokernels
	\begin{align*}\mr{ker}_{g,d}(\sigma) &\coloneqq \mr{ker}\left(\sigma \colon H_d(\Gamma_{g,1};\bQ) \to H_d(\Gamma_{g+1,1};\bQ)\right)\\
	\mr{coker}_{g,d}(\sigma) &\coloneqq \mr{coker}\left(\sigma \colon H_d(\Gamma_{g-1,1};\bQ) \to H_d(\Gamma_{g,1};\bQ)\right)
\end{align*}
of the stabilisation map.

Since $\gR_\bQ$ is an $E_2$-algebra, and hence homotopy commutative, the maps $\phi(\sigma)$ and $\phi(\lambda)$ commute up to homotopy and hence give a map of cofibre sequences
\[\begin{tikzcd} 
S^{4,2} \otimes \overline{\gR}_\bQ \rar{S^{3,2} \otimes \phi(\sigma)} \dar{\phi(\lambda)} &[5pt] S^{3,2} \otimes \overline{\gR}_\bQ \dar{\phi(\lambda)} \rar & S^{3,2} \otimes \overline{\gR}_\bQ/\sigma \dar{\phi(\lambda)} \rar{\partial} & S^{4,3} \otimes \overline{\gR}_\bQ \dar{\phi(\lambda)} \\
S^{1,0} \otimes \overline{\gR}_\bQ \rar{\phi(\sigma)}  & \overline{\gR}_\bQ \rar & \overline{\gR}_\bQ/\sigma \rar{\partial} & S^{1,1} \otimes \overline{\gR}_\bQ.
\end{tikzcd}\]
From the induced map of long exact sequences we can therefore extract the following map of short exact sequences:
\[\begin{tikzcd}0 \rar  &[-10pt] \mr{coker}_{g-3,d-2}(\sigma) \rar \dar{\lambda \cdot -} & H_{g-3,d-2}(\gR_\bQ/\sigma) \dar{\lambda \cdot -} \rar & \mr{ker}_{g-4,d-3}(\sigma) \dar{\lambda \cdot -} \rar &[-10pt] 0 \\
0 \rar  & \mr{coker}_{g,d}(\sigma) \rar & H_{g,d}(\gR_\bQ/\sigma) \rar & \mr{ker}_{g-1,d-1}(\sigma) \rar & 0.\end{tikzcd}\]
When the middle map is an isomorphism (i.e.~for $4d \leq 3g-5$, or looking forward to Corollary \ref{cor:MCGSecStabImprov} when $5d \leq 4g-6$) it follows that the left vertical map is injective and the right vertical map is surjective. Furthermore, the snake lemma identifies kernel of the latter with the the cokernel of the former (via the Massey product $x \mapsto \langle \sigma, x, \lambda \rangle$), giving an exact sequence
	\[\begin{tikzcd}
	& 0 \rar & \mr{coker}_{g-3,d-2}(\sigma) \rar{\lambda \cdot -} \ar[draw=none]{d}[name=X, anchor=center]{} & \mr{coker}_{g,d}(\sigma)
	\ar[rounded corners,
	to path={ -- ([xshift=2ex]\tikztostart.east)
		|- (X.center) \tikztonodes
		-| ([xshift=-2ex]\tikztotarget.west)
		-- (\tikztotarget)}]{dll} \\
	& \mr{ker}_{g-4,d-3}(\sigma) \rar{\lambda \cdot -} & \mr{ker}_{g-1,d-1}(\sigma) \rar & 0.
\end{tikzcd}\]

Using Corollary \ref{cor:MCGSecStabIntegral} this argument goes through with $\bQ$ replaced by $\bZ[\frac{1}{10}]$, in a range of degrees.
\end{remark}

\subsection{Integral secondary homological stability}
\label{sec:integr-second-homol} 

We now want to show that the previous results hold with integral coefficients, and our first goal will be to define secondary stabilisation maps. Recall from Lemma \ref{lem:MCGFacts} (\ref{it:MCGFacts42}) that there is a class  $\lambda \in H_2(\Gamma_{3,1};\bZ) = H_{3,2}(\gR_\bZ)$, well-defined modulo classes in the image of the stabilisation map
\[\sigma \cdot - \colon H_2(\Gamma_{2,1};\bZ) \lra H_2(\Gamma_{3,1};\bZ).\]
As the domain of this map is $\bZ/2$, by Lemma \ref{lem:MCGFacts} (\ref{it:MCGFactsg2}), the class $\lambda$ is well-defined after inverting 2; after inverting $10$ it agrees with the class we called $\lambda$ in Lemma \ref{lem:MCGFacts} (\ref{it:MCGFacts32abs}).


\subsubsection{Constructing the integral secondary stabilisation map}\label{sec:ConstSecStabMap}

Let us first explain why we cannot simply proceed as in Section \ref{sec:RatStab}. If we consider multiplication by the class $\lambda$ described above (or rather, a choice of this class up to the indicated $\bZ/2$-ambiguity) then the map $\phi(\lambda) \colon S^{3,2} \otimes \overline{\gR}_\bZ/\sigma \lra \overline{\gR}_\bZ/\sigma$ induces a map
\[\lambda \cdot - \colon \bZ\{1\} = H_{0,0}(\overline{\gR}_\bZ/\sigma) \lra H_{3,2}(\overline{\gR}_\bZ/\sigma) = \bZ\{\mu\}\]
sending $1$ to $\lambda \cdot 1 = 10 \mu$. This is not an epimorphism, and so \emph{the statement of Theorem \ref{thm:A} would be false with this choice of secondary stabilisation map}.

To solve this problem we would like to say that the secondary stabilisation map is ``multiplication by $\mu$" acting on $H_{*,*}(\overline{\gR}_\bZ/\sigma)$, but this does not make sense as $\overline{\gR}_\bZ/\sigma$ is not itself a ring (it is a quotient left $\overline{\gR}_\bZ$-module of $\overline{\gR}_\bZ$, so should be thought of as dividing out the left ideal generated by $\sigma$ but not the two-sided ideal) and $\mu$ does not lift to an element of $H_{3,2}(\overline{\gR}_\bZ)$ (only $\lambda = 10\mu$ does). To circumvent this difficulty, we will construct the secondary stabilisation map
\[\varphi_* \colon H_{d-2}(\Gamma_{g-3,1},\Gamma_{g-4,1};\bZ) \lra H_{d-2}(\Gamma_{g-1,1},\Gamma_{g-,1};\bZ)\] from an endomorphism of the left $\overline{\gR}_\bZ$-module $\overline{\gR}_\bZ/\sigma$ which does \emph{not} come from multiplying on the left by an element of $H_{3,2}(\overline{\gR}_\bZ)$. Consider the diagram
\begin{equation*}
\begin{tikzcd}
S^{3,2}_\bZ \otimes S^{1,0}_\bZ \otimes \overline{\gR}_\bZ \rar{S^{3,2}_\bZ \otimes \phi(\sigma) }&[5pt]  S^{3,2}_\bZ \otimes  \overline{\gR}_\bZ \arrow{rr} \dar{\mu \otimes \overline{\gR}_\bZ} &[5pt] &[-5pt] S^{3,2}_\bZ \otimes  \overline{\gR}_\bZ/\sigma  \dar[dashed] \\
&\overline{\gR}_\bZ/\sigma \otimes  \overline{\gR}_\bZ \rar{\cong}[swap]{\beta_{\overline{\gR}_\bZ/\sigma, \overline{\gR}_\bZ}} & \overline{\gR}_\bZ \otimes \overline{\gR}_\bZ/\sigma  \rar{a} & \overline{\gR}_\bZ/\sigma
\end{tikzcd}
\end{equation*}
in the category of left $\overline{\gR}_\bZ$-modules, where the top row is obtained by applying $S^{3,2}_\bZ \otimes -$ to the cofibration sequence \eqref{eq:RModSigmaCofSeq}, and the bottom map is (transposing the factors followed by) the left $\overline{\gR}_\bZ$-module structure map of $\overline{\gR}_\bZ/\sigma$. By the long exact sequence associated to a cofibre sequence, the dashed arrow in the diagram may be filled in with an $\overline{\gR}_\bZ$-module map making the square commute up to homotopy if and only if the composition 
\[a \circ \beta_{\overline{\gR}_\bZ/\sigma, \overline{\gR}_\bZ} \circ (\mu \otimes \overline{\gR}_\bZ) \circ (S^{3,2}_\bZ \otimes \phi(\sigma)) \colon  S^{3,2}_\bZ \otimes S^{1,0}_\bZ \otimes \overline{\gR}_\bZ \lra \overline{\gR}_\bZ/\sigma\]
is null as a left $\overline{\gR}_\bZ$-module map, which happens if and only if the restriction to $S^{3,2}_\bZ \otimes S^{1,0}_\bZ \subset  S^{3,2}_\bZ \otimes S^{1,0}_\bZ \otimes \overline{\gR}_\bZ $ is null as a map. But by Corollary \ref{cor:MCGStabRange} we have $H_{4,2}(\overline{\gR}_\bZ/\sigma)=H_2(\Gamma_{4,1},\Gamma_{3,1};\bZ) = 0$, so  all such maps are null and hence the dashed arrow exists. Let us write
\[\varphi \colon S^{3,2}_\bZ \otimes \overline{\gR}_\bZ/\sigma \lra \overline{\gR}_\bZ/\sigma\]
for a choice of such a dashed map.   

As always in obstruction theory, the set of possible extensions is a torsor, in this case for $H_{4,3}(\overline{\gR}_\bZ/\sigma) = H_3(\Gamma_{4,1}, \Gamma_{3,1};\bZ)$. This group is non-zero because $H_2(\Gamma_{3,1};\bZ) \to H_2(\Gamma_{4,1};\bZ)$ has kernel $\bZ/2\bZ$ by Remark \ref{rem:mcg-comp}. This failure of uniqueness for $\varphi$ is not a problem, as the argument we will give holds for \emph{any} choice of extension $\varphi$. 

The problem as we have presented it has to do with divisibility by 10, and so it is not surprising that it can be circumvented when working with $\bZ[\tfrac{1}{10}]$-coefficients. We record this observation for later use. See Sections $E_k$.12.2.2 and $E_k$.12.2.3 for adapters and their applications.

\begin{lemma}\label{lem:LambdaOverTen}
With $\bZ[\tfrac{1}{10}]$-coefficients, one choice of $\varphi$ is given by multiplication by $\tfrac{1}{10}\lambda \in H_{3,2}(\overline{\gR}_{\bZ[\frac{1}{10}]})$ using the adapter.
\end{lemma}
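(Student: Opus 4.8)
The plan is to verify directly that ``multiplication by $\tfrac{1}{10}\lambda$'', realised on the left $\overline{\gR}_{\bZ[\tfrac{1}{10}]}$-module $\overline{\gR}_{\bZ[\tfrac{1}{10}]}/\sigma$ via an adapter as in Section $E_k$.12.2.3, satisfies the characterising property of $\varphi$ from Section \ref{sec:ConstSecStabMap}. Since $10$ is a unit in $\bZ[\tfrac{1}{10}]$, the class $\tfrac{1}{10}\lambda\in H_{3,2}(\overline{\gR}_{\bZ[\tfrac{1}{10}]})$ is well defined, and the adapter produces a left $\overline{\gR}_{\bZ[\tfrac{1}{10}]}$-module map $\phi(\tfrac{1}{10}\lambda)\colon S^{3,2}\otimes\overline{\gR}_{\bZ[\tfrac{1}{10}]}/\sigma\to\overline{\gR}_{\bZ[\tfrac{1}{10}]}/\sigma$ which is left multiplication by $\tfrac{1}{10}\lambda$ after forgetting module structures, and which fits into a square, homotopy commutative in left $\overline{\gR}_{\bZ[\tfrac{1}{10}]}$-modules, with horizontal maps $S^{3,2}\otimes q$ and $q$, left vertical map left multiplication by $\tfrac{1}{10}\lambda$ on $\overline{\gR}_{\bZ[\tfrac{1}{10}]}$, and right vertical map $\phi(\tfrac{1}{10}\lambda)$. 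It then remains to check that $\varphi:=\phi(\tfrac{1}{10}\lambda)$ makes the relevant square of Section \ref{sec:ConstSecStabMap} (with $\bZ$ replaced by $\bZ[\tfrac{1}{10}]$) commute up to homotopy, since that is exactly what it means to be a legitimate choice of $\varphi$ there.

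Next I would reduce this to a single homotopy class. The two composites $S^{3,2}\otimes\overline{\gR}_{\bZ[\tfrac{1}{10}]}\to\overline{\gR}_{\bZ[\tfrac{1}{10}]}/\sigma$ to be compared, namely $\phi(\tfrac{1}{10}\lambda)\circ(S^{3,2}\otimes q)$ and the composite $a\circ\beta\circ(\mu\otimes\overline{\gR}_{\bZ[\tfrac{1}{10}]})$ appearing in the diagram of Section \ref{sec:ConstSecStabMap}, are maps of left $\overline{\gR}_{\bZ[\tfrac{1}{10}]}$-modules out of $S^{3,2}\otimes\overline{\gR}_{\bZ[\tfrac{1}{10}]}$, the free left module on $S^{3,2}$. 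By the free--forgetful adjunction each is therefore determined up to homotopy by its restriction along the unit $S^{3,2}\hookrightarrow S^{3,2}\otimes\overline{\gR}_{\bZ[\tfrac{1}{10}]}$, and since $S^{3,2}$ is a cofibrant model for a $2$-sphere in grading $3$, such a restriction is determined up to homotopy by the class it represents in $H_{3,2}(\overline{\gR}_{\bZ[\tfrac{1}{10}]}/\sigma)=\bZ[\tfrac{1}{10}]\{\mu\}$ (Lemma \ref{lem:MCGFacts} (\ref{it:MCGFacts32rel}), (\ref{it:MCGFacts32abs})). Tracing the free generator: the second composite sends it to $\mu$, while the first, by the homotopy commutative square of the previous paragraph together with $q$ being a module map, sends it to $q(\tfrac{1}{10}\lambda)$.

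Finally I would invoke Lemma \ref{lem:MCGFacts} (\ref{it:MCGFacts32rel}), which gives $q(\lambda)=10\mu$ in $H_{3,2}(\overline{\gR}_\bZ/\sigma)$; passing to $\bZ[\tfrac{1}{10}]$-coefficients and dividing by the unit $10$ yields $q(\tfrac{1}{10}\lambda)=\mu$ (this is also recorded in Lemma \ref{lem:MCGFacts} (\ref{it:MCGFacts32abs})). Hence the two restrictions agree, so the two composites are homotopic as left $\overline{\gR}_{\bZ[\tfrac{1}{10}]}$-module maps, the square of Section \ref{sec:ConstSecStabMap} commutes up to homotopy with $\varphi=\phi(\tfrac{1}{10}\lambda)$, and $\phi(\tfrac{1}{10}\lambda)$ is a valid choice. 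The step I expect to demand the most care is the interface with the adapter formalism of Sections $E_k$.12.2.2--12.2.3: one must confirm that the adapter-built map $\phi(\tfrac{1}{10}\lambda)$ on the quotient module is genuinely a left $\overline{\gR}_{\bZ[\tfrac{1}{10}]}$-module map and is compatible with $q$ in the precise sense of the homotopy commutative square above. The remaining mathematical input, the identity $q(\lambda)=10\mu$, is already established in Lemma \ref{lem:MCGFacts}.
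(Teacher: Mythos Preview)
Your proposal is correct and follows essentially the same approach as the paper: both arguments use the adapter to build $\phi(\tfrac{1}{10}\lambda)$ as a left $\overline{\gR}_{\bZ[\tfrac{1}{10}]}$-module map on $\overline{\gR}_{\bZ[\tfrac{1}{10}]}/\sigma$, then reduce the verification to the free module $S^{3,2}\otimes\overline{\gR}_{\bZ[\tfrac{1}{10}]}$, where it amounts to checking that the generator is sent to $\mu\in H_{3,2}(\overline{\gR}_{\bZ[\tfrac{1}{10}]}/\sigma)$, which follows from $q(\lambda)=10\mu$. Your write-up is slightly more explicit about the free--forgetful adjunction step, but the logic is the same.
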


\begin{proof}
Multiplication by $\tfrac{1}{10}\lambda$ using the adapter gives the bottom row in the following homotopy commutative diagram in the category of left $\overline{\gR}_{\bZ[\frac{1}{10}]}$-modules
\begin{equation*}
\begin{tikzcd}
S^{3,2}_\bZ \otimes \overline{\gR}_{\bZ[\frac{1}{10}]} \dar \rar{\tfrac{1}{10}\lambda \otimes \mathrm{Id}} & \overline{\gR}_{\bZ[\frac{1}{10}]} \otimes \overline{\gR}_{\bZ[\frac{1}{10}]} \rar & \overline{\gR}_{\bZ[\frac{1}{10}]} \dar\\
S^{3,2}_\bZ \otimes \overline{\gR}_{\bZ[\frac{1}{10}]}/\sigma \rar{\tfrac{1}{10}\lambda \otimes \mathrm{Id}} & \overline{\gR}_{\bZ[\frac{1}{10}]} \otimes \overline{\gR}_{\bZ[\frac{1}{10}]}/\sigma \rar & \overline{\gR}_{\bZ[\frac{1}{10}]}/\sigma,
\end{tikzcd}
\end{equation*}
where the two rightmost horizontal maps are given by the special $\overline{\gR}_{\bZ[\frac{1}{10}]}$-module structure produced using the adapter. Thus the restriction of the bottom map to $\overline{\gR}_{\bZ[\frac{1}{10}]} \subset \overline{\gR}_{\bZ[\frac{1}{10}]}/\sigma$ is the unique homotopy class of $\overline{\gR}_{\bZ[\frac{1}{10}]}$-module map sending the unit to
\[S^{3,2}_\bZ \overset{\tfrac{1}{10}\lambda}\lra \overline{\gR}_{\bZ[\frac{1}{10}]} \lra \overline{\gR}_{\bZ[\frac{1}{10}]}/\sigma.\]
But this is the class $\mu$, so the lower map extends the map $a \circ b_{\overline{\gR}_\bZ/\sigma, \overline{\gR}_\bZ} \circ (\mu \otimes \overline{\gR}_\bZ)$ and hence is a choice of $\varphi$.
\end{proof}

\subsubsection{Proof of integral secondary homological stability}

The main result of this section is that each of the maps $\varphi$ has trivial relative homology in the same range of degrees as we showed $\lambda \cdot -$ does rationally in Section \ref{sec:RatStab}. The proof starts as in Section \ref{sec:RatStab} by building a cellular model for $\gR_\bZ$ in low degrees, similar to that in the previous section but more complex. Just as we had to a pick a representative of the Browder bracket, we need to pick a point in $\cC_2(2)$ to get a representative of the product operation and a map $S^1 \to \cC_2(2)/\fS_2$ to get a representative of $Q^1_\bZ$. Using these we obtain maps from $S^{2,1}_\bZ$, $S^{2,1}_\bZ$, and $S^{3,1}_\bZ$ into $\gE_2(S_\bZ^{1,0}\sigma \oplus S_\bZ^{1,1}\tau)$ representing $10 \sigma \tau$, $Q^1_\bZ(\sigma) - 3\sigma \tau$ and $\sigma^2 \tau$ respectively.  Let us define an $E_2$-algebra $\gA$ by
\begin{equation*}
\gA \coloneqq \gE_2(S_\bZ^{1,0}\sigma \oplus S_\bZ^{1,1}\tau) \cup^{E_2}_{10\sigma\tau} \gD_\bZ^{2,2}\rho_1 \cup^{E_2}_{Q^1_\bZ(\sigma)-3\sigma\tau} \gD_\bZ^{2,2}\rho_2 \cup^{E_2}_{\sigma^2\tau} \gD_\bZ^{3,2} \rho_3.
\end{equation*}

Let us next choose maps $\sigma \colon S^{1,0}_\bZ \to \gR_\bZ$ and $\tau \colon S^{1,1}_\bZ \to \gR_\bZ$ representing $\sigma \in H_{1,0}(\gR_\bZ)$ and $\tau \in H_{1,1}(\gR_\bZ)$. In $H_{*,*}(\gR_\bZ)$ we have equations $10\sigma \tau =0$, $Q^1_\bZ(\sigma) = 3\sigma \tau$, and $\sigma^2 \tau=0$, and we may pick homotopies $\rho_1$, $\rho_2$ and $\rho_3$ imposing these (which are not unique). Using this data we may define an $E_2$-map
\[c \colon \gA \lra \gR_\bZ,\]
sending the formal generator $\sigma$ of $\gA$ to $\gR_\bZ$ using $\sigma \colon S^{1,0}_\bZ \to \gR_\bZ$, etc. By Cohen's calculations we have
\[H_{g,1}(\gE_2(S_\bZ^{1,0}\sigma \oplus S_\bZ^{1,1}\tau)) = \begin{cases}
0 & \text{if $g=0$,} \\
\bZ\{\tau\} & \text{if $g=1$,}\\
\bZ\{\sigma^{g-1}  \tau, \sigma^{g-2}  Q^1_\bZ(\sigma) \} & \text{if $g \geq 2$,}
\end{cases}\]
and attaching the cells $\rho_i$ gives, by an easy computation with the skeletal spectral sequence of Corollary $E_k$.10.19,
\[H_{g,1}(\gA) = \begin{cases}
\bZ\{\tau\} & \text{if $g=1$,}\\
\bZ/10\{\sigma  \tau\} & \text{if $g = 2$,}\\
0 & \text{if $g\neq 1,2$.}
\end{cases}\]

The map $c$ is not uniquely defined: we may change it on the cells $\rho_i$ by adding on any homology class of $\gR_\bZ$ of the appropriate bidegree. Considering the induced map of long exact sequences
\begin{equation*}
\begin{tikzcd}
H_{3,2}(\overline{\gA}) \dar{c_*} \rar & H_{3,2}(\overline{\gA}/\sigma) \dar{c_*} \rar{\partial} & H_{2,1}(\overline{\gA}) = \bZ/10\{\sigma \tau\} \dar{\simeq} \rar& 0\\
H_{3,2}(\overline{\gR}_\bZ) \rar & H_{3,2}(\overline{\gR}_\bZ/\sigma) = \bZ\{\mu\} \rar{\partial} & H_{2,1}(\overline{\gR}_\bZ) = \bZ/10\{\sigma \tau\} \rar& 0
\end{tikzcd}
\end{equation*}
the class $\rho_3 \in H_{3,2}(\overline{\gA}/\sigma)$ satisfies $\partial(\rho_3) = \sigma \tau$, so that $c_*(\rho_3) = (1 + 10 \cdot N) \mu \in H_{3,2}(\overline{\gR}_\bZ/\sigma)$. As $10 \mu$ lifts to the class $\lambda \in H_{3,2}(\overline{\gR}_\bZ)$, we have $c_*(\rho_3) = \mu + N \cdot \lambda$. Therefore after re-choosing the map $c$ by changing it by $-N \cdot \lambda \in H_{3,2}(\overline{\gR}_\bZ)$ on the cell $\rho_3$ we may suppose that the middle map satisfies $c_*(\rho_3)=\mu$ and so in particular is surjective. This will be used in the following lemma: it is at this point that we have used the rather subtle calculation of Lemma \ref{lem:MCGFacts} (\ref{it:MCGFacts32rel}).

\begin{lemma}\label{lem:IntVanishingRelE2HomologyMCG}$H^{E_2}_{g,d}(\gR_\bZ, \gA)=0$ for $4d \leq 3g-1$.\end{lemma}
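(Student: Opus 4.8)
The plan is to run through the proof of Lemma~\ref{lem:VanishingRelE2HomologyMCG} once more, this time feeding in the \emph{integral} low-degree computations of Lemma~\ref{lem:MCGFacts} and the richer model $\gA$ of this section. First I would observe that every cell of $\gA$ lies in a bidegree --- namely $(1,0)$, $(1,1)$, $(2,2)$ or $(3,2)$ --- with $d\ge g-1$, so that $H^{E_2}_{g,d}(\gA)=0$ for $d<g-1$; together with Proposition~\ref{prop:MCGE2vanish} and the long exact sequence ($E_k$.10.1) in $E_2$-homology of $c$, this gives $H^{E_2}_{g,d}(\gR_\bZ,\gA)=0$ for $d<g-1$. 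Since a bidegree $(g,d)$ with $4d\le 3g-1$ and $d\ge g-1$ satisfies $4(g-1)\le 3g-1$, hence $g\le 3$, the only remaining bidegrees are $(1,0)$, $(2,1)$ and $(3,2)$. The first two are immediate: for $(1,0)$ the map $c$ induces the identity $\bZ\{\sigma\}=H^{E_2}_{1,0}(\gA)\to H^{E_2}_{1,0}(\gR_\bZ)=\bZ\{\sigma\}$, and for $(2,1)$ one has $H^{E_2}_{2,1}(\gR_\bZ)=0$ by Corollary~\ref{cor:MCG21vanish} and $H^{E_2}_{2,0}(\gA)=0$ because $\gA$ has no $(2,0)$-cell, so both relative groups vanish by the long exact sequence.

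The bidegree $(3,2)$ is the heart of the matter, and I would handle it by relative Hurewicz, exactly as in the proof of Lemma~\ref{lem:VanishingRelE2HomologyMCG}. The first task is to show that $c$ is $(3,3,3,3,2,2,\dots)$-connective, i.e.\ that $H_{g,d}(\gR_\bZ,\gA)=0$ for $d\le 1$ and all $g$, and for $d=2$ and $g\le 3$. For $d=0,1$ this holds because $c_*\colon H_{g,d}(\gA)\to H_{g,d}(\gR_\bZ)$ is an isomorphism for every $g$: both sides are $\bZ\{\sigma^g\}$ when $d=0$, and when $d=1$ one compares the computation of $H_{g,1}(\gA)$ recorded above with Lemma~\ref{lem:MCGFacts}~(\ref{it:MCGFacts11})--(\ref{it:MCGFactsg1}). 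For $d=2$: $H_{1,2}(\gR_\bZ,\gA)=0$ since $H_{1,2}(\gR_\bZ)=H_2(\Gamma_{1,1};\bZ)=0$ (as in the proof of Lemma~\ref{lem:MCGFacts}~(\ref{it:MCGFactsg2})) and $\gA$ has nothing in grading $1$ beyond $\sigma$ and $\tau$; and $H_{2,2}(\gR_\bZ,\gA)=0$ since $c_*\colon H_{2,2}(\gA)\to H_{2,2}(\gR_\bZ)$ is onto --- the square $\tau^2$ maps to a generator of $H_{2,2}(\gR_\bZ)=\bZ/2$ by Remark~\ref{rem:mcg-comp} --- while $c_*\colon H_{2,1}(\gA)\to H_{2,1}(\gR_\bZ)$ is injective. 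The case $g=3$, $d=2$ is the crucial one, discussed in the last paragraph. Granting the connectivity of $c$, Proposition~$E_k$.11.9 applied to $c\colon\gA\to\gR_\bZ$ (with $\gA$ and $\gR_\bZ$ both $0$-connective, $c_f=(3,3,3,3,2,2,\dots)$, and $\cO=\cC_2$ with $\cO(1)\simeq\ast$) shows that $\gR_\bZ/\gA\to Q^{E_2}_\bL(\gR_\bZ)/Q^{E_2}_\bL(\gA)$ is $(4,4,4,4,3,3,\dots)$-connective, so the Hurewicz map $H_{3,2}(\gR_\bZ,\gA)\to H^{E_2}_{3,2}(\gR_\bZ,\gA)$ is an isomorphism and the latter group vanishes.

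The remaining step, which I expect to be the main obstacle, is to prove $H_{3,2}(\gR_\bZ,\gA)=0$. Since $H_{3,1}(\gA)=0$ this group is the cokernel of $c_*\colon H_{3,2}(\gA)\to H_{3,2}(\gR_\bZ)$, so the claim is that $c_*$ is surjective. I would prove this by a diagram chase in the ladder comparing the cofibre sequence \eqref{eq:RModSigmaCofSeq} for $\overline{\gA}$ with the one for $\overline{\gR}_\bZ$, using the following four facts: (a) by the choice of $c$ arranged just above, $c_*(\rho_3)=\mu$, so $c_*\colon H_{3,2}(\overline{\gA}/\sigma)\to H_{3,2}(\overline{\gR}_\bZ/\sigma)$ is onto; (b) $c_*\colon H_{2,1}(\overline{\gA})\to H_{2,1}(\overline{\gR}_\bZ)$ is an isomorphism (both are $\bZ/10\{\sigma\tau\}$); (c) $H_{3,1}(\overline{\gR}_\bZ)=0$ by Lemma~\ref{lem:MCGFacts}~(\ref{it:MCGFactsg1}), which identifies $\ker\bigl(q\colon H_{3,2}(\overline{\gR}_\bZ)\to H_{3,2}(\overline{\gR}_\bZ/\sigma)\bigr)$ with the image of $\sigma\cdot-\colon H_{2,2}(\overline{\gR}_\bZ)\to H_{3,2}(\overline{\gR}_\bZ)$; and (d) $c_*\colon H_{2,2}(\gA)\to H_{2,2}(\gR_\bZ)$ is onto, so this image lies in $\operatorname{im}(c_*)$. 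Chasing a class $y\in H_{3,2}(\overline{\gR}_\bZ)$ around the ladder --- lift $q(y)$ to $\overline{\gA}/\sigma$ by (a), push it back up to $H_{3,2}(\overline{\gA})$ using (b) and exactness, and correct the resulting discrepancy with (c) and (d) --- shows $y\in\operatorname{im}(c_*)$. The delicate ingredient is (a), which rests squarely on the subtle extension computation of Lemma~\ref{lem:MCGFacts}~(\ref{it:MCGFacts32rel}); items (c) and (d) are precisely what is needed to absorb the $\bZ/2$ worth of $2$-torsion in $H_{3,2}(\gR_\bZ)$ that is invisible after dividing out $\sigma$.
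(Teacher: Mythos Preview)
Your argument is correct, but it takes a genuinely different route from the paper's in the crucial bidegree $(3,2)$, and the difference is worth noting.

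You prove the stronger statement $H_{3,2}(\gR_\bZ,\gA)=0$, via the ladder for $\overline{(-)}/\sigma$ and your ingredients (a)--(d). This works, but (d)---surjectivity of $c_*\colon H_{2,2}(\gA)\to H_{2,2}(\gR_\bZ)$---rests on Godin's computation that $H_2(\Gamma_{2,1};\bZ)=\bZ/2$ is generated by $\tau^2$, which the paper records in Remark~\ref{rem:mcg-comp} with the comment ``though we will not need it''. The paper keeps the argument self-contained to Lemma~\ref{lem:MCGFacts}: it only establishes that $c$ is $(2,2,2,\ldots)$-connective (using just parts (\ref{it:MCGFacts11})--(\ref{it:MCGFactsg1})), so that the Hurewicz map $H_{3,2}(\gR_\bZ,\gA)\to H^{E_2}_{3,2}(\gR_\bZ,\gA)$ is merely \emph{surjective}. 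Then, rather than showing the source vanishes, it exploits the factorization of $H_{*,*}(\gR_\bZ)\to H^{E_2}_{*,*}(\gR_\bZ)$ through $H_{*,*}(\overline{\gR}_\bZ/\sigma)$ (valid because $\sigma\cdot-$ lands in the $E_2$-decomposables). A short diagram chase shows that the composite $H_{3,2}(\overline{\gR}_\bZ/\sigma)\to H^{E_2}_{3,2}(\gR_\bZ,\gA)$ is already surjective, and since $c_*(\rho_3)=\mu$ generates the source and this factors through $H^{E_2}_{3,2}(\gA)\to H^{E_2}_{3,2}(\gR_\bZ)\to H^{E_2}_{3,2}(\gR_\bZ,\gA)$ (which is zero), the target vanishes. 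Your approach is cleaner once Godin's result is granted; the paper's approach buys independence from that external computation. A minor redundancy: once you have proved $H_{3,2}(\gR_\bZ,\gA)=0$ in your final paragraph, you only need \emph{surjectivity} of Hurewicz to conclude, so the full $(3,3,3,3,2,2,\ldots)$-connectivity---and hence the separate $g=1,2$, $d=2$ verifications---are not actually needed.
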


\begin{proof}As in the proof of Lemma \ref{lem:VanishingRelE2HomologyMCG}, to show that $H^{E_2}_{g,d}(\gR_\bZ, \gA)=0$ for $4d \leq 3g-1$ it is enough to show vanishing in bidegrees $(1,0)$, $(2,1)$, and $(3,2)$. The first two cases go through as in the proof of that lemma, and it remains to show that $H^{E_2}_{3,2}(\gR_\bZ, \gA)=0$. We have that $H_{g,1}(\gR_\bZ, \gA)=0$ for all $g$, by Lemma \ref{lem:MCGFacts} (\ref{it:MCGFacts11}), (\ref{it:MCGFacts21}), and (\ref{it:MCGFactsg1}), so the map $c$ is $(2,2,2,\ldots)$-connective. The objects $\gA$ and $\gR_\bZ$ are 0-connective, so it follows from Proposition $E_k$.11.9 that the map of homotopy cofibres
	\[\gR_\bZ/\gA \lra Q_\bL^{E_2}(\gR_\bZ)/Q_\bL^{E_2}(\gA)\]
	is $(3,3,3,\ldots)$-connective. Thus the Hurewicz map $H_{3,2}(\gR_\bZ, \gA) \to H_{3,2}^{E_2}(\gR_\bZ, \gA)$ for $E_2$-homology is surjective. Note that the composition
	\[S^{1,0}_\bZ \otimes \gR_\bZ \overset{\sigma \cdot -}\lra \gR_\bZ \lra Q^{E_2}_\bL(\gR_\bZ)\]
	is canonically null-homotopic, as multiplying by $\sigma$ lands in the decomposables, by definition.  Therefore there is a factorisation $\gR_\bZ \to \gK \to Q^{E_2}_\bL(\gR_\bZ)$ of the Hurewicz map over the homotopy cofibre
        $\gK$ of $\sigma \cdot - \colon S^{1,0}_\bZ \otimes \gR_\bZ \to \gR_\bZ$. As there is a weak equivalence
	\[\gK(g) \simeq (\overline{\gR}_\bZ/\sigma)(g)\]
	for $g > 0$, in these degrees there is a factorisation of the induced map
	\[H_{g,d}(\gR_\bZ) \lra H_{g,d}(\overline{\gR}_\bZ/\sigma) \lra H^{E_2}_{g,d}(\gR_\bZ)\]
	on homology; similarly for $\gA$. The (factorised) Hurewicz maps give a diagram
	\begin{equation*}
	\begin{tikzcd}
	H_{3,2}(\gA) \dar \rar{c_*}&[-5pt] H_{3,2}(\gR_\bZ) \dar \rar[two heads]&[-8pt] H_{3,2}(\gR_\bZ, \gA) \arrow[two heads]{dd} \rar{0}&[-5pt] H_{2,1}(\gA) \arrow{dd} \rar{c_*}[swap]{\sim}&[-5pt] H_{2,1}(\gR_\bZ)\\
	H_{3,2}(\overline{\gA}/\sigma) \dar \rar[two heads]{c_*}& H_{3,2}(\overline{\gR}_\bZ/\sigma) \arrow[dashed]{rd} \dar \\
	H^{E_2}_{3,2}(\gA) \rar{c_*}& H^{E_2}_{3,2}(\gR_\bZ) \rar& H^{E_2}_{3,2}(\gR_\bZ, \gA) \rar& H^{E_2}_{2,1}(\gA) =0
	\end{tikzcd}
	\end{equation*}
	and so the dashed composition 
	\[\bZ\{\mu\} = H_{3,2}(\overline{\gR}_\bZ/\sigma) \lra H^{E_2}_{3,2}(\gR_\bZ) \lra H^{E_2}_{3,2}(\gR_\bZ, \gA)\]
	is surjective. But $c_* \colon H_{3,2}(\overline{\gA}/\sigma) \to H_{3,2}(\overline{\gR}_\bZ/\sigma)$ is surjective too, as we arranged that $c_*(\rho_3)=\mu$, and after a diagram chase one concludes that $\smash{H^{E_2}_{3,2}}(\gR_\bZ, \gA)=0$.\end{proof}

In Corollary \ref{cor:MCGSecStab} we proved that the homology of the cofibre of $\lambda \cdot -  \colon S^{3,2}_{\mathbb{Q}} \otimes \overline{\mathbf{R}}_\mathbb{Q}/\sigma \to \overline{\mathbf{R}}_\mathbb{Q}/\sigma$ vanishes in a range, and the analogous statement here is that the homology of the cofibre $\gC_\varphi$ of $\varphi \colon S^{3,2}_\bZ \otimes \overline{\gR}_\bZ/\sigma \to\overline{\gR}_\bZ/\sigma$ vanishes in a range. At this point our strategy deviates from the rational case because, although $\gA$ contains all the cells in bidegree $(d,g)$ satisfying $\frac{d}{g}<\frac{3}{4}$, this is not enough to have the same obstruction theory argument go through as for $\gR_\bZ$. To remedy this, we add enough cells to get a CW $E_2$-algebra equivalent to $\gR_\bZ$; this is the $E_2$-algebra $\gB$ we construct now. Our knowledge of the connectivity of $\gA \to \gR_\bZ$ on $E_2$-homology will allow us to constrain the bidegrees of cells added to $\gA$ to produce $\gB$. (We could have given an argument in the rational case along the same lines, which amounts to replacing the appeal to Theorem $E_k$.15.4 to the calculational proof of that theorem for discrete groupoids $\cat{G}$ referred to in Remark $E_k$.15.2.)

By Theorem $E_k$.11.21 we may find a factorisation 
\[c \colon \gA \lra \gB \overset{\sim}\lra \gR_{\bZ}\]
where $\gB$ is a CW $E_2$-algebra obtained by attaching cells $\{\gD^{g_\alpha, d_\alpha}x_\alpha\}_{\alpha \in I}$ to $\gA$ with $4d_\alpha \geq 3g_\alpha$. Replacing $\gR_\bZ$ by the weakly equivalent $\gB$, the construction of $\varphi$ goes through the same way, giving morphisms
\[\psi \colon S^{3,2}_\bZ \otimes \overline{\gB}/\sigma \lra \overline{\gB}/\sigma\]
of $\overline{\gB}$-modules which may perhaps be regarded as a sort of ``multiplication by $\rho_3$.''  Writing  $\gC_\psi$ for its cofibre, the weak equivalence $\gB \overset{\sim}\to \gR_\bZ$ gives without loss of generality an equivalence $\gC_\psi \simeq \gC_\varphi$. Thus to prove our integral secondary stability result, it will be enough to show that $H_{g,d}(\gC_\psi)=0$ for $4d \leq 3g-1$.

\begin{theorem}\label{thm:IntStabMCG}
Any choice of $\varphi$ has cofibre $\gC_\varphi$ satisfying $H_{g,d}(\gC_\varphi)=0$ for $4d \leq 3g-1$.
\end{theorem}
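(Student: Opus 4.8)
The plan is to follow the strategy of the rational case (Section~\ref{sec:RatStab}, and particularly Lemma~\ref{lem:VanishA}), working integrally with the CW $E_2$-algebra $\gB$ in place of $\gR_\bQ$ and with the cell $\rho_3$ in the role played by $\lambda$. We have already reduced to showing $H_{g,d}(\gC_\psi)=0$ for $4d\le 3g-1$; since $\gC_\psi$ is the cofibre of $\psi\colon S^{3,2}_\bZ\otimes\overline{\gB}/\sigma\to\overline{\gB}/\sigma$, it is by construction the iterated left $\overline{\gB}$-module quotient $\overline{\gB}/(\sigma,\rho_3)$ obtained (via Definition~$E_k$.12.14) by forming $\overline{\gB}/\sigma$ and then dividing out the action of~$\psi$.

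First I would equip $\gB$ with its cell-attachment filtration $f\gB\in\Alg_{E_2}((\cat{sMod}_\bZ^\bN)^{\bZ_\le})$, putting $\sigma,\tau$ in filtration $0$ and each of $\rho_1,\rho_2,\rho_3,\{x_\alpha\}$ in the filtration degree of its construction stage. By Theorem~$E_k$.6.4 the associated graded is the free $E_2$-algebra
\[\grr(f\gB)=\gE_2\Bigl(S^{1,0,0}_\bZ\sigma\oplus S^{1,1,0}_\bZ\tau\oplus S^{2,2,r_1}_\bZ\rho_1\oplus S^{2,2,r_2}_\bZ\rho_2\oplus S^{3,2,r_3}_\bZ\rho_3\oplus\bigoplus_{\alpha} S^{g_\alpha,d_\alpha,r_\alpha}_\bZ x_\alpha\Bigr),\]
where $4d_\alpha\ge 3g_\alpha$ for every $\alpha$. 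After arranging that $\psi$ is a filtered module map — possible because $\rho_3$ is itself one of the cells of $\gB$ and the obstruction to constructing the requisite module map lives in $H_{4,2}(\overline{\gB}/\sigma)=H_2(\Gamma_{4,1},\Gamma_{3,1};\bZ)=0$ by Corollary~\ref{cor:MCGStabRange} — one obtains a filtered $\overline{f\gB}$-module $\overline{f\gB}/(\sigma,\rho_3)$ with colimit $\overline{\gB}/(\sigma,\rho_3)\simeq\gC_\psi$ and associated graded $\overline{\grr(f\gB)}/(\sigma,\rho_3)$, hence (Corollary~$E_k$.10.17) a multiplicative spectral sequence
\[E^1_{g,p,q}=H_{g,p+q,q}\bigl(\overline{\grr(f\gB)}/(\sigma,\rho_3)\bigr)\ \Longrightarrow\ H_{g,p+q}(\gC_\psi),\]
whose $d^1$-differential is fixed by the attaching maps: $d^1(\rho_1)=10\,\sigma\tau$, $d^1(\rho_2)=Q^1_\bZ(\sigma)-3\,\sigma\tau$, $d^1(\rho_3)=\sigma^2\tau$, and $d^1(x_\alpha)$ the class of the attaching map of $x_\alpha$.

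It then suffices to show $E^2_{g,p,q}=0$ whenever $\tfrac{p+q}{g}<\tfrac34$. In $\overline{\grr(f\gB)}/\sigma$ one has $\sigma\tau=0$, so $d^1\rho_1=d^1\rho_3=0$ while $d^1\rho_2=Q^1_\bZ(\sigma)$; mimicking the proof of the Claim inside Lemma~\ref{lem:VanishA}, I would introduce an auxiliary ``computational'' filtration assigning filtration $0$ to $Q^1_\bZ(\sigma)$ and $\rho_2$ and filtration equal to the homological degree to the other homogeneous basis elements, extended multiplicatively. Passing to its associated graded isolates a tensor factor $(\Lambda_\bZ(\bZ\{Q^1_\bZ(\sigma),\rho_2\}),\delta)$ with $\delta\rho_2=Q^1_\bZ(\sigma)$ — over $\bZ$ this has homology $\bZ$ in the trivial tridegree plus cyclic torsion groups sitting in tridegrees of slope $\ge\tfrac34$ — tensored with the remaining generators carrying zero differential. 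What then remains is a \emph{slope count}: to check that every homogeneous basis element of $\overline{\grr(f\gB)}/(\sigma,\rho_3)$ which is neither in the left ideal generated by $\sigma$, nor a product of $Q^1_\bZ(\sigma)$ with other classes, nor equal to $\rho_3$, has slope $\ge\tfrac34$.

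The main obstacle is this slope count, carried out integrally. The ``surviving'' generators $\tau\,(1,1)$, $\rho_1\,(2,2)$ and the $x_\alpha$ (with $4d_\alpha\ge 3g_\alpha$) all have slope $\ge\tfrac34$, and it is elementary that the product, the Browder bracket, the Dyer--Lashof operations $Q^s_2$ and the top operation $\xi$, as well as the grading shifts $S^{1,1}\otimes-$ and $S^{3,3}\otimes-$ coming from the connecting maps of the two module quotients, all send classes of slope $\ge\tfrac34$ to classes of slope $\ge\tfrac34$. The difficulty is that, in contrast to the free Gerstenhaber algebra available rationally in Lemma~\ref{lem:VanishA}, the homotopy of the free $E_2$-algebras in play now carries Dyer--Lashof operations and the torsion they generate, and one must also control the kernel and cokernel terms contributed by the connecting maps of the quotients by $\sigma$ and $\rho_3$; the content of the computation is to verify that the \emph{only} classes of slope below $\tfrac34$ which arise are those in the ideal of $\sigma$ (killed in the quotient by $\sigma$), the multiples of $Q^1_\bZ(\sigma)$ (killed by $d^1\rho_2$), and $\rho_3$ (killed in the quotient by $\rho_3$). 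A secondary technical point is the filtered refinement of the construction of $\psi$, which uses Corollary~\ref{cor:MCGStabRange} together with the normalisation $c_*(\rho_3)=\mu$ arranged above by means of Lemma~\ref{lem:MCGFacts}~(\ref{it:MCGFacts32rel}).
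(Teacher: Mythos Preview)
Your strategy is broadly that of the paper --- set up a cell-attachment spectral sequence for $\gC_\psi$ and establish a vanishing line on the $E^2$-page via a slope count after isolating a small Koszul-type subcomplex --- but two departures from the paper's argument cause genuine difficulties.

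The most serious is that you attempt the computation integrally. The description of the homology of a free $E_2$-algebra as a free graded-commutative algebra on classes $Q^I(y)$ with $y$ a basic Lie word is Cohen's theorem, and it holds over $\bF_\ell$ for each prime $\ell$ (with visibly different answers at $\ell=2$ and at odd $\ell$), not over $\bZ$. Already the relation $2Q^1_\bZ(\sigma)=-[\sigma,\sigma]$ shows that the putative generators $Q^1_\bZ(\sigma)$ and $[\sigma,\sigma]$ are not independent, and your object ``$\Lambda_\bZ(\bZ\{Q^1_\bZ(\sigma),\rho_2\})$'' is not the integral homology of any honest tensor factor of the associated graded. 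The assertion that its homology is ``$\bZ$ plus cyclic torsion of slope $\geq\tfrac34$'' is neither well-posed nor proven. The paper instead carries out the entire $E^2$-page estimate over $\bF_\ell$, separately for $\ell=2$ and for $\ell$ odd, and only reassembles over $\bZ$ at the very end using that $\gC_\psi$ is of finite type.

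Second, the filtered refinement of $\psi$ is more delicate than you indicate, and the paper handles it differently. Your obstruction group $H_{4,2}(\overline{\gB}/\sigma)=0$ controls only the \emph{unfiltered} $\psi$, which is already in hand; lifting to a filtered map requires vanishing of $H_{4,2}$ at a finite filtration stage, which the paper establishes via a separate claim (that $H_{4,3,q}(\grr(\overline{\mr{sk}(\gB)}/\sigma))=0$ for $q\geq4$, forcing injectivity into the colimit). More substantially, rather than arranging $\grr(\psi)$ to be ``multiplication by $\rho_3$'' so that the associated graded of the cofibre becomes $\overline{\grr(f\gB)}/(\sigma,\rho_3)$ as you propose, the paper deliberately places the source of the filtered map one filtration degree \emph{above} $\rho_3$, forcing $\grr(\mr{sk}(\psi))=0$. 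The associated graded of $\gC_{\mr{sk}(\psi)}$ is then $(S^{0,0,0}\oplus S^{3,3,3}\rho_4)\otimes\grr(\overline{\mr{sk}(\gB)}/\sigma)$, with $d^1(\rho_4)=\rho_3$: it is this extra generator $\rho_4$, not a module quotient, that disposes of $\rho_3$ in the computational filtration. This device also makes it possible to show, via a surjectivity statement on $H_{4,3}$ of the filtration stage, that \emph{every} unfiltered $\psi$ arises from some filtered lift --- needed since the theorem is asserted for arbitrary $\varphi$, a point your sketch does not address.
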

\begin{proof}

Let $\mr{sk}(\gB)$ denote the skeletal filtration of $\gB$, with associated graded $\mathrm{gr}(\mr{sk}(\gB))$.  This acquires an extra grading, and we shall denote tridegrees as $(g,d,r)$, where as before $g$ is the genus and $d$ is homological degree, and $r$ is the new grading arising from passing to associated graded in the skeletal filtration.  Then the associated graded $\mathrm{gr}(\mr{sk}(\gB))$ is given by
\[\gE_2\left(S^{1,0,0}_\bZ\sigma \oplus S^{1,1,1}_\bZ\tau \oplus S^{2,2,2}_\bZ\rho_1 \oplus S^{2,2,2}_\bZ\rho_2 \oplus S^{3,2,2}_\bZ\rho_3 \oplus \bigoplus_{\alpha \in I} S^{g_\alpha, d_\alpha,d_\alpha}_\bZ x_\alpha\right)\]
and similarly for $\mathrm{gr}(\overline{\mr{sk}(\gB)}/\sigma)$.  Below we will show that the morphism $\psi$ may be lifted to a morphism in the filtered category in a certain way. In principle there is an obstruction to doing this, which will be avoided using the following calculation.


\vspace{1ex}

\noindent\textbf{Claim}: $H_{4,3, q}(\mathrm{gr}(\overline{\mr{sk}(\gB)}/\sigma))=0$ for $q\geq 4$.
\begin{proof}[Proof of claim]
  Since $\gB$, being equivalent to $\gR_\bZ$, is of finite type, it is enough to verify this for the base change $\gB_{\bF_\ell} \coloneqq \gB \otimes_\bZ \bF_\ell$ for any prime number $\ell$, and it follows from the construction that $\overline{\mr{sk}(\gB)_{\bF_\ell}} \simeq \overline{\mr{sk}(\gB)}_{\bF_\ell}$. By Cohen's calculations (see \cite[ p.207ff]{CLM} and the summary in Section $E_k$.16), $H_{*,*, *}(\mathrm{gr}(\overline{\mr{sk}(\gB)}_{\bF_\ell}))$ is a free graded commutative algebra on generators of the form $Q^I_\ell(y)$, where $y$ is a basic Lie word in $\{\sigma, \tau, \rho_i, x_\alpha\}$ and $Q^I_\ell$ runs through Dyer--Lashof monomials satisfying the usual admissibility and excess conditions.  Hence the homology of the cofibre can be identified, as a $H_{*,*, *}(\mathrm{gr}(\overline{\mr{sk}(\gB)}_{\bF_\ell}))$-module, as
\[H_{*,*, *}(\mathrm{gr}(\overline{\mr{sk}(\gB)}_{\bF_\ell}/\sigma)) \cong H_{*,*, *}(\mathrm{gr}(\overline{\mr{sk}(\gB)}_{\bF_\ell}))/(\sigma),\]
which admits the structure of a free graded-commutative algebra on the same set of generators except for $\sigma = Q^\varnothing_\ell \sigma$.

The generators $\{\sigma, \tau, \rho_i, x_\alpha\}$ each have homological dimension equal to their filtration grading, so applying the operations $[-,-]$ and $Q^I_\ell(-)$ gives classes which have dimension at least their grading. Thus $H_{4,3, q}(\mathrm{gr}(\overline{\mr{sk}(\gB)}_{\bF_\ell}/\sigma))$ can be non-zero only for $q \leq 3$.
\end{proof}

The map $\sigma \colon S^{1,0}_\bZ \to \gA \subset \gB$ lifts to the 0-skeleton $(\mr{sk}(\gB))(0)$, so by adjunction determines a filtered map $\sigma \colon 0_*(S^{1,0}_\bZ) \to \mr{sk}(\gB)$. Using this we can form $\overline{\mr{sk}(\gB)}/\sigma$. There is a homotopy cofibre sequence
\[(0_*(S^{1,0}_\bZ) \otimes \overline{\mr{sk}(\gB)})(2)\lra (\overline{\mr{sk}(\gB)})(2)\lra (\overline{\mr{sk}(\gB)}/\sigma)(2),\]
and the attaching map $\sigma^2 \tau$ for the $(3,2)$-cell $\rho_3$ lifts to $(0_*(S^{1,0}_\bZ) \otimes \overline{\mr{sk}(\gB)})(2)$, giving a map $\rho_3 \colon S^{3,2}_\bZ \to (\overline{\mr{sk}(\gB)}/\sigma)(2)$, so by adjunction a filtered map $\rho_3 \colon 2_*(S^{3,2}_\bZ) \to \overline{\mr{sk}(\gB)}/\sigma$. We may consider this as a filtered map $\rho_3 \colon 3_*(S^{3,2}_\bZ) \to \overline{\mr{sk}(\gB)}/\sigma$, by neglect of structure.

\vspace{1ex}

\noindent\textbf{Claim}: The composition
\[3_*(S^{4,2}_\bZ) = 0_*(S^{1,0}_\bZ) \otimes 3_*(S^{3,2}_\bZ)   \xrightarrow{\sigma \otimes \rho_3} \overline{\mr{sk}(\gB)} \otimes (\overline{\mr{sk}(\gB)}/\sigma) \lra \overline{\mr{sk}(\gB)}/\sigma\]
is null-homotopic.
\begin{proof}[Proof of claim]
This map is adjoint to a map $u \colon S^{4,2}_\bZ \to (\overline{\mr{sk}(\gB)}/\sigma)(3)$. There are homotopy cofibre sequences
\[(\overline{\mr{sk}(\gB)}/\sigma)(q-1) \lra (\overline{\mr{sk}(\gB)}/\sigma)(q) \lra \mathrm{gr}(\overline{\mr{sk}(\gB)}/\sigma)(q)\]
and as $H_{4,3, q}(\mathrm{gr}(\overline{\mr{sk}(\gB)}/\sigma))=0$ for $q\geq 4$ by our calculation above, the maps
\[(\overline{\mr{sk}(\gB)}/\sigma)(3) \lra (\overline{\mr{sk}(\gB)}/\sigma)(4) \lra (\overline{\mr{sk}(\gB)}/\sigma)(5) \lra (\overline{\mr{sk}(\gB)}/\sigma)(6) \lra \cdots\]
are all injective on $H_{4,2}(-)$. As the colimit $H_{4,2}(\gB/\sigma)\cong H_{4,2}(\gR_\bZ/\sigma)$ vanishes as a consequence of Corollary \ref{cor:MCGStabRange}, it follows that $H_{4,2}((\overline{\mr{sk}(\gB)}/\sigma)(3))=0$ and so the map $u$ is null-homotopic.
\end{proof}

Using this claim, the obstruction theory argument (in the category of $\overline{\mr{sk}(\gB)}$-modules) gives a $\overline{\mr{sk}(\gB)}$-module map in the filtered category
\[\mr{sk}(\psi) \colon 3_*(S^{3,2}_\bZ) \otimes (\overline{\mr{sk}(\gB)}/\sigma) \lra \overline{\mr{sk}(\gB)}/\sigma\]
which is again a ``multiplication by $\rho_3$'', but now in the filtered category.

Recall that our previous definition of $\psi$ only pinned it down up to a torsor for the unknown group $H_{4,3}(\gB/\sigma)$.  It is clear that the obstruction theoretic construction of $\mr{sk}(\psi)$ is a refinement of the one for $\psi$, so passing to underlying ungraded maps (i.e.\ taking colimit over the filtration) will send any $\mr{sk}(\psi)$ to some $\psi$.  In fact, any $\psi$ arises this way:

\vspace{1ex}

\noindent\textbf{Claim}: 
Any of the ($H_{4,3}(\gB/\sigma)$-torsor worth of) maps $\psi$ arise by taking underlying unfiltered map from some $\mr{sk}(\psi)$ arising as above.
\begin{proof}[Proof of claim]
 The claim is simply that the indeterminacy in the filtered obstruction problem is at least as large as that in the original obstruction problem: in particular, admitting a lift to a filtered map does not further pin down $\psi$.

The maps
\[H_{4,3}((\overline{\mr{sk}(\gB)}/\sigma)(3)) \lra H_{4,3}((\overline{\mr{sk}(\gB)}/\sigma)(4)) \lra H_{4,3}((\overline{\mr{sk}(\gB)}/\sigma)(5)) \lra \cdots\]
are all surjective by the first claim above, so $H_{4,3}((\overline{\mr{sk}(\gB)}/\sigma)(3)) \to H_{4,3}(\overline{\gB}/\sigma)$ is surjective.
\end{proof}

We have now lifted the previous constructions of $\psi \colon S^{3,2} \otimes \overline{\gB}/\sigma \to \overline{\gB}/\sigma$ to the filtered category.  Returning to the proof of Theorem~\ref{thm:IntStabMCG}, we must again do some low-dimensional calculations.

The cofibre $\gC_{\mr{sk}(\psi)}$ of $\mr{sk}(\psi)$ has underlying unfiltered object $\gC_\psi$. As the map $\rho_3 \colon 3_*(S^{3,2}_\bZ) \to \overline{\mr{sk}(\gB)}/\sigma$ used to construct $\mr{sk}(\psi)$ was obtained by neglect of structure from a map from $2_*(S^{3,2}_\bZ)$, the map $\mr{sk}(\psi)$ strictly decreases filtration and so it is trivial on associated graded. Since $\grr$ commutes with cofibres the associated graded of $\gC_{\mr{sk}(\psi)}$ is given by $(S^{0,0,0} \oplus S^{3,3,3}\rho_4) \otimes \grr(\overline{\mr{sk}(\gB)})/\sigma$, with $\rho_4$ denoting the map $S^{3,3,3} \to \grr(\gC_{\mr{sk}(\psi)})$ obtained from the trivial map 
\[S^{3,2,3} = S^{3,2,3} \otimes \bunit \lra \grr(3_*(S^{3,2}_\bZ) \otimes (\overline{\mr{sk}(\gB)}/\sigma)) \xrightarrow{\grr(\mr{sk}(\psi))} \grr(\overline{\mr{sk}(\gB)}/\sigma)\]
upon taking cofibre. We further have that $\grr(\overline{\mr{sk}(\gB)})/\sigma$ is 
\[\overline{\gE_2(S^{1,0,0}_\bZ\sigma \oplus S^{1,1,1}_\bZ\tau \oplus S^{2,2,2}_\bZ\rho_1 \oplus S^{2,2,2}_\bZ\rho_2 \oplus S^{3,2,2}_\bZ\rho_3 \oplus \bigoplus_{\alpha \in I} S^{g_\alpha, d_\alpha,d_\alpha}_\bZ )}/\sigma,\]
where as before we have $\frac{d_\alpha}{g_\alpha} \geq \frac{3}{4}$ for all $\alpha \in I$.

The associated spectral sequence 
\[E^1_{*,*,*} = H_{*,*,*}\left((S^{0,0,0}_\bZ \oplus S^{3,3,3}_\bZ\rho_4) \otimes \grr(\overline{\mr{sk}(\gB)}/\sigma)\right) \Longrightarrow H_{*,*}(\gC_\psi)\]
has $d^1(\rho_4) = \rho_3$, $d^1(\rho_1) = 0$, and $d^1(\rho_2) = Q^1_\bZ(\sigma)$ (the $d^1$-differential vanishes on $\rho_3$ since taking the quotient by $\sigma$ made its attaching map null-homotopic). This is a module spectral sequence over that for $\overline{\mr{sk}(\gB)}$, having $F^1_{*,*,*}$ given by
\[H_{*,*, *}\left(\overline{\gE_2(S^{1,0,0}_\bZ\sigma \oplus S^{1,1,1}_\bZ\tau \oplus S^{2,2,2}_\bZ\rho_1 \oplus S^{2,2,2}_\bZ\rho_2 \oplus S^{3,2,2}_\bZ\rho_3 \oplus \bigoplus_{\alpha \in I} S^{g_\alpha, d_\alpha,d_\alpha}_\bZ )}\right).\]
There is a similar spectral sequence after applying base change
$- \otimes_{\bZ} \bF_\ell$, and we will show a vanishing line for the
$E^2$-page of this spectral sequence for every prime number $\ell$:
\begin{equation}\label{eqn:ss-cpsi-ell} E^{1,\ell}_{*,*,*} = H_{*,*,*}\left((S^{0,0,0}_{\bF_\ell} \oplus S^{3,3,3}_{\bF_\ell}\rho_4) \otimes \grr(\overline{\mr{sk}(\gB)}_{\bF_\ell})/\sigma\right) \Longrightarrow H_{*,*}((\gC_\psi)_{\bF_\ell}).\end{equation}

As in the proof of the first claim above, the results of Cohen imply that the homology $H_{*,*,*}(\mathrm{gr}(\overline{\mr{sk}(\gB)}_{\bF_\ell})/\sigma)$ is a free graded commutative algebra on $L/\langle \sigma \rangle$ (so we omit $\sigma=Q^\varnothing_\ell(\sigma)$), where $L$ is the $\bF_\ell$-vector space with basis $Q^I_\ell(y)$, where $y$ is a basic Lie word in $\{\sigma, \tau, \rho_i, x_\alpha\}$, satisfying certain conditions. We now use Cohen's calculations to study the $E^2$-page of (\ref{eqn:ss-cpsi-ell}). 

\vspace{1ex}

\noindent\textbf{Claim.} $E^{2,\ell}_{g,p,q}$ vanishes for $\frac{p+q}{g}<\frac{3}{4}$.

\begin{proof}[Proof of claim] Due to slight differences in results of Cohen's calculations, we consider the cases $\ell = 2$ and $\ell$ odd separately.
	
\vspace{1ex}

\noindent\emph{The case $\ell=2$.} In this case $Q^1_{\ell}(\sigma) = Q^1_2(\sigma)$. The groups $E^{2,\ell}_{*,*,*}$ are given by the homology of the chain complex
\[E^{1,\ell}_{*,*,*} = \left((S^{0,0,0}_{\bF_2} \oplus S^{3,3,3}_{\bF_2}\rho_4) \otimes \Lambda_{\bF_2}(L/\langle \sigma \rangle),d^1\right),\]
where $\Lambda_{\bF_2}$ denotes the free \emph{commutative} algebra in this case.

As in the proof of Lemma \ref{lem:VanishA}, we introduce an additional ``computational'' filtration. Let $F^\bullet E^{1,\ell}_{*,*,*}$ be the filtration in which $Q^1_2(\sigma)$, $\rho_2$, $\rho_3$ and $\rho_4$ are given filtration $0$, the remaining basis elements are given filtration equal to homological degree, and the filtration is extended multiplicatively to all of $\Lambda_{\bF_2}(L/\langle \sigma \rangle)$. The differential $d^1$ preserves this filtration. Its associated graded $\grr(F^\bullet E^{1,\ell}_{\ast,\ast,\ast})$ can be identified with the tensor product
\[\left((S^{0,0,0}_{\bF_2} \oplus S^{3,3,3}_{\bF_2}\rho_4) \otimes \Lambda_{\bF_2}(\bF_2\{Q^1_2(\sigma),\rho_2,\rho_3\}),\delta\right) \otimes (\Lambda_{\bF_2}(L/\langle \sigma,Q^1_2(\sigma),\rho_2,\rho_3\rangle),0)\]
where the differential is determined by $\delta(Q^1_2(\sigma)) = 0$, $\delta(\rho_2) = Q^1_2(\sigma)$, and $\delta(\rho_4) = \rho_3$, and multiplicativity. The right factor has basis concentrated in degrees $(g,p,q,r)$ with $\frac{p+q}{g} \geq \frac{3}{4}$. The left factor is itself a tensor product
\[\left((S^{0,0,0}_{\bF_2} \oplus S^{3,3,3}_{\bF_2}\rho_4) \otimes \Lambda_{\bF_2}(\bF_2\{\rho_3\}),\delta \right) \otimes \left(\Lambda_{\bF_2}(\bF_2\{ Q^1_2(\sigma),\rho_2\}),\delta \right).\]
The left factor of this has homology $\bF_\ell$ in degree $(0,0,0,0)$. The right factor has homology $\Lambda_{\bF_2}(\bF_2\{\rho_2^2\})$, so consists of elements of slope 1. We conclude that the second page of the spectral sequence converging to $E^{2,\ell}_{g,p,q}$ vanishes for $\frac{p+q}{g} < \frac{3}{4}$, hence so does $E^{2,\ell}_{g,p,q}$.

\vspace{1ex}

\noindent\emph{The case $\ell$ is odd.} In this case $Q^1_{\ell}(\sigma) = -\tfrac{1}{2}[\sigma, \sigma]$. As in the previous case we compute the homology of the chain complex
\[E^{1,\ell}_{*,*,*} = \left((S^{0,0,0}_{\bF_\ell} \oplus S^{3,3,3}_{\bF_\ell}\rho_4) \otimes \Lambda_{\bF_\ell}(L/\langle \sigma \rangle),d^1\right)\]
by introducing an additional ``computational'' filtration preserved by $d^1$. We let $F^\bullet E^{1,\ell}_{\ast,\ast,\ast}$ be the filtration in which $[\sigma,\sigma]$, $\rho_2$, $\rho_3$ and $\rho_4$ are given filtration $0$, the remaining basis elements are given filtration equal to their homological degree, and the filtration is extended multiplicatively. Its associated graded $\grr(F^\bullet E^{1,\ell}_{\ast,\ast,\ast})$ is given by a chain complex with zero differential which is concentrated in degrees $(g,p,q,r)$ with $\frac{p+q}{g} \geq \frac{3}{4}$, tensored with
\[\left((S^{0,0,0}_{\bF_\ell} \oplus S^{3,3,3}_{\bF_\ell}\rho_4) \otimes \Lambda_{\bF_\ell}(\bF_\ell\{\rho_3\}),\delta \right) \otimes \left(\Lambda_{\bF_\ell}(\bF_\ell\{[\sigma,\sigma],\rho_2\}),\delta \right),\]
where the differential is determined by $\delta([\sigma,\sigma]) = 0$, $\delta(\rho_2) = -\tfrac{1}{2}[\sigma,\sigma]$, $\delta(\rho_4) = \rho_3$, and multiplicativity. The left factor has homology $\bF_\ell$ in degree $(0,0,0,0)$. The non-zero class of lowest slope in the homology of the right factor is $[\sigma, \sigma] \cdot \rho_2^{\ell-1}$, of bidegree $(2\ell, 2\ell-1)$ and so of slope $\frac{2\ell-1}{2\ell} \geq \frac{5}{6}$. As before, we conclude that the second page of the spectral sequence converging to $E^{2,\ell}_{g,p,q}$ vanishes for $\frac{p+q}{g} < \frac{3}{4}$, hence so does $E^{2,\ell}_{g,p,q}$.
\end{proof}

It then follows from the spectral sequence that $H_{g,d}((\gC_\psi )_{\bF_\ell})=0$ for $d < \tfrac{3}{4} g$. As $H_{*,*}(\gC_\psi)$ is an iterated cofibre of self-maps of a finite type simplicial abelian group weakly equivalent to $\gR_\bZ$, it is of finite type and hence it follows from the universal coefficient theorem that $H_{g,d}(\gC_\psi)=0$ for $d < \tfrac{3}{4} g$, as required.
\end{proof}

\begin{corollary}\label{cor:MCGSecStabIntegral}
The homomorphisms
\[\varphi_* \colon H_{d-2}(\Gamma_{g-3,1}, \Gamma_{g-4,1};\bZ) \lra H_{d}(\Gamma_{g,1}, \Gamma_{g-1,1};\bZ),\]
induced by any of the maps $\varphi \colon \gR/\sigma \to \gR/\sigma$ constructed above are surjective for $d \leq \frac{3g-1}{4}$, and isomorphisms for $d \leq \frac{3g-5}{4}$.
\end{corollary}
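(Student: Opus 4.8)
The plan is to deduce the corollary formally from Theorem~\ref{thm:IntStabMCG}, in exact analogy with the way Corollary~\ref{cor:MCGSecStab} is deduced in the rational case from the vanishing of $H_{g,d}(\overline{\gR}_\bQ/(\sigma,\lambda))$. First I would let $\gC_\varphi$ denote the mapping cone of $\varphi$ and record the resulting cofibre sequence
\[
S^{3,2}_\bZ \otimes \overline{\gR}_\bZ/\sigma \overset{\varphi}\lra \overline{\gR}_\bZ/\sigma \lra \gC_\varphi \overset{\partial}\lra S^{3,3}_\bZ \otimes \overline{\gR}_\bZ/\sigma
\]
of $\bN$-graded simplicial $\bZ$-modules (it is in fact a cofibre sequence of left $\overline{\gR}_\bZ$-modules, but only the underlying objects matter here). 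This exists for any of the maps $\varphi$ constructed in Section~\ref{sec:ConstSecStabMap}, and by the construction of $\psi$ in the proof of Theorem~\ref{thm:IntStabMCG} one has $\gC_\psi \simeq \gC_\varphi$.

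Next I would apply bigraded homology. Using the identification $H_{g,d}(\overline{\gR}_\bZ/\sigma) \cong H_d(\Gamma_{g,1}, \Gamma_{g-1,1};\bZ)$ from Section~\ref{sec:mcg-as-e2} together with $H_{g,d}(S^{p,q}_\bZ \otimes X) \cong H_{g-p,d-q}(X)$ (tensoring with $S^{p,q}_\bZ$ shifts the $\bN$-grading by $p$ and homological degree by $q$), the long exact sequence of the cofibre sequence becomes
\[
\cdots \lra H_{g,d+1}(\gC_\varphi) \lra H_{d-2}(\Gamma_{g-3,1}, \Gamma_{g-4,1};\bZ) \overset{\varphi_*}\lra H_{d}(\Gamma_{g,1}, \Gamma_{g-1,1};\bZ) \lra H_{g,d}(\gC_\varphi) \lra \cdots,
\]
and I would check directly from the definition of $\varphi$ that the middle arrow is the homomorphism $\varphi_*$ appearing in the statement.

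Finally I would feed in Theorem~\ref{thm:IntStabMCG}: $H_{g,d}(\gC_\varphi)=0$ whenever $4d \leq 3g-1$. Reading off the exact sequence, $\varphi_*$ is an epimorphism onto $H_d(\Gamma_{g,1},\Gamma_{g-1,1};\bZ)$ once $H_{g,d}(\gC_\varphi)=0$, i.e.\ for $d \leq \tfrac{3g-1}{4}$, and it is in addition a monomorphism once $H_{g,d+1}(\gC_\varphi)=0$ as well, i.e.\ for $d+1 \leq \tfrac{3g-1}{4}$, equivalently $d \leq \tfrac{3g-5}{4}$; this gives both claims. Since Theorem~\ref{thm:IntStabMCG} — which carries all the actual content, namely the construction of the CW $E_2$-algebra $\gB$ and the multi-graded spectral sequence estimate for $\gC_\psi$ over all primes $\ell$ — is already in hand, there is no genuine obstacle in this final deduction; the only care needed is in bookkeeping the two grading shifts $(3,2)$ and $(3,3)$ and in matching the two vanishing inputs $H_{g,d}(\gC_\varphi)=0$ and $H_{g,d+1}(\gC_\varphi)=0$ to exactly the positions of $\gC_\varphi$ in the long exact sequence.
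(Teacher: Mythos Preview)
Your proposal is correct and takes exactly the same approach as the paper's own proof, which simply invokes Theorem~\ref{thm:IntStabMCG} and says ``the claim follows''; you have merely spelled out the cofibre long exact sequence and the grading shifts that the paper leaves implicit.
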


\begin{proof}
Letting $\varphi \colon S^{3,2}_\bZ \otimes \overline{\gR}_\bZ/\sigma \to \overline{\gR}_\bZ/\sigma$ be any of the maps constructed in Section \ref{sec:ConstSecStabMap} by obstruction theory, the induced map on homology is of the given form. As Theorem \ref{thm:IntStabMCG} shows that $H_{g,d}(\gC_\varphi)=0$ for $4d \leq 3g-1$, the claim follows.
\end{proof}

\begin{corollary} \label{cor:rel-integral}
	For each $k \geq 1$ we have $H_{2k}(\Gamma_{3k,1}, \Gamma_{3k-1,1};\bZ) \cong \bZ$.
\end{corollary}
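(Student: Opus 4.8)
The plan is to argue by induction on $k$, using integral secondary stability (Corollary~\ref{cor:MCGSecStabIntegral}) to propagate the answer upward, and the rational computation (Corollary~\ref{cor:MCGPowersKappa}) to pin down the isomorphism type at each stage.

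For the base case $k = 1$ I would note that the identification $H_{g,d}(\overline{\gR}_\bZ/\sigma) \cong H_d(\Gamma_{g,1},\Gamma_{g-1,1};\bZ)$ recorded in Section~\ref{sec:mcg-as-e2}, combined with Lemma~\ref{lem:MCGFacts}~(\ref{it:MCGFacts32rel}), gives $H_2(\Gamma_{3,1},\Gamma_{2,1};\bZ) = H_{3,2}(\overline{\gR}_\bZ/\sigma) \cong \bZ$.

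For the inductive step, assume $H_{2k}(\Gamma_{3k,1},\Gamma_{3k-1,1};\bZ) \cong \bZ$ and apply Corollary~\ref{cor:MCGSecStabIntegral} with $g = 3k+3$ and $d = 2k+2$. A direct check shows that the surjectivity hypothesis $d \leq \tfrac{3g-1}{4}$ reads $2k+2 \leq \tfrac{9k+8}{4}$, i.e.\ $0 \leq k$, so it is satisfied; hence
\[\varphi_* \colon H_{2k}(\Gamma_{3k,1},\Gamma_{3k-1,1};\bZ) \lra H_{2k+2}(\Gamma_{3k+3,1},\Gamma_{3k+2,1};\bZ)\]
is an epimorphism. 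Therefore the target is a quotient of $\bZ$, hence cyclic. Tensoring with $\bQ$ (which is flat, so commutes with homology) and invoking Corollary~\ref{cor:MCGPowersKappa}, which gives $H_{2k+2}(\Gamma_{3k+3,1},\Gamma_{3k+2,1};\bQ) = \bQ\{\lambda^{k+1}\}$, shows this cyclic group has rank one, so it must be $\bZ$. This completes the induction and the proof.

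I do not expect a serious obstacle: the substantive content is entirely in Corollaries~\ref{cor:MCGSecStabIntegral} and~\ref{cor:MCGPowersKappa} (and Lemma~\ref{lem:MCGFacts}~(\ref{it:MCGFacts32rel}) for the base case), while the remaining argument is purely formal. The one minor point to watch is that Corollary~\ref{cor:MCGSecStabIntegral} is stated for \emph{any} of the non-canonical maps $\varphi$; since the argument only uses surjectivity, this ambiguity is harmless, and in particular the conclusion $H_{2k}(\Gamma_{3k,1},\Gamma_{3k-1,1};\bZ)\cong\bZ$ does not depend on a choice.
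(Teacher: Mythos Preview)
Your proof is correct and follows essentially the same approach as the paper's: both use Lemma~\ref{lem:MCGFacts}~(\ref{it:MCGFacts32rel}) for the base case, Corollary~\ref{cor:MCGSecStabIntegral} to obtain surjectivity of the secondary stabilisation maps along the line $d = \tfrac{2}{3}g$, and Corollary~\ref{cor:MCGPowersKappa} to conclude the resulting cyclic groups have rank one. The only cosmetic difference is that the paper chains the surjections directly rather than phrasing it as an induction.
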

\begin{proof}
	By Lemma \ref{lem:MCGFacts} (\ref{it:MCGFacts32rel}) we have $H_{2}(\Gamma_{3,1}, \Gamma_{3-1,1};\bZ) = \bZ\{\mu\}$, and, by the corollary above, as $4 \cdot 4 \leq 3 \cdot 6-1$ and $4 \leq 6-2$ there are surjective maps
	\[\bZ\{\mu\} = H_2(\Gamma_{3,1}, \Gamma_{2,1};\bZ) \overset{\varphi_*}\lra H_4(\Gamma_{6,1}, \Gamma_{5,1};\bZ) \overset{\varphi_*}\lra \cdots \overset{\varphi_*}\lra H_{2k}(\Gamma_{3k,1}, \Gamma_{3k-1,1};\bZ).\]
	Thus $H_{2k}(\Gamma_{3k,1}, \Gamma_{3k-1,1};\bZ)$ is a cyclic abelian group, but it has rank 1 by Corollary \ref{cor:MCGPowersKappa}.
\end{proof}

\subsection{Mapping class groups with punctures and boundaries}

The previous ideas can also be used to study the homology of mapping class groups $\Gamma_{g, 1+B}^P$ of a genus $g$ surface with punctures in bijection with a finite set $P$, a distinguished boundary component, and further boundary components in bijection with a finite set $B$.  The main observation is that the chains on such mapping class groups can be made into a \emph{module} over the algebra $\gR_\bZ$ studied before.  We go into this both as an illustration of module structures and coefficient systems, and because the results we obtain will be used in the proof of Theorem~\ref{thm:H43} below.

More precisely, fix finite sets $P$ and $B$, and an embedding $(P \sqcup B) \times D^2 \hookrightarrow [0,1]^2 \times \{0\}$. Then we let the model surface $\Sigma_{0,1+B}^P \subset [0,1]^3$ be given by $[0,1]^2 \times \{0\} \setminus (P \sqcup B) \times \mr{int}(D^2)$. This has genus $0$ and is equal to $[0,1]^2 \times \{0\}$ near $\partial [0,1]^3$. So $\partial [0,1]^2 \times \{0\}$ forms part of the boundary of $\Sigma_{0,1+B}^P$ and is called the \emph{distinguished boundary component}. It has further boundary components which are identified with $S^1$ and are in bijection with $B$, called \emph{additional boundary components}, and boundary components which are in bijection with $P$ but which we do \emph{not} identify with $S^1$, called \emph{punctures}. We then write
\[\Sigma_{g,1+B}^P \coloneqq \Sigma_{g,1} \cup (\Sigma_{0,1+B}^P + g \cdot e_1) \subset [0,g+1] \times [0,1]^2,\]
and call $\partial_0 \Sigma_{g,1+B}^P \coloneqq \partial [0,g+1]^2 \times \{0\}$ the distinguished boundary component. 

Let $\Diff(\Sigma_{g,1+[B]}^{[P]})$ denote the diffeomorphisms of $\Sigma_{g,1+B}^P$ which are equal to the identity near the distinguished boundary, may permute the additional boundaries but must preserve their identification with $S^1$, and may permute the punctures. As always, this is a topological group with the $C^\infty$-topology. We set 
\[\Gamma_{g, 1+[B]}^{[P]} \coloneqq \pi_0\left(\Diff(\Sigma_{g,1+[B]}^{[P]})\right),\]
and write $\Gamma_{g, 1+B}^{P} \lhd \Gamma_{g, 1+[B]}^{[P]}$ for the normal subgroup of those isotopy classes of diffeomorphisms inducing the trivial permutation of the additional boundaries and punctures. 

Let the groupoid $\cat{MCG}_B^P$ have objects given by the natural numbers, and morphisms given by
\[\cat{MCG}_B^P(g,h) = \begin{cases} \Gamma_{g, 1+[B]}^{[P]} & \text{if $g=h$,} \\
\varnothing & \text{otherwise}.\end{cases}\]

We use $\fS_T$ to denote the group of permutations of a set $T$, and write $r' \colon \cat{MCG}_B^P \to \bN \times \fS_B \times \fS_P$ for the functor which associates to the object $g$ the object $(g, *, *)$, and associates to an isotopy class the permutations of additional boundaries and of punctures it induces. There is a (left) action of the strict monoidal category $(\cat{MCG}, \oplus, 0)$ given by
\begin{align*}
\cat{MCG} \times \cat{MCG}_B^P  &\lra \cat{MCG}_B^P\\
(g, h) &\longmapsto g+h\\
(\varphi \in \Gamma_{g, 1}, \psi \in \Gamma_{g, 1+[B]}^{[P]}) &\longmapsto \varphi \cup (\psi + g \cdot e_1) \in \Gamma_{g+h, 1+[B]}^{[P]}
\end{align*}
where $\varphi \cup (\psi + g \cdot e_1)$ is considered as a diffeomorphism of
\[\Sigma_{g, 1} \cup (\Sigma_{h,1+B}^P+ g \cdot e_1) = \Sigma_{g+h, 1+B}^P.\]

The terminal functor $\underline{\ast}_{\geq 0}$ in $\cat{sSet}^{\cat{MCG}^P_B}$ sending everything to $\ast$ is canonically a module over the terminal non-unital associative algebra $\underline{\ast}_{> 0}$ in $\cat{sSet}^\cat{MCG}$ and hence also over its cofibrant replacement $\gT' \in \cat{Alg}_{\mr{Ass}} (\cat{sSet}^\cat{MCG})$ (as in \cite{e2cellsI}, $\mr{Ass}$ denotes the non-unital associative operad). Let $\gU \in \cat{sSet}^\cat{MCG_B^P}$ denote a cofibrant replacement of the terminal functor $\underline{\ast}_{\geq 0}$ considered as a $\gT'$-module. 
Since the following diagram commutes
\[\begin{tikzcd} \cat{MCG} \times \cat{MCG}_B^P \rar \dar{r \times r'} &[15pt] \cat{MCG}_B^P \dar{r'} \\
\bN \times \bN \times \fS_B \times \fS_P \rar{+ \times \mr{id} \times \mr{id}} & \bN  \times \fS_B \times \fS_P,\end{tikzcd}\]
with horizontal map given by addition in the $\bN$ entries, the pushforward  $(r')_*(\gU) \in \cat{sSet}^{\bN \times \fS_B \times \fS_P}$ is a module over $r_*(\gT') \in \Alg_{\mr{Ass}}(\cat{sSet}^\bN)$. We have that 
\[r_*(\gT')(g) = \bL r_*(\underline{\ast}_{> 0}) \simeq \begin{cases} \varnothing & \text{if $g=0$,}\\
 B\Gamma_{g,1} & \text{otherwise,}\end{cases}\]
 \[\text{and} \qquad (r')_*(\gU)(g) = \bL (r')_*(\underline{\ast}_{\geq 0}) \simeq B\Gamma_{g, 1+[B]}^{[P]}.\]

We claim that as a non-unital $E_1$-algebra $r_*(\gT')$ is weakly equivalent to $\gR$. Recall that $\gT$ denotes the cofibrant replacement of the terminal non-unital $E_2$-algebra $\underline{\ast}_{> 0}$ in $\Alg_{E_2}(\cat{sSet}^\cat{MCG})$. We may regard $\gT'$ as a non-unital $E_1$-algebra and as such it has a unique map to $\underline{\ast}_{> 0}$. Since trivial fibrations of non-unital $E_1$-algebras are those of the underlying objects, this is a trivial fibration of non-unital $E_1$-algebras and thus there is a lift $\gT \to \gT'$ of non-unital $E_1$-algebras. This induces a weak equivalence $\gR = r_*(\gT) \to r_*(\gT')$ of non-unital $E_1$-algebras, and hence on unitalisations.

We denote $r_*(\gT')$ by $\widetilde{\gR}$, and use $\widetilde{\gR}^+$ for its unitalisation. We also denote $(r')_*(\gU)$ by $\gM$, whose $\smash{\widetilde{\gR}}$-module structure induces an $\smash{\widetilde{\gR}}^+$-module structure. To save space, we shall write $g$ for the object $(g,*,*) \in \bN \times \fS_B \times \fS_P$. 

In Section $E_k$.9.4.2 we have explained that---like $E_2$-indecomposables---we can form $\widetilde{\gR}^+$-module indecomposables $Q^{\widetilde{\gR}^+}_\bL(\gM)$ and hence define $\widetilde{\gR}^+$-homology groups \[H^{\widetilde{\gR}^+}_{g,d}(\gM) \coloneqq \tilde{H}_{g,d}(Q^{\widetilde{\gR}^+}_\bL(\gM)).\] By Corollary $E_k$.9.17 these can be computed by a bar construction: assuming that $\widetilde{\gR}^+$ and $\gM$ are cofibrant in $\cat{sSet}^\bN$ then we have $Q^{\widetilde{\gR}^+}_\bL(\gM) \simeq B(1,\widetilde{\gR}^+,\gM)$.

\begin{theorem}\label{thm:MCGModuleCells}
	We have $H^{\smash{\widetilde{\gR}}^+}_{g,d}(\gM)=0$ for $d < g$.
\end{theorem}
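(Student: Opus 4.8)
The plan is to identify $Q^{\widetilde{\gR}^+}_\bL(\gM)$ with a space built from separating arcs in $\Sigma_{g,1+B}^P$, exactly parallel to the treatment of $Q^{E_1}_\bL(\gR)$ in Section~\ref{sec:mcg-as-e2}, and then prove high-connectivity of that space by the same geometric/poset methods used for Theorem~\ref{thm:MCGSplittingConn}. By Corollary~$E_k$.9.17 we have $Q^{\widetilde{\gR}^+}_\bL(\gM) \simeq B(1,\widetilde{\gR}^+,\gM)$, the one-sided bar construction. First I would run the analogue of Corollary~$E_k$.17.5 for modules (the module version of the ``splitting complex'' description of $E_1$-homology) to reinterpret this bar construction, up to the appropriate suspension, as a space of the form $\suspsplit^{\gM}_\bullet(g) \wedge_{\Gamma_{g,1+[B]}^{[P]}} (E\Gamma_{g,1+[B]}^{[P]})_+$, where $\suspsplit^{\gM}_\bullet(g)$ is a double-suspension of the realisation of a semi-simplicial set whose $p$-simplices record a decomposition $g = g_0 + \cdots + g_p$ together with the extra ``module'' block carrying the punctures and additional boundaries. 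Concretely this semi-simplicial set is the one indexing $p$-tuples of disjoint separating arcs $a_0 < \cdots < a_p$ in $\Sigma_{g,1+B}^P$, running from $b_0$ to $b_1$ on the distinguished boundary, such that the genus between consecutive arcs (and to the left of $a_0$) is positive, but where the region to the right of $a_p$ is allowed to contain all the punctures and additional boundaries and is \emph{not} required to have positive genus. This is precisely the poset $\mathcal{S}(\Sigma_{g,1+B}^P, \partial_0, b_0, b_1)$ of a \emph{prepared surface} with additional boundaries/punctures, as in Definition~\ref{defn:PreparedSurface} (punctures behaving like additional boundaries for the purposes of these arc systems).

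The key connectivity input is then already available: Theorem~\ref{thm:HighConnSCx}(ii) says that for a prepared surface of genus $g$ with additional boundaries (or punctures), $\mathcal{S}(\Sigma, \partial_0 \Sigma, b_0, b_1)$ is $(g-2)$-connected. (If $B = P = \varnothing$ we are in case~(i), which gives $(g-3)$-connected, matching the closed case $\gM = \gR$ and the bound $d<g$ there too.) Granting the module analogue of the equivalence $\Sigma^2\split(g) \simeq \suspsplit(g)$ from Lemma~$E_k$.17.10, the $(g-2)$-connectivity of the $(g-2)$-dimensional complex $\mathcal{S}$ forces $\suspsplit^{\gM}(g)$ to have the homotopy type of a wedge of $g$-spheres, whence $\smash{H^{\widetilde{\gR}^+}_{g,d}}(\gM) \cong \tilde H_{d-g}(\Gamma_{g,1+[B]}^{[P]}; \mathrm{St}^{E_1,\gM}(g))$ vanishes for $d < g$, which is the claim.

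The main obstacle is the first paragraph: carefully setting up the module version of the ``$Q^{E_1}_\bL$ as arc complex'' dictionary and checking it specialises correctly. Two points need care. First, one must verify that punctures and additional boundaries really do play the role of ``additional boundary components'' in the sense of Definition~\ref{defn:PreparedSurface} — i.e. that an arc separating off a positive-genus piece on the left, with everything else (including all punctures) on the right, is exactly what the bar construction $B(1,\widetilde{\gR}^+,\gM)$ produces as its $0$-simplices, the point being that the module factor $\gM$ contributes the rightmost block with no positivity constraint. Second, one must identify the action of $\Gamma_{g,1+[B]}^{[P]}$ (the version that permutes punctures and boundaries) on the arc system with the stabiliser description, using the isotopy extension theorem and the contractibility-of-arc-spaces results of Gramain as in Proposition~\ref{prop:SplittingModel}; here one should note that permuting punctures/boundaries does not affect the genus-based injectivity argument, so the map from the abstract coset description to the arc complex is still an isomorphism. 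Once these identifications are in place, everything reduces formally to Theorem~\ref{thm:HighConnSCx}(ii), and the transfer of vanishing lines to $\cat{sMod}_\bk^\bN$ proceeds for formal reasons exactly as in Corollaries~\ref{cor:MCGVanishingLine} and~\ref{cor:MCG21vanish} (here we only need the integral/combinatorial statement, so no change of coefficients is even required).
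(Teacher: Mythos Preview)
Your strategy matches the paper's: reduce $Q^{\widetilde{\gR}^+}_\bL(\gM)(g)$ to the connectivity of the separating-arc poset for the prepared surface $\Sigma_{g,1+B}^P$ and invoke Theorem~\ref{thm:HighConnSCx}(ii). Two points, however, need correction.

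First, the case $B = P = \varnothing$ is mishandled. There $\gM$ is not $\gR$ but $\widetilde{\gR}^+$ (the module is built from $\underline{\ast}_{\geq 0}$, not $\underline{\ast}_{>0}$, so it includes genus $0$), whence $Q_\bL^{\widetilde{\gR}^+}(\gM) \simeq S^{0,0}$ and the statement is vacuous for $g>0$. Your appeal to Theorem~\ref{thm:HighConnSCx}(i) would give only $(g-3)$-connectivity of the arc poset, hence vanishing only for $d < g-1$; the assertion that this ``matches the bound $d<g$'' is simply false.

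Second, your bookkeeping in the nonempty case is off. The arc poset for a prepared surface with additional boundaries has nerve of dimension $g-1$, not $g-2$: the rightmost region may have genus $0$, so the longest chain of arcs has length $g$. With your stated numbers ($(g-2)$-connected and $(g-2)$-dimensional) the complex would be contractible, which is not what you want. More seriously, you rely on an unspecified ``module analogue'' of Corollary $E_k$.17.5 and Lemma $E_k$.17.10; the suspension count for a one-sided bar construction is not the same as for the two-sided one, and you have not verified that the shifts come out right. The paper avoids this entirely: from the cofibration sequence
\[
B(\widetilde{\gR}, \widetilde{\gR}^+, \gM) \lra B(\widetilde{\gR}^+, \widetilde{\gR}^+, \gM) \simeq \gM \lra B(\mathbbm{1}, \widetilde{\gR}^+, \gM)
\]
(the middle equivalence by the extra degeneracy) it suffices that the homotopy fibre of the augmentation $B(\widetilde{\gR}, \widetilde{\gR}^+, \gM)(g) \to \gM(g)$ be $(g-2)$-connected. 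That fibre is identified levelwise, exactly as in Proposition~\ref{prop:SplittingModel}, with the arc complex, and Theorem~\ref{thm:HighConnSCx}(ii) finishes the argument directly---no Steinberg-module description or suspension bookkeeping required.
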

\begin{proof} We have that $\smash{\widetilde{\gR}}^+$ is cofibrant in $\cat{sSet}^\bN$, and $\gM$ is cofibrant in $\cat{sSet}^{\bN \times \fS_B \times \fS_P}$ so in particular gives a cofibrant object in $\cat{sSet}^\bN$ with an action of $\fS_B \times \fS_P$. Thus by Corollary~$E_k$.9.17 we have $Q^{\smash{\widetilde{\gR}}^+}_\bL(\gM) \simeq B(\mathbbm{1}, \smash{\widetilde{\gR}}^+, \gM)$, where the bar construction is taken in $\cat{sSet}^\bN$ (for this argument the $\fS_B \times \fS_P$-action on $\gM$ may be ignored). Thus applying $B_\bullet(-,\smash{\widetilde{\gR}}^+,\gM)$ to anything which is cofibrant in $\cat{sSet}^\bN$ gives rise to a Reedy-cofibrant semi-simplicial object and hence $B(-,\smash{\widetilde{\gR}}^+,\gM)$ preserves cofibration sequences between cofibrant objects. In particular there is a cofibration sequence
	\[B(\smash{\widetilde{\gR}}, \smash{\widetilde{\gR}}^+, \gM)\lra B(\smash{\widetilde{\gR}}^+, \smash{\widetilde{\gR}}^+, \gM)\lra B(\mathbbm{1}, \smash{\widetilde{\gR}}^+, \gM)\]
	and the augmentation $\epsilon \colon B(\smash{\widetilde{\gR}}^+,\smash{\widetilde{\gR}}^+, \gM) \to \gM$ is homotopy equivalence, as this augmented semi-simplicial object has an extra degeneracy. Therefore it is enough to show that the homotopy fibre of the augmentation
	\[\epsilon \colon B(\smash{\widetilde{\gR}}, \smash{\widetilde{\gR}}^+, \gM) \lra \gM\]
	has a certain connectivity. At the object $g \in \bN$, on $p$-simplices this is the inclusion
	\begin{equation}\label{eqn:young-type-pb}\bigsqcup_{\substack{g_{0} > 0, g_1, \ldots, g_{p+1} \geq 0\\ \sum g_i=g}}
	B\Gamma_{(g_0,\ldots,g_{p+1}),1+B}^P \lra B\Gamma_{g, 1+B}^P,\end{equation}
	where $\Gamma_{(g_0,\ldots,g_{p+1}),1+B}^P$ is the Young-type subgroup of $\Gamma_{g,1+B}^P$ associated to the decomposition 
	\[\Sigma_{g,1+B}^P = \bigsqcup_{i=0}^P (\Sigma_{g_i,1}+(g_0+\cdots+g_{i-1})\cdot e_1) \sqcup (\Sigma_{g_p,1+B}^P+(g_0+\cdots+g_{p-1})\cdot e_1),\] 
	which is isomorphic to $\Gamma_{g_0, 1} \times \Gamma_{g_1, 1} \times \cdots \times \Gamma_{g_p, 1} \times \Gamma_{g_{p+1}, 1+B}^P$ by a mild generalisation of Lemma \ref{lem:MonoidalInj}. The map (\ref{eqn:young-type-pb}) therefore has homotopy fibre homotopy equivalent to the set
	\[\bigsqcup_{\substack{g_{0} > 0, g_1, \ldots, g_{p+1} \geq 0\\ \sum g_i=g}} \frac{\Gamma_{g, 1+B}^P}{\Gamma_{(g_0,\ldots,g_{p+1}),1+B}^P},\]
	and these form the $p$-simplices of a semi-simplicial set whose thick geometric realisation is therefore equivalent to the homotopy fibre of $\epsilon$ at $g \in \bN$.
	
	If $B=P=\varnothing$ then $\gM \cong \smash{\widetilde{\gR}}^+$ so $Q^{\smash{\widetilde{\gR}}^+}_\bL(\gM) \simeq S^{0,0}$ and there is nothing to show.  If $B$ or $P$ is non-empty, then just as in the proof of Proposition \ref{prop:SplittingModel} we see that there is a homotopy equivalence
	\[\mathrm{hofib}(B(\smash{\widetilde{\gR}}, \smash{\widetilde{\gR}}^+, \gM)(g) \to \gM(g)) \simeq S(\Sigma_{g, 1+B}^P, \partial_0 \Sigma_{g, 1+B}^P, (0,0,0), (0,1,0))_\bullet,\]
	where the latter is as in Definition \ref{defn:SplittingCx2}. By Theorem \ref{thm:HighConnSCx} (ii) this is $(g-2)$-connected, so
	\[Q^{\smash{\widetilde{\gR}}^+}_\bL(\gM)(g) \simeq B(\mathbbm{1}, \smash{\widetilde{\gR}}^+, \gM)(g) \simeq \mathrm{hocofib}(B(\smash{\widetilde{\gR}}, \smash{\widetilde{\gR}}^+, \gM)(g) \lra \gM(g))\]
	is $(g-1)$-connected, as required.
\end{proof}

This theorem allows us to promote the results developed so far, of homological stability and of secondary homological stability, from surfaces with a single boundary to surfaces with multiple boundaries and with punctures.

\begin{corollary}\label{cor:punctures-secondary-stability}
	For any finite sets $B$ and $P$ we have $H_d(\Gamma_{g, 1+B}^P, \Gamma_{g-1, 1+B}^P;\bZ)=0$ for $3d \leq 2g-1$. Furthermore, there is a $\fS_B \times \fS_P$-equivariant map
	\[\varphi'_* \colon H_{d-2}(\Gamma_{g-3, 1+B}^P, \Gamma_{g-4, 1+B}^P;\bZ) \lra H_d(\Gamma_{g, 1+B}^P, \Gamma_{g-1, 1+B}^P;\bZ)\]
	which is an epimorphism for $4d \leq 3g-1$ and an isomorphism for $4d \leq 3g-5$.
\end{corollary}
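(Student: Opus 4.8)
The plan is to mimic the arguments of Sections~\ref{sec:stability} and \ref{sec:integr-second-homol}, but working with the $\widetilde{\gR}_\bZ$-module $\gM_\bZ \coloneqq \bk[\gM]$ (with $\bk = \bZ$) in place of the algebra $\gR_\bZ$, using Theorem~\ref{thm:MCGModuleCells} in place of Proposition~\ref{prop:MCGE2vanish}. First I would record the module analogues of the constructions already in hand. Since $\gR = r_*(\gT)$ and $\widetilde{\gR} = r_*(\gT')$ are weakly equivalent as $E_1$-algebras, $\gM_\bZ$ is a module over (a strictly associative model of) $\overline{\gR}_\bZ$; form $\gM_\bZ/\sigma$ as the homotopy cofibre of $\sigma \cdot - \colon S^{1,0}\otimes \gM_\bZ \to \gM_\bZ$, so that $H_{g,d}(\gM_\bZ/\sigma) \cong H_d(\Gamma^P_{g,1+B}, \Gamma^P_{g-1,1+B};\bZ)$, with $\fS_B \times \fS_P$ acting throughout. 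Feeding Theorem~\ref{thm:MCGModuleCells} (which gives $H^{\widetilde{\gR}^+}_{g,d}(\gM)=0$ for $d<g$) into the module version of Theorem~$E_k$.18.2 --- the module homological stability statement --- yields $H_{g,d}(\gM_\bZ/\sigma)=0$ for $3d \leq 2g-1$, which is the first assertion of the corollary. Here one uses that $\sigma \cdot - \colon H_{1,1}(\gR_\bZ) \to H_{2,1}(\gR_\bZ)$ is surjective, already established, together with the fact that multiplication by $\sigma$ on $\gM_\bZ$ is the relevant stabilisation map.

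Next I would build the secondary stabilisation map. The point is that the endomorphism $\varphi \colon S^{3,2}_\bZ \otimes \overline{\gR}_\bZ/\sigma \to \overline{\gR}_\bZ/\sigma$ constructed in Section~\ref{sec:ConstSecStabMap} is a map of left $\overline{\gR}_\bZ$-modules, so applying $- \otimes_{\overline{\gR}_\bZ} \gM_\bZ/\sigma$ (derived) produces a map
\[\varphi' \colon S^{3,2}_\bZ \otimes \gM_\bZ/\sigma \lra \gM_\bZ/\sigma\]
of $\gM_\bZ/\sigma$; alternatively, and more directly, the same obstruction-theoretic construction runs verbatim in the category of $\overline{\gR}_\bZ$-modules applied to $\gM_\bZ/\sigma$, with the relevant obstruction group being $H_{4,2}(\gM_\bZ/\sigma) = H_2(\Gamma^P_{4,1+B},\Gamma^P_{3,1+B};\bZ)$, which vanishes by the already-proved range $3d\leq 2g-1$ (here $3\cdot 2 \leq 2\cdot 4 - 1$). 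Because all these constructions are natural and the $\fS_B\times\fS_P$-action on $\gM$ commutes with the $\widetilde{\gR}$-action, $\varphi'$ is $\fS_B\times\fS_P$-equivariant, and on homology it induces the desired map $\varphi'_*$.

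The heart of the matter is then to show $H_{g,d}(\gC_{\varphi'})=0$ for $4d\leq 3g-1$, where $\gC_{\varphi'}$ is the cofibre of $\varphi'$. For this I would run the module analogue of the proof of Theorem~\ref{thm:IntStabMCG}: take the CW $\overline{\gR}_\bZ$-module approximation of $\gM_\bZ$ obtained by attaching cells to a suitable starting module, with all cells of slope $\geq \tfrac34$ (this follows from Theorem~\ref{thm:MCGModuleCells} together with the module version of Theorem~$E_k$.11.21), pass to the skeletal filtration, and analyse the resulting multiplicative spectral sequence, which is a \emph{module} spectral sequence over the one for $\overline{\mr{sk}(\gB)}$ appearing in the proof of Theorem~\ref{thm:IntStabMCG}. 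Since the module generators are already in slope $\geq \tfrac34$, and Cohen's calculations describe the $E^1$-page as a free module over the free graded-commutative algebra analysed before, the same ``computational filtration'' argument --- filtering away the part of $d^1$ coming from $\rho_2, \rho_3, \rho_4$ --- shows the $E^2$-page vanishes below slope $\tfrac34$, working prime by prime and then invoking finite type and universal coefficients. The main obstacle I anticipate is bookkeeping rather than conceptual: one must set up the module-level skeletal filtration and its spectral sequence carefully so that the module generators genuinely sit in slope $\geq \tfrac34$ and so that $d^1$ on them is controlled by the known differentials $d^1(\rho_2)=Q^1_\bZ(\sigma)$, $d^1(\rho_4)=\rho_3$ from the algebra side, and one must check that the finite-type argument survives base change to $\bF_\ell$ uniformly in the extra grading. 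Granting the vanishing of $H_{g,d}(\gC_{\varphi'})$ for $4d\leq 3g-1$, the long exact sequence of the cofibre sequence $S^{3,2}\otimes \gM_\bZ/\sigma \to \gM_\bZ/\sigma \to \gC_{\varphi'}$ gives that $\varphi'_*$ is an epimorphism for $4d\leq 3g-1$ and an isomorphism for $4d\leq 3g-5$, completing the proof.
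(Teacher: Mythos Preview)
Your approach is correct but takes a more laborious route than the paper for the secondary stability part. The paper's key simplification is that $\varphi'$ can be \emph{defined} as $B(\varphi,\overline{\gR}_\bZ,\gZ)$, where $\gZ \overset{\sim}\to \gM_\bZ$ is a CW $\overline{\gR}_\bZ$-module approximation with cells in bidegrees $(g_\alpha,d_\alpha)$ satisfying $d_\alpha \geq g_\alpha$ (from Theorem~\ref{thm:MCGModuleCells}). Its cofibre is then $B(\gC_\varphi,\overline{\gR}_\bZ,\gZ)$, and the skeletal filtration of $\gZ$ gives a spectral sequence with $E^1$-page $H_{*,*}(\gC_\varphi) \otimes \bigoplus_\alpha S^{g_\alpha,d_\alpha,d_\alpha}$. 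Since $H_{g,d}(\gC_\varphi)=0$ for $d<\tfrac34 g$ is \emph{already established} in Theorem~\ref{thm:IntStabMCG}, and the module cells have slope $\geq 1$, the vanishing follows in two lines. The first part is handled by exactly the same mechanism, with $\overline{\gR}_\bZ/\sigma$ in place of $\gC_\varphi$ and Corollary~\ref{cor:MCGStabRange} in place of Theorem~\ref{thm:IntStabMCG}. Your plan to rerun the entire Cohen-style computation, computational filtration, and prime-by-prime analysis at the module level does work (the module cells can be given filtration equal to their homological degree, so on the associated graded they carry zero differential and the algebra argument propagates), but it repeats work already done. The gain of the paper's approach is that all the delicate analysis is isolated in the proof of Theorem~\ref{thm:IntStabMCG} once and for all, and transferring to any $\overline{\gR}_\bZ$-module with a known vanishing line for module cells is then formal; the gain of yours is that it is self-contained and does not require setting up the bar-construction identification $\gZ/\sigma \simeq B(\overline{\gR}_\bZ/\sigma,\overline{\gR}_\bZ,\gZ)$ or thinking about $\varphi$ as a right-module map. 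A small sloppiness: your ``$-\otimes_{\overline{\gR}_\bZ}\gM_\bZ/\sigma$'' should be $-\otimes_{\overline{\gR}_\bZ}\gM_\bZ$, with $\overline{\gR}_\bZ/\sigma$ regarded as a right module, so that the result is $\gM_\bZ/\sigma$.
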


\begin{proof}
	For studying homology we may as well pass from simplicial sets to simplicial $\bZ$-modules, resulting in a non-unital associative algebra $\gR_\bZ \in \cat{sMod}_\bZ^{\bN}$ and an $\smash{\widetilde{\gR}}^+_\bZ$-module $\gM_\bZ \in \cat{sMod}_\bZ^{\bN}$ with an action of $\fS_B \times \fS_P$. Using the equivalence $\overline{\gR} \overset{\sim}\to \smash{\widetilde{\gR}}^+$ we may consider $\gM_\bZ$ as a $\overline{\gR}_\bZ$-module. 
	
	The first part concerns the vanishing of the homology of $\gM_\Z/\sigma$. For the second part, we may form the map $\varphi' \coloneqq B(\varphi, \overline{\gR}_\bZ, \gM_\Z)$, considered as a map
	\[S^{3,2}_\bZ \otimes \gM_\Z/\sigma \simeq  B(S^{3,2}_\bZ \otimes\overline{\gR}_\bZ/\sigma , \overline{\gR}_\bZ , \gM_\Z) \lra B(\overline{\gR}_\bZ/\sigma , \overline{\gR}_\bZ , \gM_\Z) \simeq \gM_\Z/\sigma.\]
	This is $\fS_B \times \fS_P$-equivariant, by functoriality, and the second part will follow if we show that the homology of its cofibre vanishes in bidegrees $(g,d)$ with $d < \frac{3}{4}g$. When proving either part, we may ignore equivariance.
	
	\vspace{1em}
	
	For the first part, we use that $\overline{\mathbf{R}}$, $\widetilde{\mathbf{R}}^+$ and $\mathbf{M}$ are cofibrant and the inclusion of the unit into $\overline{\gR}$ is a cofibration, to obtain equivalences 
	\[Q^{\smash{\widetilde{\gR}}^+}_\bL(\gM) \simeq B(\mathbbm{1}, \smash{\widetilde{\gR}}^+, \gM) \simeq  B(\mathbbm{1}, \overline{\gR}, \gM) \simeq Q^{\overline{\gR}}_\bL(\gM)\]
	in $\cat{sMod}_\bZ^\bN$, and so by Theorem \ref{thm:MCGModuleCells} we have $\smash{H^{\overline{\gR}}}_{g,d}(\gM)=0$ for $d < g$. By Theorem $E_k$.11.21 there is a CW approximation $\gZ \overset{\sim}\to \gM_\bZ$ in the category of $\overline{\gR}_\bZ$-modules which only has cells of bidegree $(g, d)$ with $d \geq g$. 
	
	Letting $\mr{sk}(\gZ)$ denote the skeletal filtration of this $\overline{\gR}_\bZ$-module, we have $\grr(\mr{sk}(\gZ)) = \overline{\gR}_\bZ \otimes (\bigoplus_{\alpha \in I} S^{g_\alpha, d_\alpha, d_\alpha}_\bZ)$ with $d_\alpha \geq g_\alpha$ for each $\alpha$.  Then the skeletal filtration of $\gZ$ induces a filtration of $\gZ/\sigma$, giving a spectral sequence
	\[E^1_{g, p, q} = H_{g, p+q, q}\left(\overline{\gR}_\bZ/\sigma \otimes \left(\bigoplus_{\alpha \in I} S^{g_\alpha, d_\alpha, d_\alpha}_\bZ\right)\right) \Longrightarrow H_{g, p+q}(\gZ/\sigma),\]
	and as $H_{g,d}(\overline{\gR}_\bZ/\sigma)=0$ for $d < \tfrac{2}{3}g$ it follows that $H_{g,d}(\gZ/\sigma)$ vanishes in this range too. As $H_{g,d}(\gZ/\sigma) \cong H_{g,d}(\gM_\bZ/\sigma) = H_d(\Gamma_{g, 1+B}^P, \Gamma_{g-1, 1+B}^P;\bZ)$ this proves the first part.
	
	\vspace{1em}
	
	Secondly, use $\gZ/\sigma \simeq B(\overline{\gR}_\bZ/\sigma , \overline{\gR}_\bZ , \gZ)$ and the right $\overline{\gR}_\bZ$-module map $\varphi \colon S^{3,2}_\bZ \otimes \overline{\gR}_\bZ/\sigma \to \gR^+_\bZ/\sigma$ constructed in the proof of Theorem \ref{thm:IntStabMCG} to form $\psi' \coloneqq B(\varphi , \overline{\gR}_\bZ , \gZ)$ as a map
	\[S^{3,2}_\bZ \otimes \gZ/\sigma \simeq  B(S^{3,2}_\bZ \otimes\overline{\gR}_\bZ/\sigma , \overline{\gR}_\bZ , \gZ) \lra B(\overline{\gR}_\bZ/\sigma , \overline{\gR}_\bZ , \gZ) \simeq \gZ/\sigma\]
	with cofibre $B(\gC_\varphi , \overline{\gR}_\bZ , \gZ)$. The spectral sequence associated to the filtered object $B(0_*\gC_\varphi , 0_*\overline{\gR}_\bZ , \mr{sk}(\gZ))$ coming from the skeletal filtration of $\gZ$ takes the form
	\[E^1_{g, p, q} = H_{g, p+q, q}\left(\gC_\varphi\otimes \left(\bigoplus_{\alpha \in I} S^{g_\alpha, d_\alpha,d_\alpha}_\bZ\right)\right) \Longrightarrow H_{g, p+q}(B(\gC_\varphi , \overline{\gR}_\bZ , \gZ)),\]
	and as $H_{g,d}(\gC_\varphi)=0$ for for $d < \tfrac{3}{4}g$ it follows that $H_{g, d}(B(\gC_\varphi , \overline{\gR}_\bZ , \gZ))=0$ in this range too. Under the equivalence $\gZ \simeq \gM_\bZ$ the map induced by $\varphi'$ on homology has the claimed property.
\end{proof}

\begin{remark}
For normal subgroups $H_B \lhd \fS_B$ and $H_P \lhd \fS_P$, pushing forward along the evident functor $q \colon \bN \times \fS_B \times \fS_P \to \bN \times \frac{\fS_B}{H_B} \times \frac{\fS_P}{H_P}$ gives an object $q_\ast(\gM)$ with
\[q_\ast(\gM)(g, *, *) \simeq B\Gamma_{g,1+[B]_{H_B}}^{[P]_{H_P}},\]
the classifying space of the normal subgroup $\Gamma_{g,1+[B]_{H_B}}^{[P]_{H_P}} \leq \Gamma_{g,1+[B]}^{[P]}$ consisting of those isotopy classes of diffeomorphisms of $\Sigma_{g, 1+B}^P$ which induce a permutation in $H_B$ of the additional boundaries and a permutation in $H_P$ of the punctures. By applying $q_\ast(-)$ to the argument above, one gets the same secondary stability theorem for the homology of these groups.
\end{remark}

\begin{remark}The construction used in this section generalises to the situation of a braided monoidal groupoid $\cat{G}$ acting on another category $\cat{M}$, in this case $\cat{G} = \cat{MCG}$ acts on $\cat{M} = \cat{MCG}^P_B$. Such a setup naturally produces an $E_1$-module over an $E_2$-algebra, though above we rigidified this to a $\widetilde{\gR}^+$-module structure on $\gM$. Apart from the connectivity result for separating arc complexes used in the proof of Theorem \ref{thm:MCGModuleCells}, the arguments in this section are formal and can be repeated to give generic homological stability results akin to Theorem $E_k$.18.1 for braided monoidal groupoids $\cat{G}$ acting on a category $\cat{M}$. This is analogous to the generalisation of the homological stability arguments of \cite{RWW} given in \cite{Krannich}.
\end{remark}

\subsection{Coefficient systems}

In Section $E_k$.19.1 we discussed a general set-up for coefficient systems which applies here.

\subsubsection{Twisted homological stability for mapping class groups}

We defined a coefficient system to be a functor $\gA \colon \cat{MCG} \to \cat{sMod}_\mathbbm{k}$ with the structure of a left module over the commutative algebra object given by the constant functor $\underline{\mathbbm{k}} \in \cat{sMod}_\mathbbm{k}^\cat{MCG}$. This yields for each $g$ a (simplicial) $\mathbbm{k}[\Gamma_{g,1}]$-module $\gA(g)$, with (hyper)homology $\bH_*(\Gamma_{g,1} ; \gA(g))$, and the $\underline{\mathbbm{k}}$-module structure yields stabilisation maps between such (hyper)homology groups. This of course includes the case of coefficient systems of $\bk$-modules, via the inclusion $\cat{Mod}_\mathbbm{k} \subset \cat{sMod}_\mathbbm{k}$, in which case hyperhomology reduces to ordinary homology. We gave a general homological stability result (Theorem $E_k$.19.2) for such stabilisation maps, in terms of the $\overline{\gR}_\bk$-module homology of $\gR_\gA \coloneqq \bL r_*(\gA)$, which in this case is as follows.

\begin{theorem}\label{thm:TwistedStab}
Let $\gA \in \underline{\mathbbm{k}}\text{-}\mathrm{Mod}$ be a coefficient system and $\lambda \leq \tfrac{2}{3}$ be such that $H_{g,d}^{\overline{\gR}_\bk}(\gR_\gA)=0$ for $d < \lambda(g - c)$. Then the map
\[\sigma \cdot - \colon \bH_d(\Gamma_{g-1,1} ; \gA(g-1)) \lra \bH_d(\Gamma_{g,1} ; \gA(g))\]
is an epimorphism for $d < \lambda(g - c)$ and an isomorphism for $d < \lambda(g -c) -1$.
\end{theorem}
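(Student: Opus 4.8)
The plan is to deduce this from the $\overline{\gR}_\bk$-module machinery exactly as in the first part of the proof of Corollary~\ref{cor:punctures-secondary-stability}, with $\gR_\gA$ playing the role of $\gM$ and with Corollary~\ref{cor:MCGStabRange} supplying the concrete input on the vanishing of $H_{*,*}(\overline{\gR}_\bk/\sigma)$. First I would recall that $\gR_\gA \coloneqq \bL r_*(\gA)$ is a left $\overline{\gR}_\bk$-module with $H_{g,d}(\gR_\gA) \cong \bH_d(\Gamma_{g,1};\gA(g))$, and that the stabilisation map $\sigma\cdot-$ fits into a cofibre sequence $S^{1,0}\otimes\gR_\gA \overset{\phi(\sigma)}\lra \gR_\gA \lra \gR_\gA/\sigma$, whose third term satisfies $H_{g,d}(\gR_\gA/\sigma) \cong \bH_d(\Gamma_{g,1},\Gamma_{g-1,1};\gA(g))$, just as in \eqref{eq:RModSigmaCofSeq} and the paragraph following it. From the long exact sequence of this cofibre sequence one sees that vanishing of $H_{g,d}(\gR_\gA/\sigma)$ makes $\sigma\cdot-$ an epimorphism in degree $d$, and vanishing in degrees $d$ and $d+1$ makes it an isomorphism in degree $d$. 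So the theorem reduces to showing $H_{g,d}(\gR_\gA/\sigma)=0$ for $d<\lambda(g-c)$.

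For this I would invoke Theorem~$E_k$.11.21: since $H^{\overline{\gR}_\bk}_{g,d}(\gR_\gA)=0$ for $d<\lambda(g-c)$, there is a CW approximation $\gZ \overset{\sim}\lra \gR_\gA$ in $\overline{\gR}_\bk$-modules built by attaching cells $\{\gD^{g_\alpha,d_\alpha}x_\alpha\}_{\alpha\in I}$ with $d_\alpha \geq \lambda(g_\alpha-c)$. The skeletal filtration of $\gZ$ has associated graded $\grr(\mr{sk}(\gZ)) \simeq \overline{\gR}_\bk \otimes \bigl(\bigoplus_{\alpha\in I} S^{g_\alpha,d_\alpha,d_\alpha}_\bk\bigr)$ and induces a filtration on $\gZ/\sigma$, giving a spectral sequence
\[ E^1_{g,p,q} = H_{g,p+q,q}\!\left(\overline{\gR}_\bk/\sigma \otimes \Bigl(\bigoplus_{\alpha\in I} S^{g_\alpha,d_\alpha,d_\alpha}_\bk\Bigr)\right) \Longrightarrow H_{g,p+q}(\gZ/\sigma), \]
in which a class coming from the cell indexed by $\alpha$ has $q=d_\alpha$ and lies in $H_{g-g_\alpha,p}(\overline{\gR}_\bk/\sigma)=H_p(\Gamma_{g-g_\alpha,1},\Gamma_{g-g_\alpha-1,1};\bk)$.

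The step requiring attention is the ensuing arithmetic. By Corollary~\ref{cor:MCGStabRange} the group $H_p(\Gamma_{g-g_\alpha,1},\Gamma_{g-g_\alpha-1,1};\bk)$ vanishes unless $p\geq\tfrac{2}{3}(g-g_\alpha)$, so any nonzero contribution to $H_{g,d}(\gZ/\sigma)$ in total degree $d=p+d_\alpha$ satisfies
\[ d \;\geq\; \tfrac{2}{3}(g-g_\alpha)+\lambda(g_\alpha-c) \;=\; \lambda(g-c)+\left(\tfrac{2}{3}-\lambda\right)(g-g_\alpha) \;\geq\; \lambda(g-c), \]
where the final inequality uses $\lambda\leq\tfrac{2}{3}$ and $g_\alpha\leq g$; this is the only place the hypothesis $\lambda\leq\tfrac{2}{3}$ is needed. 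Hence $H_{g,d}(\gZ/\sigma)=H_{g,d}(\gR_\gA/\sigma)=0$ for $d<\lambda(g-c)$, and the long exact sequence
\[ \cdots \lra \bH_d(\Gamma_{g-1,1};\gA(g-1)) \overset{\sigma\cdot-}\lra \bH_d(\Gamma_{g,1};\gA(g)) \lra H_{g,d}(\gR_\gA/\sigma) \lra \bH_{d-1}(\Gamma_{g-1,1};\gA(g-1)) \lra \cdots \]
gives the stated range. The main obstacle is just bookkeeping --- verifying that the $\overline{\gR}_\bk$-module structure on $\gR_\gA$ and the identification $\gZ/\sigma\simeq\gR_\gA/\sigma$ behave as claimed and that the skeletal spectral sequence converges with this $E^1$-page --- and all of this is already in place in the treatment of $\gM$ in Corollary~\ref{cor:punctures-secondary-stability}. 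Alternatively, the theorem is simply the specialisation of Theorem~$E_k$.19.2 to $\cat{G}=\cat{MCG}$, read off using Corollary~\ref{cor:MCGStabRange}.
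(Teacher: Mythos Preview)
Your proposal is correct and is essentially the paper's own approach: the paper states this result as the specialisation of Theorem~$E_k$.19.2 to $\cat{MCG}$, and you have unpacked precisely that argument (CW approximation of $\gR_\gA$ as an $\overline{\gR}_\bk$-module, skeletal spectral sequence, and the vanishing input from Corollary~\ref{cor:MCGStabRange}), as you yourself note in your final sentence. One tiny notational slip: the relative group $H_{g,d}(\gR_\gA/\sigma)$ is $\bH_d(\Gamma_{g,1},\Gamma_{g-1,1};\gA(g),\gA(g-1))$, with both coefficients recorded, not just $\gA(g)$; your long exact sequence at the end has it right.
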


Moreover, the work we have done so far immediately proves secondary homological stability with coefficient systems satisfying a hypothesis similar to that of Theorem \ref{thm:TwistedStab}. The main difficulty is stating the result in classical terms: for a (simplicial) $\mathbbm{k}[H]$-module $N$ and a (simplicial) $\mathbbm{k}[G]$-module $M$, we define the relative (hyper)homology associated to a homomorphism $\varphi \colon H \to G$ of groups and a $\varphi$-equivariant homomorphism $\psi \colon N \to M$ of (simplicial) $\mathbbm{k}$-modules as the homology of the mapping cone of
\[E \varphi \otimes_{\varphi} \psi \colon \mathbbm{k}E H \otimes_{\mathbbm{k}[H]} N \lra \mathbbm{k}E G \otimes_{\mathbbm{k}[G]} M.\]
Abusing notation, we write such homology groups as $\bH_{*}(G, H ; M, N)$.

The relative hyperhomology groups $\bH_{d}(\Gamma_{g,1}, \Gamma_{g-1,1} ; \gA(g), \gA(g-1))$ of the stabilisation map $s \colon \Gamma_{g-1,1} \to \Gamma_{g,1}$ may be identified in terms of $\gR_\gA$ as follows. The object $\gR_\bA = \bL r_*(\gA)$ can be explicitly computed by taking the cofibrant replacement $\gT$ of $\underline{\ast}_{>0}$ in $\Alg_{E_2}(\cat{sSet}^\cat{MCG})$, taking its levelwise tensor with $\gA$ and applying $r_*$. Because $\gT$ is cofibrant in $\cat{sSet}^\cat{MCG}$, it is contractible with free $\Gamma_{g,1}$-action and we have that $\gR_\gA(g)$ is weakly equivalent to $\bk E\Gamma_{g,1} \otimes_{\bk \Gamma_{g,1}} \gA(g)$. In terms of this description, the map $\sigma \colon \gR_\gA(g-1) \to \gR_\gA(g)$ is given by 
\[Es \otimes \gA(s) \colon \bk E\Gamma_{g-1,1} \otimes_{\bk \Gamma_{g-1,1}}  \gA(g-1) \lra \bk E\Gamma_{g,1} \otimes_{\bk \Gamma_{g,1}}\gA(g),\] whose mapping cone has homology equal to the relative hyperhomology.

\begin{theorem}\label{thm:TwistedSecondStab}
Let $\gA \in \underline{\mathbbm{k}}\text{-}\mathrm{Mod}$ be a coefficient system and $\lambda \leq \tfrac{3}{4}$ be such that $H_{g,d}^{\overline{\gR}_\bk}(\gR_\gA)=0$ for $d < \lambda(g - c)$. There is a map
\[\varphi_* \colon \bH_{d-2}(\Gamma_{g-3,1}, \Gamma_{g-4,1} ; \gA(g-3), \gA(g-4)) \lra \bH_{d}(\Gamma_{g,1}, \Gamma_{g-1,1} ; \gA(g), \gA(g-1))\]
which is an epimorphism for $d < \lambda(g - c)$ and an isomorphism for $d < \lambda(g-c)-1$.
\end{theorem}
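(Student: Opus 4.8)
The plan is to mimic the proof of Corollary~\ref{cor:MCGSecStabIntegral}, transferring the vanishing statements for $\overline{\gR}_\bk/\sigma$ and for the cofibre $\gC_\varphi$ of the secondary stabilisation map from $\gR_\bk$ to the $\overline{\gR}_\bk$-module $\gR_\gA$, exactly as in the proof of Corollary~\ref{cor:punctures-secondary-stability}. First I would record that, by hypothesis and Theorem~$E_k$.11.21, there is a CW approximation $\gZ \overset{\sim}\to \gR_\gA$ in the category of $\overline{\gR}_\bk$-modules which has cells only in bidegrees $(g_\alpha, d_\alpha)$ with $d_\alpha \geq \lambda(g_\alpha - c)$; this is the only place the hypothesis $H_{g,d}^{\overline{\gR}_\bk}(\gR_\gA) = 0$ for $d < \lambda(g-c)$ enters, and it is identical to the step in Corollary~\ref{cor:punctures-secondary-stability}.

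Next I would construct the secondary stabilisation map for coefficients. Take the map $\varphi \colon S^{3,2}_\bk \otimes \overline{\gR}_\bk/\sigma \to \overline{\gR}_\bk/\sigma$ of right $\overline{\gR}_\bk$-modules produced in the proof of Theorem~\ref{thm:IntStabMCG} (after base-change from $\bZ$ to $\bk$), and form
\[
\psi' \coloneqq B(\varphi, \overline{\gR}_\bk, \gZ) \colon S^{3,2}_\bk \otimes \gZ/\sigma \simeq B(S^{3,2}_\bk \otimes \overline{\gR}_\bk/\sigma, \overline{\gR}_\bk, \gZ) \lra B(\overline{\gR}_\bk/\sigma, \overline{\gR}_\bk, \gZ) \simeq \gZ/\sigma,
\]
using $\gZ/\sigma \simeq B(\overline{\gR}_\bk/\sigma, \overline{\gR}_\bk, \gZ)$ and the identification $H_{g,d}(\gZ/\sigma) \cong H_{g,d}(\gR_\gA/\sigma) \cong \bH_d(\Gamma_{g,1}, \Gamma_{g-1,1}; \gA(g), \gA(g-1))$. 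The cofibre of $\psi'$ is $B(\gC_\varphi, \overline{\gR}_\bk, \gZ)$. Now filter $\gZ$ by its skeletal filtration: $\grr(\mr{sk}(\gZ)) = \overline{\gR}_\bk \otimes (\bigoplus_{\alpha \in I} S^{g_\alpha, d_\alpha, d_\alpha}_\bk)$ with $d_\alpha \geq \lambda(g_\alpha - c)$, so the induced filtration on $B(\gC_\varphi, \overline{\gR}_\bk, \gZ)$ gives a spectral sequence
\[
E^1_{g,p,q} = H_{g,p+q,q}\!\left(\gC_\varphi \otimes \Bigl(\bigoplus_{\alpha \in I} S^{g_\alpha, d_\alpha, d_\alpha}_\bk\Bigr)\right) \Longrightarrow H_{g,p+q}(B(\gC_\varphi, \overline{\gR}_\bk, \gZ)).
\]
By Theorem~\ref{thm:IntStabMCG} (applied over $\bk$; note its proof works over any base ring, via the $\bF_\ell$ and universal-coefficient arguments, or one simply base-changes the $\bZ$-statement) we have $H_{g,d}(\gC_\varphi) = 0$ for $d < \tfrac34 g$, hence $H_{g,d}(\gC_\varphi \otimes S^{g_\alpha, d_\alpha, d_\alpha}_\bk)$ vanishes for $d - d_\alpha < \tfrac34(g - g_\alpha)$, which together with $d_\alpha \geq \lambda(g_\alpha - c)$ and $\lambda \leq \tfrac34$ gives vanishing of the $E^1$-page, and so of the abutment, for $d < \lambda(g-c)$. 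The long exact sequence of the cofibre sequence defining $\psi'$ then yields that $\varphi_* = \psi'_*$ on homology is an epimorphism for $d < \lambda(g-c)$ and an isomorphism for $d < \lambda(g-c)-1$, as in Corollary~\ref{cor:MCGSecStabIntegral}.

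The one genuinely new point, and the step I expect to require the most care, is keeping the bigrading bookkeeping straight when the cells of $\gZ$ live on the shifted line $d \geq \lambda(g-c)$ rather than $d \geq g$: one must check that the convolution of the two vanishing ranges ($d < \tfrac34 g$ for $\gC_\varphi$ and $d_\alpha \geq \lambda(g_\alpha - c)$ for the cells) really does produce the clean range $d < \lambda(g-c)$ for all $\lambda \leq \tfrac34$, uniformly in $c$; this is an elementary but slightly fiddly inequality-chase, and is the analogue of the corresponding step in the proof of Theorem~\ref{thm:TwistedStab}. Everything else — the existence and equivariance of $\varphi'$ (equivariance is irrelevant here since coefficient systems carry no extra symmetry group), the comparison $\gZ \simeq \gR_\gA$, and the translation back to relative hyperhomology — is formal and parallels the single-coefficient case verbatim.
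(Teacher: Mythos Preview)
Your proposal is correct and follows essentially the same approach as the paper: apply $B(-,\overline{\gR}_\bk,\gR_\gA)$ (equivalently, your $B(-,\overline{\gR}_\bk,\gZ)$) to the cofibre sequence defining $\varphi$, identify the resulting cofibre as $B(\gC_\varphi,\overline{\gR}_\bk,\gR_\gA)$, and use a cellular filtration of $\gR_\gA$ with cells satisfying $d_\alpha \geq \lambda(g_\alpha-c)$ together with the vanishing $H_{g,d}(\gC_\varphi)=0$ for $d<\tfrac34 g$ to bound the $E^1$-page. The paper handles your ``fiddly inequality-chase'' in one line by noting the $E^1$-page vanishes for $d < \lambda g + \min_\alpha(d_\alpha-\lambda g_\alpha)$, hence for $d<\lambda g-\lambda c$, which is exactly your convolution argument.
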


\begin{proof}
We consider the defining cofibration sequence of right $\overline{\gR}_\bk$-modules
\[S^{3,2} \otimes \overline{\gR}_\bk/\sigma \overset{\varphi}\lra \overline{\gR}_\bk/\sigma \lra \gC_\varphi\]
and apply $B(- ; \overline{\gR}_\bk ; \gR_\gA)$, which preserves cofibration sequences as before. We have $B(\overline{\gR}_\bk/\sigma; \overline{\gR}_\bk; \gR_\gA) \simeq \gR_\gA/\sigma$ so there is a homotopy cofibre sequence
\[S^{3,2}_\bk \otimes \gR_\gA/\sigma \overset{\varphi}\lra \gR_\gA/\sigma \lra B(\gC_\varphi; \overline{\gR}_\bk;  \gR_\gA)\]
and we must show that the right-hand term has vanishing homology in bidegrees $(g,d)$ with $d < \lambda (g-c)$.

As in the proof of Theorem $E_k$.19.2, up to weak equivalence the left $\overline{\gR}_\bk$-module $\gR_\gA$ admits a filtration with associated graded $\overline{\gR}_\bk \otimes (\bigoplus_{\alpha \in I} S^{g_\alpha, d_\alpha,d_\alpha}_\bk)$ with $d_\alpha \geq \lambda(g_\alpha -c)$, and so up to weak equivalence $B(\gC_\varphi , \overline{\gR}_\bk , \gR_\gA)$ admits a filtration with associated graded $\gC_\varphi \otimes (\bigoplus_{\alpha \in I} S^{g_\alpha, d_\alpha,d_\alpha}_\bk )$ with $d_\alpha \geq\lambda(g_\alpha - c)$. By Theorem \ref{thm:IntStabMCG} we have $H_{g,d}(\gC_\varphi)=0$ for $4d < 3g$, and so for $d < \lambda g$. As in the proof of Theorem $E_k$.19.2 the $E^1$-page of the associated spectral sequence vanishes in bidegrees $(g,d)$ satisfying
\[d < \lambda g + \min_{\alpha \in I}(d_\alpha-\lambda g_\alpha),\]
and so in particular in bidegrees satisfying $d < \lambda g -\lambda c$ as required.
\end{proof}

\begin{remark}
If $\gA$ is a coefficient system which is discrete (i.e.\ $H_{g,d}(\gA)=0$ for all $d \neq 0$) then in Lemma $E_k$.19.4 we described a sufficient condition for the hypothesis ``$\smash{H_{g,d}^{\overline{\gR}_\bk}}(\gR_\gA)=0$ for $d < \lambda(g - c)$" to hold, namely that $\smash{\mathrm{Tor}^{\underline{\bk}}_d}(\bk, \gA)(g)=0$ for $d < \lambda(g - c)$ with $\mathrm{Tor}$-groups formed in the functor category $\mathsf{Mod}_\bk^\mathsf{MCG}$. This hypothesis should perhaps be considered as a derived form of representation stability. Theorem A of \cite{patztcentral} shows how this condition is related to \emph{central stability homology}, which in turn by Theorem D of \cite{patztcentral} implies homological stability with such coefficients.
\end{remark}

Let us give the main explicit examples of coefficient systems to which the above results may be applied: the objectwise tensor power of linear coefficient systems (as in Definition $E_k$.19.7).

\begin{example}\label{ex:H1MCG} Consider the functor $\gH_1 \colon \cat{MCG} \to \cat{Mod}_\mathbbm{k}$ given by $\gH_1(g) \coloneqq H_1(\Sigma_{g,1};\mathbbm{k})$ with the evident $\Gamma_{g,1}$-action. The decomposition $\Sigma_{k+g, 1} = \Sigma_{k,1} \cup (\Sigma_{g,1} + k \cdot e_1)$ and Mayer--Vietoris yields isomorphisms
\[s_{k,g} \colon \gH_1(k) \oplus \gH_1(g) \lra \gH_1(k+g)\]
defining a strong monoidality on $\gH_1$, hence making it a linear coefficient system.

Writing $\gH_1^{\boxtimes s}$ for the $s$th objectwise tensor power of $\gH_1$, which is a coefficient system, Corollary $E_k$.19.9 and the change-of-diagram-category spectral sequence of Theorem $E_k$.10.13 show that
\begin{equation}\label{eq:h1-coeff-module} H_{g,d}^{\overline{\gR}_\bk}(\gR_{\gH_1^{\boxtimes s}})=0 \text{ for } d < g - s.\end{equation}
In particular, this vanishes for $3d < 2(g-s)$ and for $4d < 3(g-s)$. 
Theorem \ref{thm:TwistedStab} applies to show that
\[\sigma \cdot - \colon H_d(\Gamma_{g-1,1} ; H_1(\Sigma_{g-1,1};\mathbbm{k})^{\otimes s}) \lra H_d(\Gamma_{g,1} ; H_1(\Sigma_{g,1};\mathbbm{k})^{\otimes s})\]
is an epimorphism for $3d \leq 2g -2s-1$ and an isomorphism for $3d \leq 2g -2s -4$. This improves on the range of homological stability for $\Gamma_{g,1}$ with coefficients in tensor powers of $H_1(\Sigma_{g,1})$ which may be deduced from \cite{Ivanov, Boldsen, RWW}. Similarly Theorem \ref{thm:TwistedSecondStab} applies to show that the associated secondary stabilisation map is an epimorphism for $4d \leq 3g -3s-1$ and an isomorphism for $4d \leq 3g -3s-5$. 
\end{example}

\subsubsection{Improvements for the coefficient system $\gH_1$}
The constant in the range of Theorem \ref{thm:TwistedSecondStab} can occasionally be improved. We shall explain this for the coefficient system $\gH_1$; a consequence of this improved range which shall be used to prove Theorem \ref{thm:H43}.

\begin{lemma}\label{lem:H1CoinvVanish}
We have that $H_{g,0}(\gR_{\gH_1}) = H_0(\Gamma_{g,1} ; H_1(\Sigma_{g,1} ; \mathbbm{k}))=0$ for all $g \geq 0$.
\end{lemma}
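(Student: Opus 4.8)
Since $\gH_1$ is a discrete coefficient system, the description $\gR_{\gH_1}(g) \simeq \mathbbm{k}E\Gamma_{g,1} \otimes_{\mathbbm{k}\Gamma_{g,1}} H_1(\Sigma_{g,1};\mathbbm{k})$ recorded above identifies $H_{g,0}(\gR_{\gH_1})$ with the group-homology coinvariants $H_0(\Gamma_{g,1};H_1(\Sigma_{g,1};\mathbbm{k})) = H_1(\Sigma_{g,1};\mathbbm{k})_{\Gamma_{g,1}}$, so it suffices to show these coinvariants vanish. The plan is to do this directly using the action of Dehn twists. For $g = 0$ there is nothing to prove since $\Sigma_{0,1}$ is a disc and $H_1(\Sigma_{0,1};\mathbbm{k}) = 0$, so assume $g \geq 1$.

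First I would recall that $H_1(\Sigma_{g,1};\mathbbm{k})$ is a free $\mathbbm{k}$-module of rank $2g$ carrying the (unimodular, skew-symmetric) intersection form, and that the action of $\Gamma_{g,1}$ is the composite with $\Gamma_{g,1} \to \mathrm{Sp}_{2g}(\bZ)$ discussed before Lemma~\ref{lem:MCGFacts}. The key input is the Picard--Lefschetz formula: for a simple closed curve $c \subset \Sigma_{g,1}$ the right-handed Dehn twist $T_c$ acts on $x \in H_1(\Sigma_{g,1};\mathbbm{k})$ by the transvection $T_c(x) = x + \langle x, [c]\rangle\,[c]$. Hence $(\mathrm{id} - T_c)(x) = -\langle x,[c]\rangle\,[c]$ lies in the submodule $U \subseteq H_1(\Sigma_{g,1};\mathbbm{k})$ generated by all elements $(\mathrm{id} - \varphi)(x)$ with $\varphi \in \Gamma_{g,1}$; by definition $H_1(\Sigma_{g,1};\mathbbm{k})_{\Gamma_{g,1}} = H_1(\Sigma_{g,1};\mathbbm{k})/U$.

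Next I would choose a geometric symplectic basis: simple closed curves $\alpha_1,\beta_1,\dots,\alpha_g,\beta_g$ in $\Sigma_{g,1}$ whose classes $a_i = [\alpha_i]$, $b_i = [\beta_i]$ form a symplectic basis of $H_1(\Sigma_{g,1};\mathbbm{k})$, i.e.\ $\langle a_i, b_i\rangle = 1$ and all other pairings vanish. Applying the transvection formula with $c = \alpha_i$ and $x = b_i$ gives $(\mathrm{id} - T_{\alpha_i})(b_i) = -\langle b_i, a_i\rangle\,a_i = a_i \in U$, and similarly with $c = \beta_i$ and $x = a_i$ gives $b_i \in U$ (up to sign). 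Thus $U$ contains the whole symplectic basis, so $U = H_1(\Sigma_{g,1};\mathbbm{k})$ and the coinvariants vanish, as required.

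I do not expect any serious obstacle here: the only points needing care are the bookkeeping identification of $H_{g,0}(\gR_{\gH_1})$ with the coinvariants (immediate from the stated model for $\gR_\gA$ on discrete coefficient systems) and citing the Picard--Lefschetz/transvection formula for the symplectic action of a Dehn twist, both of which are standard.
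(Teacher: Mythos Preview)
Your proposal is correct and takes essentially the same approach as the paper: both compute the coinvariants by using the transvection formula for Dehn twists on a symplectic basis. The only cosmetic difference is that the paper first reduces to $g=1$ via the inclusion $\Gamma_{1,1}^{\times g} \hookrightarrow \Gamma_{g,1}$ (and to $\mathbbm{k}=\bZ$ by universal coefficients) before doing the transvection calculation, whereas you carry it out directly for all $g$; your version is in fact slightly more direct.
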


\begin{proof}
For $g=0$ there is nothing to show, as $H_1(\Sigma_{0,1} ; \mathbbm{k})=0$. For $g \geq 1$, as a $\bk[\Gamma_{1,1} \times \cdots \times \Gamma_{1,1}]$-module we have $H_1(\Sigma_{g,1} ; \mathbbm{k}) = H_1(\Sigma_{1,1} ; \mathbbm{k})^{\oplus g}$, so there is a surjection $H_0(\Gamma_{1,1};H_1(\Sigma_{1,1} ; \mathbbm{k}))^{\oplus g} \to H_0(\Gamma_{g,1} ; H_1(\Sigma_{g,1} ; \mathbbm{k}))$ and thus it suffices to establish this for $g=1$. Furthermore, by the universal coefficient theorem it suffices to establish it for $\mathbbm{k}=\bZ$. 

A Dehn twist around a simple closed curve in the class of $a$ acts on $H_1(\Sigma_{1,1} ; \bZ) = \bZ\{a, b\}$  as the matrix $\bigl( \begin{smallmatrix}1 & 1\\ 0 & 1\end{smallmatrix}\bigr)$ and a Dehn twist around a simple closed curve in the class of $b$ acts as the matrix $\bigl( \begin{smallmatrix}1 & 0\\ -1 & 1\end{smallmatrix}\bigr)$. In the coinvariants we thus have $b \sim b + a$ and $a \sim a-b$, so $a=b=0$.
\end{proof}

\begin{proposition}\label{prop:StabH1}
The map
\[\sigma \cdot - \colon H_d(\Gamma_{g-1,1} ; H_1(\Sigma_{g-1,1};\bZ)) \lra H_d(\Gamma_{g,1} ; H_1(\Sigma_{g,1};\bZ))\]
is an epimorphism for $3d \leq 2g -2$ and an isomorphism for $3d \leq 2g -5$, and there is a map
\[\begin{tikzcd} H_{d-2}(\Gamma_{g-3,1}, \Gamma_{g-4,1} ; H_1(\Sigma_{g-3,1};\bZ), H_1(\Sigma_{g-4,1};\bZ)) \dar{\varphi_*} \\  H_{d}(\Gamma_{g,1}, \Gamma_{g-1,1} ; H_1(\Sigma_{g,1};\bZ), H_1(\Sigma_{g-1,1};\bZ))\end{tikzcd}\]
which is an epimorphism for $4d \leq 3g -3$ and an isomorphism for $4d \leq 3g -7$.
\end{proposition}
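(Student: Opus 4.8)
The plan is to obtain both parts as applications of the general twisted stability results Theorem~\ref{thm:TwistedStab} and Theorem~\ref{thm:TwistedSecondStab} to the coefficient system $\gH_1$ over $\bk=\bZ$. The ranges in those theorems are controlled by a vanishing line $H^{\overline{\gR}_\bZ}_{g,d}(\gR_{\gH_1})=0$ for $d<\lambda(g-c)$; the ``generic'' input \eqref{eq:h1-coeff-module} gives such a line only with $c=1$, which yields precisely the ranges already recorded in Example~\ref{ex:H1MCG}, and the improvement in this proposition comes from using Lemma~\ref{lem:H1CoinvVanish} to push $c$ down to $\tfrac{1}{2}$ for ordinary stability and to $\tfrac{2}{3}$ for secondary stability.

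Concretely, the first step is to upgrade the vanishing line to $H^{\overline{\gR}_\bZ}_{g,d}(\gR_{\gH_1})=0$ for $d<\tfrac{3}{4}g-\tfrac{1}{2}$. For $g=0$ this holds because $\gR_{\gH_1}(0)\simeq 0$, as $H_1(\Sigma_{0,1};\bZ)=0$. For $g\geq 2$ one has $\tfrac{3}{4}g-\tfrac{1}{2}\leq g-1$, so the region in question is contained in $d<g-1$, where the vanishing is \eqref{eq:h1-coeff-module} with $s=1$. The only remaining bidegree is $(1,0)$, and here the derived $\overline{\gR}_\bZ$-module indecomposables agree with $\gR_{\gH_1}$ itself: since $\overline{\gR}_\bZ$ is concentrated in non-negative genus with genus-$0$ part the unit and $\gR_{\gH_1}(0)\simeq 0$, the bar construction computing $Q^{\overline{\gR}_\bZ}_\bL(\gR_{\gH_1})$ collapses in genus $1$ to $\gR_{\gH_1}(1)\simeq B\Gamma_{1,1}$ with coefficients in $H_1(\Sigma_{1,1};\bZ)$, so $H^{\overline{\gR}_\bZ}_{1,0}(\gR_{\gH_1})=H_0(\Gamma_{1,1};H_1(\Sigma_{1,1};\bZ))=0$ by Lemma~\ref{lem:H1CoinvVanish}. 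Since $\tfrac{2}{3}g-\tfrac{1}{3}\leq\tfrac{3}{4}g-\tfrac{1}{2}$ for $g\geq 2$, and both regions reduce to the single bidegree $(1,0)$ when $g\leq 1$, the same reasoning also gives $H^{\overline{\gR}_\bZ}_{g,d}(\gR_{\gH_1})=0$ for $d<\tfrac{2}{3}g-\tfrac{1}{3}$.

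The second step is to feed these vanishing lines into the general theorems. Applying Theorem~\ref{thm:TwistedStab} with $\lambda=\tfrac{2}{3}$ and $c=\tfrac{1}{2}$ (admissible since the hypothesis is the vanishing for $d<\tfrac{2}{3}(g-\tfrac{1}{2})=\tfrac{2}{3}g-\tfrac{1}{3}$ just established) shows that $\sigma\cdot-$ is an epimorphism for $d<\tfrac{2}{3}g-\tfrac{1}{3}$, i.e.\ $3d\leq 2g-2$, and an isomorphism for $d<\tfrac{2}{3}g-\tfrac{4}{3}$, i.e.\ $3d\leq 2g-5$. Applying Theorem~\ref{thm:TwistedSecondStab} with $\lambda=\tfrac{3}{4}$ and $c=\tfrac{2}{3}$ (admissible since the hypothesis is the vanishing for $d<\tfrac{3}{4}(g-\tfrac{2}{3})=\tfrac{3}{4}g-\tfrac{1}{2}$) produces the secondary stabilisation map $\varphi_*$ and shows it is an epimorphism for $d<\tfrac{3}{4}g-\tfrac{1}{2}$, i.e.\ $4d\leq 3g-3$, and an isomorphism for $d<\tfrac{3}{4}g-\tfrac{3}{2}$, i.e.\ $4d\leq 3g-7$. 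Because $\gH_1$ is a discrete coefficient system, its hyperhomology reduces to ordinary homology, and the relative (hyper)homology groups appearing in the two theorems are exactly $H_d(\Gamma_{g,1},\Gamma_{g-1,1};H_1(\Sigma_{g,1};\bZ),H_1(\Sigma_{g-1,1};\bZ))$, as in the discussion preceding Theorem~\ref{thm:TwistedSecondStab}; this is precisely the statement of the proposition.

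I do not expect a serious obstacle, since Theorems~\ref{thm:TwistedStab} and~\ref{thm:TwistedSecondStab} carry out the real work. The one point that needs care is the genus-$1$ identification $H^{\overline{\gR}_\bZ}_{1,0}(\gR_{\gH_1})=H_0(\Gamma_{1,1};H_1(\Sigma_{1,1};\bZ))$, i.e.\ that the derived module indecomposables coincide with the module in the bottom genus, together with the elementary check that the fractional constants $c=\tfrac{1}{2}$ and $c=\tfrac{2}{3}$ are admissible, meaning that the shifted regions $d<\lambda(g-c)$ genuinely lie inside the locus where vanishing has been established; both have been dealt with above.
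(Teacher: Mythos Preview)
Your proof is correct and follows essentially the same approach as the paper. Both use \eqref{eq:h1-coeff-module} with $s=1$ together with Lemma~\ref{lem:H1CoinvVanish} to kill the potential $(1,0)$-cell, and then feed this into the machinery behind Theorems~\ref{thm:TwistedStab} and~\ref{thm:TwistedSecondStab}; the only cosmetic difference is that the paper phrases the improvement as ``re-run the spectral sequence argument using that the CW $\overline{\gR}_\bZ$-module approximation has cells only in bidegrees with $d\geq g-1$ and $d>0$'', while you encode the same constraint as a linear vanishing line $d<\lambda(g-c)$ with fractional $c$ and quote the theorems directly.
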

\begin{proof}
It follows from \eqref{eq:h1-coeff-module} in the case $s=1$ and Lemma \ref{lem:H1CoinvVanish} that $\gR_{\gH_1}$ up to equivalence may be constructed as a $\overline{\gR}_\bk$-module using cells of bidegrees $(g,d)$ with $d \geq g-1$ and $d>0$. In particular, we may find a CW approximation $\gZ \overset{\sim}\to \gR_{\gH_1}$ in the category of $\overline{\gR}_\bk$-modules, whose skeletal filtration $\mr{sk}(\gZ)$ has associated graded $\mathrm{gr}(\mr{sk}(\gZ)) = \overline{\gR}_\bk \otimes (\bigoplus_{\alpha \in I} S^{g_\alpha, d_\alpha, d_\alpha}_\bk )$ with $d_\alpha \geq g_\alpha-1$ and $d_\alpha>0$. Now follow the arguments of Theorem $E_k$.19.2 and Theorem \ref{thm:TwistedSecondStab}, using that $\gZ$ has no $(1,0)$-cell.
\end{proof}

\begin{corollary}\label{cor:H1Vanish42}
For $g \geq 4$, $H_2(\Gamma_{g,1} ; H_1(\Sigma_{g,1};\bZ[\tfrac{1}{10}]))=0$.
\end{corollary}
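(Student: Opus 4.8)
The plan is to reduce to a single low genus using homological stability, and then to identify the remaining group as a direct summand of the untwisted homology of a once‑punctured mapping class group. Since $\bZ[\tfrac1{10}]$ is flat over $\bZ$ (or by rerunning the proof of Proposition~\ref{prop:StabH1} with $\bk=\bZ[\tfrac1{10}]$), the conclusions of Proposition~\ref{prop:StabH1} hold over $\bZ[\tfrac1{10}]$. In particular, for $g\ge 4$ the stabilisation map $\sigma\cdot-\colon H_2(\Gamma_{g-1,1};H_1(\Sigma_{g-1,1};\bZ[\tfrac1{10}]))\to H_2(\Gamma_{g,1};H_1(\Sigma_{g,1};\bZ[\tfrac1{10}]))$ is surjective; it is moreover an isomorphism on $H_1$ in this range, and the relative $H_0$ vanishes by Lemma~\ref{lem:H1CoinvVanish}, so in fact the relative group $H_2(\Gamma_{g,1},\Gamma_{g-1,1};H_1(\Sigma_{g,1};\bZ[\tfrac1{10}]),H_1(\Sigma_{g-1,1};\bZ[\tfrac1{10}]))$ vanishes for $g\ge 4$. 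Consequently $H_2(\Gamma_{g,1};H_1(\Sigma_{g,1};\bZ[\tfrac1{10}]))$ is a quotient of $H_2(\Gamma_{g-1,1};H_1(\Sigma_{g-1,1};\bZ[\tfrac1{10}]))$ for all $g\ge 4$, so it suffices to treat the single case $g=4$.

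To handle $g=4$ I would pass to the once‑punctured mapping class group $\Gamma_{g,1}^{\{*\}}$ (genus $g$, one boundary component fixed pointwise, one marked interior point), which sits in a Birman point‑pushing exact sequence $1\to\pi_1(\Sigma_{g,1})\to\Gamma_{g,1}^{\{*\}}\to\Gamma_{g,1}\to1$. Placing the marked point at a fixed point in a collar of $\partial\Sigma_{g,1}$, which is fixed by every element of $\Gamma_{g,1}$, splits this sequence; hence in the Serre spectral sequence of the surface bundle $B\Gamma_{g,1}^{\{*\}}\to B\Gamma_{g,1}$, which has only the two rows $H_*(\Gamma_{g,1};\bk)$ and $H_*(\Gamma_{g,1};H_1(\Sigma_{g,1};\bk))$, the one possible differential $d^2$ vanishes. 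This gives a split isomorphism $H_n(\Gamma_{g,1}^{\{*\}};\bk)\cong H_n(\Gamma_{g,1};\bk)\oplus H_{n-1}(\Gamma_{g,1};H_1(\Sigma_{g,1};\bk))$, so that $H_2(\Gamma_{4,1};H_1(\Sigma_{4,1};\bZ[\tfrac1{10}]))$ is a direct summand of $H_3(\Gamma_{4,1}^{\{*\}};\bZ[\tfrac1{10}])$. One then identifies $H_3$ of this punctured mapping class group, and of $\Gamma_{4,1}$, after inverting $10$, using the low‑genus calculations recorded in Remark~\ref{rem:mcg-comp} and Figure~\ref{fig:rat} and the references cited there, and reads off that the summand is zero.

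The main obstacle is precisely this last input: the cited computations of the homology of low‑genus mapping class groups, with and without punctures, are in places only known up to torsion at small primes, so part of this may have to be done by hand — for instance through the central extension $1\to\bZ\to\Gamma_{g,2}\to\Gamma_{g,1}^{\{*\}}\to1$ and the Birman sequence relating $\Gamma_{g,2}$ to $\Gamma_{g,1}$. An alternative route that stays within the $E_2$‑cell formalism is to argue from the $\overline{\gR}_{\bZ[\frac1{10}]}$‑module CW structure on $\gR_{\gH_1}$ used in the proof of Proposition~\ref{prop:StabH1}: over $\bZ[\tfrac1{10}]$ it has no cells in gradings $g\le 1$, because $H^{\overline{\gR}}_{1,d}(\gR_{\gH_1})\cong H_d(\Gamma_{1,1};H_1(\Sigma_{1,1};\bZ[\tfrac1{10}]))$ and this vanishes — write $\Gamma_{1,1}$ as a central $\bZ$‑extension of $SL_2(\bZ)=\bZ/4\ast_{\bZ/2}\bZ/6$ acting on $\bZ^2$; the $\bZ/2$‑ and $\bZ/4$‑contributions disappear once $2$ is inverted, while $H_{>0}(\bZ/6;\bZ^2)=0$ already integrally since $\bZ^2$ is free of rank one over $\bZ[\zeta_6]$, on which $1-\zeta_6$ is a unit and the norm element is zero. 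Feeding this into the skeletal spectral sequence shows that only cells in bidegrees $(2,1)$, $(2,2)$ and $(3,2)$ can contribute to $H_2(\Gamma_{g,1};H_1(\Sigma_{g,1};\bZ[\tfrac1{10}]))$, whose multiplicities are controlled by $H_1$ and $H_2$ of $\Gamma_{2,1}$ with $H_1(\Sigma_{2,1};\bZ[\tfrac1{10}])$‑coefficients and by $H^{\overline{\gR}}_{3,2}(\gR_{\gH_1,\bZ[\frac1{10}]})$, again reducing the corollary to a computation in genus at most $3$.
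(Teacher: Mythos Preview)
Your reduction step is correct: Proposition~\ref{prop:StabH1} gives surjectivity of $\sigma\cdot-$ on $H_2$ for $g\geq 4$, so it would suffice to show the vanishing at $g=4$. But neither of your two approaches to this base case is completed, and both run into real obstacles.

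For your first approach, the references in Remark~\ref{rem:mcg-comp} and Figure~\ref{fig:rat} do not contain computations of $H_3(\Gamma_{4,1})$ or $H_3(\Gamma_{4,1}^{\{*\}})$ with any coefficients; the figure records only rational information, much of it derived from the results of this paper. In fact the paper's own calculation $H_3(\Gamma_{4,1};\bQ)=0$ (Theorem~\ref{thm:H43}) \emph{uses} Corollary~\ref{cor:H1Vanish42} in Claim~5 of its proof, so any attempt to feed that back in would be circular. Your second approach correctly observes that $H_*(\Gamma_{1,1};H_1(\Sigma_{1,1};\bZ[\tfrac1{10}]))=0$, which is a nice computation, but then ends at ``reducing the corollary to a computation in genus at most~$3$'' without carrying it out; the number and attaching maps of the relevant $(2,2)$- and $(3,2)$-cells would require knowing $H_*(\Gamma_{2,1};H_1(\Sigma_{2,1};\bZ[\tfrac1{10}]))$ and $H^{\overline{\gR}}_{3,2}(\gR_{\gH_1})$, neither of which is available.

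The paper's argument avoids any such direct low-genus computation. The idea you are missing is to use \emph{secondary} stability to obtain \emph{injectivity} (not just surjectivity) of $\sigma\cdot-$ at $g=4$. Over $\bZ[\tfrac1{10}]$ the map $\varphi$ is multiplication by $\tfrac1{10}\lambda$ (Lemma~\ref{lem:LambdaOverTen}), so one may form the commuting square relating $\partial\colon H_{2,1}(\gR_{\gH_1}/\sigma)\to H_{1,0}(\gR_{\gH_1})$ to $\partial\colon H_{5,3}(\gR_{\gH_1}/\sigma)\to H_{4,2}(\gR_{\gH_1})$ via $\tfrac1{10}\lambda\cdot-$. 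The left vertical is surjective by the secondary range in Proposition~\ref{prop:StabH1}, and the top $\partial$ vanishes since $H_{1,0}(\gR_{\gH_1})=0$ by Lemma~\ref{lem:H1CoinvVanish}; hence the bottom $\partial$ vanishes, so $\sigma\cdot-\colon H_{4,2}(\gR_{\gH_1})\to H_{5,2}(\gR_{\gH_1})$ is injective. For $g\geq 6$ the stabilisation maps on $H_2$ are isomorphisms by Proposition~\ref{prop:StabH1}, so $H_{4,2}(\gR_{\gH_1})$ embeds in the stable value, which vanishes by Morita's computation \cite[Theorem~2]{MoritaTwisted} that $H_2(\Gamma_{g,1};H_1(\Sigma_{g,1};\bZ))=0$ for $g\geq 12$. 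You neither invoke Morita's theorem nor exploit secondary stability, and these are the two ingredients that make the argument go through.
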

\begin{proof}
We take $\mathbbm{k} = \bZ[\tfrac{1}{10}]$. As discussed in Lemma \ref{lem:LambdaOverTen}, with these coefficients the secondary stability map $\varphi$ coincides with multiplication by the class $\tfrac{1}{10}\lambda \in H_{3,2}(\overline{\gR}_\bk)$ using the $\overline{\gR}_\bk$-module structure on $\gR_{\gH_1}/\sigma$. We may therefore form the commutative diagram
\begin{equation*}
\begin{tikzcd}
H_{2,1}(\gR_{\gH_1}/\sigma) \rar{\partial} \dar{\tfrac{1}{10}\lambda \cdot -}& H_{1,0}(\gR_{\gH_1}) \rar{\sigma \cdot -} \dar{\tfrac{1}{10}\lambda \cdot -}& H_{2,0}(\gR_{\gH_1}) \dar{\tfrac{1}{10}\lambda \cdot -}\\
H_{5,3}(\gR_{\gH_1}/\sigma) \rar{\partial}& H_{4,2}(\gR_{\gH_1}) \rar{\sigma \cdot -}& H_{5,2}(\gR_{\gH_1}).
\end{tikzcd}
\end{equation*}
Now by the second part of Proposition \ref{prop:StabH1} the left-hand vertical map is surjective, but by Lemma \ref{lem:H1CoinvVanish} we have $H_{1,0}(\gR_{\gH_1})=0$, and so the top left-hand map is zero. Thus the bottom left-hand map is zero, and so
\[\sigma \cdot - \colon H_{4,2}(\gR_{\gH_1}) \lra H_{5,2}(\gR_{\gH_1})\]
is injective. Further stabilisation maps are isomorphisms, by the first part of Proposition \ref{prop:StabH1}, so the claim now follows from Morita's calculation \cite[Theorem 2]{MoritaTwisted} that $H_2(\Gamma_{g,1} ; H_1(\Sigma_{g,1};\bZ))=0$ for $g \geq 12$.
\end{proof}

\begin{remark}The groups $H_d(\Gamma_{g,1};H_1(\Sigma_{g,1};\bQ))$ are known for low $d$ and $g$. Using the results in \cite{ABE}, one may deduce that $H_2(\Gamma_{g,1};H_1(\Sigma_{g,1};\bQ)) = 0$ for $g \leq 2$, and that $H_3(\Gamma_{g,1};H_1(\Sigma_{g,1};\bQ)) = 0$ for $g \leq 2$.\end{remark}

\section{Relations in the tautological ring and their consequences} In this section we shall use results on the tautological ring to obtain a strengthened rational secondary stability range and compute a number of non-vanishing relative groups.
	
\subsection{A strengthening of the rational secondary stability range}\label{sec:RatStrengthening} 
Using the methods described in the previous section, we can extend the range of bidegrees enjoying secondary homological stability with rational coefficients once we establish the following result:

\begin{theorem}\label{thm:H43}
$H_{4,3}(\gR_\bQ) = H_3(\Gamma_{4,1};\bQ)=0$.
\end{theorem}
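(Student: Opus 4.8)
The plan is to leverage the $E_2$-cell structure of $\gR_\bQ$ together with the low-dimensional homology calculations already assembled. By the vanishing estimate $b_{g,d}^{E_2}(\gR_\bQ) = 0$ for $d < g-1$ (Proposition~\ref{prop:MCGE2vanish}) and the fact that the only bidegrees $(g,d)$ with $d \geq g-1$ and slope $\tfrac{d}{g} \leq \tfrac34$ are $(1,0)$, $(2,1)$, $(3,2)$, $(4,3)$, there is a CW approximation to $\gR_\bQ$ whose cells of slope $\leq \tfrac34$ sit only in those four bidegrees. Corollary~\ref{cor:MCG21vanish} kills the $(2,1)$-cell, so the relevant cells are a single $\sigma$ in bidegree $(1,0)$, a single $\lambda$ in bidegree $(3,2)$, and some number of cells in bidegree $(4,3)$. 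The point $(4,3)$ lies on the line $d = g-1$, so $H_{4,3}^{E_2}(\gR_\bQ) = \pi_{4,3}(Q^{E_2}_\bL(\gR_\bQ))$ is exactly the number of $(4,3)$-cells, and $H_{4,3}(\gR_\bQ)$ is assembled from these cells together with the lower-slope cells via the $E_2$-homology-to-homology spectral sequence. First I would show $H^{E_2}_{4,3}(\gR_\bQ) = 0$, and then run the spectral sequence / obstruction-theoretic argument to conclude that $H_{4,3}(\gR_\bQ)$ itself vanishes.

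To show $H^{E_2}_{4,3}(\gR_\bQ) = 0$: recall the $E_2$-algebra $\gA = \gE_2(S^{1,0}_\bQ\sigma \oplus S^{3,2}_\bQ\lambda) \cup^{E_2}_{[\sigma,\sigma]} \gD^{2,2}_\bQ\rho$ and the map $c \colon \gA \to \gR_\bQ$, for which Lemma~\ref{lem:VanishingRelE2HomologyMCG} gives $H^{E_2}_{g,d}(\gR_\bQ,\gA) = 0$ for $4d \leq 3g-1$; in particular this holds at $(g,d) = (4,3)$ since $12 \leq 11$ is \emph{false}, so I must be careful---the relative vanishing does \emph{not} directly cover $(4,3)$. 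Instead I would use the long exact sequence $\cdots \to H^{E_2}_{4,3}(\gA) \to H^{E_2}_{4,3}(\gR_\bQ) \to H^{E_2}_{4,3}(\gR_\bQ,\gA) \to H^{E_2}_{4,2}(\gA) \to \cdots$. Since $\gA$ has cells only in bidegrees $(1,0)$, $(3,2)$, $(2,2)$, one computes $H^{E_2}_{4,3}(\gA) = 0$ and $H^{E_2}_{4,2}(\gA) = 0$ directly (these are below the relevant slope/degree for $\gA$), so $H^{E_2}_{4,3}(\gR_\bQ) \cong H^{E_2}_{4,3}(\gR_\bQ,\gA)$, and I would then bound the latter using a Hurewicz-type argument as in Lemma~\ref{lem:VanishingRelE2HomologyMCG}: the connectivity of $c$, improved by Proposition~$E_k$.11.9 and the relative Hurewicz theorem for $E_2$-homology, identifies $H^{E_2}_{4,3}(\gR_\bQ,\gA)$ with (a subquotient of) the ordinary relative homology $H_{4,3}(\gR_\bQ,\gA) = H_3(\Gamma_{4,1},\gA(4);\bQ)$ in the appropriate range---but this is circular unless I feed in an independent input. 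The clean route is therefore to invoke the known low-dimensional rational homology: by Lemma~\ref{lem:MCGFacts}~(\ref{it:MCGFactsg1}), (\ref{it:MCGFactsg2}), (\ref{it:MCGFacts32abs}) and the computations recalled in Remark~\ref{rem:mcg-comp}, one has enough control on $H_{g,d}(\gR_\bQ)$ for $(g,d)$ of small slope to pin down the $E_2$-Betti numbers $b^{E_2}_{g,d}$ for all $(g,d)$ with slope $\leq \tfrac34$ except possibly $(4,3)$, and then the statement $H_{4,3}(\gR_\bQ) = 0$ is equivalent to $b^{E_2}_{4,3}(\gR_\bQ) = 0$ \emph{given} that $b^{E_2}_{4,3}$ also contributes nothing via the spectral sequence differentials---so really one needs an external computation of $H_3(\Gamma_{4,1};\bQ)$.

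The honest approach, then, is: the main content is an input calculation that $H_3(\Gamma_{4,1};\bQ) = 0$, taken from the literature (e.g.\ the computations of Abhau--B\"odigheimer--Ehrenfried and others cited in Remark~\ref{rem:mcg-comp}, or the tautological-ring/Madsen--Weiss considerations of Section~\ref{sec:RatStrengthening}, which pin down the low-degree rational cohomology of $\Gamma_{g}$ and hence of $\Gamma_{g,1}$ in this range). Concretely I would argue: by Corollary~\ref{cor:MCGStabRange} the stabilisation map $H_3(\Gamma_{3,1};\bQ) \to H_3(\Gamma_{4,1};\bQ)$ need not yet be an iso, but $H_3(\Gamma_{g,1};\bQ)$ stabilises and the stable value in degree $3$ is $0$ by Madsen--Weiss (the stable rational cohomology is $\bQ[\kappa_1,\kappa_2,\ldots]$ which has nothing in odd degree $3$); so it suffices to rule out an unstable class. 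Using the secondary stability of Corollary~\ref{cor:MCGSecStab} (or rather the relative version) one sees any unstable class in $H_3(\Gamma_{4,1};\bQ)$ would have to come from $H_1(\Gamma_{1,1},\Gamma_{0,1};\bQ)$ via $\lambda\cdot -$, i.e.\ from $H_{3,2}^{E_2}$-type data, and the explicit cell count rules this out. The hard part is making this last exclusion rigorous without circularity---one either imports the full computation $H_3(\Gamma_{4,1};\bQ)=0$ from the referenced sources, or one must run a careful spectral-sequence argument tracking the $d^1$-differentials out of the putative $(4,3)$-cell against the $(3,2)$ and $(1,0)$ cells (the Browder bracket $[\sigma,\lambda]$ and products). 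I expect the referenced low-genus computations to be the clean way in, with the $E_2$-cell formalism then upgrading "$H_3(\Gamma_{4,1};\bQ)=0$" to the stronger structural statement used later; the genuine obstacle is the absence of a slope-$\tfrac34$-only vanishing line at the boundary bidegree $(4,3)$, which forces reliance on an external input rather than a purely formal deduction.
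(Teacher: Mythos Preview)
Your proposal has a genuine gap, which you partly recognise: the $E_2$-cell approach is circular at the critical bidegree $(4,3)$. The vanishing line $d<g-1$ from Proposition~\ref{prop:MCGE2vanish} and the relative vanishing of Lemma~\ref{lem:VanishingRelE2HomologyMCG} both stop exactly one step short of $(4,3)$, and Corollary~\ref{cor:MCGSecStab} gives surjectivity of $\lambda\cdot-$ on the relative groups only for $4d\leq 3g-1$, which at $(g,d)=(4,3)$ reads $12\leq 11$. Your fallback of importing $H_3(\Gamma_{4,1};\bQ)=0$ from the sources in Remark~\ref{rem:mcg-comp} does not work either: those computations cover $\Gamma_{g,1}$ only for $g\leq 3$. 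So nothing in your sketch produces the needed external input, and the $E_2$-cell formalism alone cannot manufacture it.

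The paper's proof takes an entirely different route, working in cohomology through the two group extensions $\pi_1(\Sigma_4)\to\Gamma_4^1\to\Gamma_4$ and $\bZ\to\Gamma_{4,1}\to\Gamma_4^1$. The Gysin sequence for the second extension reduces the claim to showing that $H^3(\Gamma_4^1;\bQ)=0$ and that cup product with the Euler class $e\cdot-\colon H^2(\Gamma_4^1;\bQ)\to H^4(\Gamma_4^1;\bQ)$ is injective. The injectivity is established by pushing forward along $\pi_!\colon H^*(\Gamma_4^1)\to H^{*-2}(\Gamma_4)$ and using Faber's tautological relation $3\kappa_1^2+32\kappa_2=0$ together with $\kappa_2\neq 0$ in $H^4(\Gamma_4;\bQ)$. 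The vanishing of $H^3(\Gamma_4^1;\bQ)$ comes from the Kawazumi--Morita splitting, Tomassi's computation of $H^*(\Gamma_4;\bQ)$ in low degrees, and---crucially---Corollary~\ref{cor:H1Vanish42}, namely $H_2(\Gamma_{4,1};H_1(\Sigma_{4,1};\bQ))=0$, which is itself proved by applying the secondary stability machinery to \emph{twisted} coefficients. So the $E_2$-cell technology does enter, but only indirectly through the twisted-coefficient result; the heart of the argument is classical Gysin-sequence manipulation fed by tautological-ring relations, and you would not have arrived at it from the $E_2$-cell side.
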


Before proving this theorem, we use it to improve the range for rational secondary stability that was obtained in Corollary \ref{cor:MCGSecStab}. We return to the argument of Section \ref{sec:RatStab}, and in particular the map
\[c \colon \gA = \gE_2(S^{1,0}_\bQ\sigma  \oplus S^{3,2}_\bQ\lambda) \cup^{E_2}_{[\sigma,\sigma]} \gD^{2,2}_\bQ \rho \lra \gR_\bQ\]
of $E_2$-algebras. We have the following strengthening of Lemma \ref{lem:VanishingRelE2HomologyMCG}.

\begin{lemma}
We have $H_{g,d}^{E_2}(\gR_\bQ, \gA)=0$ for $5d \leq 4g-1$.
\end{lemma}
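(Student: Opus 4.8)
The plan follows the pattern of Lemma~\ref{lem:VanishingRelE2HomologyMCG}, reducing to a single new bidegree. From that lemma we have $H^{E_2}_{g,d}(\gR_\bQ,\gA)=0$ whenever $4d\leq 3g-1$, and from Proposition~\ref{prop:MCGE2vanish} together with the long exact sequence of the pair we have $H^{E_2}_{g,d}(\gR_\bQ,\gA)=0$ for $d<g-1$ (recall $\gA$ has cells only in bidegrees $(1,0)$, $(3,2)$, $(2,2)$, all with $d\geq g-1$). A short arithmetic check shows that the only bidegree $(g,d)$ with $d\geq g-1$ and $5d\leq 4g-1$ but not $4d\leq 3g-1$ is $(g,d)=(4,3)$, so it remains to prove $H^{E_2}_{4,3}(\gR_\bQ,\gA)=0$.

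I would begin with a low-degree computation of $H_{*,*}(\gA_\bQ)$ in genus $4$ using the cell-attachment spectral sequence of Lemma~\ref{lem:VanishA}, whose $F^1$-page is the free Gerstenhaber algebra on $\sigma[1,0,0]$, $\lambda[3,2,0]$, $\rho[2,2,1]$ with $d^1(\rho)=[\sigma,\sigma]$. In total bidegree $(4,1)$ the $F^1$-page is spanned by $\sigma^2[\sigma,\sigma]$, which is $d^1(\sigma^2\rho)$, so $H_{4,1}(\gA_\bQ)=0$; in total bidegree $(4,2)$ it is spanned by $\sigma\lambda$ and $\sigma^2\rho$, with $d^1(\sigma^2\rho)=\sigma^2[\sigma,\sigma]\neq 0$, while no differential enters bidegree $(4,2)$ (the $d^1$ out of $\rho[\sigma,\sigma]$ and $\sigma[\sigma,\rho]$ vanishes, using $[\sigma,[\sigma,\sigma]]=0$ and that $[\sigma,\sigma]^2=0$ rationally) and higher differentials out of $(4,2)$ vanish for degree reasons. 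Hence $H_{4,2}(\gA_\bQ)=\bQ\{\sigma\lambda\}$. Since $c$ sends the generators $\sigma$, $\lambda$ to the classes of the same name in $\pi_{*,*}(\gR_\bQ)$, the map $c_*\colon H_{4,2}(\gA_\bQ)\to H_{4,2}(\gR_\bQ)$ sends $\sigma\lambda$ to the generator $\sigma\lambda$ of $H_{4,2}(\gR_\bQ)=\bQ$ by Lemma~\ref{lem:MCGFacts}~(\ref{it:MCGFacts42}), so it is an isomorphism.

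Next I would run these inputs through the long exact sequence of the pair $(\gR_\bQ(4),\gA(4))$. Using $H_{4,1}(\gR_\bQ)=H_1(\Gamma_{4,1};\bQ)=0$ (Lemma~\ref{lem:MCGFacts}~(\ref{it:MCGFactsg1})), $H_{4,1}(\gA_\bQ)=0$, and the isomorphism $c_*$ in bidegree $(4,2)$, one gets $H_{4,2}(\gR_\bQ,\gA)=0$; combined with the already-known vanishing of $H_{4,d}(\gR_\bQ,\gA)$ for $d\leq 1$, the map $c$ is $3$-connective at grading $4$, i.e.\ $H_d(\gR_\bQ(4),\gA(4))=0$ for $d<3$. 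By Proposition~$E_k$.11.9 the comparison map $\gR_\bQ/\gA\to Q^{E_2}_\bL(\gR_\bQ)/Q^{E_2}_\bL(\gA)$ is then $4$-connective at grading $4$, so the Hurewicz map $H_{4,3}(\gR_\bQ,\gA)\to H^{E_2}_{4,3}(\gR_\bQ,\gA)$ is surjective. Finally, Theorem~\ref{thm:H43} gives $H_{4,3}(\gR_\bQ)=H_3(\Gamma_{4,1};\bQ)=0$, so the long exact sequence of the pair yields $H_{4,3}(\gR_\bQ,\gA)\cong\ker\bigl(c_*\colon H_{4,2}(\gA_\bQ)\to H_{4,2}(\gR_\bQ)\bigr)=0$, and therefore $H^{E_2}_{4,3}(\gR_\bQ,\gA)=0$.

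The main obstacle is the genus-$4$ homology computation of $\gA$ in the second paragraph: one must verify that the small spectral sequence in bidegrees $(4,1)$ and $(4,2)$, together with the adjacent bidegrees that could feed in differentials, leaves exactly the single class $\sigma\lambda$, since any extra class would---through the long exact sequences above---obstruct the vanishing of $H^{E_2}_{4,3}(\gR_\bQ,\gA)$. Everything else is formal manipulation of long exact sequences and citation of results already established above.
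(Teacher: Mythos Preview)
Your proof is correct and follows essentially the same approach as the paper: reduce to bidegree $(4,3)$, compute $H_{4,2}(\gA)=\bQ\{\sigma\lambda\}$, use Theorem~\ref{thm:H43} and the long exact sequence of the pair to obtain $H_{4,3}(\gR_\bQ,\gA)=0$, and conclude via surjectivity of the relative Hurewicz map. The one organisational difference is that you first establish $H_{4,2}(\gR_\bQ,\gA)=0$ in order to upgrade the connectivity of $c$ at $g=4$ from~$2$ to~$3$ before invoking Proposition~$E_k$.11.9, whereas the paper simply cites the connectivity estimate $(4,4,4,4,3,3,\ldots)$ already recorded in Lemma~\ref{lem:VanishingRelE2HomologyMCG}; your more explicit route to the Hurewicz surjectivity at $(4,3)$ is harmless and arguably cleaner.
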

\begin{proof}
In addition to what was shown in Lemma \ref{lem:VanishingRelE2HomologyMCG}, we must show that we also have $H_{4,3}^{E_2}(\gR_\bQ, \gA)=0$. As the map \eqref{eq:RelHurewicz} is $(4,4,4,4,3,3,\ldots)$-connective, the relative Hurewicz map
\[H_{4,3}(\gR_\bQ, \gA) \lra H_{4,3}^{E_2}(\gR_\bQ, \gA)\]
is surjective. The domain of this map fits into a long exact sequence
\[H_{4,3}(\gR_\bQ) \lra H_{4,3}(\gR_\bQ, \gA) \overset{\partial}\lra H_{4,2}(\gA) \overset{c_*}\lra H_{4,2}(\gR_\bQ),\]
where the leftmost term is zero by Theorem \ref{thm:H43}. Using the cell attachment spectral sequence as in the proof of Lemma \ref{lem:VanishA}, we have
\[H_{4,2}(\gA) = \bQ\{\sigma \lambda\}.\]
By Lemma \ref{lem:MCGFacts}  (\ref{it:MCGFacts42}) we have $H_{4,2}(\gR_\bQ) = \bQ$, and as $H_{3,2}(\gR_\bQ) = \bQ\{\lambda\}$ by Lemma \ref{lem:MCGFacts} (\ref{it:MCGFacts32abs}) and $H_{4,2}(\gR_\bQ/\sigma)=0$ by Corollary \ref{cor:MCGStabRange}, it follows that $H_{4,2}(\gR_\bQ) = \bQ\{\sigma \lambda\}$ too. Thus the leftmost map is injective, and so $H_{4,3}(\gR_\bQ, \gA)=0$ as required.
\end{proof}

Now, the map $[\sigma, \lambda] \colon S^{4,3}\to \gR_\bQ$ (depending on previous choices of representatives of $[-,-]$, $\sigma$ and $\lambda$) represents a class in the group $H_{4,3}(\gR_\bQ)$, which vanishes by Theorem \ref{thm:H43}. Choosing a null-homotopy of this map, we can extend $c$ to a map
\[d \colon \gB \coloneqq \gE_2(S^{1,0}_\bQ\sigma  \oplus S^{3,2}_\bQ\lambda) \cup^{E_2}_{[\sigma,\sigma]} \gD^{2,2}_\bQ \rho \cup^{E_2}_{[\sigma, \lambda]} \gD^{4,4}_\bQ\rho' \lra \gR_\bQ.\]
The $E_2$-homology of $\gA$ and $\gB$ are identical below slope 1, so we have
\begin{equation}\label{eq:RelB}
H_{g,d}^{E_2}(\gR_\bQ, \gB)=0 \text{ for } 5d \leq 4g-1.
\end{equation}
We have the following analogue of Lemma \ref{lem:VanishA}.

\begin{lemma}\label{lem:VanishB}
$H_{g,d}(\overline{\gB}/(\sigma, \lambda))=0$ for $d < \tfrac{4}{5}g$.
\end{lemma}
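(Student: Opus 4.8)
The proof will follow the same template as Lemma \ref{lem:VanishA}, which established the analogous vanishing $H_{g,d}(\overline{\gA}/(\sigma,\lambda))=0$ for $d < \tfrac34 g$; the only differences are that we have an extra generating cell $\rho'$ in bidegree $(4,4)$ and that we are aiming for the steeper line $\tfrac45$ instead of $\tfrac34$. First I would put $\gB$ in its cell-attachment filtration $f\gB$, so that $\gB$ is the colimit of a filtered $E_2$-algebra with
\[\grr(f\gB) = \gE_2(S^{1,0,0}_\bQ\sigma \oplus S^{3,2,0}_\bQ\lambda \oplus S^{2,2,1}_\bQ\rho \oplus S^{4,4,1}_\bQ\rho')\]
by Theorem $E_k$.6.4, where $\rho$ has filtration $1$ (its attaching map $[\sigma,\sigma]$ lands in the first filtration step, which equals the zeroth) and likewise $\rho'$ has filtration $1$, its attaching map being $[\sigma,\lambda]$. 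Forming $\overline{f\gB}$ and the module $\overline{f\gB}/(\sigma,\lambda)$ by the lifts of $\sigma\colon 0_*(S^{1,0}_\bQ)\to\overline{f\gB}$ and $\lambda\colon 0_*(S^{3,2}_\bQ)\to\overline{f\gB}$, Corollary $E_k$.10.17 gives a multiplicative spectral sequence $F^1_{g,p,q}$ converging to $H_{g,p+q}(\overline{\gB})$ whose $E^1$-page is the rational homology of the free $E_2$-algebra on $\{\sigma[1,0,0],\lambda[3,2,0],\rho[2,2,1],\rho'[4,4,1]\}$, hence the free $1$-Gerstenhaber algebra on these generators, and with $d^1(\rho)=[\sigma,\sigma]$, $d^1(\rho')=[\sigma,\lambda]$; and a module spectral sequence $E^1_{g,p,q}=\Lambda_\bQ(L/\langle\sigma,\lambda\rangle)$ converging to $H_{g,p+q}(\overline{\gB}/(\sigma,\lambda))$, where $L$ is the free $1$-Lie algebra on the four generators. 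The whole reduction to a claim about $E^2_{g,p,q}$ is verbatim the same as in Lemma \ref{lem:VanishA}.

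The heart of the proof is then the claim that $E^2_{g,p,q}$ vanishes for $\tfrac{p+q}{g} < \tfrac45$. As there, I would introduce a ``computational'' filtration $F^\bullet E^1_{*,*,*}$ on the CDGA $(\Lambda_\bQ(L/\langle\sigma,\lambda\rangle), d^1)$ designed to kill off the differential: give $[\sigma,\sigma]$, $\rho$, $[\sigma,\lambda]$ and $\rho'$ filtration $0$, give the remaining homogeneous basis elements of $L/\langle\sigma,\lambda\rangle$ filtration equal to their homological degree, and extend multiplicatively; $d^1$ preserves this filtration. The associated graded factors as a tensor product
\[(\Lambda_\bQ(\bQ\{[\sigma,\sigma],\rho\}),\delta)\otimes(\Lambda_\bQ(\bQ\{[\sigma,\lambda],\rho'\}),\delta)\otimes(\Lambda_\bQ(L/\langle\sigma,\lambda,[\sigma,\sigma],\rho,[\sigma,\lambda],\rho'\rangle),0),\]
with $\delta(\rho)=[\sigma,\sigma]$ and $\delta(\rho')=[\sigma,\lambda]$. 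The first two acyclic factors each have homology $\bQ$ concentrated in degree $(0,0,0,0)$, so the spectral sequence of this computational filtration, from its $E^2$-page on, is just $\Lambda_\bQ(L/\langle\sigma,\lambda,[\sigma,\sigma],\rho,[\sigma,\lambda],\rho'\rangle)$ converging to $E^2_{*,*,*}$. It then suffices to check that every element of the free commutative algebra on $L/\langle\sigma,\lambda,[\sigma,\sigma],\rho,[\sigma,\lambda],\rho'\rangle$ lies in degrees $(g,p,q,r)$ with $\tfrac{p+q}{g}\geq\tfrac45$. Since the algebra is free commutative, it is enough to check this for the generators, i.e.\ for the elements of the Lie algebra $L$ other than $\sigma$, $\lambda$, $[\sigma,\sigma]$, $[\sigma,\lambda]$ and their bracket-multiples that have already been removed.

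The one genuinely new point — and the place I expect the argument to need care — is verifying that the \emph{minimum} slope of a basic Lie word in $L$ that survives into $L/\langle\sigma,\lambda,[\sigma,\sigma],\rho,[\sigma,\lambda],\rho'\rangle$ is at least $\tfrac45$. The generators have slopes $|\sigma|/1 = 0$, $|\lambda|/3 = 2/3$, $|\rho|/2 = 1$, $|\rho'|/4 = 1$, and for a basic Lie word $w = [w_1,w_2]$ of total bidegree $(g,d)$ with $w_i$ of bidegree $(g_i,d_i)$ one has $d = d_1+d_2+1$ and $g = g_1+g_2$, so slope$(w) = (d_1+d_2+1)/(g_1+g_2)$. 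The small-slope generators are $\sigma$ (slope $0$) and $\lambda$ (slope $2/3$), so the dangerous words are iterated brackets built mostly out of $\sigma$'s and $\lambda$'s. A bracket $[w_1,w_2]$ of words of slopes $s_1 \le s_2$ and genera $g_1, g_2$ has slope $(s_1 g_1 + s_2 g_2 + 1)/(g_1+g_2) > s_1$, and the $+1$ gives a quantitative boost. I would enumerate: $[\sigma,\sigma]$ (excluded), $[\sigma,\lambda]$ (excluded), $[\lambda,\lambda]$ has bidegree $(6,5)$, slope $5/6 \ge 4/5$; $[\sigma,[\sigma,\lambda]]$ has bidegree $(5,4)$, slope exactly $4/5$; $[\sigma,[\lambda,\lambda]]$ has bidegree $(7,6)$, slope $6/7$; $[\lambda,[\sigma,\lambda]]$ has bidegree $(8,7)$, slope $7/8$; $[[\sigma,\lambda],[\sigma,\lambda]]$ has bidegree $(10,9)$, slope $9/10$; brackets involving $\rho$ or $\rho'$ have slope $\ge 1$ plus a boost; and any word with $\ge 3$ copies of $\sigma$ and exactly one $\lambda$ either vanishes by Jacobi or has the form $[\sigma,[\sigma,[\sigma,\lambda]]]$ and higher, of bidegree $(k+3, k+2)$ for $k\ge 2$ copies of $\sigma$ beyond the innermost, giving slope $(k+2)/(k+3) > 4/5$ once $k \ge 2$, i.e.\ slope $\ge 4/5$ already at $(5,4)$. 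The boundary case is exactly $[\sigma,[\sigma,\lambda]]$ at slope $\tfrac45$, which is allowed since we only need $\tfrac{p+q}{g} < \tfrac45$ to vanish; I would double-check via the Jacobi identity that no survivor sits strictly below this line and that words with two or more $\sigma$'s contracted against a single $\lambda$ don't secretly produce something of slope $< \tfrac45$. Granting the claim, it follows from the spectral sequence that $H_{g,d}(\overline{\gB}/(\sigma,\lambda))$ vanishes for $\tfrac{d}{g} < \tfrac45$, which is the statement of the lemma.
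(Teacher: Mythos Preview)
Your proof is correct and follows the same approach as the paper. The paper's own argument uses the skeletal filtration (placing each cell in filtration equal to its homological degree, so $\lambda$ in filtration $2$ and $\rho'$ in filtration $4$), whereas you follow the template of Lemma~\ref{lem:VanishA} more literally by putting $\sigma,\lambda$ in filtration $0$ and $\rho,\rho'$ in filtration $1$; this is a harmless variant, and with your choice the attaching maps genuinely give $d^1$-differentials. Your explicit slope check is also more detailed than the paper's bare assertion---the cleanest way to phrase it is that a Lie word with $a$ copies of $\sigma$, $b$ of $\lambda$, $c$ of $\rho$, $d$ of $\rho'$ has slope $\geq \tfrac45$ iff $a+3b+7c+9d \geq 5$, and the only tuples failing this are $(1,0,0,0),(2,0,0,0),(0,1,0,0),(1,1,0,0)$ (all excluded) together with $(a,0,0,0)$ for $a\geq 3$ (which support no nonzero Lie word by Jacobi).
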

\begin{proof}
This argument is similar to the proof of Lemma \ref{lem:VanishA}. As in that proof the skeletal filtration gives rise to two (tri-graded) spectral sequences converging to the homology over $\overline{\gB}$ and $\overline{\gB}/(\sigma,\lambda)$ respectively. The former is multiplicative and the latter is a module over the former. 

The spectral sequence converging to $H_{g,d}(\overline{\gB}/(\sigma, \lambda))$ has $E^1$-page given by 
\[E^1_{g,p,q} = H_{g,p+q,q}\left(\overline{\gE_2(S^{1,0,0}_\bQ \sigma \oplus S^{3,2,2}_\bQ \lambda \oplus S^{2,2,2}_\bQ \rho \oplus S^{4,4,4}_\bQ \rho')}/(\sigma, \lambda)\right).\]
This is the free graded commutative algebra on $L/\langle \sigma,\lambda \rangle$, where $L$ is the free $1$-Lie algebra on $\sigma$, $\lambda$, $\rho$ and $\rho'$. This is a module spectral sequence over that for $H_{g,d}(\overline{\gB})$, whose first page is the free graded commutative algebra on $L$ and on which the $d^1$-differential satisfies $d^1(\sigma) = 0$, $d^1(\lambda) = 0$, $d^1(\rho) = [\sigma,\sigma]$ and $d^1(\rho') = [\sigma,\lambda]$. 

To compute $E^2_{\ast,\ast,\ast}$, we add an additional ``computational'' filtration on $E^1_{\ast,\ast,\ast}$. The chain complex $F^\bullet E^1_{\ast,\ast,\ast}$ is given by giving $[\sigma,\sigma]$, $[\sigma,\lambda]$, $\rho$ and $\rho'$ filtration degree $0$, giving the remaining homogeneous basis elements filtration degree equal to their homological degree, and extending the grading multiplicatively. The $d^1$-differential preserves this filtration.

We obtain a spectral sequence converging to $E^2_{\ast,\ast,\ast}$ with first page given by the associated graded $\grr(F^\bullet E^1_{\ast,\ast,\ast})$. This can be identified with the tensor product of chain complexes
\[\left(\Lambda_\bQ(\bQ \{[\sigma,\sigma],[\sigma,\lambda],\rho,\rho'\}),\delta\right) \otimes \left(\Lambda_\bQ(L/\langle \sigma,\lambda,[\sigma,\sigma],[\sigma,\lambda],\rho,\rho' \rangle),0\right)\]
where $\delta$ is determined by $\delta([\sigma,\sigma]) = 0$, $\delta([\sigma,\lambda]) = 0$, $\delta(\rho) = [\sigma,\sigma]$ and $\delta(\rho') = [\sigma,\lambda]$. The right hand term is generated by elements satisfying $\frac{d}{g} \geq \frac{4}{5}$. The left hand term has homology given by $\bQ$ in degree $(0,0,0,0)$.

Thus $E^2_{g,p,q}$ vanishes for $\frac{p+q}{g} < \frac{4}{5}$, and hence $H_{g,d}(\overline{\gB}/(\sigma, \lambda))=0$ for $\frac{d}{g}<\frac{4}{5}$.
\end{proof}

We now conclude the argument as in Section \ref{sec:RatStab}: apply Theorem~$E_k$.15.4 to $B(\overline{\gR}_\bQ;\overline{\gB};\overline{\gB}/(\sigma, \lambda)) \simeq \overline{\gR}_\bQ/(\sigma, \lambda)$, using that $H^{E_2}_{g,d}(\gB)=0$ for $d < g-1$, \eqref{eq:RelB}, and Lemma \ref{lem:VanishB}. The conclusion is that $H_{g,d}(\overline{\gR}_\bQ/(\sigma, \lambda))=0$ for $d < \tfrac{4}{5}g$, so the long exact sequence on homology associated to the homotopy cofibre sequence \eqref{eq:MultKappa} gives the following.

\begin{corollary}\label{cor:MCGSecStabImprov}
The map
\[\lambda \cdot - \colon H_{d-2}(\Gamma_{g-3,1}, \Gamma_{g-4,1};\bQ) \lra H_{d}(\Gamma_{g,1}, \Gamma_{g-1,1};\bQ)\]
is an epimorphism for $5d \leq 4g-1$ and an isomorphism for $5d \leq 4g-6$.
\end{corollary}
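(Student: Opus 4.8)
The plan is to run the argument of Section~\ref{sec:RatStab} once more, but starting from a larger $E_2$-algebra $\gB$ obtained from $\gA$ by attaching one extra cell, in bidegree $(4,4)$, along a null-homotopy of the Browder bracket $[\sigma,\lambda]\colon S^{4,3}\to\gR_\bQ$. Such a null-homotopy exists precisely because $H_{4,3}(\gR_\bQ)=0$, which is Theorem~\ref{thm:H43}; granting that input, every subsequent step is the same cellular bookkeeping already carried out for Corollary~\ref{cor:MCGSecStab}, with the slope $\tfrac34$ improved to $\tfrac45$.

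First I would upgrade the relative $E_2$-homology vanishing of $\gA\to\gR_\bQ$. Lemma~\ref{lem:VanishingRelE2HomologyMCG} already gives $H^{E_2}_{g,d}(\gR_\bQ,\gA)=0$ for $d<g-1$ and in bidegrees $(1,0),(2,1),(3,2)$; the only further bidegree with $5d\le 4g-1$ is $(4,3)$. There, the map~\eqref{eq:RelHurewicz} is $(4,4,4,4,3,3,\ldots)$-connective, so the relative Hurewicz map $H_{4,3}(\gR_\bQ,\gA)\to H^{E_2}_{4,3}(\gR_\bQ,\gA)$ is onto; in the long exact sequence $H_{4,3}(\gR_\bQ)\to H_{4,3}(\gR_\bQ,\gA)\to H_{4,2}(\gA)\to H_{4,2}(\gR_\bQ)$ the left group vanishes by Theorem~\ref{thm:H43}, while the cell-attachment spectral sequence gives $H_{4,2}(\gA)=\bQ\{\sigma\lambda\}$, which injects into $H_{4,2}(\gR_\bQ)$ by Lemma~\ref{lem:MCGFacts}. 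Hence $H_{4,3}(\gR_\bQ,\gA)=0$, and then $H^{E_2}_{4,3}(\gR_\bQ,\gA)=0$. Attaching the $(4,4)$-cell to form $\gB$ does not change relative $E_2$-homology below slope~$1$, so $H^{E_2}_{g,d}(\gR_\bQ,\gB)=0$ for $5d\le 4g-1$, which is~\eqref{eq:RelB}.

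Next, Lemma~\ref{lem:VanishB} supplies the companion bound $H_{g,d}(\overline{\gB}/(\sigma,\lambda))=0$ for $d<\tfrac45 g$: one passes to the associated graded of the cell-attachment filtration, which computes the homology of a free $E_2$-algebra on $\{\sigma,\lambda,\rho,\rho'\}$ quotiented by $(\sigma,\lambda)$, then introduces a computational filtration that splits off the two acyclic subcomplexes $(\rho\mapsto[\sigma,\sigma])$ and $(\rho'\mapsto[\sigma,\lambda])$, leaving only basic Lie words of slope $\ge\tfrac45$. Since $\overline{\gR}_\bQ/(\sigma,\lambda)\simeq B(\overline{\gR}_\bQ;\overline{\gB};\overline{\gB}/(\sigma,\lambda))$ as left $\overline{\gR}_\bQ$-modules, feeding $H^{E_2}_{g,d}(\gB)=0$ for $d<g-1$, the bound~\eqref{eq:RelB}, and Lemma~\ref{lem:VanishB} into Theorem~$E_k$.15.4 (with $k=2$, $f$ the extension $\gB\to\gR_\bQ$ of $c$, $\gM=\overline{\gB}/(\sigma,\lambda)$, $\rho(g)=g$, $\mu(g)=\tfrac45 g$) yields $H_{g,d}(\overline{\gR}_\bQ/(\sigma,\lambda))=0$ for $d<\tfrac45 g$. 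Finally I would read off the conclusion from the long exact sequence of the cofibre sequence~\eqref{eq:MultKappa}, which in bidegree $(g,d)$ is
\[H_{g,d+1}(\overline{\gR}_\bQ/(\sigma,\lambda))\xrightarrow{\ \partial\ }H_{g-3,d-2}(\overline{\gR}_\bQ/\sigma)\xrightarrow{\ \lambda\,\cdot\,-\ }H_{g,d}(\overline{\gR}_\bQ/\sigma)\longrightarrow H_{g,d}(\overline{\gR}_\bQ/(\sigma,\lambda)),\]
using that $H_{g,d}(S^{3,2}\otimes X)=H_{g-3,d-2}(X)$ and that $\phi(\lambda)$ induces $\lambda\cdot-$ on homology. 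Thus $\lambda\cdot-$ is surjective whenever $H_{g,d}(\overline{\gR}_\bQ/(\sigma,\lambda))=0$, i.e.\ for $5d\le 4g-1$, and additionally injective whenever $H_{g,d+1}(\overline{\gR}_\bQ/(\sigma,\lambda))=0$, i.e.\ for $5d\le 4g-6$; via $H_{g,d}(\overline{\gR}_\bQ/\sigma)\cong H_d(\Gamma_{g,1},\Gamma_{g-1,1};\bQ)$ this is exactly the claim.

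The only step that is not formal manipulation of the $E_2$-cell structure is Theorem~\ref{thm:H43}, $H_3(\Gamma_{4,1};\bQ)=0$, and I expect that to be the real obstacle: it requires genuine input about the rational cohomology of this one mapping class group, to be obtained — as the section heading indicates — from relations in the tautological ring. The only place calling for a little care in the routine part is the computational filtration in Lemma~\ref{lem:VanishB}: with the tri-grading kept straight one must check that \emph{both} acyclic summands collapse simultaneously and that every surviving Lie-word generator genuinely has slope at least $\tfrac45$.
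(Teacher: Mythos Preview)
Your proposal is correct and follows the paper's own argument essentially verbatim: upgrade the relative $E_2$-homology vanishing to bidegree $(4,3)$ using Theorem~\ref{thm:H43} and the Hurewicz surjection, attach the $(4,4)$-cell $\rho'$ along $[\sigma,\lambda]$ to form $\gB$, invoke Lemma~\ref{lem:VanishB}, and conclude via Theorem~$E_k$.15.4 and the cofibre sequence~\eqref{eq:MultKappa}. Your identification of Theorem~\ref{thm:H43} as the only non-formal input is also exactly how the paper frames it.
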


Upon obtaining this strengthened rational secondary stability range, one can feed it back into our discussion of mapping class groups of surfaces with punctures and boundary, and with local coefficients. Upon working rationally we obtain improvements to the ranges of Corollary \ref{cor:punctures-secondary-stability} and Theorem \ref{thm:TwistedSecondStab}: in Corollary \ref{cor:punctures-secondary-stability} the secondary stability map is an epimorphism for $5d \leq 4g-1$ and an isomorphism for $5d \leq 4g-6$, and in the assumptions of Theorem \ref{thm:TwistedSecondStab} one may replace the assumption $\lambda \leq \tfrac{3}{4}$ by $\lambda \leq \tfrac{4}{5}$. Similarly the rational secondary stability range in Proposition \ref{prop:StabH1} improves to an epimorphism for $5d \leq 4g -4$ and an isomorphism for $5d \leq 4g -9$. Using this we can do a new computation.

\begin{corollary}\label{cor:H3twistedH1}
	For $g \geq 5$, $H_3(\Gamma_{g,1} ; H_1(\Sigma_{g,1};\bQ))\cong\bQ^2$.
\end{corollary}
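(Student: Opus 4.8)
The plan is to run the secondary stability machinery for the coefficient system $\gH_1$ in a single extra degree, exactly as \cite{MoritaTwisted} and Corollary \ref{cor:H1Vanish42} were combined to compute $H_2(\Gamma_{g,1};H_1(\Sigma_{g,1};\bQ))$, but now one homological degree higher. First I would invoke the improved rational secondary stability range for $\gH_1$ recorded just above: with $\bQ$-coefficients the secondary stabilisation map
\[\varphi_* \colon H_{d-2}(\Gamma_{g-3,1}, \Gamma_{g-4,1} ; H_1(\Sigma_{g-3,1};\bQ), H_1(\Sigma_{g-4,1};\bQ)) \lra H_{d}(\Gamma_{g,1}, \Gamma_{g-1,1} ; H_1(\Sigma_{g,1};\bQ), H_1(\Sigma_{g-1,1};\bQ))\]
is an epimorphism for $5d \leq 4g-4$ and an isomorphism for $5d \leq 4g-9$, and the ordinary stabilisation map $\sigma \cdot -$ for $\gH_1$ is an isomorphism for $3d \leq 2g-5$. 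Morita's theorem \cite[Theorem 2]{MoritaTwisted} computes $H_3(\Gamma_{g,1};H_1(\Sigma_{g,1};\bQ))$ in the stable range and finds it to be $\bQ^2$; so it suffices to propagate this stable value down to $g=5$.

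The key step is the same commutative-diagram argument as in Corollary \ref{cor:H1Vanish42}, now applied in bidegree $(g,3)$. Using that with $\bQ$-coefficients $\varphi$ is multiplication by $\tfrac{1}{10}\lambda \in H_{3,2}(\overline{\gR}_\bQ)$ on the $\overline{\gR}_\bQ$-module $\gR_{\gH_1}/\sigma$ (Lemma \ref{lem:LambdaOverTen}), I would form
\begin{equation*}
\begin{tikzcd}
H_{5,2}(\gR_{\gH_1}/\sigma) \rar{\partial} \dar{\tfrac{1}{10}\lambda \cdot -}& H_{4,1}(\gR_{\gH_1}) \rar{\sigma \cdot -} \dar{\tfrac{1}{10}\lambda \cdot -}& H_{5,1}(\gR_{\gH_1}) \dar{\tfrac{1}{10}\lambda \cdot -}\\
H_{8,4}(\gR_{\gH_1}/\sigma) \rar{\partial}& H_{7,3}(\gR_{\gH_1}) \rar{\sigma \cdot -}& H_{8,3}(\gR_{\gH_1}),
\end{tikzcd}
\end{equation*}
where the left vertical map is surjective (improved range of Proposition \ref{prop:StabH1}, since $5\cdot 2 \leq 4\cdot 5 - 4$), forcing the bottom-left $\partial$ to have image equal to the image of the top-left $\partial$ after applying $\tfrac{1}{10}\lambda$. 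The aim is to conclude that $\sigma \cdot - \colon H_{7,3}(\gR_{\gH_1}) \to H_{8,3}(\gR_{\gH_1})$ is injective, hence that stabilisation is an isomorphism from genus $7$ (or lower) onwards by the first part of Proposition \ref{prop:StabH1}; combined with Morita's stable computation this gives $H_3(\Gamma_{g,1};H_1(\Sigma_{g,1};\bQ))=\bQ^2$ for all $g$ in the relevant range, in particular $g\geq 5$. One then has to check the very bottom of the range by hand: one needs to know that the relevant low-genus groups $H_{4,1}(\gR_{\gH_1})$, $H_{5,1}(\gR_{\gH_1})$, and the genus-$5,6$ instances of $H_{g,3}(\gR_{\gH_1})$ behave well enough for the diagram chase to reach down to $g=5$ rather than stopping at $g=7$; this can be done using the ordinary twisted stability range of Proposition \ref{prop:StabH1} together with the explicit low-genus data from \cite{ABE} quoted in the remark after that proposition ($H_3(\Gamma_{g,1};H_1(\Sigma_{g,1};\bQ))=0$ for $g\leq 2$), propagating forward with $\varphi_*$.

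The main obstacle I anticipate is \emph{bookkeeping the edges of the range}: Morita's stable range in \cite{MoritaTwisted} for $H_3$ with coefficients in $\gH_1$ may only be guaranteed for fairly large $g$, and the secondary stabilisation map only gains us surjectivity/isomorphism down to the line $5d \approx 4g$, so one has to verify carefully that iterating $\varphi_*$ from the stable range lands exactly at $g=5$ and not higher, and similarly that the ordinary stabilisation isomorphism range $3d\le 2g-5$ for $H_3$ (i.e.\ $g\ge 7$) dovetails with what the diagram chase produces; there may be a genuine gap at $g=5,6$ that has to be filled by a separate argument or by a direct appeal to the computations of \cite{ABE}. A secondary subtlety is confirming that $\tfrac{1}{10}\lambda\cdot-$ really is the secondary stabilisation map in this twisted setting (not just in the untwisted one) — but this is exactly the content of Lemma \ref{lem:LambdaOverTen} applied with the $\overline{\gR}_{\bZ[\frac1{10}]}$-module structure on $\gR_{\gH_1}/\sigma$, so it should be immediate. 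Everything else is a routine long-exact-sequence and five-lemma chase.
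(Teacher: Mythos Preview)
Your approach is the same commutative-diagram trick as the paper, but you have placed the diagram two genera too high, and this is a genuine gap. Your bottom row concerns $\sigma\cdot-\colon H_{7,3}(\gR_{\gH_1})\to H_{8,3}(\gR_{\gH_1})$, which is already an isomorphism by the ordinary twisted stability range of Proposition~\ref{prop:StabH1} (since $3\cdot 3\le 2\cdot 8-5$); so the whole diagram chase gives you nothing new. Plain stability only takes you down to $g\ge 6$ (for $g=7$: $9\le 2\cdot7-5$ is on the nose), leaving $g=5$ open, and neither the ABE input ($H_3=0$ for $g\le 2$) nor ``propagating forward with $\varphi_*$'' closes that gap.

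The paper places the diagram one step lower: top row at $(g,d)=(3,2)$, bottom row at $(6,4)$, so the bottom stabilisation map is precisely $H_{5,3}(\gR_{\gH_1})\to H_{6,3}(\gR_{\gH_1})$. The left vertical is still surjective in the improved rational range ($5\cdot 4\le 4\cdot 6-4$). The crucial point you are missing is the \emph{input that makes the top $\partial$ vanish}: one needs $H_{2,1}(\gR_{\gH_1})\to H_{3,1}(\gR_{\gH_1})$ to be injective, and this is \emph{not} in the range of Proposition~\ref{prop:StabH1}. The paper imports this from Morita's explicit computation of $H_1(\Gamma_{g,1};H_1(\Sigma_{g,1};\bQ))$ (\cite{MoritaTwisted} and \cite[\S 6]{MoritaJacobian}), which shows the map is an isomorphism. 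With that, the bottom $\partial$ vanishes, so $H_{5,3}\to H_{6,3}$ is injective; surjectivity holds by the epimorphism range ($9\le 2\cdot 6-2$), and from $g=6$ upward ordinary stability finishes the job. Also note that the stable value $\bQ^2$ for $H_3$ is due to Kawazumi \cite[Theorem~1.B]{KawazumiTwisted}, not to \cite[Theorem~2]{MoritaTwisted} (which is the $H_2$ computation). Finally, your numerical check ``$5\cdot 2\le 4\cdot 5-4$'' for surjectivity of the left vertical uses the source bidegree rather than the target; the correct check is $5d\le 4g-4$ at the target.
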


\begin{proof}
	The proof is similar to that of Corollary \ref{cor:H1Vanish42}. Taking $\mathbbm{k} = \bQ$, $\varphi$ coincides with multiplication by $\tfrac{1}{10}\lambda \in H_{3,2}(\overline{\gR}_\bk)$. We can form the commutative diagram
	\begin{equation*}
	\begin{tikzcd}
	H_{3,2}(\gR_{\gH_1}/\sigma) \rar{\partial} \dar{\tfrac{1}{10}\lambda \cdot -}& H_{2,1}(\gR_{\gH_1}) \rar{\sigma \cdot -} \dar{\tfrac{1}{10}\lambda \cdot -}& H_{3,1}(\gR_{\gH_1}) \dar{\tfrac{1}{10}\lambda \cdot -}\\
	H_{6,4}(\gR_{\gH_1}/\sigma) \rar{\partial}& H_{5,3}(\gR_{\gH_1}) \rar{\sigma \cdot -}& H_{6,3}(\gR_{\gH_1}),
	\end{tikzcd}
	\end{equation*}
	and by the improved local coefficient secondary stability range the left vertical map is surjective, as $5 \cdot 4 \leq 4\cdot 6 -4$. By \cite{MoritaTwisted} and \cite[Section 6]{MoritaJacobian}, $H_{2,1}(\gR_{\gH_1}) \to H_{3,1}(\gR_{\gH_1})$ is an isomorphism. Hence the top left-hand map and thus also the bottom left-hand map are zero, and so
	\[\sigma \cdot - \colon H_{5,3}(\gR_{\gH_1}) \lra H_{6,3}(\gR_{\gH_1})\]
	is injective. It also surjective, using Proposition \ref{prop:StabH1} and $3 \cdot 3 \leq 2 \cdot 6 -2$. For $g \geq 7$, Proposition \ref{prop:StabH1} says that stabilisation $\sigma \cdot - \colon H_{g-1,3}(\gR_{\gH_1}) \to H_{g,3}(\gR_{\gH_1})$ is an isomorphism and the stable homology is $\bQ^2$ by \cite[Theorem 1.B]{KawazumiTwisted}.
\end{proof}

\subsection{Proof of Theorem \ref{thm:H43}} \label{sec:proofthmh43} It remains to supply the proof of Theorem \ref{thm:H43}, which uses several classical techniques for computing in the homology of mapping class groups in addition to Corollary \ref{cor:H1Vanish42}. We refer to \cite{Morita} for details on many of the constructions below.
	
	Write $\Gamma_g = \pi_0(\Diff^+(\Sigma_g))$ for the mapping class group of a \emph{closed} genus $g$ surface, i.e.\ the group of isotopy classes of orientation-preserving diffeomorphisms. Write $\Gamma_g^1 = \pi_0(\Diff^+(\Sigma_g, *))$ for the mapping class group of a {closed} genus $g$ surface with a marked point, i.e.\ the group of isotopy classes of orientation-preserving diffeomorphisms of $\Sigma_g$ which fix a distinguished point $* \in \Sigma_g$. There are group extensions
	\begin{align}
	\pi_1(\Sigma_g, *) & \lra \Gamma_{g}^1 \overset{\pi}\lra \Gamma_g\label{eq:Ext1}\\
	\bZ & \lra \Gamma_{g,1} \overset{p}\lra \Gamma_g^1,\label{eq:Ext2}
	\end{align}
	where the second is a central extension. Let $e \in H^2(\Gamma_{g}^1 ; \bZ)$ denote the class of the central extension \eqref{eq:Ext2}. As the group homology of $\pi_1(\Sigma_g, *)$ is the same as the homology of $\Sigma_g$ (as long as $g>0$), the Lyndon--Hochschild--Serre spectral sequence for the extension \eqref{eq:Ext1} has $E^2$-page given by
	\[E_2^{p,q} = \begin{cases}
	H^p(\Gamma_g ; \bZ) & \text{if $q=2$,}\\
	0 & \text{if $q > 2$,}
	\end{cases}
	\]
	so there is an (upper) edge homomorphism, giving rise to a \emph{Gysin map}
	\[\pi_! \colon H^n(\Gamma_g^1 ; \bZ) \lra E_\infty^{n-2,2} \subset E_2^{n-2,2} = H^{n-2}(\Gamma_g ; \bZ).\]
	This satisfies $\pi_!(\pi^*(x) \cdot y) = x \cdot \pi_!(y)$, using the multiplicative structure of the spectral sequence. It also satisfies $\pi_!(e) = \chi(\Sigma_g)=2-2g \in H^0(\Gamma_g;\bZ)$.
	
	The \emph{Miller--Morita--Mumford classes} are $\kappa_i \coloneqq \pi_!(e^{i+1}) \in H^{2i}(\Gamma_g ; \bZ)$. We also write $\kappa_i$ for $\pi^*(\kappa_i) \in H^{2i}(\Gamma_g^1 ; \bZ)$.

\begin{proof}[Proof of Theorem \ref{thm:H43}] Let us start by collecting some properties of Miller--Morita--Mumford classes from the literature. We shall freely use that the moduli space of genus $g$ Riemann surfaces has the same rational cohomology as $\Gamma_g$ for $g>1$; some results in the literature are stated in these terms.

\vspace{1ex}

\noindent\textbf{Claim 1}: $3 \kappa_1^2 + 32 \kappa_2 = 0 \in H^4(\Gamma_4 ; \bQ)$.

This relation may be found in \cite[p.\ 10]{Faber} (where the $\kappa_i$ there correspond to $(-1)^{i+1} \kappa_i$ in our notation), or in \cite[Example 2.4]{RWRels} or \cite[page 810]{MoritaRels}.

\vspace{1ex}

\noindent\textbf{Claim 2}: $\kappa_2 \neq 0 \in H^4(\Gamma_4 ; \bQ)$.

This is an instance of \cite[Theorem 2]{Faber}.

\vspace{1ex}

\noindent\textbf{Claim 3}: $H^2(\Gamma_4 ; \bQ) = \bQ\{\kappa_1\}$.

By Claims 1 and 2, $\kappa_1^2 \neq 0$ and so $\kappa_1 \neq 0$, so it is enough to show that this cohomology group has dimension (at most) 1. This follows from \cite[Theorem 1.4]{TomassiM4}, but can also be deduced from either of the stability theorems \cite{Boldsen, R-WResolution}, which show that the maps
\[H^2(\Gamma_4 ; \bQ) \lra H^2(\Gamma_{4,1} ; \bQ) \longleftarrow H^2(\Gamma_{5,1} ; \bQ)\]
are isomorphisms, and Harer's theorem \cite{HarerH2} that the rightmost group is $\bQ$.

\vspace{1ex}

\noindent\textbf{Claim 4}: $H^2(\Gamma_4^1 ; \bQ) = \bQ\{\kappa_1, e\}$.

Using the transfer, Kawazumi and Morita have shown \cite[Proposition 5.2]{KM} \cite[Proposition 3.1]{Morita} that the Lyndon--Hochschild--Serre spectral sequence for the extension \eqref{eq:Ext1} gives, for any $\Gamma_g$-module $M$, a splitting
\begin{equation}\label{eqn:morita-splitting}H^p(\Gamma_g^1 ; M) \cong H^p(\Gamma_4 ; M) \oplus H^{p-1}(\Gamma_g ; H_1(\Sigma_g)\otimes M ) \oplus e \cdot H^{p-2}(\Gamma_g ; M).\end{equation}
Apply this with $g=4$, $M=\bQ$, and $p=2$, and use that Morita has also shown \cite[Proposition 5.1]{MoritaTwisted} that $H^1(\Gamma_4 ; H_1(\Sigma_4;\bQ))=0$: interpreting this splitting and using Claim 3 proves Claim 4.

\vspace{1ex}

\noindent\textbf{Claim 5}: $H^3(\Gamma_4^1 ; \bQ) = 0$.

It follows from \cite[Theorem 1.4]{TomassiM4} that $H^3(\Gamma_4 ; \bQ)=0=H^1(\Gamma_4 ; \bQ)$, so (\ref{eqn:morita-splitting}) gives $H^3(\Gamma_4^1 ; \bQ) \cong H^2(\Gamma_4 ; H_1(\Sigma_4;\bQ))$. Applying (\ref{eqn:morita-splitting}) again with $M=H_1(\Sigma_4;\bQ)$ shows that this group is a summand of $H^2(\Gamma_4^1 ; H_1(\Sigma_4;\bQ))$. The Gysin sequence for the central extension \eqref{eq:Ext2} has the form
\[H^0(\Gamma_4^1 ; H_1(\Sigma_4;\bQ)) \overset{e \cdot -}\lra H^2(\Gamma_4^1 ; H_1(\Sigma_4;\bQ)) \lra H^2(\Gamma_{4,1} ; H_1(\Sigma_4;\bQ)),\]
where the leftmost term vanishes by Lemma \ref{lem:H1CoinvVanish} and the rightmost term vanishes by Corollary \ref{cor:H1Vanish42}, both dualised using the isomorphism $H_*(G,M)^\vee \cong H^*(G,M^\vee)$ given by the universal coefficient theorem, together the identification $H_1(\Sigma_4;\bQ)^\vee \cong H_1(\Sigma_4;\bQ)$ as a representation of $\Gamma_4$ using the symplectic form. This proves the claim.

\vspace{1ex}

We now begin the proof of the theorem; by the universal coefficient theorem it is enough to prove the vanishing of $H^3(\Gamma_{4,1};\bQ)$. The Gysin sequence for the central extension \eqref{eq:Ext2} is
\[H^3(\Gamma_{4}^1;\bQ) \overset{p^*}\lra H^3(\Gamma_{4,1};\bQ) \overset{p_!}\lra H^2(\Gamma_{4}^1;\bQ) \overset{e \cdot -}\lra H^4(\Gamma_{4}^1;\bQ).\]
The leftmost term is 0 by Claim 5, so to prove the theorem we must show that the rightmost map is injective: suppose therefore that $Ae + B \kappa_1 \in \bQ\{\kappa_1, e\} = H^2(\Gamma_4^1 ; \bQ)$ lies in the kernel of the rightmost map. We shall apply $\pi_!$ for $\pi \colon B\Gamma^1_4 \to B\Gamma_4$, the universal surface bundle, to expressions built from $Ae+B\kappa_1$. As $e \cdot (Ae + B \kappa_1) = 0$, applying $\pi_!(-)$ gives
\begin{align*}
0 &= \pi_!(A e^2 + B e \kappa_1)\\
 &= A \kappa_1 + B \kappa_1 \cdot \pi_!(e)\\
 &= A \kappa_1 + B (2-2 \cdot 4) \kappa_1\\
&= (A - 6 B) \kappa_1.
\end{align*}
As $\kappa_1 \neq 0$, we must have $A=6B$. On the other hand, we also have $e^2 \cdot (Ae + B \kappa_1)=0$ and so applying $\pi_!(-)$ gives
\begin{align*}
0 &= \pi_!(A e^3 + B e^2 \kappa_1)\\
 &= A \kappa_2 + B \kappa_1^2\\
 &= B(6 \kappa_2 + \kappa_1^2).
\end{align*}
If $B \neq 0$ then $6 \kappa_2 = -\kappa_1^2$, which combined with the relation $3 \kappa_1^2 =- 32 \kappa_2$ shows that $\kappa_2=0$, contradicting Claim 2. Thus $B=A=0$, as required.
\end{proof}

\subsection{Some non-vanishing relative groups} Using the techniques of Section \ref{sec:proofthmh43}, we prove that some relative homology groups do not vanish. 

The homology group $H_4(\Gamma_{6,1};\bQ)$ is in the stable range and it follows from the Madsen--Weiss theorem \cite{MadsenWeiss} that $H^4(\Gamma_{6,1};\bQ) = \bQ\{\kappa_2, \kappa_1^2\}$. Corollary \ref{cor:MCGSecStabImprov} implies that the map $H_4(\Gamma_{5,1};\bQ) \to H_4(\Gamma_{6,1};\bQ)$ has 1-dimensional cokernel, which means that $H_4(\Gamma_{5,1};\bQ)$ contains a rational homology class $x$ which pairs non-trivially with some linear combination of $\kappa_2$ and $\kappa_1^2$. (This may also be deduced from the facts that $H_4(\Gamma_{5,1};\bQ)$ surjects onto $H_4(\Gamma_5;\bQ)$ by \cite{R-WResolution} and that the tautological ring does not vanish in $H^4(\Gamma_5;\bQ)$ \cite{Faber4}.) By the relation $5 \kappa_1^2 + 72  \kappa_2=0$ in $H^4(\Gamma_{5};\bQ)$ \cite[Example 9.2]{MoritaRels}, we may assume that $\langle x,\kappa_2 \rangle \neq 0$. An element of $H_d(\Gamma_{g,1};\bQ)$ is said to \emph{destabilise} if it is in the image of the stabilisation map.

\begin{lemma}The element $\lambda^3 \cdot x^2 \in H_{14}(\Gamma_{19,1};\bQ)$ does not destabilise (so in particular is non-zero).
\end{lemma}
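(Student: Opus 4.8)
The plan is to convert the statement into the non-vanishing of a class in relative homology, and then to detect that class against tautological cohomology. By the cofibre sequence~\eqref{eq:RModSigmaCofSeq}, the image of $\sigma\cdot-\colon H_{14}(\Gamma_{18,1};\bQ)\to H_{14}(\Gamma_{19,1};\bQ)$ is precisely the kernel of the quotient map $q\colon H_{19,14}(\overline{\gR}_\bQ)\to H_{19,14}(\overline{\gR}_\bQ/\sigma)=H_{14}(\Gamma_{19,1},\Gamma_{18,1};\bQ)$, so ``$\lambda^3x^2$ does not destabilise'' is equivalent to $q(\lambda^3x^2)\neq 0$. Since $q$ is a map of left $\overline{\gR}_\bQ$-modules we may rewrite $q(\lambda^3x^2)=\lambda^3\cdot q(x^2)=x^2\cdot q(\lambda^3)$, where by Corollary~\ref{cor:MCGPowersKappa} the class $q(\lambda^3)$ generates $H_6(\Gamma_{9,1},\Gamma_{8,1};\bQ)\cong\bQ$. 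Moreover $5\cdot 14=4\cdot 19-6$, so Corollary~\ref{cor:MCGSecStabImprov} shows that $\lambda\cdot-\colon H_{12}(\Gamma_{16,1},\Gamma_{15,1};\bQ)\to H_{14}(\Gamma_{19,1},\Gamma_{18,1};\bQ)$ is an isomorphism carrying $q(\lambda^2x^2)$ to $q(\lambda^3x^2)$, and the analogous maps further up the secondary-stability line are isomorphisms too; these identifications are convenient for organising the argument but do not by themselves give non-vanishing.

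The arithmetic input is the pairing of $\lambda^3x^2$ with Miller--Morita--Mumford classes. Using that $\kappa_i$ is additive for boundary connected sum, i.e.\ that the $E_2$-product $\mu$ satisfies $\mu^*\kappa_i=\sum_j\kappa_i^{(j)}$, together with $\langle\lambda,\kappa_1\rangle=12$ (as $\kappa_1=12\lambda_1$ and $\langle\lambda,\lambda_1\rangle=1$), the fact $\langle x,\kappa_2\rangle\neq0$ for the class $x\in H_4(\Gamma_{5,1};\bQ)$ fixed above, and the relation $5\kappa_1^2+72\kappa_2=0$ in $H^4(\Gamma_{5,1};\bQ)$ (which forces $\langle x,\kappa_1^2\rangle=-\tfrac{72}{5}\langle x,\kappa_2\rangle$), a direct expansion of $\mu^*$ gives $\langle\lambda^3x^2,\kappa_1^3\kappa_2^2\rangle=12\cdot 12^3\cdot\langle x,\kappa_2\rangle^2\neq 0$; more generally $\alpha\mapsto\langle\lambda^3x^2,\alpha\rangle$ is a non-zero functional on the span of $\kappa_1^3\kappa_2^2$, $\kappa_1^5\kappa_2$ and $\kappa_1^7$ inside $H^{14}(\Gamma_{19,1};\bQ)$. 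The two building blocks are detected similarly: $q(\lambda^3)$ is detected in $H^6(\Gamma_{9,1},\Gamma_{8,1};\bQ)$ by a Morita relation $\kappa_3=R_3(\kappa_1,\kappa_2)$ which holds in $\Gamma_{8,1}$ but not in $\Gamma_{9,1}$ (this is exactly the non-vanishing argument of Corollary~\ref{cor:MCGPowersKappa} for $k=3$), and $x$ does not destabilise from genus $4$: pairing against $3\kappa_1^2+32\kappa_2\in H^4(\Gamma_{5,1};\bQ)$, which restricts to $0$ in $\Gamma_{4,1}$ since it is pulled back from the relation $3\kappa_1^2+32\kappa_2=0$ in $H^4(\Gamma_4;\bQ)$ (Claim~1 in the proof of Theorem~\ref{thm:H43}), while $\langle x,3\kappa_1^2+32\kappa_2\rangle=-\tfrac{56}{5}\langle x,\kappa_2\rangle\neq0$, shows $q(x)\neq0$ in $H_4(\Gamma_{5,1},\Gamma_{4,1};\bQ)$.

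The crux — and the step I expect to be the main obstacle — is to promote these numerical facts to the existence of a genuine class $\Theta\in H^{14}(\Gamma_{19,1},\Gamma_{18,1};\bQ)$ with $\langle q(\lambda^3x^2),\Theta\rangle\neq0$, i.e.\ of a cohomology class of $\Gamma_{19,1}$ that restricts to $0$ in $\Gamma_{18,1}$ and pairs non-trivially with $\lambda^3x^2$. I would build $\Theta$ by bootstrapping along the product-of-pairs map
\[(B\Gamma_{9,1}\times B\Gamma_{10,1},\,B\Gamma_{8,1}\times B\Gamma_{10,1})\lra(B\Gamma_{19,1},B\Gamma_{18,1}),\]
under which $q(\lambda^3x^2)$ corresponds to the external product $q(\lambda^3)\boxtimes x^2$ of the relative generator of $H_6(\Gamma_{9,1},\Gamma_{8,1};\bQ)$ with the genus-$10$ class $x^2$ (non-zero, since $\langle x^2,\kappa_2^2\rangle=2\langle x,\kappa_2\rangle^2\neq0$), and the external product of the relative class detecting $q(\lambda^3)$ with a class detecting $x^2$ pairs non-trivially with it. What must then be checked — and this is where the additivity of $\kappa$'s under $\mu^*$, the module structure over $\overline{\gR}_\bQ$, the secondary-stability isomorphism above, and low-genus tautological computations in the style of Section~\ref{sec:proofthmh43} (together with Corollary~\ref{cor:H1Vanish42}) all come in — is that the relevant pairing functional is realised by a class of the relative group $H^{14}(\Gamma_{19,1},\Gamma_{18,1};\bQ)$ itself rather than merely by a class on the cohomology of the product subgroup; equivalently, that pushing $q(\lambda^3)\boxtimes x^2$ forward along the displayed map does not annihilate it. Granting this, $\langle q(\lambda^3x^2),\Theta\rangle\neq0$ forces $q(\lambda^3x^2)\neq0$, so $\lambda^3x^2$ does not destabilise; and by the module-structure identity the same argument incidentally shows that $x^2$, $\lambda x^2$ and $\lambda^2x^2$ do not destabilise either.
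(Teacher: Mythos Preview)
Your overall strategy is exactly right: $\lambda^3 x^2$ fails to destabilise if and only if it pairs non-trivially with some class in $H^{14}(\Gamma_{19,1};\bQ)$ that restricts to zero in $H^{14}(\Gamma_{18,1};\bQ)$. You also correctly observe that tautological relations holding in lower genus supply such classes, and you use this idea for $q(x)$ and $q(\lambda^3)$ separately. But you then abandon this direct route and instead propose to assemble $\Theta$ by a product-of-pairs argument, which you yourself flag as incomplete (``Granting this\ldots''). That is the gap: you never produce $\Theta$, and the push-forward step you defer is precisely the content of the lemma.

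The paper closes the gap in one stroke by applying the same relation-detection idea \emph{directly at genus $18$}. Morita's ``third relation'' for $k=7$ \cite{MoritaRels} is a specific polynomial $R(\kappa_1,\ldots,\kappa_7)$ of degree $14$ that vanishes in $H^{14}(\Gamma_{18};\bQ)$ and hence in $H^{14}(\Gamma_{18,1};\bQ)$; so $R$ itself is (a lift of) the relative class $\Theta$ you were seeking. Since the $\kappa_i$ are primitive for the coproduct induced by boundary connected sum, and $\lambda$ pairs only with $\kappa_1$ while $x$ pairs only with $\{\kappa_1^2,\kappa_2\}$, one discards all terms of $R$ involving $\kappa_{\geq 3}$ and is left with an explicit linear combination of $\kappa_1^7$, $\kappa_1^5\kappa_2$, $\kappa_1^3\kappa_2^2$, $\kappa_1\kappa_2^3$. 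Expanding the five-fold coproduct, using $\langle x,\kappa_1^2\rangle=-\tfrac{72}{5}\langle x,\kappa_2\rangle$, yields $\langle\lambda^3 x^2,R\rangle = 128024064\cdot\langle\lambda,\kappa_1\rangle^3\langle x,\kappa_2\rangle^2\neq 0$. This is exactly your second-paragraph computation, but carried out against the \emph{right} combination of monomials --- the one dictated by Morita's relation --- rather than against $\kappa_1^3\kappa_2^2$ alone. No product-of-pairs map, no bootstrapping, no appeal to secondary stability is needed.
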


\begin{proof}
Morita's third relation for $k=7$ from \cite{MoritaRels} gives a polynomial $R = R(\kappa_1, \ldots, \kappa_7)$, of cohomological degree 14, which vanishes in the cohomology of $\Gamma_{18}$, hence also in the cohomology of $\Gamma_{18,1}$. If the class $\lambda^3 \cdot x^2$ destabilised, i.e.~ $\lambda^3 \cdot x^2 = \sigma \cdot y$ for some $y \in H_{14}(\Gamma_{18,1};\bQ)$, then as $\langle y, R \rangle=0$ we must also have $\langle \lambda^3 \cdot x^2, R \rangle=0$.
	
	This pairing may be computed as the pairing of $\lambda \otimes \lambda \otimes \lambda \otimes x \otimes x$ against the 5-fold coproduct of $R$, where the comultiplication on the cohomology of $\gR_\bQ$ is induced by the multiplication on $\gR_\bQ$. Since the Miller--Morita--Mumford classes $\kappa_i$ for $i>0$ are primitive with respect to this comultiplication, and the classes $\lambda$ and $x$ pair non-trivially only with $\kappa_1$ and $\{\kappa_2, \kappa_1^2\}$ respectively, we may discard terms involving $\kappa_i$ for $i>2$ in $R$ when computing this evaluation. Upon discarding these terms, $R$ is given (up to a non-zero scalar) by
	\[80435 \cdot \kappa_1^7 + 21719880 \cdot \kappa_1^5 \kappa_2 + 1387036224 \cdot \kappa_1^3 \kappa_2^2 + 17581100544 \cdot \kappa_1 \kappa_2^3.\]
	When evaluating the 5-fold coproduct of this against $\lambda \otimes \lambda \otimes \lambda \otimes x \otimes x$, only terms of the form $\kappa_1 \otimes \kappa_1 \otimes \kappa_1 \otimes w_1 \otimes w_2$ with $w_1,w_2 \in \{\kappa_1^2,\kappa_2\}$ contribute. These terms are
	\begin{align*}
101348100 \cdot \kappa_1 \otimes \kappa_1 \otimes \kappa_1 \otimes \kappa_1^2 \otimes \kappa_1^2 &+ 1303192800 \cdot \kappa_1 \otimes \kappa_1 \otimes \kappa_1 \otimes \kappa_1^2 \otimes \kappa_2 \\
+ 1303192800 \cdot \kappa_1 \otimes \kappa_1 \otimes \kappa_1 \otimes \kappa_2 \otimes \kappa_1^2 
	&+ 16644434688 \cdot \kappa_1 \otimes \kappa_1 \otimes \kappa_1 \otimes \kappa_2 \otimes \kappa_2.\end{align*}
	Because $\langle x,\kappa_1^2 \rangle = -\frac{72}{5} \langle x,\kappa_2\rangle$, the pairing is therefore $128024064 \cdot \langle \lambda,\kappa_1 \rangle^3 \langle x,\kappa_2 \rangle^2$. This does not vanish since $\langle \lambda,\kappa_1 \rangle \neq 0$ and $\langle x,\kappa_2 \rangle \neq 0$.
\end{proof}

This observation allows us to find the first non-vanishing relative rational homology groups in each genus $g \equiv 1,2 \pmod 3$. Let us denote the image of $x^i \in H_{4i}(\Gamma_{5i,1};\bQ)$ in $H_{4i}(\Gamma_{5i,1},\Gamma_{5i-1,1};\bQ)$ by $z_i$.

\begin{corollary}\label{cor:non-vanishing-rel-groups} We have that $\lambda^r \cdot z_1$ is non-zero in $H_{5+3r,4+2r}(\gR_\bQ/\sigma)$ for all $r \geq 0$. We have that $\lambda^r \cdot z_2$ is non-zero in $H_{10+3r,8+2r}(\gR_\bQ/\sigma)$ for all $r \geq 0$.\end{corollary}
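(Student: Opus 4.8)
The plan is to derive both assertions from the single input supplied by the preceding lemma, namely that $\lambda^{3}\cdot x^{2}\in H_{14}(\Gamma_{19,1};\bQ)=H_{19,14}(\gR_\bQ)$ does not destabilise. First I would reinterpret this inside $\overline{\gR}_\bQ/\sigma$: applying homology to the cofibre sequence \eqref{eq:RModSigmaCofSeq} with $\bk=\bQ$ identifies the kernel of $q\colon H_{19,14}(\gR_\bQ)\to H_{19,14}(\overline{\gR}_\bQ/\sigma)$ with the image of the stabilisation map $\sigma\cdot-\colon H_{14}(\Gamma_{18,1};\bQ)\to H_{14}(\Gamma_{19,1};\bQ)$. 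Thus ``$\lambda^{3}x^{2}$ does not destabilise'' is precisely the statement $q(\lambda^{3}x^{2})\neq 0$; and since $q$ is a map of left $\overline{\gR}_\bQ$-modules we have $q(\lambda^{3}x^{2})=\lambda^{3}\cdot q(x^{2})=\lambda^{3}\cdot z_{2}$. Hence $\lambda^{3}z_{2}\neq 0$ in $H_{19,14}(\gR_\bQ/\sigma)=H_{14}(\Gamma_{19,1},\Gamma_{18,1};\bQ)$.

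Next I would propagate this non-vanishing up and down the tower of $\lambda$-multiplications. For $r\geq 2$ the improved secondary stability range of Corollary~\ref{cor:MCGSecStabImprov} makes
\[
\lambda\cdot-\colon H_{10+3r,\,8+2r}(\gR_\bQ/\sigma)\lra H_{13+3r,\,10+2r}(\gR_\bQ/\sigma)
\]
an isomorphism, since the required inequality $5(10+2r)\leq 4(13+3r)-6$ simplifies to $r\geq 2$; in particular this map is injective. Beginning from $\lambda^{3}z_{2}\neq 0$ (the case $r=3$, which satisfies $r\geq 2$) and inducting over $r$, one gets $\lambda^{r}z_{2}\neq 0$ for all $r\geq 3$. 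For $r\in\{0,1,2\}$, vanishing of $\lambda^{r}z_{2}$ would force $\lambda^{3}z_{2}=\lambda^{3-r}\cdot(\lambda^{r}z_{2})=0$, a contradiction. This establishes the statement about $\lambda^{r}\cdot z_{2}$. I note that the $\tfrac{4}{5}$-range is genuinely needed here: the $\tfrac{3}{4}$-range of Corollary~\ref{cor:MCGSecStab} would not even let one pass from $r=3$ to $r=4$.

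Finally I would deduce the statement about $z_{1}$ by multiplicativity. Since $q$ is a module map, $z_{2}=q(x\cdot x)=x\cdot q(x)=x\cdot z_{1}$ in $H_{10,8}(\gR_\bQ/\sigma)$; and because $\gR_\bQ$ is an $E_{2}$-algebra, hence homotopy-commutative, associativity of the module action gives $\lambda^{r}\cdot z_{2}=\lambda^{r}\cdot(x\cdot z_{1})=x\cdot(\lambda^{r}\cdot z_{1})$ in $H_{10+3r,\,8+2r}(\gR_\bQ/\sigma)$. Therefore $\lambda^{r}z_{2}\neq 0$ forces $\lambda^{r}z_{1}\neq 0$ for every $r\geq 0$, completing the proof. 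The one computational fact doing real work --- non-destabilisation of $\lambda^{3}x^{2}$, proved via the explicit pairing against Morita's degree-$14$ relation --- has already been dispatched in the preceding lemma, so the remaining steps are formal; the only point requiring minor care is the arithmetic in the middle paragraph that makes the secondary-stability isomorphism available precisely from $r=2$ onwards, so that the induction can engage at $r=3$.
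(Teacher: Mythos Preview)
Your proof is correct and uses essentially the same ideas as the paper's: reduce via the $\tfrac45$ secondary stability range to a single critical case, and use the module structure on $\overline{\gR}_\bQ/\sigma$ to link $z_1$ and $z_2$. The only difference is organizational: the paper treats $z_1$ and $z_2$ in parallel (reducing the $z_1$ claim to $\lambda^2 z_1\neq 0$, i.e.\ $\lambda^2 x$ does not destabilise, by the contrapositive ``otherwise $\lambda^3 x^2$ would''), whereas you first settle the $z_2$ claim completely and then deduce all of the $z_1$ claim at once from $\lambda^r z_2 = x\cdot(\lambda^r z_1)$---a slightly cleaner packaging of the same multiplicativity observation.
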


\begin{proof}
By the improved secondary stability range of Corollary \ref{cor:MCGSecStabImprov}, for the first claim it is enough to show that $\lambda^2 \cdot z_1$ is non-zero in $H_{11,8}(\gR_\bQ/\sigma)$, because $5 \cdot 10 \leq 4 \cdot 14 - 6$. This is a consequence of $\lambda^2 \cdot x$ not destabilising (otherwise $\lambda^3 \cdot x^2$ would).
	
Similarly, for the second claim it is enough to show that $\lambda^3 \cdot z_2$ is non-zero in $H_{11,8}(\gR_\bQ/\sigma)$, because $5 \cdot 14 \leq 4 \cdot 19 - 6$. This is a consequence of $\lambda^3 \cdot x^2$ not destabilising.
\end{proof}

\begin{remark}
Further computer calculations, involving expressions too large to print here, show that $\lambda^3 \cdot x^6$ does not destabilise. The above argument then implies that $\lambda^r \cdot x^s$ does not destabilise for $r \geq 0$ and $s \leq 6$, resulting in additional infinite families of non-vanishing relative homology groups.
\end{remark}

\subsection{A summary of low-genus rational homology}\label{sec:table} We start by proving Theorem \ref{thm:B}, which we recall says that
\begin{enumerate}[(i)]
	\item $H_d(\Gamma_{g,1}, \Gamma_{g-1,1};\bZ)=0$ for $d \leq \tfrac{2g-1}{3}$.
	
	\item $H_{2k}(\Gamma_{3k,1}, \Gamma_{3k-1,1};\bZ) \cong \bZ$ for $k \geq 0$.
	
	\item $H_{2k+1}(\Gamma_{3k+1,1}, \Gamma_{3k,1};\bQ)=0$ for $k \geq 1$ (but $H_1(\Gamma_{1,1},\Gamma_{0,1};\bQ) = \bQ$).
	
	\item $H_{2k+2}(\Gamma_{3k+2,1}, \Gamma_{3k+1,1};\bQ) \neq 0$ for $k \geq 0$.
	
	\item $H_{2k+2}(\Gamma_{3k+1,1}, \Gamma_{3k,1};\bQ) \neq 0$ for $k \geq 3$ (but we have $H_{2}(\Gamma_{1,1}, \Gamma_{0,1};\bQ) = 0$ and $H_4(\Gamma_{4,1},\Gamma_{3,1};\bQ) \neq 0$, the case $k=2$ remaining unknown for now).
\end{enumerate}

\begin{proof}[Proof of Theorem \ref{thm:B}] Part (\ref{it:B1}) is Corollary \ref{cor:MCGStabRange} and part (\ref{it:B2}) is Corollary \ref{cor:rel-integral}. 
	
For part (\ref{it:B3}), we use that in the exact sequence
	\begin{equation*}
	\begin{tikzcd}
	& & H_3(\Gamma_{4,1};\bQ) \rar \ar[draw=none]{d}[name=X, anchor=center]{} & H_3(\Gamma_{4,1}, \Gamma_{3,1};\bQ)
	\ar[rounded corners,
	to path={ -- ([xshift=2ex]\tikztostart.east)
		|- (X.center) \tikztonodes
		-| ([xshift=-2ex]\tikztotarget.west)
		-- (\tikztotarget)}]{dll} \\
	& H_2(\Gamma_{3,1};\bQ) \rar & H_2(\Gamma_{4,1};\bQ) \rar & H_2(\Gamma_{4,1}, \Gamma_{3,1};\bQ)
	\end{tikzcd}
	\end{equation*}
the first term vanishes by Theorem \ref{thm:H43}, the last term vanishes by Corollary \ref{cor:MCGStabRange}, and $H_2(\Gamma_{3,1};\bQ) \cong \bQ \cong H_2(\Gamma_{4,1};\bQ)$ by Lemma \ref{lem:MCGFacts} (\ref{it:MCGFacts32abs}) and (\ref{it:MCGFacts42}). It follows that $H_3(\Gamma_{4,1}, \Gamma_{3,1};\bQ)=0$, and so by Corollary \ref{cor:MCGSecStab} we have
	\[H_{2k+1}(\Gamma_{3k+1,1}, \Gamma_{3k,1};\bQ)=0 \text{ for all } k \geq 1.\]

Parts (\ref{it:B4}) and (\ref{it:B5}) follow from Corollary \ref{cor:non-vanishing-rel-groups}. The parenthesised statements follow from a variety of computations, collected in Figure \ref{fig:rat}.
\end{proof}

\begin{figure}[h]
	\begin{tikzpicture}
	\begin{scope}[scale=.9]
	\clip (-.75,-.75) rectangle (13.5,8.5);
	\draw (-.5,0)--(11.5,0);
	\draw (0,-1) -- (0,8.5);
	\fill [pattern=north west lines, pattern color=black!20!white] ({3.5*1.25},8.5) -- ({3.5*1.25},3.5) -- ({1.25*18.5/4},3.5) -- ({9.5*1.25},{1/5*(4*9.5-1)}) -- ({9.5*1.25},8.5)-- cycle;
	\fill [fill=Mahogany!5!white] (-1.25,-1) -- ({9.5*1.25},{1/3*(2*9.5-1)}) -- ({9.5*1.25},{1/5*(4*9.5-1)}) -- cycle;
	\begin{scope}
	\foreach \s in {0,...,8}
	{
		\draw [dotted] (-.5,\s)--(11.5,\s);
		\node [fill=white] at (-.25,\s) [left] {\tiny $\s$};
	}

	\foreach \s in {1,...,9}
	{
		\draw [dotted] ({1.25*\s},-0.5)--({1.25*\s},8.5);
		\node [fill=white] at ({1.25*\s},-.5) {\tiny $\s$};
		\node [fill=white] at ({1.25*\s},0) {$\bQ$};
		
	}
	\node [fill=Mahogany!5!white] at ({1.25*3},2) {\hspace{.5em}};	
	\node [fill=Mahogany!5!white] at ({1.25*6},4) {\hspace{1em}};
	\node [fill=Mahogany!5!white] at ({1.25*9},6) {\hspace{1em}};
	\end{scope}
	\node [fill=white] at (0,0) {$\bQ$};
	\node at (0,-.5) {\tiny 0};

	\draw [->>,thick] (.25,0) --++ (.75,0);
	\foreach \s in {2,...,9}
	{
		\draw [->,thick] ({1.25*\s-1},0) -- node[above] {\tiny $\cong$} ++ (.75,0);
	}
	
	\node [fill=white] at (1.25,1) {$\bQ$};
	\draw [->,thick] (2.75,3) --  node[above] {\tiny ?} ++ (.75,0);
	
	\foreach \s in {3,5,6}
	{
		\node [fill=white] at (3.75,\s) {$\bQ$};
	}
	\node [fill=white] at (2.5,3) {$\bQ$};
	
	\foreach \s in {4,...,9}
	{
		\node [fill=white] at ({1.25*\s},2) {$\bQ$};
	}
	\node at ({1.25*3},2) {$\bQ$};
	\foreach \s in {7,8,9}
	{
		\node at ({1.25*\s},4) {$\bQ^2$};
	}
	\node  at ({1.25*6},4) {$\bQ^2$};
	\draw [->>,thick] ({1.25*6+0.25},4) --++ (.75,0);
	\draw [->>,thick] ({1.25*6+0.25},5) --++ (.75,0);
	\draw [->,thick] ({1.25*5+0.25},4) -- node[above] {\tiny rank 1} ++(.75,0);
	\node  at ({1.25*9},6) {$\bQ^3$};
	\draw [->,thick] ({1.25*8+0.25},6) -- node[above] {\tiny rank 2} ++(.75,0);
	
	\foreach \s in {4,...,8}
	{
		\node at ({1.25*4},\s) {?};
		\node at ({1.25*5},\s) {?};
	}
	\foreach \s in {5,...,8}
	{
		\node at ({1.25*6},\s) {?};
		\node at ({1.25*7},\s) {?};
	}
	\foreach \s in {6,...,8}
	{
		\node at ({1.25*8},\s) {?};
	}
	\foreach \s in {7,8}
	{
		\node at ({1.25*9},\s) {?};
	}
	
	\foreach \s in {8,9}
	{
		\draw [->,thick] ({1.25*\s-1},4) -- node[above] {\tiny $\cong$} ++ (.75,0);
	}
	\draw [->>,thick] (4,2) --++ (.75,0);
	\foreach \s in {5,...,9}
	{
		\draw [->,thick] ({1.25*\s-1},2) -- node[above] {\tiny $\cong$} ++ (.75,0);
	}
	
	\draw [very thick,Mahogany,densely dotted] (-1.25,-1) -- ({9.5*1.25},{1/3*(2*9.5-1)}) node [right] {\tiny $d = \frac{2g-1}{3}$};
	\draw [very thick,Mahogany,densely dotted] (-1.25,-1) -- ({9.5*1.25},{1/5*(4*9.5-1)}) node [right] {\tiny $d = \frac{4g-1}{5}$};
	
	\node [fill=white] at (-.5,-.5) {$\nicefrac{d}{g}$};
	\end{scope}
	\end{tikzpicture}
	\caption{A summary of the low-degree low-genus rational homology of $\Gamma_{g,1}$ and the stabilisation maps.}
	\label{fig:rat}
\end{figure}
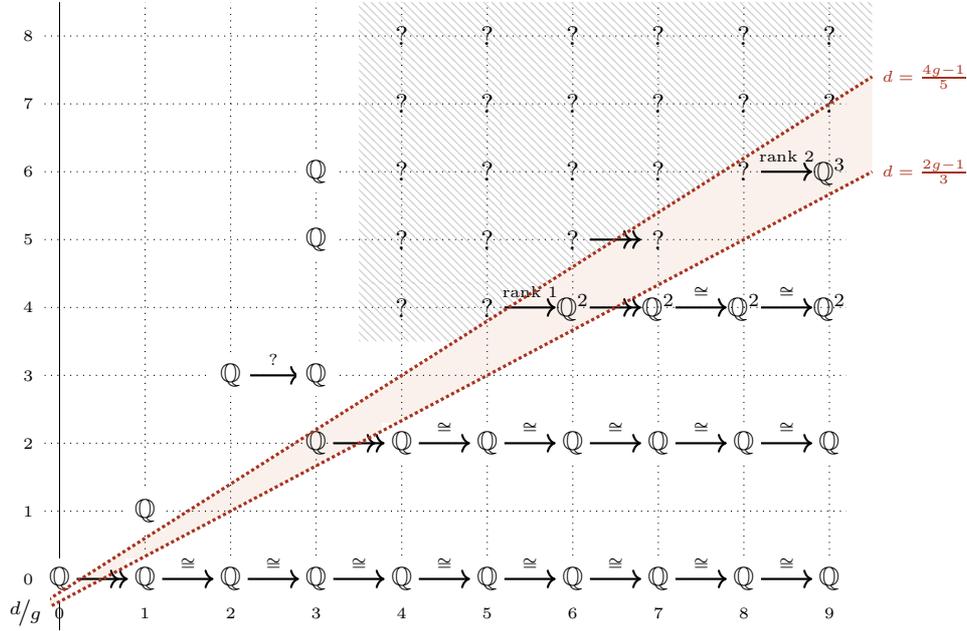

We used this to obtain Figure \ref{fig:rat}, which contains our current knowledge of the low-genus rational homology of the groups $\Gamma_{g,1}$ and the stabilisation maps. The shaded part is our rational secondary homological stability range; everything below it is in our improved homological stability range and given by Madsen--Weiss' solution of the Mumford conjecture \cite{MadsenWeiss}. We have also included low genus computations, obtained from \cite{ABE,GodinMCG,WangThesis,BoesHermannThesis} in addition to Lemma \ref{lem:MCGFacts} and Theorem \ref{thm:H43}. The dashed part consists of unknown groups outside our secondary stability range, empty entries denote zero groups, and question marks denote unknown groups or stabilisation maps.

It remains to explain how we obtained the groups and stabilisation maps in the shaded range. By ordinary homological stability the stabilisation map $H_4(\Gamma_{6,1};\bQ) \to H_4(\Gamma_{7,1};\bQ)$ is surjective, and the target is in the stable range so is isomorphic to $\bQ^2$ by the Madsen--Weiss theorem. By Theorem \ref{thm:B} (\ref{it:B3}), $H_5(\Gamma_{7,1},\Gamma_{6,1};\bQ) = 0$, so in the long exact sequence
\[H_5(\Gamma_{7,1},\Gamma_{6,1};\bQ) \lra H_4(\Gamma_{6,1};\bQ) \lra H_4(\Gamma_{7,1};\bQ) \lra 0\]
the right map has to be injective as well and $H_4(\Gamma_{6,1};\bQ) \cong \bQ^2$. To see that the map $H_4(\Gamma_{5,1};\bQ) \to H_4(\Gamma_{6,1};\bQ)$ has rank 1, we use that $H_3(\Gamma_{5,1};\bQ) = 0$ and Theorem \ref{thm:B} (ii) says $H_4(\Gamma_{6,1},\Gamma_{5,1};\bQ) \cong \bQ$. Analogous arguments two degrees up imply $H_6(\Gamma_{9,1};\bQ) \cong \bQ^3$, and that the stabilisation map $H_6(\Gamma_{8,1};\bQ) \to H_6(\Gamma_{9,1};\bQ)$ has $1$-dimensional cokernel. Similarly, the map $H_3(\Gamma_{4,1},\Gamma_{3,1};\bQ) \to H_5(\Gamma_{7,1},\Gamma_{6,1};\bQ)$ is surjective with domain $0$, so that $H_5(\Gamma_{6,1};\bQ) \to H_5(\Gamma_{7,1};\bQ)$ is surjective.

\begin{remark}It seems unknown whether stabilisation $H_3(\Gamma_{2,1};\bQ) \to H_3(\Gamma_{3,1};\bQ)$ is an isomorphism. The class in $H_3(\Gamma_{2,1};\bQ)$ is \emph{not} the Browder bracket of the generator $\tau$ of $H_1(\Gamma_{1,1};\bQ)$ with itself as this vanishes by $[\tau,\tau] = (-1)^{|\tau||\tau|} [\tau,\tau]$. Finally, the iterated stabilisation map $H_6(\Gamma_{3,1};\bQ) \to H_6(\Gamma_{9,1};\bQ)$ vanishes. If it did not then $H_6(\Gamma_{3,1};\bQ)$ would be detected by some linear combination of $\kappa_1^3$, $\kappa_1 \kappa_2$, and $\kappa_3$. Then the tautological ring in $H^6(\Gamma_3;\bQ)$ would be non-zero, but it vanishes in that degree by \cite[Theorem 1.2]{LooijengaRel} or using \cite[Theorem 4.10]{Looijenga} and weight considerations. An interpretation of a generator of $H^6(\Gamma_3;\bQ)$ was recently given in \cite[Theorem 1.1]{ChanGalatiusPayne}.\end{remark}

\subsection*{Acknowledgements} 

AK and SG were supported by the European Research Council (ERC) under the European Union’s Horizon 2020 research and innovation programme (grant agreement No 682922), and by the Danish National Research Foundation through the Centre for Symmetry and Deformation (DNRF92). SG was also supported by NSF grant DMS-1405001. AK was also supported by NSF grant DMS-1803766. ORW was partially supported by EPSRC grant EP/M027783/1.

SG acknowledges conversations long ago with Mike Hopkins and Nathalie Wahl, about the possibility of secondary stability phenomena in mapping class groups and other groups.

\bibliographystyle{amsalpha}
\bibliography{biblio}

\vspace{1em}

\end{document}